\crefname{equation}{}{}
\crefname{enumi}{}{}
\let\eqref\cref
\theoremstyle{plain}
    \newtheorem{theorem}{Theorem}[section]
    \newtheorem{lemma}[theorem]{Lemma}
    \newtheorem{corollary}[theorem]{Corollary}
    \newtheorem{proposition}[theorem]{Proposition}
\theoremstyle{definition}
    \newtheorem{definition}[theorem]{Definition}
    \newtheorem{example}[theorem]{Example}
    \newtheorem{remark}[theorem]{Remark}
\theoremstyle{remark}
    \let\OLD@begintheorm\@begintheorem
    \def\@begintheorem{\needspace{2\baselineskip}\OLD@begintheorm}
\numberwithin{equation}{section}
\newcommand{\Bb}[1]{
  \typeout{**** Command #1#1 denotes mathbb #1 for vector spaces}%
  \expandafter\DeclareRobustCommand\csname #1#1\endcsname%
  {\ensuremath{\mathbb #1}
  \global\csname UsedBb#1true\endcsname}%
  \expandafter\newif\csname ifUsedBb#1\endcsname
  \csname UsedBb#1false\endcsname
  \AtEndDocument{\csname ifUsedBb#1\endcsname\else
  \typeout{**** Bb symbol #1#1 not used.}
  \fi}
  }
    \newcommand{\R}{\mathbb{R}}
    \newcommand{\C}{\mathbb{C}} 
    \newcommand{\N}{\mathbb{N}}
    \newcommand{\Z}{\mathbb{Z}}
\DeclareMathOperator{\dom}{dom}
\DeclareMathOperator{\id}{id}
\newcommand{\Cal}[1]{
  \typeout{**** Command c#1 denotes calligraphic #1}%
  \expandafter\DeclareRobustCommand\csname c#1\endcsname%
  {\ensuremath{\mathcal #1}
  \global\csname UsedCal#1true\endcsname}%
  \expandafter\newif\csname ifUsedCal#1\endcsname
  \csname UsedCal#1false\endcsname
  \AtEndDocument{\csname ifUsedCal#1\endcsname\else
  \typeout{**** Cal symbol c#1 not used.}
  \fi}
  }
\DeclareMathOperator{\Span}{span}
\DeclareMathOperator{\Pol}{Pol}
\DeclareMathOperator{\sgn}{sgn}
\newcommand{\Real}{\Re}
\newcommand{\beq}[1]{\begin{equation} \label{#1}}
\newcommand{\eeq}{\end{equation}}
\newcommand{\ODE}{\textsc{ode}}\let\ode\ODE
\newcommand{\PDE}{\textsc{pde}}\let\pde\PDE
\begin{document}   

\begin{ArXiv}
\title{Normal forms and invariant manifolds for nonlinear, non-autonomous PDEs, viewed as ODEs in infinite dimensions}  
\end{ArXiv}

\begin{journal}
\title[Normal forms and invariant manifolds]{Normal forms and invariant manifolds for nonlinear, non-autonomous PDEs, viewed as ODEs in infinite dimensions}  
\end{journal}

\date{\today}

\begin{ArXiv}
\author{Peter Hochs\thanks{School of Mathematical Sciences, University of Adelaide, \protect\url{peter.hochs@adelaide.edu.au}}
\and
A.J. Roberts\thanks{School of Mathematical Sciences, University of Adelaide, \protect\url{mailto:anthony.roberts@adelaide.edu.au},
\protect\url{http://orcid.org/0000-0001-8930-1552}
}}  
\end{ArXiv}

\begin{journal}
\author{Peter Hochs}
\address{School of Mathematical Sciences, University of Adelaide}
\email{peter.hochs@adelaide.edu.au}
\author{A.J. Roberts}
\address{School of Mathematical Sciences, University of Adelaide}
\email{\protect\url{mailto:anthony.roberts@adelaide.edu.au}}
\urladdr{\protect\url{http://orcid.org/0000-0001-8930-1552}}
\end{journal}

\maketitle

\begin{abstract}
We prove that a general class of nonlinear, non-autonomous 
\ODE s in Fr\'echet spaces are close to \ODE s in a specific normal form, where closeness means that  solutions of the normal form \ODE\ satisfy the original \ODE\ up to a residual that vanishes up to any desired order.  
%
%
In this normal form, the centre, stable and unstable coordinates of the \ODE\ are clearly separated, which allows us to define invariant manifolds of such equations in a robust way.
In particular, our method empowers us to study approximate centre manifolds, given by solutions of \ODE s that are central up to a desired, possibly nonzero precision. 
The main motivation is the case where the Fr\'echet space in question is a suitable function space, and the maps involved in an \ODE\ in this space are defined in terms of derivatives of the functions, so that the infinite-dimensional \ODE\ is a finite-dimensional \PDE. 
We show that our methods apply to a relevant class of nonlinear, non-autonomous \PDE s in this way.
\end{abstract}

\setcounter{tocdepth}{2}
\tableofcontents

\section{Introduction}

\subsection{Background and motivation}

Various invariant manifolds are central to many areas of dynamical systems, including using centre manifolds to construct and justify  reduced low-dimensional models of high-dimensional dynamics \cite[e.g.]{Roberts2014a}.
Many dynamical systems involve \pde{}s in infinite dimensional state spaces of functions, 
and some applications require infinite dimensional centre manifolds \cite[e.g.]{Bunder2018a, Roberts2013a}.
In general we also want to cater for non-autonomous systems, with an aim to subsequently generalise to stochastic\slash rough dynamics \cite[e.g.]{Friz2014}.
Further, encompassing unstable dynamics with both centre and stable is necessary for application to Saint Venant-like, cylindrical, problems \cite[e.g.]{Haragus95b, Mielke88b, Mielke91a}, and to deriving boundary conditions for approximate \pde{}s \cite[e.g.]{Roberts92c}.
Consequently, here we address the general challenge of constructing and justifying various infinite dimensional invariant manifolds for non-autonomous dynamical systems which have stable, unstable and centre modes.
A crucial novel feature of the approach is that we further develop a backward theory recently initiated for finite dimensional systems~\cite{Roberts2018a}: analogous backward theory has been very useful in other domains~\cite[e.g.]{Grcar2011}.

Applications of the extant forward theory in such a general setting is often confounded by impractical preconditions.
Non-autonomous invariant manifold theories typically require bounded operators, and Lipschitz and/or uniformly bounded nonlinearities, \cite[e.g.]{Aulbach96, Aulbach99, Aulbach2000, Chicone97, Haragus2011}.
The extant boundedness requirement \cite[Hypothesis~2.1(i) and 3.8(i), e.g.]{Haragus2011} arises from the general necessity of both forward and backward time convolutions with the semigroup (e.g., \(e^{At}\) for systems that linearise to \(\dot x=Ax\)), convolutions that must be continuous in extant forward theory, but cannot be continuous with unbounded operators.
Despite many interesting specific scenarios having rigorous invariant manifolds  established via strongly continuous semigroup operators and by mollifying nonlinearity \cite[e.g.]{Carr81, Vanderbauwhede89}, extant \emph{non-autonomous} forward theory fails to rigorously apply in many practical~cases.

Our main motivation for studying \ODE s in infinite-dimensional vector spaces is their possible application to analysing invariant manifolds of \PDE s in finite space-time dimensions. In that setting, the infinite-dimensional vector space in question is a space of functions, and the maps occurring in the \ODE\ are differential operators. The linear part of such an \ODE\ is a linear partial differential operator, which typically is unbounded in applications. 
Such an operator can be viewed as a bounded operator between \emph{different} Banach spaces, with norms adapted to make the operator bounded. (For example, the operator $\frac{d}{dx}$ is unbounded on $L^2(\R)$, but becomes bounded if we take its domain to be a first-order Sobolev space.) Centre manifold theory in this setting was developed by several authors \cite[e.g.]{Haragus2011, Mielke88, Vanderbauwhede92}, and applied to \PDE s. 

However, to achieve our goal of developing the desired backward theory, and robustly constructing invariant manifolds via coordinate transformations to approximate normal forms, we need to go beyond this setting.
This essentially boils down to the fact that a bounded operator on a single Banach space can be iterated to yield new bounded operators, whereas this is not possible for a bounded operator between different Banach spaces. The necessity of iterating operators in our constructions leads us outside the setting of Banach spaces, to \emph{graded Fr\'echet spaces}: intersections of infinite sequences of Banach spaces connected by bounded inclusion maps. These include spaces relevant to the study of \PDE s, such as spaces of smooth functions.


\begin{figure}
\centering\small
\setlength{\unitlength}{1ex}
\newcommand{\boxd}[1]{\fbox{\parbox[b]{14ex}{\raggedright #1}}}
\begin{picture}(75,34)
\color{red}
\put(0,15){\boxd{exact physical dynamics}}
\put(10,14){\vector(1,-1){5}\put(-13,-3){modelling \,error}}
\color{black}
\put(12,1){\boxd{given $\infty$-D mathematical system~\eqref{eq ODE intro}}}
\put(28,5){\vector(1,0){30}}
\put(30,3){sometimes existence known}
\color{red}
\put(59,1){\boxd{exact reduced mathematical model(s)}}
\color{black}
\put(65,11){\color{red}{$\Big\updownarrow$} error}
\color{blue}
\put(59,15){\boxd{approx separated system with model~\eqref{eq normal form intro}}}
\put(28,7){\vector(3,1){30}}
\put(28.5,8.4){\rotatebox{18.44}{approx coordinate transform}} 
\put(1,21.8){\color{red}equivalent{\Large$\nearrow$\llap{$\swarrow$}}error}
\put(28,27.5){\vector(4,-1){30}}
\put(58,20){\vector(-4,1){30}}
\put(28,28.5){\rotatebox{-14.04}{exact coordinate transform~\eqref{eq:xiIntro}}}
\put(12,26){\boxd{nearby $\infty$-D mathematical system~\eqref{eqs:nfIntro}}}
\color{magenta}
\put(29,25){\rotatebox{-14.04}{finite domain bound}}
\put(22,8.9){\rotatebox{90}{$\leftarrow${error bound}$\to$}}
\end{picture}
\caption{\label{fig:scheme} 
schematic diagram: blue, new theory and practice established by this article; magenta, for future research; black, mostly established extant theory and practice; red, practically unattainable (in general).}
\end{figure}

\subsection{Results}

The first step in the proposed backward theory is to establish an approximate conjugacy between a given system and a `nearby' system for which we know its invariant manifolds, by its construction.
\cref{fig:scheme} illustrates what this article achieves.
Planned future research will then provide novel finite domain and error bounds as illustrated in \cref{fig:scheme}.
That is, instead of proving that there exists a reduced dimensional manifold for a specified system, which is then approximately constructed, our main results, \cref{thm normal form,cor special case}, establish that there is a system which is both `close' to the specified system, and also has a reduced dimensional manifold which we know exactly.
In essence, we invoke an (extended) normal form coordinate transform---related to Hartman--Grobman theory \cite[e.g.]{Aulbach96, Aulbach99, Aulbach2000}---and use it from a new point of view.

An intuitive formulation of our main result on the existence of such a normal form is the following \cref{thm normal form intro}. \cref{thm normal form} is a precise formulation, and \cref{cor special case} is a special case that applies to nonlinear, non-autonomous \PDE s. (In that special case, the space $V$ is a space of functions, and the maps that occur are partial differential operators.)

\begin{theorem}[Normal form theorem, intuitive formulation] \label{thm normal form intro}
Let $I$ be an interval in~\(t\), 
and $V$ a possibly infinite-dimensional vector space.
Consider a nonlinear, non-autonomous \ODE
\beq{eq ODE intro}
\dot x(t) = Ax(t) + f(t, x(t))
\eeq
for $x\colon I \to V$, where $A$ is a linear operator on~$V$ (independent of~\(t\)), and $f \colon I \times V \to V$ and its derivative vanish on $I \times \{0\}$. 
For each  $p\geq 2$, there are both an \ODE 
\begin{subequations}\label{eqs:nfIntro}%
\beq{eq normal form intro}
\dot X(t) = AX(t) + F_p(t, x(t))
\eeq
for $X\colon I \to V$, and a time-dependent coordinate transformation 
\begin{equation}
\xi_p\colon I \times V \to V
\label{eq:xiIntro}
\end{equation} 
\end{subequations}
such that
\begin{itemize}
\item if $X(t)$ satisfies \eqref{eq normal form intro}, then setting $x(t) = \xi_p(t, X(t))$ defines a solution of \eqref{eq ODE intro} up to a residual term that vanishes to order $p$;
\item the map $F_p\colon I \times V \to V$ is of order $2$ in its second entry~\(X\);
\item the component of a solution to \eqref{eq normal form intro}  in the stable subspace for $A$ decays exponentially quickly to zero as  \(t\)~increases in \(I\). Its component in the unstable subspace for $A$ decays exponentially quickly to zero as  \(t\)~decreases in $I$. If the solution starts out in either the centre-stable or the centre-unstable subspace for $A$, then its component in the central subspace for $A$ is bounded by a constant for all $t \in I$, or at worst by a specified, small exponential growth rate.
%
\end{itemize}
\end{theorem}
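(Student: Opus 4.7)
The approach I would take is a near-identity normal form transformation constructed inductively in polynomial degree in $X$. Write $\xi_p(t,X) = X + \eta_2(t,X) + \cdots + \eta_{p-1}(t,X)$ and $F_p(t,X) = F_p^{(2)}(t,X) + \cdots + F_p^{(p-1)}(t,X)$, with each summand homogeneous of degree $k$ in $X$. Substituting $x = \xi_p(t,X)$ into \eqref{eq ODE intro}, using \eqref{eq normal form intro} to rewrite $\dot X$, Taylor-expanding $f$ about $X$, and matching terms of degree $k$ yields a homological equation
\[
  A\eta_k - (D_X\eta_k)\cdot AX - \partial_t \eta_k + F_p^{(k)} = R_k(t,X),
\]
where $R_k$ is determined entirely by $f$ and by the data $\eta_j, F_p^{(j)}$ for $j < k$. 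Solving these sequentially for $k = 2, \ldots, p-1$ realises an exact conjugacy between \eqref{eq ODE intro} and \eqref{eq normal form intro} modulo a remainder of order $p$, delivering the first bullet. That $F_p$ begins at order $2$ is immediate since the inductive scheme starts at $k = 2$.

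The second step is to decide what goes into $F_p^{(k)}$ and what can be absorbed into $\eta_k$, which is done by splitting each homological equation using the spectral decomposition $V = V_s \oplus V_c \oplus V_u$ induced by $A$. On the block mapping $V_{i_1} \otimes \cdots \otimes V_{i_k}$ into $V_j$, the homological operator $\eta \mapsto A\eta - (D_X\eta)AX - \partial_t\eta$ is governed by combinations of spectra of $A$ on the relevant summands, and for non-autonomous problems becomes a linear time-dependent \ODE\ for $\eta_k$ that is solved by forward convolution on stable blocks and backward convolution on unstable blocks. On non-resonant blocks---where the spectral combination is uniformly bounded away from the imaginary axis---the operator is invertible and one chooses $\eta_k$ to eliminate $R_k$ there, setting $F_p^{(k)} = 0$. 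On the resonant blocks, which couple centre modes to themselves and those combinations compatible with preserving the $V_s/V_c/V_u$ decomposition, one instead sets $\eta_k = 0$ and absorbs $R_k$ into $F_p^{(k)}$. By construction, the resulting $F_p$ respects the stable/centre/unstable splitting.

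The principal obstacle is that $A$ is typically unbounded, so each homological inversion and each Taylor expansion of $f$ costs regularity and cannot be carried out within a single Banach space. This is exactly what the paper's graded Fr\'echet setting---$V$ realised as the intersection of a descending sequence of Banach spaces with bounded inclusions---is designed to overcome: every algebraic step becomes a bounded map from one level of the scale to a lower one, and because only finitely many orders $k < p$ are needed for fixed $p$, the construction closes inside $V$ after finitely many losses of regularity. Once $\xi_p$ and $F_p$ are in hand, the dynamical conclusions follow from the spectral structure of $F_p$ via variation-of-constants and Gronwall arguments applied block by block: the stable component of any solution to \eqref{eq normal form intro} is governed by the contractive part of $A$ together with a superlinear $F_p$, yielding forward exponential decay, and symmetrically for the unstable component as $t$ decreases. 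For solutions starting in the centre-stable or centre-unstable subspace, the invariance ensured by the resonance structure of $F_p$ confines the dynamics to that subspace, and Gronwall on the centre block gives either uniform boundedness or at worst the prescribed small exponential growth rate.
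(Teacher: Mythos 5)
Your plan follows the paper's strategy closely: an inductively constructed near-identity transform $\xi_p$, a homological equation that splits into a solvable (non-resonant) part absorbed into the transform and a resonant part retained in $F_p$, forward/backward convolutions with exponentials of the detuning frequency to handle non-autonomy, and graded Fr\'echet spaces to absorb loss of regularity across finitely many steps. The resonance bookkeeping you describe---eliminate what can be eliminated, keep in $F_p$ only terms compatible with preserving $V_s\oplus V_c\oplus V_u$---is precisely the content of \cref{prop csu}, and your dynamics argument (variation of constants plus Gronwall block by block) is a viable alternative to the paper's energy estimate (\cref{lem decay to Vc}), which instead exploits the Hilbert-space structure on $V_1$ together with the factorisation $(F)_a(t,v)=\tilde F(t,v)v_a$ of \cref{lem tilFp}.

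There is, however, one genuine gap. The corrections $\eta_k$ and $F_p^{(k)}$ are not finite linear-algebra objects but infinite sums of monomial terms over multi-indices $q\in\N_0^\infty$; even within a single one of your blocks $V_{i_1}\otimes\cdots\otimes V_{i_k}\to V_j$, each factor is infinite-dimensional, so the monomial series need not converge. Indeed a generic bounded polynomial between infinite-dimensional spaces---the identity map on $V$ is the simplest example---is not a convergent sum of monomials, so the homological inversion you propose does not land in a well-defined object without additional structure. This is exactly the problem that the paper's notion of \emph{compact} polynomial maps (\cref{sec cpt pol}) and \emph{compactly differentiable} maps (\cref{sec cpt der def}) is introduced to solve: compactness, together with comparability of $\{V_k\}$ to a scale of separable Hilbert spaces in which the eigenvectors of $A$ are orthogonal, supplies an unconditional Schauder basis of monomials (\cref{lem uncond KPn}, \cref{cor comp conv}), which legitimises both the monomial decomposition of the residual (\cref{cor Taylor cpt Vinfty}) and the reassembly of $\hat\xi$ and $\hat F$ after the homological step (\cref{lem conv hat psi hat F}). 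Your appeal to the graded Fr\'echet setting addresses the loss-of-derivatives obstacle, but this convergence obstacle is separate; you would additionally need to assume $\{V_k\}$ is compactly nested and comparable to a Hilbert scale, and that $f$ is compactly differentiable (which in \pde\ applications is verified via Rellich-type compactness, \cref{prop ex cpt poly Sobolev}), before the series in your construction can be summed.
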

The three most important things to make more precise in this intuitive formulation is what `order $p$' means, the related question what topology on $V$ is used, and what kinds of maps $A$, $f$, $\xi_p$ and $F_p$ are.

The last point in \cref{thm normal form intro} means that the centre, stable and unstable manifolds (in this case, linear subspaces) of \eqref{eq normal form intro} are exactly the centre, stable and unstable spaces of~$A$, respectively. (And similarly for the centre-stable and centre-unstable subspaces.) The  centre, stable, unstable, centre-stable and centre-unstable subspaces for the dynamics in $x$ described by \cref{eq normal form intro} and $x(t) = \xi_p(t, X(t))$ (which becomes an \ODE\ in $x$ if $\xi_p(t, \relbar)$ is invertible for all $t$) are then obtained from these spaces via an application of the coordinate transform $\xi_p$. In this way, we show that any (non-autonomous) system of the form \cref{eq ODE intro} is arbitrarily close to a  system with robustly defined invariant manifolds.

This definition of these key invariant manifolds  is a crucial reformation of the backward theory proposed.
Classic definitions of un\slash stable and centre manifolds require the existence of limits as time goes to~\(\pm\infty\) \cite[e.g.]{Aulbach2006, Barreira2007, Haragus2011, Potzsche2006}.
This consequently requires solutions of the dynamical system to be  well-behaved for all time, which requires constraints that in applications are often not found, or are hard to establish.
For example, in stochastic systems very rare events will eventually happen over the infinite time requiring global Lipschitz and boundedness that are oppressive in applications.
By modifying definitions we establish results for finite times, which are useful in many applications, and for a wider range of non-autonomous systems.



\subsection{Ingredients of the proof}

The key ingredients of the proofs of our main results are \emph{nested sequences of Banach spaces}, whose intersections are \emph{graded Fr\'echet spaces}; \emph{compact polynomial maps} between Banach spaces and graded Fr\'echet spaces;  and \emph{compactly differentiable maps} between such spaces.


\subsubsection{Sequences of Banach spaces}

It is important to specify what is meant by a \emph{nearby} infinite-dimensional mathematical system in \cref{fig:scheme}. Intuitively, we mean by this that solutions of the nearby system \eqref{eqs:nfIntro} are solutions of the original system \eqref{eq ODE intro} up to any desired order in the magnitude of such a solution, as in \cref{thm normal form intro}. To be more precise about what that this magnitude is, we need to specify norms or seminorms on spaces containing these solutions. Working with a single Banach space (i.e., a single norm) is too restrictive for applications. This is because  in applications to {\PDE}s, the maps $A$ and~$f$ in \cref{thm normal form intro} are generally not continuous maps from a single Banach space to itself. This could be remedied by allowing maps between two different Banach spaces, but that would not let us iteratively apply maps involving $A$ and~$f$, which we do in the proof of \cref{thm normal form intro}.


A type of space that is both general enough to apply to various nonlinear \PDE s, while being close enough to Banach spaces to allow us to define a meaningful notion of a solution of an equation up to a given order, is what are often called {graded Fr\'echet spaces}.
These are intersections of sequences of Banach spaces, each with a bounded inclusion map into the next. The notion of an operator of a given order  on a graded Fr\'echet space is then defined in terms of the norms on these Banach spaces, see \cref{def On}.

For several convergence questions, it would be useful if the Banach spaces that occur in the definition of a graded Fr\'echet space are Hilbert spaces. Then we can use orthogonality, for example. However, for applications to nonlinear \PDE s, it is not enough to use Hilbert spaces. For example, a nonlinear term $u \mapsto u^2$ is a well-defined (and differentiable) map from the Banach space $L^4(\R)$ to the Hilbert space $L^2(\R)$. To be able to use Hilbert space techniques in such settings, we use the notion of nested sequences of Banach space that are \emph{comparable} to nested sequences of Hilbert spaces (see \cref{def comparable}). This effectively means that a graded Fr\'echet space that naturally occurs as an intersection of Banach spaces can equivalently be presented as an intersection of Hilbert spaces. Proving such a property in situations relevant to \PDE{}s involves the relevant Sobolev embedding theorems.


\subsubsection{Compact polynomial maps}

We construct coordinate transformations on graded Fr\'echet spaces to bring \ODE s in such spaces into normal forms that allow us to define invariant manifolds directly and robustly. These transformations are polynomial maps, which we construct by adding (infinitely many) monomial terms with the right properties together. At the level of Banach spaces, a polynomial map can be naturally defined as a finite sum of  restrictions to the diagonal of  bounded multilinear maps. For example, Taylor polynomials of differentiable maps between normed vector spaces are polynomials of this type.

But not all such polynomial maps (for example, the identity map) can be approximated by sums of monomials. This leads us to define \emph{compact} polynomial maps, which can be approximated in this way in settings relevant to us. The notion of a compact polynomial map that we use seems natural, but we have not been able to find it elsewhere in the literature. Different notions of compact polynomial maps were developed and studied by Gonzalo, Jaramillo and Pe{\l}czy\'{n}sky  \cite{GJ, Pelczynski}.

\subsubsection{Compactly differentiable maps}

Our construction of the required coordinate transforms involves Taylor polynomials of differentiable maps between Banach spaces,  and between graded Fr\'echet spaces. This construction is possible if those polynomials are compact in the sense just mentioned. That is the case for \emph{compactly} differentiable maps, which we define for this purpose. We will see in \cref{sec ex f} that, in applications to \PDE s, the relevant differentiable maps are indeed compactly differentiable. This follows from various Sobolev embedding theorems. 

\subsection{Outline of this paper}

The main results of this paper, on normal forms and invariant manifolds of nonlinear, non-autonomous \ODE s in Fr\'echet spaces, and of nonlinear, non-autonomous \PDE s in finite-dimensional spaces, are stated in \cref{secPrelim}. We illustrate our results by applying them to  an example \PDE\ in \cref{sec ex}.

In the rest of the paper, we prove our main results. We start by reviewing  standard material on differentiable maps and polynomials on normed spaces in \cref{sec der}. In
\cref{sec cpt der,sec Frechet}, we develop technical tools we need for our proofs. Then in
\cref{sec co xform,sec transf choice}, we use these tools to prove the main \cref{thm normal form,thm special case}. We prove some properties of the normal form equation, which allow us to identify its invariant manifolds, in \cref{sec dynamics}.

A key ingredient in the proof of a version Taylor's theorem for compactly differentiable maps between Fr\'echet spaces, mentioned above, is the fact
that a compact operator from a Banach space with the approximation property into another Banach space can be approximated by finite-rank operators in a suitable way. 
\begin{ArXiv}
This is reviewed in 
\cref{secCompFinite}.
\end{ArXiv}

\subsection{Notation and conventions}
\label{secNota}

We write~$\N$ for the set of positive integers, and~$\N_0$ for the set of nonnegative integers. We write~$\N_0^{\infty}$ for the set of sequences in~$\N_0$ with finitely many nonzero entries, interpreted as multi-indices. For $q \in \N_0^{\infty}$, or in~$\N_0^n$, we denote the (finite) sum of its elements by~$|q|$.

We denote spaces of bounded linear operators by the letter~$\cB$, and spaces of compact linear operators by the letter~$\cK$.

When we mention a normed vector space~$V$, the implicitly given norm is denoted by~$\|\cdot \|_V$. Similarly, if~$V$ is an inner product space, then the inner product is denoted by~$(\relbar, \relbar)_V$.
Inner products on complex vector spaces are assumed to be linear in their second entries, and antilinear in their first entries. For maps $f,g\colon V \to \R$, when we write $f(v) = O(g(v))$, we implicitly mean that $f(v) = O(g(v))$ as $v \to 0$ in $V$.

If~$V$ is a normed vector space, 
and $I$ is an open interval, and $f\colon I \to V$ and $f_j\colon I \to V$, for $j \in \N$, are maps, then we say that~$f_j$ converges to~$f$ if~$f_j(t)$ converges to~$f(t)$ in~$V$ uniformly in~$t$ in compact subsets of~$I$. If~$f$ and~$f_j$ are smooth, then we say that~$f_j$ converges to~$f$ differentiably in~$t$ if~$f_j^{(n)}$ converges to~$f^{(n)}$ for every $n \in \N_0$, in this sense. 

For maps $f,g\colon I \times V \to V$ and $h\colon  V \to V$, the maps $f\circ g\colon I \times V \to V$ and $f\circ h\colon I\times V \to V$ are defined by
\[
(f\circ g)(t,v):= f(t, g(t,v)), \quad
(f\circ h)(t,v):= f(t, h(v)),
\]
for $t \in I$ and $v \in V$.\label{not compos}

If~$\Omega$ is an open subset of~$\R^d$ and $m \in \N$, then the Sobolev space of functions from~$\Omega$ to~$\R^m$ with weak derivatives up to order~$k$ in~$L^p$ is denoted by~$W^{k,p}(\Omega; \R^m)$. The norm on this space is 
\[
\|u\|_{W^{k,p}} := \sum_{\alpha \in \N_0^n; |\alpha|\leq k} \left\|\frac{\partial^{\alpha} u}{\partial x^{\alpha}}\right\|_{L^p}.
\]
If $m = 1$, we write $W^{k,p}(\Omega) = W^{k,p}(\Omega; \R)$.

\section{Preliminaries and results}
\label{secPrelim}

Our main result, \cref{thm normal form}, asserts that a broad class of nonlinear \PDE{}s and \ODE s in infinite-dimensional vector spaces may be effectively approximated by normal form systems via well-chosen, time-dependent, coordinate transformations. In this normal form, the centre, stable and unstable components of the \PDE\ and \ODE\ are clearly separated, which allows us to define centre manifolds for this class of equations in a robust way (\cref{def centre mfd}).

We first state our result on normal forms, and the definition of centre manifolds, for \ODE s in a class of abstract vector spaces (\cref{sec result}). 
Our main reason for developing this theory is to apply it to the study of \PDE s, for which the vector spaces used are spaces of functions, and the relevant maps between them are defined in terms of derivatives of functions. 
We discuss a relevant class of examples of such function spaces and maps in \cref{sec special}.

%

\subsection{Nested sequences of Banach spaces}

The normal form we obtain in \cref{thm normal form} is approximate in the sense that functions satisfying an equation transformed into that form satisfy the original equation up to a residual term. 
An important point in \cref{thm normal form} is that this residual vanishes up to a specified order. 
To make it precise what this vanishing up to a certain order means, we introduce the type of topological vector spaces we consider in this subsection. 
More details about these spaces and their properties are given in \cref{sec Frechet}. A concrete class of examples of these spaces relevant to the study of \PDE s is given in \cref{sec special}.
 
 \begin{definition} \label{def nested seq}
By a \emph{nested sequence of Banach spaces}, we mean a sequence
$\{V_k\}_{k=1}^{\infty}$  of  Banach spaces such that 
 \begin{itemize} 
 \item  for every~$k$, $V_{k+1} \subset V_k$, where the inclusion map is bounded, and
 \item the intersection $V_{\infty}:=\bigcap_{l=1}^{\infty}V_l$ is dense in~$V_k$ for every~$k\in\N$.
  \end{itemize}
We then consider~$V_{\infty}$ as a Fr\'echet space\footnote{Much of what we write about Fr\'echet spaces of this form holds for more general projective limits of Banach spaces connected by bounded operators. But we do not need that degree of generality.} with the seminorms (now actual norms) that are the restrictions of the norms on the spaces~$V_k$. 

A \emph{compactly} nested sequence of Banach spaces is such a sequence such that for every $l \in \N$, there is a $k \geq l$ 
such that the inclusion $V_k \subset V_l$ is compact.
\end{definition}

A  Fr\'echet space $V_{\infty}$ as in this \cref{def nested seq} is often called a \emph{graded} Fr\'echet space.
 
Let $\{V_k\}_{k=1}^{\infty}$ be a nested sequence of Banach spaces. 
\begin{definition} \label{def BVinfty}
The space~$\cB(V_{\infty})$ of \emph{bounded operators on~$V_{\infty}$} consists of the linear maps $A \colon V_{\infty} \to V_{\infty}$ such that for every $l \in \N$, there is a $k \in \N$
such that the linear map~$A$ extends continuously to a map in~$\cB(V_k, V_l)$. 
\end{definition}
 \begin{remark}
 In \cref{def BVinfty}, if $k \leq l$, then the composition 
 \[
 V_l \hookrightarrow V_k \xrightarrow{A} V_l
 \]
 is a bounded operator on $V_l$. So we may always take $k \geq l$ in this context, but this does not need to be assumed a priori. Similar remarks apply in analogous situations, such as \cref{def On,def Vinfty diffble} below.
 \end{remark}
 
 \begin{definition} \label{def On}
 A map $f\colon V_{\infty} \to V_{\infty}$ is \emph{of order~$n$}, written as $f = \cO(n)$, if for every $l \in \N$, there is a $k \in \N$ 
such that $\| f(v)\|_{V_l} = O(\|v\|_{V_k}^n)$ as~$v\to 0$ in~$V_k$.
 
If~$I$ is an open interval, a map  $f\colon I \times V_{\infty} \to V_{\infty}$ is \emph{of order~$n$}, written as $f = \cO(n)$, if for every $l \in \N$, there is a $k \in \N$ such that $\| f(t, v)\|_{V_l} = O(\|v\|_{V_k}^n)$ as~$v\to 0$ in~$V_k$, uniformly in~$t$ in compact subsets of~$I$.
 \end{definition}

\begin{definition} \label{def Vinfty diffble}
An~$n$ times \emph{differentiable map from~$V_{\infty}$ to itself} is a map $f\colon V_{\infty} \to V_{\infty}$ such that for every $l \in \N$, there is a $k \in \N$ 
such that~$f$ extends to an~$n$ times differentiable map from~$V_k$ to~$V_l$. If a map is~$n$ times differentiable for every~$n \in \N$, then it is \emph{infinitely differentiable}.
\end{definition}
Basic material on differentiable maps between normed vector spaces is reviewed in \cref{sec der}.

\begin{definition}\label{def comparable}
Two nested sequences  $\{V_k\}_{k=1}^{\infty}$ and $\{W_k\}_{k=1}^{\infty}$  of Banach spaces are \emph{comparable} if 
for every $k \in \N$, there are $l_1, l_2, l_3, l_4 \in \N$ such that we have bounded inclusions \(V_{l_1} \subset W_k \subset V_{l_2}\) and
\(W_{l_3} \subset V_k \subset W_{l_4}\).
\end{definition}
 In the setting of this definition, $V_{\infty} = W_{\infty}$.

\subsection{Setup and goal} \label{sec setup}

Let $\{V_k\}_{k=1}^{\infty}$ be a compactly nested sequence of Banach spaces, such that~$V_1$ is a Hilbert space. Let $A \in \cB(V_{\infty})$. 
Let $I \subset \R$ be an open interval in~\(t\) containing \(t=0\), and let 
\(f\colon I \times V_{\infty} \to V_{\infty}\)
be infinitely  differentiable with respect to~$V_{\infty}$ and~$I$. Suppose that $f = \cO(2)$, and that for every $l \in \N$, there is a $k \in \N$ such that $f\colon V_k \to V_l$ is differentiable, and  
\beq{eq est der f}
\|f'_{V_{\infty}}(t,v)\|_{\cB(V_k, V_l)} = O(\|v\|_{V_k}), 
\eeq
uniformly in~$t$ in compact subsets of~$I$.

Suppose that $\{e_j\}_{j=1}^{\infty} \subset V_{\infty}$ is a set of eigenvectors of~$A$ which is a Hilbert basis of~$V_1$ (an orthonormal set that spans a dense subspace of $V_1$). 
We assume that the sequence~$\{V_k\}_{k=1}^{\infty}$ is {comparable} to a nested sequence of separable Hilbert spaces  in which the vectors~$e_j$ are orthogonal. 
However, we will see in \cref{rem comparable unnecessary}  that we may equivalently make the seemingly stronger but more concrete assumption that the spaces~$V_k$ themselves are separable Hilbert spaces. 
The assumption that~$\{V_k\}_{k=1}^{\infty}$ is {comparable} to a nested sequence of separable Hilbert spaces is easier to check in practice than the condition that every space~$V_k$ can be chosen to be a separable Hilbert space itself. 
For example, \cref{sec special} discusses a class of relevant cases where the spaces~$V_k$ are not Hilbert spaces for $k \geq 2$. 
In this sense, the notion of comparable sequences of Banach spaces is a tool that makes it easier to check the conditions of \cref{thm normal form}.


We study smooth maps $x\colon I \to V_{\infty}$ satisfying the non-autonomous dynamical system differential equation
\beq{eq ODE}
\dot x(t) = Ax(t) + f(t, x(t))\quad\text{for all }t \in I\,.
\eeq
Since the nonlinearity~\(f\) satisfies~\eqref{eq est der f}, \(x=0\) is an equilibrium of the system~\eqref{eq ODE}.
We provide a novel backward approach to establish invariant manifolds in a finite domain about the equilibrium \(x=0\).
For these invariant manifolds to be useful in applications, the time interval~\(I\) will be long enough for transient dynamics to decay to insignificance in the context of the application.  
The proofs of our main results simplify considerably if the time interval~\(I\) is short, or bounded. But
we emphasise that we only aim this theory to support the many applications where the time interval~\(I\) is long enough, or unbounded, so that the theorems are non-trivially useful in the application.

\subsection{Dynamics in a normal form}

We define invariant manifolds, or sets, for dynamical systems in a particular normal form, and show that this definition captures the essence of such manifolds. In \cref{sec result}, we show that a very general class of {\ODE}s of the form \eqref{eq ODE} can be brought into this normal form, modulo residuals that vanish to a desired order.

\paragraph{Spectral gap in an exponential trichotomy}
Let $\alpha, \beta, \gamma,\tilde \mu$ be such that $0\leq\alpha<\tilde\mu < \min(\beta,\gamma)$, and no eigenvalues of~$A$ have real parts in the intervals~$(-\beta, -\alpha)$ and~$(\alpha, \gamma)$ (depending upon the circumstances, \(\beta\) or~\(\gamma\) could be~\(\infty\), and/or \(\alpha\)~may be zero).
For every $j \in \N$, let~$\alpha_j$ be the eigenvalue of~$A$ corresponding to~$e_j$.
With respect to the parameters~$\alpha$, $\beta$ and~$\gamma$, we define
 the sets of indices of \emph{central}, \emph{stable} and \emph{unstable} eigenvalues and eigenvectors, respectively, as
\begin{align*}
J_c &:= \{j \in \N : |\Real(\alpha_j)| \leq \alpha\};\\
J_s &:= \{j \in \N : \Real(\alpha_j) \leq -\beta\};\\
J_u &:= \{j \in \N : \Real(\alpha_j) \geq \gamma\}.
\end{align*}
%
%
For $a = c,s,u$, let $V_a$ be  the closure in~$V_{1}$ of the span of the eigenvectors~$e_j$, for $j \in J_a$.
%
For any map~$g$ into~$V_{1}$ and $a \in \{c,s,u\}$, we write~$g_a$ for its components in~$V_a$. The sets~\(V_c,V_s,V_u\) are respectively called the \emph{centre\slash stable\slash unstable subspaces}.
Further, we define the \emph{centre-stable subspace} \(V_{cs}:=V_c\oplus V_s\), and the \emph{centre-unstable subspace} \(V_{cu}:=V_c\oplus V_u\).

For $v \in V_{\infty}$ and a multi-index $q \in \N_0^{\infty}$, we set
\[
v^q := \prod_{j=1}^{\infty} ({e_j},v)_{V_1}^{q_j}.
\]
For $q \in \N_0^{\infty}$, write $q = q^c + q^s + q^u$, for  $q^c, q^s, q^u \in \N_0^{\infty}$ such that $q^c_j = 0$ if $j \not \in J_c$, $q^s_j = 0$ if $j \not \in J_s$ and $q^u_j = 0$ if $j \not \in J_u$.


\paragraph{Normal form dynamics}

\begin{definition}
A smooth map $F\colon I \times V_{\infty} \to V_{\infty}$ \emph{separates invariant subspaces} if the components of $F$ in~$V_c$, $V_s$ and~$V_u$ are of the forms
\newcommand{\subpar}[2][5em]{\parbox{#1}{\scriptsize\raggedright #2}}
\begin{subequations} \label{eq Fcsu}
\begin{align}
F_c (t,v)&= \sum_{
\subpar{$q \in \N_0^{\infty}$: $|q| \leq p$ 
and $q^s = q^u=0$
or   $q^s \not=0\not= q^u$}}
F^q(t) v^q, \label{eq sep invar c} \\
F_s (t,v)&= \sum_{
\subpar{$q \in \N_0^{\infty}$: $|q| \leq p$
and $q^s \not=0$}} 
F^q(t) v^q, \label{eq sep invar s} \\
F_u (t,v)&= \sum_{\subpar{$q \in \N_0^{\infty}$: $|q| \leq p$
and $q^u \not=0$}} 
F^q(t) v^q,\label{eq sep invar u} 
\end{align}
\end{subequations}
for all $t \in I$ and $v \in V_{\infty}$,
for smooth maps $F^q \colon I \to V_{\infty}$, where the series converge in $\Pol(V_{\infty})$, differentiably in $t$. 
\end{definition}

Consider  a polynomial map $F\colon I \times V_{\infty} \to V_{\infty}$
that separates invariant subspaces, and the \ODE
\beq{eq ODE XYZ}
\dot X(t) = AX(t) + F(t, X(t)),
\eeq
in smooth maps $X\colon I \to V_{\infty}$. Because $F$ separates invariant subspaces, this \ODE\ has very explicit invariant manifolds, by \cref{lem Vj invar} and \cref{prop dynamics} below.

\begin{lemma} \label{lem Vj invar}
Suppose that  $X\colon I \to V_{\infty}$ satisfies~\eqref{eq ODE XYZ}, where $F$ separates invariant subspaces. Let $a \in\{ c,s,u\}$. If there exists a $t \in I$ such that $X(t) \in V_a$, then $X(t) \in V_a$ for all $t \in I$.
\end{lemma}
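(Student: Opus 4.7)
The plan is to show that the vector field $v \mapsto Av + F(t,v)$ on $V_\infty$ is tangent to $V_a \cap V_\infty$, and then to invoke uniqueness of solutions to the initial value problem.

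First, I would verify that $A$ preserves $V_a \cap V_\infty$. For $v \in V_a \cap V_\infty$, the comparability of $\{V_k\}_{k=1}^\infty$ to a nested sequence of Hilbert spaces in which the $e_j$ are orthogonal yields a Fourier expansion $v = \sum_{j \in J_a}(e_j,v)_{V_1}\, e_j$ converging in each $V_k$, and hence in the Fr\'echet topology of $V_\infty$. Continuity of $A$ on $V_\infty$, together with $Ae_j = \alpha_j e_j$, then gives $Av = \sum_{j \in J_a}\alpha_j (e_j,v)_{V_1}\, e_j \in V_a$.

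Next, I would show $F(t,v) \in V_a$ for $v \in V_a \cap V_\infty$. Orthonormality of $\{e_j\}$ in the Hilbert space $V_1$ implies $(e_j,v)_{V_1} = 0$ for $j \notin J_a$, so the monomial $v^q = \prod_j (e_j,v)_{V_1}^{q_j}$ vanishes unless $q = q^a$. A case check against the index sets in~\eqref{eq Fcsu} then shows $F_b(t,v) = 0$ for every $b \in \{c,s,u\} \setminus \{a\}$: for example, when $a = s$, the condition $q^u \neq 0$ defining the $F_u$ sum is incompatible with $q = q^s$, while the conditions defining the $F_c$ sum force either $q = 0$ (a constant term, which is absent in the normal-form setting of \cref{sec setup} where $f = \cO(2)$) or $q^s \neq 0 \neq q^u$ (again impossible). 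The surviving component $F_a(t,v)$ has coefficients $F^q(t)$ taking values in $V_a$, whence $F(t,v) = F_a(t,v) \in V_a$. The cases $a = c$ and $a = u$ are entirely analogous.

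With tangency in hand, the initial value problem~\eqref{eq ODE XYZ} restricts to a well-defined IVP on $V_a \cap V_\infty$. Given $X(t_0) \in V_a$, a solution $Y$ of the restricted problem stays in $V_a$ by construction, and uniqueness of solutions to~\eqref{eq ODE XYZ} in $V_\infty$ --- working in a single $V_k$ in which $A$ is bounded and the polynomial $F$ is locally Lipschitz --- forces $Y = X$ throughout its interval of existence. A standard continuation argument then yields $X(t) \in V_a$ for all $t \in I$. The main obstacle I anticipate is the Fr\'echet-level justification of these two ingredients (convergence of the Fourier expansion in every $V_k$, and ODE uniqueness in $V_\infty$); both reduce to familiar statements in a single Banach space once one tracks how the indices in the definitions of $\cB(V_\infty)$ and of order interact under iteration of the coordinates $(e_j,\cdot)_{V_1}$.
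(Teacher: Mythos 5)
Your argument for tangency---showing that $A$ and $F(t,\cdot)$ map $V_a$ into $V_a$ by analysing the monomials $v^q$ against the index conditions in~\eqref{eq Fcsu}---is essentially the same as the paper's, and your handling of the case distinctions (including the $q=0$ constant term, which the paper passes over more quickly) is correct. Where you depart from the paper is in the concluding step. The paper simply states that if $X(t)\in V_a$ then $\dot X(t)\in V_a$, and infers invariance of $V_a$ directly; it never invokes uniqueness of solutions. You instead appeal to a Picard--Lindel\"of-type uniqueness argument, and this is where a genuine gap appears.

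Your stated justification---``working in a single $V_k$ in which $A$ is bounded and the polynomial $F$ is locally Lipschitz''---does not hold in this setting. By \cref{def BVinfty} and the surrounding discussion, $A$ and the homogeneous terms of $F$ are only bounded as maps $V_k\to V_l$ with $k$ strictly larger than $l$ in general (think of $A=\partial_x^2$ on a Sobolev scale): they are not bounded endomorphisms of any single $V_k$, so the standard Banach-space contraction/Gr\"onwall argument cannot close on a single norm. This is precisely the obstruction that motivates the Fr\'echet-space framework, and the paper explicitly flags it in the remark following \cref{def centre mfd}: existence and uniqueness of solutions of \eqref{eq ODE XYZ} are \emph{not} assumed, and local Lipschitz continuity fails in many \pde{} applications. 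So the uniqueness you lean on is not available here, and the concluding step of your proposal does not go through as written. If you want to avoid the paper's bare tangency assertion, the workable route is to follow the strategy of \cref{prop dynamics}: use \cref{lem tilFp} to write $\dot X_b(t) = \bigl(A + \tilde F(t,X(t))\bigr)X_b(t)$ for each $b\neq a$, which is a \emph{linear} non-autonomous \ode{} for the off-$V_a$ components with zero initial data, and argue from there (as in \cref{lem decay to Vc}) rather than invoking nonlinear uniqueness for~\eqref{eq ODE XYZ}.
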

\begin{proposition} \label{prop dynamics}
There is a neighbourhood $D_{\tilde \mu}$ of $I \times \{0\}$ in $I \times V_{\infty}$,  with the following property.
Let $X\colon I \to V_{\infty}$ be a solution of \eqref{eq ODE XYZ}, for some open interval $I$ containing $0$, and where $F$ sepearates invariant subspaces. Write $X = X_c + X_s + X_u$, with $X_a  \in V_a$ for $a = c,s,u$.
\begin{itemize}
\item  If $(t, X(t)) \in D_{\tilde \mu}$ for all $ t\in I$ with $t\geq0$, then for all such $t$,
 $\|X_s(t)\|_{V_1} \leq \|X_s(0)\|_{V_1} e^{-(\beta - \tilde \mu)t}$. 
\item  
If $(t, X(t)) \in D_{\tilde \mu}$ for all  $t \in I$ with $t\leq0$, then for all such $t$, $\|X_u(t)\|_{V_1} \leq \|X_u(0)\|_{V_1} e^{(\gamma -\tilde \mu)t}$.
\item  Suppose that  $X_s(0) = 0$ or $X_u(0)=0$. 
If $(t, X(t)) \in D_{\tilde \mu}$ for all  $t \in I$, then for all  $t \in I$, $\|X_c(t)\|_{V_1} \leq \|X_c(0)\|_{V_1} e^{(\alpha + \tilde \mu) |t|}$.
\end{itemize}
\end{proposition}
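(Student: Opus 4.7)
The plan is to derive a differential inequality for each component norm $\|X_a(t)\|_{V_1}^2$, $a\in\{c,s,u\}$, and then invoke Gronwall's inequality. Using the Hilbert-space structure of $V_1$, the fact that $\{e_j\}$ diagonalises $A$, and that $V_c,V_s,V_u$ are mutually orthogonal closed subspaces of $V_1$ preserved by $A$ and by the separated structure of $F$, I have $\dot X_a = AX_a + F_a(t,X)$, whence
\[
\frac{d}{dt}\|X_a(t)\|_{V_1}^2 = 2\Real(X_a, AX_a)_{V_1} + 2\Real(X_a, F_a(t,X))_{V_1}.
\]
Expanding in the eigenbasis, $\Real(X_a, AX_a)_{V_1} = \sum_{j\in J_a}\Real(\alpha_j)|(e_j,X)_{V_1}|^2$, so the spectral-gap hypothesis yields $\Real(X_s, AX_s)_{V_1}\leq -\beta\|X_s\|_{V_1}^2$, $|\Real(X_c, AX_c)_{V_1}|\leq \alpha\|X_c\|_{V_1}^2$, and $\Real(X_u, AX_u)_{V_1}\geq \gamma\|X_u\|_{V_1}^2$.

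For the polynomial perturbation, any monomial $X^q$ appearing in $F_s$ (resp.\ $F_u$) has $q^s\neq 0$ (resp.\ $q^u\neq 0$), and so contains a factor $(e_j,X)_{V_1}$ with $j\in J_s$ (resp.\ $j\in J_u$) of absolute value at most $\|X_s\|_{V_1}$ (resp.\ $\|X_u\|_{V_1}$). Distributing norms and summing over the finitely many terms with $|q|\leq p$, I obtain bounds $\|F_s(t,X)\|_{V_1}\leq C(t)\|X_s\|_{V_1}\|X\|_{V_1}$ and analogously for $F_u$, with $C$ locally uniformly bounded in $t\in I$. Choosing $D_{\tilde\mu}$ so that $C(t)\|X\|_{V_1}\leq\tilde\mu$ throughout $D_{\tilde\mu}$ gives $\frac{d}{dt}\|X_s(t)\|_{V_1}^2\leq -2(\beta-\tilde\mu)\|X_s(t)\|_{V_1}^2$, and Gronwall yields the stable bound for $t\geq 0$. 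The unstable bound follows by the symmetric argument with time reversed.

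For the centre bound, the mixed monomials in $F_c$ with $q^s\neq 0\neq q^u$ block any direct estimate in terms of $\|X_c\|_{V_1}$, which is why the hypothesis $X_s(0)=0$ or $X_u(0)=0$ is required. Suppose $X_s(0)=0$; every monomial of $F_s$ then vanishes identically, so $Y\equiv 0$ is a solution of $\dot Y = AY + F_s(t,X_c+Y+X_u)$ with the centre and unstable components of $X$ regarded as prescribed data. Uniqueness of \ODE\ solutions, mirroring \cref{lem Vj invar}, forces $X_s(t)=0$ for all $t\in I$, i.e.\ $X(t)\in V_{cu}$. The mixed terms in $F_c$ then vanish identically, and $F_c(t,X)=F_c(t,X_c)$ is a polynomial of order at least two in $X_c$ alone; shrinking $D_{\tilde\mu}$ so that its linear-over-$\|X_c\|$ contribution is bounded by $\tilde\mu$ produces
\[
\left|\frac{d}{dt}\|X_c(t)\|_{V_1}^2\right|\leq 2(\alpha+\tilde\mu)\|X_c(t)\|_{V_1}^2,
\]
and Gronwall applied both forward and backward in time yields the claimed bound. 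The case $X_u(0)=0$ is handled symmetrically via invariance of $V_{cs}$.

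The main obstacle I anticipate is the construction of a single neighbourhood $D_{\tilde\mu}$ that simultaneously services all three estimates and is uniform over the (possibly unbounded) interval $I$. This requires verifying that the coefficient functions $C(t)$ arising from the polynomial bounds on $F$ are locally uniformly bounded in $t$; this will follow from the control on the polynomial coefficients $F^q(t)$ provided by the construction of $F$ in the proof of the normal form theorem. A secondary subtlety is the passage from \cref{lem Vj invar} to the invariance of $V_{cs}$ and $V_{cu}$, which is a straightforward uniqueness argument but must be spelled out because \cref{lem Vj invar} is stated only for $V_c, V_s, V_u$ individually.
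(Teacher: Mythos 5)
Your overall strategy matches the paper's: a Gronwall-type differential inequality for $\|X_a(t)\|_{V_1}^2$, using orthogonality of the eigenbasis in $V_1$ and the spectral gap, with a neighbourhood $D_{\tilde\mu}$ controlling the nonlinear term. That high-level plan is right, and the paper packages exactly this computation in its \cref{lem decay to Vc}. But the crucial estimate on the nonlinear contribution contains a gap.

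You write that one bounds $\|F_s(t,X)\|_{V_1}$ by ``distributing norms and summing over the finitely many terms with $|q|\leq p$.'' The index set $\{q\in\N_0^{\infty}:|q|\leq p\}$ is infinite, not finite, and the convergence of the series $\sum_q \|F^q(t)\|_{V_1}|X^q|$ in the $V_1$-norm after extracting one stable factor is not addressed. Furthermore, the bound you seek, with $\|X\|_{V_1}$ on the right, is not generally available: the polynomial $F$ is typically only bounded as a map $V_k\to V_1$ for some $k>1$, so the relevant control is the $V_k$-norm of $X$ (or an operator norm in $\cB(V_k,V_1)$), not the $V_1$-norm. The paper avoids both difficulties by introducing the quasi-linear form $F(t,v)=\tilde F(t,v)v$ via the construction in \cref{eq def tilde p}, and proving in \cref{lem tilFp} that $(F(t,v))_s=\tilde F(t,v)v_s$ and $(F(t,v))_u=\tilde F(t,v)v_u$ \emph{exactly}, with $(F(t,v))_c=\tilde F(t,v_c)v_c$ on the relevant invariant subspace. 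This turns the nonlinear contribution into an operator applied to $X_a$, so the domain $D_{\tilde\mu}$ can be defined simply as $\{(t,v):\|\tilde F(t,v)\|_{\cB(V_k,V_1)}<\tilde\mu\}$ (\cref{eq Dmu}), and the entire differential inequality collapses into the negative-semidefiniteness hypothesis of \cref{lem decay to Vc}. Without this factorization, or some replacement for it, your $C(t)$-bound is not rigorously obtained.

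On the centre estimate: you invoke ``uniqueness of \ODE\ solutions'' to pass from $X_s(0)=0$ to $X_s\equiv 0$. The paper explicitly disclaims uniqueness in this setting (see the remark after \cref{def centre mfd}), and instead appeals to \cref{lem Vj invar}, whose proof is a direct invariance argument: $F$ maps $I\times V_a$ into $V_a$. You correctly flag that \cref{lem Vj invar} as stated addresses only $V_c,V_s,V_u$ and not $V_{cs},V_{cu}$; the extension is indeed needed and follows by the same direct argument (monomials of $F_s$ all carry a factor $(e_j,v)_{V_1}$ with $j\in J_s$, hence $F_s(t,v)=0$ for $v\in V_{cu}$), but it should not be cast as a uniqueness statement.
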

Since $\beta - \tilde \mu$ and $\gamma - \tilde \mu$ are positive, this proposition in particular states that stable solutions decrease to zero exponentially quickly as $t$~increases in~\(I\), while unstable solutions decrease to zero exponentially quickly as $t$~decreases in~\(I\). The numbers~$\alpha$ and~$\tilde \mu$ represent bounds on what one takes to be relatively small real parts of eigenvalues of~$A$ (classically, these numbers are zero), so that the third point in \cref{prop dynamics} intuitively states that central solutions, at worst, only grow relatively slowly as $|t|$~increases.

\cref{lem Vj invar,prop dynamics} are proved in \cref{sec dynamics}. The specific form of the set $D_{\tilde \mu}$ is also specified there, see \eqref{eq Dmu}.

\subsection{Invariant manifolds}

\cref{lem Vj invar,prop dynamics} show that, for every $a=c,s,u$, the set
\[
D_{\tilde \mu} \cap (I\times V_a)
\]
is a centre, stable or unstable submanifold of $I \times V_{\infty}$ for \eqref{eq ODE XYZ}, respectively. Furthermore, for $a = cs$ and $a = cu$, we obtain centre-stable and centre-unstable manifolds, respectively. (Here we use the cases of the third point in \cref{prop dynamics} where $X_u(0)=0$ and $X_s(0)=0$, respectively.)
This motivates
 \cref{def centre mfd} of invariant subspaces of dynamical systems of a certain form. 
  To state it precisely, we incorporate existence of solutions of \eqref{eq ODE XYZ}.

For $v \in V_{\infty}$, we write $a_v$ for the infimum of the set of all $a > 0$ such that there is a solution $X: (-a, 0] \to V_{\infty}$ of \eqref{eq ODE XYZ}, with $X(0)=v$. Similarly, $b_v$ is the supremum of the set of all $b > 0$ such that there is a solution $X: [0,b) \to V_{\infty}$ of \eqref{eq ODE XYZ}, with $X(0)=v$.
 If such  $a$ and $b$ exist, we set $I_v := (-a_v, b_v)$. (In particular, $I_v = \R$ if such a solution exists for all $a,b > 0$.) 
  If such an $a$  exists but no $b$, we set $I_v := (-a_v, 0)$, and if such a $b$  exists but no $a$, we set $I_v := (0, b_v)$.   If there are no such $a,b > 0$, we set $I_v := \emptyset$.
\begin{definition} \label{def centre mfd}
Let $\xi\colon I \times V_{\infty} \to V_{\infty}$ be a smooth map, and let $F\colon I \times V_{\infty} \to V_{\infty}$ be a polynomial map 
that separates invariant subspaces.
Consider the dynamical system for smooth maps $x\colon I \to V_{\infty}$ determined by
\beq{eq def xyz}
x(t) = \xi(t, X(t)),
\eeq
for $t \in I$,
for a smooth map $X\colon I \to V_{\infty}$ satisfying \cref{eq ODE XYZ}.
Let $D_{\tilde \mu}$ be as in \cref{prop dynamics}.
For every $a = c,s,u, cs, cu$, set
\[
E_{a} :=  \bigl\{(t, \xi(t,v)): t \in I_v, v \in  V_a, (t,v) \in D_{\tilde \mu} \bigr\} \subset \R \times V_{\infty}.
\]
The set $E_{c}$ is a \emph{centre subset} of the dynamical system in $x$; the set $E_{s}$ is a \emph{stable subset} of the system; and the set $E_{u}$ is an \emph{unstable subset} of the system. The set $E_{cs}$ is a \emph{centre-stable subset}, and $E_{cu,p}$ is a \emph{centre-unstable subset} of the system.
Such spaces are \emph{invariant} or \emph{integral subsets} of the dynamical system in $x$.
\end{definition}

\begin{remark}
If the map $\xi_t := \xi(t, \relbar)$ in \cref{def centre mfd} is invertible for all $t \in I$ (on a suitable domain), then the dynamical system in $x$ in that definition is equivalent to the \ODE
\[
\frac{d}{dt} (\xi_t^{-1} \circ x)(t) = A (\xi_t^{-1} \circ x)(t) + F(t, (\xi_t^{-1} \circ x) (t)).
\]
\end{remark}

\begin{remark}
In general, existence and uniqueness of solutions of \eqref{eq ODE XYZ} is not guaranteed, hence the careful definition of $I_v$. Existence and uniqueness of solutions is an assumption in previous definitions \cite[Theorem~2.9, e.g.]{Haragus2011}; see Hypothesis~2.7 in that reference.
There are existence and uniqueness results if $f$ satisfies a local Lipschitz condition, but that is not the case in many applications to \PDE s. Under additional assumptions, Vanderbauwhede \& Iooss \cite[proof of Theorem~3]{Vanderbauwhede92} showed such a local Lipschitz condition holds.
\end{remark}

\begin{remark}
Invariant subsets or submanifolds are not unique in general; here this non-uniqueness is due to various possibilities for~\(I_v\), \(D_{\tilde \mu}\) and~\(\xi_p\), and is reflected in the use of the indefinite article in \cref{def centre mfd}.
\end{remark}
\begin{example}
For one example of the non-uniqueness engendered via~\(\xi\), consider the classic example system of \(\dot x=-x^2\) and \(\dot y=-y\) in the role of~\eqref{eq ODE} (and let the step function \(H(x):=1\) when \(x>0\), and \(H(x):=0\) when \(x\leq0\)).  
This \ode\ system may be given, for every~\(C\), as the coordinate transformation,~\cref{eq def xyz}, \(x=X\) and \(y=Y+CH(X)e^{-1/X}\) together with the system,~\eqref{eq ODE XYZ}, \(\dot X=-X^2\) and \(\dot Y=-Y\) (by design, here symbolically identical to the original \(xy\)-system).
\cref{lem Vj invar} identifies \(Y=0\) as the centre subspace of this \(XY\)-system.
\cref{def centre mfd} then gives the classic non-uniqueness that, for every~\(C\), \(y=CH(x)e^{-1/x}\) are centre manifolds for the \(xy\)-system.
\end{example}

\begin{remark} \label{rem space mfd}
In the setting of \cref{def centre mfd}, if $\xi$ is a local diffeomorphism in the Fr\'echet manifold sense, then the subsets $E_{j}$ in \cref{def centre mfd} are Fr\'echet manifolds. This would justify the more specific terminology \emph{invariant submanifolds} rather than just {invariant subsets}. 
\end{remark}

\subsection{Main result: an approximate normal form}\label{sec result}


Our main result, \cref{thm normal form}, states that for an \ODE\ of the form \cref{eq ODE}, there is a dynamical system in the 
 normal form  used to define invariant manifolds in \cref{def centre mfd}, such that solutions of the normal form system satisfy \cref{eq ODE} up to a residual term that vanishes to any desired order. In this sense, \cref{eq ODE} is arbitrarily close to a dynamical system with clearly and robustly defined invariant manifolds.


\begin{definition} \label{def pol growth}
A function $f\colon I \to \R$ \emph{grows at most polynomially} if there are $C,r>0$ such that for all $t \in I$, $|f(t)| \leq C(1+|t|^r)$. (This condition holds for all bounded functions if~$I$ is bounded.)

An infinitely differentiable map $\varphi\colon I \times V_{\infty} \to V_{\infty}$ \emph{has polynomial growth} if for every $v \in V_{\infty}$, every $k \in \N$, and every $l \in \N_0$, the function 
\[
\|\varphi(\relbar, v)^{(l)}\|_{V_k}\colon I \to [0, \infty)
\]
grows at most polynomially. 
\end{definition}

We use the term \emph{$\tilde \mu$-regular integral} for  an integral of the form 
\[
\int_a^{\infty} e^{-\mu t} f(t) \, dt,
\]
where $\Re(\mu) > \tilde \mu$ and~$f$ grows at most polynomially. The larger~$\tilde \mu$, the better the convergence properties of~$\tilde \mu$-regular integrals.

\begin{theorem} \label{thm normal form}
Let $p \in \N$ be such that $p \geq 2$, 
$\beta - (p+1)\alpha > \tilde \mu$ and $\gamma - (p+1)\alpha > \tilde \mu$. Suppose that~$f$ has polynomial growth.
%
Then there are three infinitely differentiable maps
\(F_p, \xi_p, R_p\colon I \times V_{\infty} \to V_{\infty}\),
such that 
\begin{itemize}
\item
$F_p = \cO(2)$ and $F_p$ separates invariant manifolds;
\item  $R_p = \cO(p)$, 
\end{itemize}
 and if a smooth map $x\colon I \to V_{\infty}$ is given by
\beq{eq def xyz p}
x(t) = \xi_p(t,X(t))
\eeq
for all $t \in I$, for a smooth map $X \colon I \to V_{\infty}$ satisfying
\beq{eq ODE XYZ p}
\dot X(t) = AX(t) + F_p(t, X(t))
\eeq
for all $t \in I$,
then for all $t \in I$,
\begin{equation}
\dot x(t) = Ax(t) + f(t, x(t)) + R_p(t, X(t)).
\label{eq:FcsuDx}
\end{equation}
%
%
Finally, there is a construction of the map~$\xi_p$ in which all integrals over~$I$ that occur are $\tilde \mu$-regular. 
\end{theorem}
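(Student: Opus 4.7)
The plan is a Poincar\'e--Dulac style normal-form iteration. I build $\xi_p(t,X) = X + \phi(t,X)$ with $\phi = \sum_{n=2}^{p}\phi_n$, and $F_p = \sum_{n=2}^{p} F_p^{(n)}$, where each $\phi_n$ and $F_p^{(n)}$ is homogeneous of degree $n$ in $X$ (with $t$-dependent coefficients), constructed recursively for $n = 2, 3, \dots, p$. Substituting $x = \xi_p(t,X)$ into~\eqref{eq ODE} and using~\eqref{eq ODE XYZ p} yields the conjugacy identity
\begin{equation*}
\phi_t + F_p + \phi'_X(AX + F_p) - A\phi = f(t, X + \phi).
\end{equation*}
Taylor-expanding both sides in $X$ and matching total degree~$n$ gives, since $\phi$ and $f$ are $\cO(2)$, a recursive sequence of \emph{homological equations} whose right-hand side at degree $n$ involves only data of strictly lower degree already constructed in previous iterations.

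Expanding in the basis $\{e_j\}$ of $V_1$ and indexing monomials by multi-indices $q \in \N_0^\infty$, the degree-$n$ homological equation decouples into scalar ODEs, one for each pair $(j,q)$ with $|q|=n$. The ODE for the coefficient $\psi(t)$ of $e_j X^q$ in $\phi_n$ (or $F_p^{(n)}$) reads
\begin{equation*}
\psi'(t) + \mu\,\psi(t) = c(t), \qquad \mu := (\lambda\cdot q) - \alpha_j, \quad \lambda\cdot q := \sum_k \alpha_k q_k,
\end{equation*}
where $c(t)$ is a known coefficient assembled from the lower-order data. If the pattern $(j,q)$ matches the shape~\eqref{eq Fcsu}, I absorb $c$ into the coefficient of $F_p^{(n)}$ and take $\psi=0$; otherwise I set the corresponding $F_p^{(n)}$ coefficient to zero and solve for $\psi$. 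The spectral gap hypotheses $\beta - (p+1)\alpha > \tilde\mu$ and $\gamma - (p+1)\alpha > \tilde\mu$ guarantee $|\Real(\mu)| > \tilde\mu$ for every non-normal monomial of degree $\leq p$. Integrating forward or backward according to the sign of $\Real(\mu)$ gives
\begin{equation*}
\psi(t) = \int_0^\infty e^{-\mu u} c(t-u)\,du \quad\text{or}\quad \psi(t) = -\int_0^\infty e^{\mu u} c(t+u)\,du,
\end{equation*}
both of which are $\tilde\mu$-regular because $c$ inherits polynomial growth from $f$ by induction.

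Carrying the formal construction into the graded Fr\'echet setting is the main technical burden. At each order $n$ the formal sum over multi-indices $q$ with $|q|=n$ must converge to a \emph{compact} polynomial map on $V_\infty$, with infinite differentiability in $(t,X)$ and polynomial growth in $t$. My approach is to identify $\phi_n$ and $F_p^{(n)}$ not via their monomial expansions but as combinations of (i) the compact Taylor polynomials of $f$ and its derivatives at $0$, which exist by the compact differentiability hypothesis and the results of \cref{sec cpt der}; (ii) the operators $c\mapsto \int_0^\infty e^{\mp\mu u} c(t\mp u)\,du$, which preserve compactness of the coefficient maps between the relevant $V_k$'s; and (iii) compositions and sums with the already-constructed lower-order data. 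The results of \cref{sec cpt der,sec Frechet} ensure that these operations stay within the class of compact polynomial and compactly differentiable maps. The comparability of $\{V_k\}$ to a Hilbert sequence with orthogonal $\{e_j\}$ is what makes the monomial sums summable via Parseval-type estimates; polynomial growth in $t$ propagates through the explicit integral formulas above.

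Once $\xi_p$ and $F_p$ are constructed with the listed properties, the residual is defined by rearranging the conjugacy identity,
\begin{equation*}
R_p(t,X) := \phi_t(t,X) + F_p(t,X) + \phi'_X(t,X)\bigl(AX+F_p(t,X)\bigr) - A\phi(t,X) - f\bigl(t,\xi_p(t,X)\bigr),
\end{equation*}
and~\eqref{eq:FcsuDx} follows directly by differentiating $x(t) = \xi_p(t, X(t))$. By construction all Taylor contributions to $R_p$ of total degree $\leq p$ cancel, so $R_p = \cO(p+1) \subset \cO(p)$. The hardest step is the analytic promotion in the previous paragraph: showing that the infinitely many scalar ODEs solved termwise in a basis sum to genuine compact polynomial maps on $V_\infty$ with the correct regularity, uniformly in $t$ on compact subintervals of $I$. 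Comparability to a Hilbert Fr\'echet space is the essential tool here, because without it one would have to control each multi-index contribution separately, which is not feasible for the maps $f$ that arise in \PDE\ applications.
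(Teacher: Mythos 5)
Your proposal is essentially the same as the paper's proof. The paper packages the same Poincar\'e--Dulac iteration slightly differently: rather than Taylor-matching degree by degree in one pass, it inducts on the order of the residual $R_p$, extracting the degree-$p$ part of $R_p$ via \cref{cor Taylor cpt Vinfty} to get the coefficients $a^q$, then solving the same componentwise homological equation \eqref{eq DE iteration}/\eqref{eq ODE j} with the same split into resonant terms (absorbed into $\hat F$) and non-resonant terms (solved by the $\tilde\mu$-regular convolutions of \cref{def conv emu}, justified by \cref{lem Re muqj}); convergence of the resulting monomial series in $\Pol^p(V_\infty)$ is established by comparability to a Hilbert sequence exactly as you indicate (\cref{cor comp conv}, \cref{lem conv hat psi hat F}), and polynomial growth is propagated by \cref{lem conv pol growth,lem pol growth}. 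Your indexing differs by one ($\deg(\xi_p - \id) \le p$ giving $R_p = \cO(p+1)$, versus the paper's $\deg \le p-1$ giving $\cO(p)$), but this is immaterial since $\cO(p+1)\subset\cO(p)$ and the spectral-gap hypothesis for $p$ dominates the requirements for all degrees $\le p$.
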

We prove this theorem in \cref{sec der,sec cpt der,sec Frechet,sec co xform,sec transf choice}; see in particular \cref{sec proof main}.

\cref{thm normal form 2} shows that the maps~$F_p$ and~$\xi_p$ can be chosen to be polynomials of a certain type. The conclusion that the integrals occurring are $\tilde \mu$-regular is more than just convenient: this is clear in the classical case where $\alpha = 0$ (see \cref{rem alpha zero}).


%

\begin{remark} \label{rem alpha zero}
In cases where the centre eigenvalue bound $\alpha$ equals zero,  we can always choose~$\tilde \mu$ so small that the conditions on $\alpha, \beta, \gamma, \tilde \mu$ and~$p$ in \cref{thm normal form} are satisfied. 
In these cases, the residual~$R_p$ can be made to vanish to arbitrarily large order~\(p\). 
Furthermore, the integrals that occur in the construction of $\xi_p$ (see \cref{def conv emu}) are $\tilde \mu$-regular for some $\tilde \mu>0$ precisely if they converge. Hence $\tilde \mu$-regularity for some $\tilde \mu>0$ is a necessary condition for the construction to make sense.

Choosing the centre eigenvalue bound~$\alpha$ positive, which imposes a positive lower bound on~$\tilde \mu$, restricts the vanishing order~$p$ of~$R_p$, but also makes the construction of the coordinate transform~$\xi_p$ more robust, in the sense that the integrals over~$I$ in its construction are~$\tilde \mu$-regular. 
Many researchers choose to phrase problems as singular perturbations \cite[e.g.]{Bykov2013, Pavliotis07, Verhulst05}.
In such cases the bounds on the hyperbolic rates \(\beta,\gamma \propto\frac1\varepsilon \to\infty\) as the perturbation parameter \(\varepsilon\to0\).
Consequently, choosing \(\tilde\mu,p\propto1/\sqrt\varepsilon\) (say) then the residual~$R_p$ again can be made to vanish to arbitrarily large order for small enough~\(\varepsilon\).

However, in applications we generally require an invariant manifold in some chosen domain of interest that resolve phenomena on chosen time scales of interest.
Such subjective choices, informed by the governing equations, generally dictate the chosen bound~\(\alpha\) separated by a big enough gap from the bounds~\(\beta,\gamma\) so that the centre manifold evolution, constructed to a valid order~\(p\), provides a useful model over the chosen domain for the desired phenomena. 

%

\end{remark}

\begin{remark}
The derivative at $0 \in V_{\infty}$ of the coordinate transformation~$\xi_p$
 is the identity map, and hence invertible.
If a suitable generalisation of the inverse function theorem applies to $\xi_p$, such as a version of the Nash--Moser theorem, then it follows that $\xi_p$ is a local diffeomorphism at zero.
Then it would be justified to call the invariant subsets of \cref{def centre mfd} invariant submanifolds in this setting (at least in a neighbourhood of zero), see \cref{rem space mfd}.
%
\end{remark}

\begin{remark} \label{rem direct constr}
In the proof of \cref{thm normal form}, explicit constructions of the maps $F_p$ and $\xi_p$ are given. In practice, however, it can be easier to determine these maps in more direct ways. This is illustrated in an example in \cref{sec ex}. \cref{thm normal form} implies that one can always find these maps. We prove this by giving a construction that always leads to an answer, even though more direct constructions may exist in specific situations.

Similarly, the domain $D_{\mu}$ in \cref{prop dynamics}, defined in \cref{eq Dmu}, is guaranteed to have the properties in \cref{prop dynamics}. In practice, these properties often hold on much larger domains. 
\end{remark}

\subsection{A general class of PDEs in bounded domains} \label{sec special}

Because \cref{thm normal form} applies to abstract Banach spaces~$V_k$, it gives one the flexibility to choose these spaces such that, for specific \pde\ applications, 
\begin{enumerate}
\item the residual~$R_p$ is of order~$p$ with respect to norms relevant to the problem, and
\item the spaces~$V_k$ incorporate the relevant boundary conditions.
\end{enumerate}
This subsection explores a class of nonlinear \PDE s to which \cref{thm normal form}, and hence \cref{def centre mfd}, apply.

Let $d \in \N$ be the dimension of the domain of the \pde{}s to be considered. 
Let~$\Omega$ be a bounded, open subset of~$\R^d$,  or of a $d$-dimensional manifold, with~$C^1$ boundary. 
Let $m \in \N$, 
and let $1\leq p < \infty$.
For $k \in \N$, let~$V_k$ be the Sobolev space~$W^{k-1,k+1}(\Omega, \R^m)$.


Let $A\colon C_c^{\infty}(\Omega; \R^m)\to C_c^{\infty}(\Omega; \R^m)$ be a linear partial differential operator.
(Here the subscript $c$ denotes compactly supported functions.)
Let $s \in \N$, with $s \geq 2$, be the `polynomial' order of the nonlinearities in the \pde{}s. Let
\[
D_1, \ldots, D_s\colon C_c^{\infty}(\Omega; \R^{m}) \to  C_c^{\infty}(\Omega; \R)  
\]
be linear partial differential operators.  
For index-vector $q \in \N_0^s$ and $u \in C^{\infty}_c(\Omega, \R^m)$, we set
\beq{eq Duq}
(Du)^q := (D_1u)^{q_1} \cdots (D_s u)^{q_s}. 
\eeq

Let~$\alpha$, $\beta$, $\gamma$ and~$\tilde \mu$ be as in  \cref{sec result}. 
Fix smooth functions\footnote{The real line may be replaced by a smaller open interval.} $a_q^j\colon \R \to \C$, for $q \in \N_0^{s}$, with $|q| \leq s$, such that these functions and all their derivatives grow 
at most polynomially.
Define $f\colon \R \times C_c^{\infty}(\Omega; \R^{m}) \to C_c^{\infty}(\Omega; \R^{m})$ by $f(t, u) := (f_1(t, u), \ldots, f_m(t, u))$, where for each~$j$,
\begin{equation*}
f_j(t, u) = \sum_{q \in \N_0^s, |q| \leq s} a_q^j(t)(Du)^q
\end{equation*}
for $t \in \R$ and $u \in C_c^{\infty}(\Omega; \R^{m})$.
Suppose that $f = \cO(2)$.

We write 
\[
W^{\infty}(\Omega; \R^m) := \bigcap_{k=1}^{\infty} W^{k-1, k+1}(\Omega; \R^m).
\]
Then $C^{\infty}_c(\Omega; \R^m) \subset W^{\infty}(\Omega; \R^m) \subset C^{\infty}(\Omega; \R^m)$.
The maps~$A$ and~$f$ extend continuously to~$W^{\infty}(\Omega; \R^m)$.  Suppose that that the  eigenfunctions~$\{e_j\}_{j=1}^{\infty}$ of this extension of~$A$ form a Hilbert basis of~$L^2(\Omega, \R^m)$.

\begin{theorem} \label{thm special case}
The spaces~$V_k$ and the maps~$A$ and~$f$ satisfy the hypotheses of \cref{thm normal form}.
\end{theorem}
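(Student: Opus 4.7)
The plan is to verify, one by one, the hypotheses listed in \cref{sec setup} together with the polynomial growth condition of \cref{def pol growth}. These split into three groups: properties of the nested sequence $\{V_k\}$, of the linear operator $A$, and of the nonlinearity $f$; throughout, the boundedness of $\Omega$ and the $C^1$ regularity of $\partial\Omega$ enter via the standard Sobolev embedding and Rellich--Kondrachov theorems.

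For the spaces, $V_1=W^{0,2}(\Omega;\R^m)=L^2(\Omega;\R^m)$ is a Hilbert space. The bounded inclusions $V_{k+1}\subset V_k$ follow from the trivial $W^{k,k+2}\subset W^{k-1,k+2}$ combined with $L^{k+2}(\Omega)\hookrightarrow L^{k+1}(\Omega)$ on bounded $\Omega$; density of $V_\infty$ in each $V_k$ holds because $C^\infty(\bar\Omega;\R^m)\subset V_\infty$ is dense in every Sobolev space $V_k$. Rellich--Kondrachov produces, for each $l$, some $k_0$ for which $V_{l+k_0}\hookrightarrow V_l$ is compact. For comparability with separable Hilbert spaces in which the $e_j$ are orthogonal, take $W_k:=H^{r(k)}(\Omega;\R^m)$ for a suitable strictly increasing sequence $r(k)$; the mutual embeddings demanded by \cref{def comparable} then follow from standard Sobolev embeddings on bounded $\Omega$, and orthogonality of $\{e_j\}$ in each $H^{r(k)}$ is the usual property of the eigenbasis of a self-adjoint elliptic operator (in an appropriate graph inner product built from a power of $A$). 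For $A$ itself, if it has order $r$ as a differential operator with smooth coefficients on $\bar\Omega$, then $A\colon W^{k-1,k+1}\to W^{k-1-r,k+1}$ is bounded, and for $k\geq l+r$ the composition $V_k\to W^{k-1-r,k+1}\hookrightarrow V_l$ is bounded, so $A\in\cB(V_\infty)$.

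The core work is the analysis of $f$. It is enough to handle a single monomial $a_q^j(t)\prod_{i=1}^{s}(D_i u)^{q_i}$ with $|q|\leq s$ and, since $f=\cO(2)$ is assumed, $|q|\geq 2$ in every nontrivial term. Let $r_{\max}$ be the maximum order of the $D_i$. Given $l$, choose $k$ large enough that $V_k\hookrightarrow C^{l-1+r_{\max}}(\bar\Omega;\R^m)$; this holds by Sobolev embedding once $k-l-r_{\max}>d/(k+1)$, which is achievable for any fixed $l,r_{\max},d$. Then every $D_i u$ and every derivative of it up to order $l-1$ is bounded pointwise on $\bar\Omega$ by $C\,\|u\|_{V_k}$. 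Expanding $\partial^\alpha\prod_i(D_i u)^{q_i}$ for $|\alpha|\leq l-1$ by the generalised Leibniz rule yields a finite sum of products of exactly $|q|$ such bounded quantities, so the monomial lies in $C^{l-1}(\bar\Omega)\subset W^{l-1,l+1}(\Omega)=V_l$ (using boundedness of $\Omega$), with norm $O(\|u\|_{V_k}^{|q|})$. This gives both $f=\cO(2)$ and smoothness of $f\colon V_k\to V_l$. Its Fr\'echet derivative $f'_{V_\infty}(t,v)h$ is, for each monomial, a sum of multilinear terms containing exactly one factor $h$ and $|q|-1\geq 1$ factors of $v$, whence $\|f'_{V_\infty}(t,v)\|_{\cB(V_k,V_l)}=O(\|v\|_{V_k})$, which is \eqref{eq est der f}. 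Polynomial growth of $f$ in $t$ is inherited directly from the assumed polynomial growth of the $a_q^j$ and all their derivatives, since the $u$-dependence contributes only $t$-independent constants.

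The main obstacle is the index bookkeeping hidden inside the nonlinearity estimate: one must track the interplay between the integrability exponents $k+1$ and $l+1$, the differentiability orders $k-1$ and $l-1$, the order $r_{\max}$ of the inner differential operators $D_i$, and the ambient dimension $d$. Because the spaces $V_k$ simultaneously raise both their integrability and their differentiability as $k$ grows, there is always room to take $k$ large enough to satisfy every required embedding; the obstacle is therefore organisational rather than genuinely technical, and the argument reduces to assembling classical Sobolev inequalities on bounded $\Omega$.
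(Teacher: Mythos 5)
Your argument is correct and covers the right checklist of hypotheses, but it takes a genuinely different route from the paper for the core estimate on the nonlinearity. The paper proves \cref{thm special case} by citing \cref{prop ex cpt poly Sobolev} (which obtains $f$ as a compact polynomial $W^{l,np}\to W^{l-k,p}$ via H\"older's inequality for $n$-fold pointwise products in \cref{lem mult Sobolev}, Rellich compactness of the $D_i$, and \cref{lem ex Kn}) together with \cref{lem Sobolev comparable} (comparability of the scale $W^{k-1,k+1}$ with the Hilbert scale $W^{k-1,2}$). You instead embed $V_k\hookrightarrow C^{l-1+r_{\max}}(\bar\Omega)$ by Sobolev--Morrey, obtain uniform pointwise bounds on the derivatives of the $D_iu$, expand by Leibniz, and re-embed $C^{l-1}(\bar\Omega)\subset W^{l-1,l+1}(\Omega)$ using boundedness of $\Omega$. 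The H\"older-space detour is more elementary and needs no separate compact-polynomial argument, since compact nesting of the $V_k$ already upgrades differentiable maps $V_\infty\to V_\infty$ to compactly differentiable ones (\cref{lem cpt diff incl}); it costs roughly $d$ extra derivative orders in the choice of $k$ relative to the paper's exponent-level bookkeeping, which is harmless because only ``some $k$'' is required. You also explicitly check $A\in\cB(V_\infty)$, which the paper leaves implicit. One caveat: you invoke self-adjointness and ellipticity of $A$ to get orthogonality of the $e_j$ in the comparable Hilbert scale, a hypothesis the paper does not impose; the paper's own two cited results are equally silent on this point (\cref{lem Sobolev comparable} establishes comparability only, not orthogonality), so this appears to be a tacit assumption of the theorem rather than a defect specific to your proof. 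One cosmetic point: your $C^s$ estimate should be phrased as controlling the full multilinear form $\lambda\in\cB^{|q|}(V_k,V_l)$, not merely its diagonal restriction, since that is what makes each monomial an element of $\Pol^{|q|}(V_\infty)$ and hence infinitely differentiable via \cref{lem pol diffble}.
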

We prove \cref{thm special case} in
\cref{sec ex f}. 
Together with \cref{thm normal form}, it has the following immediate consequence.

\begin{corollary}\label{cor special case}
Let $p \in \N$ be such that $p \geq 2$. 
Suppose that~$\alpha$, $\beta$, $\gamma$ and~$\tilde \mu$ satisfy
$\beta - (p+1)\alpha > \tilde \mu$ and $\gamma - (p+1)\alpha > \tilde \mu$ (as in \cref{thm normal form}). 
Then there are infinitely differentiable maps
\[
F_p, \xi_p, R_p\colon \R \times W^{\infty}(\Omega; \R^m) \to W^{\infty}(\Omega; \R^m),
\]
where $F_p$ is a polynomial map that separates invariant subspaces,
such that if~$X$ and~$x$ are as in \eqref{eq ODE XYZ p} and \eqref{eq def xyz p}, then
\[
\dot x(t) = Ax(t) + f(t, x(t)) + R_p(t, X(t))
\]
for all $t \in I$.
Further, for every $l \in \N$, there is a $k \in \N$ 
such that for all $u \in W^{\infty}(\Omega; \R^m)$, 
\[
\|R_p(t,v) \|_{W^{l-1, l+1}}  = O( \|v\|^p_{W^{k-1, k+1}} )
\]
as $v \to 0$ in $W^{k-1,k+1}(\Omega, \R^m)$.
There is a construction of the map~$\xi_p$ in which all integrals over~$I$ that occur are $\tilde \mu$-regular. 
\end{corollary}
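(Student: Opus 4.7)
The plan is to derive \cref{cor special case} as an immediate corollary by plugging the concrete function spaces and differential operators of \cref{sec special} into the abstract machinery of \cref{thm normal form}. Accordingly, the first step is to take $V_k := W^{k-1,k+1}(\Omega;\R^m)$, so that $V_\infty = W^\infty(\Omega;\R^m)$, and to identify the given partial differential operator $A$ (and its extension) and the nonlinearity $f$ with the abstract $A\in\cB(V_\infty)$ and $f\colon I\times V_\infty\to V_\infty$ of \cref{sec setup}. The hypotheses of \cref{thm normal form} in this setting, namely compact nestedness of the $V_k$, density of $V_\infty$ in each $V_k$, comparability to a nested sequence of separable Hilbert spaces in which $\{e_j\}$ is orthogonal, smoothness and polynomial growth of $f$, and the derivative bound \eqref{eq est der f}, are precisely the content of \cref{thm special case}, so I simply quote that theorem. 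I would take the time interval in \cref{thm normal form} to be $I=\R$ (the statement of \cref{sec special} places $f$ on $\R\times C_c^\infty$).

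With the hypotheses verified, I apply \cref{thm normal form} directly to obtain infinitely differentiable maps $F_p,\xi_p,R_p\colon \R\times V_\infty\to V_\infty$ satisfying $F_p=\cO(2)$, $F_p$ separates invariant subspaces, $R_p=\cO(p)$, the identity $\dot x(t)=Ax(t)+f(t,x(t))+R_p(t,X(t))$ whenever $x,X$ are related through \eqref{eq def xyz p} and $X$ satisfies \eqref{eq ODE XYZ p}, and a construction of $\xi_p$ in which every integral over $I$ is $\tilde\mu$-regular. Each of these conclusions is an item in \cref{thm normal form} and transfers verbatim.

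The only remaining task is translating the abstract order condition $R_p=\cO(p)$ into the concrete Sobolev estimate claimed in \cref{cor special case}. By \cref{def On} applied to the map $R_p\colon I\times V_\infty\to V_\infty$, for every $l\in\N$ there exists $k\in\N$ such that
\[
\|R_p(t,v)\|_{V_l} = O(\|v\|_{V_k}^p)
\]
as $v\to 0$ in $V_k$, uniformly for $t$ in compact subsets of $I$. Substituting $V_l=W^{l-1,l+1}(\Omega;\R^m)$ and $V_k=W^{k-1,k+1}(\Omega;\R^m)$ gives precisely the bound in the statement of the corollary.

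This proof is essentially bookkeeping; there is no substantive obstacle because all the genuine work is carried out either in \cref{thm normal form} (construction of $F_p,\xi_p,R_p$ and $\tilde\mu$-regularity of integrals) or in \cref{thm special case} (verification that the Sobolev-based nested sequence and the differential operators $A$ and $f$ fit the abstract framework, which is where Sobolev embedding and compactness arguments are used). The only minor point requiring care is making sure the abstract constant $k$ produced by \cref{def On} is recorded in the statement as \emph{existing} rather than \emph{quantified uniformly}, matching the phrasing of \cref{cor special case}; this is automatic from the shape of \cref{def On}.
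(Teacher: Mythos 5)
Your proof is correct and follows exactly the paper's intended route: the paper itself presents \cref{cor special case} as an ``immediate consequence'' of \cref{thm special case} (which verifies all hypotheses of \cref{thm normal form} for the Sobolev spaces $V_k = W^{k-1,k+1}(\Omega;\R^m)$) combined with \cref{thm normal form}, with the Sobolev estimate on $R_p$ being the unwinding of the abstract $\cO(p)$ condition of \cref{def On}. Your write-up is just a more explicit version of that same deduction.
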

This corollary shows that any \pde{} of the form  \eqref{eq ODE}, with~$A$ and~$f$ as in this subsection,  is equivalent up to a residual of order $p$ to a \pde\ with clear invariant manifolds, as in Definition \ref{def centre mfd}.


\begin{example}
Suppose that $\Omega = S^1$, the circle. This amounts to imposing periodic boundary conditions. Take $m = 1$, and let $A\colon C^{\infty}(S^1) \to C^{\infty}(S^1)$ by any linear partial differential operator with constant coefficients. Its eigenfunctions,  $e_j(\theta) = e^{ij\theta}$ for $j \in \Z$ and $\theta \in \R/2\pi\Z$, are orthogonal in the Sobolev spaces~$W^{k,2}(S^1)$. For a map~$f$ as in \cref{thm special case}, that is, a polynomial expression in derivatives of functions, whose polynomial coefficients increase at most polynomially, \cref{thm special case} implies that the conditions of \cref{thm normal form} are satisfied in this case, so \cref{cor special case} applies. This generalises directly to cases where~$\Omega$ is a higher-dimensional torus; that is, to problems in~$\R^d$ with periodic boundary conditions. 
Here we used 
the case where the domain~$\Omega$ is a manifold, rather than an open subset of~$\R^d$.
\end{example}

Most of the rest of this paper is devoted to proofs of  \cref{thm normal form,thm special case}, and developing the tools used in these proofs.  
In \cref{sec dynamics}, we prove \cref{lem Vj invar,prop dynamics}.
In \cref{sec ex} we illustrate \cref{cor special case} by working out an example.

\section{Derivatives and polynomials} \label{sec der}

In this section we review standard material on derivatives of maps between normed vector spaces. We also briefly discuss polynomial maps between normed vector spaces.
Throughout this section, $(V, \|\cdot \|_V)$ and $(W, \|\cdot\|_W)$ are normed vector spaces, possibly infinite-dimensional. Let $U\subset V$ be an open subset, and let $f\colon U \to W$ be a map. We fix an element $u \in U$.

\subsection{First order derivatives}

This subsection and the next contain some standard definitions and facts about derivatives of maps between normed vector spaces. Details and proofs can be found in various textbooks \cite[e.g.]{Zorich}.

For a map $\varphi\colon V \supset \dom(\varphi) \to W$, we use the notation $\varphi(h) = o(h)$ for the statement
\[
\lim_{h \to 0} \frac{\|\varphi(h)\|_W}{\|h\|_V}=0,
\]
were $h$ runs over $ \dom(\varphi) \setminus \{0\}$.

\begin{definition}
The map $f:U\to W$ is \emph{differentiable} at $u$, if there is an operator $f'(u) \in \cB(V,W)$ such that
\[
f(u+h) = f(u) + f'(u)h + o(h).
\]
Then $f'(u)$ is the \emph{derivative} of $f$ at $u$. 
If $f$ is differentiable at every point in~$U$, then we say that $f$ is \emph{differentiable}. In that case, the \emph{derivative} of $f$ is the map
\beq{eq def der}
f'\colon U \to \cB(V,W)
\eeq
mapping $u \in U$ to $f'(u)$.
\end{definition}
The derivative of a map at a point is unique, if it exists.

\begin{lemma}[Chain rule] \label{lem chain rule}
Let $(X, \|\cdot \|_X)$ be a third normed vector space. Let $A \subset W$ be an open subset containing $f(U)$. If $g\colon A \to X$ is differentiable at~$f(u)$ and $f$ is differentiable at~$u$, then $g\circ f$ is differentiable at~$u$, and
\[
(g\circ f)'(u) = g'(f(u)) \circ f'(u).
\]
\end{lemma}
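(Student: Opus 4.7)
The plan is to verify the definition of differentiability directly, by expanding both $f$ and $g$ around their respective base points and showing that the composition has the claimed linear approximation modulo an $o(h)$ remainder. Set $v := f(u)$ and write
\[
f(u+h) = f(u) + f'(u)h + r_1(h), \qquad g(v+k) = g(v) + g'(v)k + r_2(k),
\]
where $r_1(h)/\|h\|_V \to 0$ as $h \to 0$ in $V$, and $r_2(k)/\|k\|_W \to 0$ as $k \to 0$ in $W$. The candidate derivative is $L := g'(f(u))\circ f'(u) \in \cB(V,X)$, which lies in $\cB(V,X)$ since it is a composition of bounded linear maps.

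The main computation is then to substitute the first expansion into the second. For $h$ small enough that $f(u+h) \in A$ (which is possible since $f$ is continuous at $u$ and $A$ is open around $v$), set $k(h) := f'(u)h + r_1(h) = f(u+h) - v$, so that
\[
(g\circ f)(u+h) = g(v + k(h)) = g(v) + g'(v)k(h) + r_2(k(h)) = (g\circ f)(u) + L h + \rho(h),
\]
where $\rho(h) := g'(v)\,r_1(h) + r_2(k(h))$. It remains to show $\rho(h) = o(h)$.

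The first summand is controlled by the bound $\|g'(v)\,r_1(h)\|_X \leq \|g'(v)\|_{\cB(W,X)}\,\|r_1(h)\|_W$, which is $o(\|h\|_V)$ since $g'(v)$ is bounded and $r_1(h) = o(h)$. For the second summand, the key observation is the quasi-linear bound $\|k(h)\|_W \leq (\|f'(u)\|_{\cB(V,W)} + 1)\,\|h\|_V$ for all $h$ small enough (using $\|r_1(h)\|_W \leq \|h\|_V$ near $0$), so $\|k(h)\|_W = O(\|h\|_V)$ and in particular $k(h) \to 0$ as $h \to 0$. Then
\[
\frac{\|r_2(k(h))\|_X}{\|h\|_V} \leq \frac{\|r_2(k(h))\|_X}{\|k(h)\|_W}\cdot\frac{\|k(h)\|_W}{\|h\|_V} \longrightarrow 0
\]
as $h \to 0$, giving the required $o(h)$ estimate.

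The one mild obstacle is the possibility that $k(h) = 0$ for some $h \neq 0$ arbitrarily close to $0$, which would make the ratio $\|r_2(k(h))\|_X/\|k(h)\|_W$ undefined. This is handled cleanly by adopting the convention $r_2(0)=0$ (forced by the definition of $r_2$) and discarding the corresponding $h$'s from the quotient, or equivalently by rephrasing the $o$-condition on $r_2$ as: for every $\varepsilon>0$ there exists $\delta>0$ with $\|r_2(k)\|_X \leq \varepsilon\|k\|_W$ whenever $\|k\|_W < \delta$, which applies regardless. Uniqueness of the derivative then identifies $(g\circ f)'(u) = L = g'(f(u))\circ f'(u)$, completing the proof.
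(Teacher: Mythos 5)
Your proof is correct and follows essentially the same route as the paper's own argument: expand $f$ at $u$, substitute into the expansion of $g$ at $f(u)$, and show the remainder is $o(h)$ using boundedness of $g'(f(u))$ and $f'(u)$. You spell out the details (explicit remainder functions, the quasi-linear bound on $k(h)$, and the degenerate case $k(h)=0$) that the paper treats more compactly via $o$-notation, but the underlying idea is identical.
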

\begin{ArXiv}
\begin{proof}
For all $h \in V$ such that $u+h \in U$, differentiability of $f$ at $u$ and of $g$ at $f(u)$ imply that
\[
\begin{split}
(g\circ f)(u+h) &= g(f(u)+f'(u)h+o(h)) \\
&= g(f(u)) + g'(f(u))(f'(u)h+o(h)) + o(f'(u)h+o(h)).
\end{split}
\]
Since $g'(f(u))$ and $f'(u)$ are bounded operators, the second term on the right-hand side equals $g'(f(u))f'(u)h+o(h)$, while the last term is $o(h)$.
\end{proof}
\end{ArXiv}

\begin{definition}\label{defNearId}
The map $f$ is a \emph{near-identity} at $u$ if the map \cref{eq def der} is continuous in a neighbourhood of $u$, and 
\[
f'(u)h = h +o(h).
\]
\end{definition}


\subsection{Higher order derivatives}

Fix a positive integer $n \in \N$. 
We write $\cB^n(V,W)$ for the space of multilinear maps
$
\lambda\colon V^n \to W
$
for which the norm
\beq{eq norm Bn}
\|\lambda\| := \sup_{
\substack{v_1, \ldots, v_n \in V\\ \|v_1\|_V = \cdots = \|v_n\|_V=1}
} \|\lambda(v_1, \ldots, v_n)\|_W
\eeq
is finite. There is a natural isometric isomorphism
\beq{eq iso Bn}
\cB(V, \cB(V, \ldots, \cB(V,W) \cdots )) \xrightarrow{\cong} \cB^n(V,W)
\eeq
mapping an operator $T$ in the left-hand side to the operator $\lambda \in \cB^n(V,W)$ given by
\[
\lambda(v_1, \ldots, v_n) = T(v_1)(v_2)\cdots(v_n),
\]
for $v_1, \ldots, v_n \in V$.

Suppose $f:U\to W$ is differentiable. 
The map $f$ is \emph{twice differentiable} at $u$ if the map \cref{eq def der} is differentiable at~$u$. Then we write
\[
f^{(2)}(u) :=
(f')'(u) \in \cB(V, \cB(V,W)) \cong \cB^2(V,W).
\]
Inductively, for $n \geq 2$,  $f$ is defined to be $n$ times differentiable at $u$ if it is $n-1$ times differentiable, and the map
\[
f^{(n-1)}\colon U \to \cB^{n-1}(V,W)
\]
is differentiable at $u$. We then set
\[
f^{(n)}(u) := (f^{(n-1)})'(u) \quad \in \cB^n(V,W).
\]
In this case, we write
\beq{eq fn}
f^{(n)}(u)h^n := f^{(n)}(u)(h, h, \ldots, h).
\eeq
As before, we say that $f$ is \emph{$n$ times differentiable} if it is $n$ times differentiable at every point in $U$.
And \emph{infinitely differentiable} means $n$ times differentiable for every $n\in\N$.

\begin{theorem}[Taylor's theorem] \label{thm Taylor}
Suppose $f$ is $n+1$ times differentiable. 
%
Suppose that $\|f^{(n+1)}(\xi)\| \leq M$ for all $\xi$ in a closed ball around $u$ contained in $U$. Then for every $h$ in this ball,
\[
\Bigl\| f(u+h) - \sum_{j=0}^n \frac{1}{j!}  f^{(j)}(u)h^j\Bigr\|_W \leq \frac{M}{(n+1)!}\|h\|_V^{n+1}.
\]
\end{theorem}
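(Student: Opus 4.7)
The plan is to reduce the vector-valued claim to the classical scalar Taylor theorem via a slicing plus Hahn--Banach argument. First I would fix $h$ in the closed ball of the hypothesis and define $\varphi\colon [0,1] \to W$ by $\varphi(t) := f(u+th)$. By iteratively applying \cref{lem chain rule} to the decomposition $\varphi = f \circ \gamma$ with $\gamma(t) := u+th$, and repeatedly using the identification \eqref{eq iso Bn}, an induction on $j$ establishes that $\varphi$ is $n+1$ times differentiable with
\[
\varphi^{(j)}(t) = f^{(j)}(u+th)\,h^j \quad \in W,
\]
in the notation of \eqref{eq fn}. The inductive step differentiates the map $t \mapsto f^{(j-1)}(u+th) \in \cB^{j-1}(V,W)$, whose derivative is $f^{(j)}(u+th)$ acting on the constant direction $h$, and then evaluates the resulting element of $\cB^j(V,W)$ on the remaining $j-1$ copies of $h$.

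Next I would dualise. For any continuous linear functional $\ell \in W^*$ with $\|\ell\|_{W^*} \leq 1$, the scalar map $\psi_\ell := \ell \circ \varphi\colon [0,1] \to \R$ inherits $n+1$-fold differentiability with $\psi_\ell^{(j)}(t) = \ell(f^{(j)}(u+th)h^j)$, because $\ell$ is bounded and linear. The classical one-variable Taylor theorem with Lagrange remainder then produces some $\xi = \xi(\ell) \in (0,1)$ with
\[
\psi_\ell(1) - \sum_{j=0}^{n} \frac{\psi_\ell^{(j)}(0)}{j!} = \frac{\psi_\ell^{(n+1)}(\xi)}{(n+1)!}.
\]
Combining $|\ell(w)| \leq \|w\|_W$ with the submultiplicative bound $\|f^{(n+1)}(u+\xi h)\,h^{n+1}\|_W \leq \|f^{(n+1)}(u+\xi h)\|\,\|h\|_V^{n+1} \leq M\|h\|_V^{n+1}$, which follows from \eqref{eq norm Bn} and the hypothesis, I would deduce
\[
\Bigl|\ell\Bigl(f(u+h) - \sum_{j=0}^n \tfrac{1}{j!}f^{(j)}(u)h^j\Bigr)\Bigr| \leq \frac{M\,\|h\|_V^{n+1}}{(n+1)!}.
\]
The claim follows upon taking the supremum over all such $\ell$ and invoking the Hahn--Banach consequence $\|w\|_W = \sup_{\|\ell\|_{W^*}\leq 1}|\ell(w)|$.

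The mildly delicate step is the chain-rule induction for $\varphi^{(j)}$, where the identification \eqref{eq iso Bn} must be unpacked each time one differentiates a $\cB^{j-1}(V,W)$-valued map of one real variable; keeping track of which slot receives the new copy of $h$ is where one could slip. If one preferred to avoid any appeal to the dual space of $W$, an alternative route is induction on $n$: the case $n=0$ is a direct consequence of the vector-valued mean value inequality, and the inductive step applies the preceding case to the remainder $g(h) := f(u+h) - \sum_{j=0}^n \tfrac{1}{j!}f^{(j)}(u)h^j$, whose derivative satisfies a Taylor estimate of one lower order.
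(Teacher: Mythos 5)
The paper does not actually prove this theorem; it is presented as standard background, with a pointer to textbooks such as Zorich for details. So there is no in-paper proof to compare against, only the question of whether your argument is correct.

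Your scalarization route is correct. The chain-rule induction giving $\varphi^{(j)}(t)=f^{(j)}(u+th)\,h^{j}$ is the right computation, and composing with a unit functional $\ell$, applying the one-variable Taylor--Lagrange theorem to $\psi_{\ell}=\ell\circ\varphi$, bounding via \eqref{eq norm Bn}, and then taking the supremum over $\ell$ with Hahn--Banach does yield the stated estimate. One technical point worth making explicit: if $W$ is a complex Banach space, the Lagrange form of the remainder fails for complex-valued $\psi_{\ell}$ (it rests on the mean value theorem), so you must take real-linear functionals on the realification of $W$ --- your notation $\psi_{\ell}\colon[0,1]\to\R$ already points this way --- and use the real Hahn--Banach theorem to recover $\|w\|_{W}=\sup_{\|\ell\|\leq 1}|\ell(w)|$. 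Your alternative sketch, inducting on $n$ via the vector-valued mean value inequality applied to the remainder $g(h)=f(u+h)-\sum_{j\leq n}\tfrac{1}{j!}f^{(j)}(u)h^{j}$, is the route more commonly found in the cited textbooks; it avoids the dual space entirely and handles real and complex $W$ uniformly, at the cost of one more integration-along-the-segment step to pass from the order-$n$ bound on $g'$ to the order-$(n+1)$ bound on $g$. Both arguments are sound.
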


\begin{ArXiv}

\subsection{Example: Burgers' equation}  \label{sec ex burgers}

An example of a map to which we would like to apply the material in this section and the next is the nonlinear term $u_x u$ in Burgers' equation
\beq{eq Burgers}
u_t = u_{xx} - u_x u.
\eeq

Let $\Omega \subset \R$ be a bounded, open interval in~\(x\). For every $k \in \N_0$, consider the $k$th $L^2$-Sobolev space $W^{k,2}(\Omega)$, with 
the inner product
\[
(u_1, u_2)_{W^{k,2}} := \sum_{j=0}^k (u_1^{(j)}, u_2^{(j)})_{L^2}.
\]
Consider the map
$f\colon W^{1,2}(\Omega) \to L^1(\Omega)$
given by 
\[
f(u) = u'u.
\]
First of all, for $u \in C^{\infty}_c(\Omega)$, the Cauchy--Schwartz inequality for $L^2(\Omega)$ (or H\"older's inequality) implies that
\beq{eq burgers CS}
\|u'u\|_{L^1} \leq \| u' \|_{L^2} \| u \|_{L^2}  \leq \|u\|_{W^{1,2}}^2.
\eeq
So $f$ indeed maps  $W^{1,2}(\Omega)$ into $L^1(\Omega)$.

We claim that $f$ is infinitely differentiable. 
 Indeed, for  $u,h \in W^{1,2}(\Omega)$,
\[
f(u+h) = f(u) + h'u+u'h + h'h.
\]
And by \eqref{eq burgers CS}, 
$
\|h'h\|_{L^1} \leq \|h\|_{W^{1,2}}^2, 
$
so 
\[
f'(u)h = h'u+u'h.
\]
The map $f'(u)\colon W^{1,2}(\Omega) \to L^1(\Omega)$ is bounded, because, analogously to \eqref{eq burgers CS}, 
\beq{eq Burgers f' bdd}
\|h'u+u'h\|_{L^1} \leq 2 \|u\|_{W^{1,2}}  \|h\|_{W^{1,2}}.
\eeq

If $u,h_1, h_2 \in W^{1,2}(\Omega)$, then
\[
f'(u+h_2)(h_1) = f'(u)h_1 + h_2'h_1 + h_1'h_2.
\]
So $f^{(2)}(u)$ is the  operator in $\cB^2(W^{1,2}(\Omega), L^1(\Omega))$ given by
\[
f^{(2)}(u)(h_1, h_2) = h_2'h_1 + h_1'h_2.
\]
The term $o(h_2)$ in the definition of the derivative is zero in this case, and that $f^{(2)}(u)$ does not depend on $u$. This implies that for every $n \geq 3$,  $f^{(n)}(u) = 0$. So $f$ is indeed infinitely differentiable.

\end{ArXiv}

\subsection{Bounded polynomial maps}

An operator in $\cB^n(V,W)$ is said to be \emph{symmetric} if it is invariant under permutations of its arguments. Let $S\cB^n(V,W)$ be the subspace of symmetric operators in $\cB^n(V,W)$. An example of such a symmetric operator is the $n$th derivative of a map.
\begin{lemma} \label{lem symm}
If $f$ is $n$ times differentiable at $u$, then $f^{(n)}(u)$ is symmetric.
\end{lemma}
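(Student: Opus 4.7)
The plan is induction on $n$, with the base case $n=2$ doing essentially all the work. For $n=2$, I would consider the manifestly symmetric second difference
\[
\Delta(h,k) := f(u+h+k) - f(u+h) - f(u+k) + f(u),
\]
defined for $h,k$ in a small neighbourhood of $0$, and prove the \emph{asymmetric} expansion
\[
\Delta(h,k) = f^{(2)}(u)(k,h) + o((\|h\|_V + \|k\|_V)^2).
\]
Since $\Delta(h,k) = \Delta(k,h)$, subtracting the same expansion with $h,k$ swapped gives $f^{(2)}(u)(h,k) - f^{(2)}(u)(k,h) = o((\|h\|_V+\|k\|_V)^2)$; homogeneity of degree $2$ of the left-hand side in $(h,k)$ then forces it to vanish, which is the claim.

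To obtain the asymmetric expansion, I would set $\psi(x) := f(u+k+x) - f(u+x)$ for $x$ near $0$, so that $\Delta(h,k) = \psi(h) - \psi(0)$ and $\psi'(x) = f'(u+k+x) - f'(u+x)$. Writing $R(v) := f'(u+v) - f'(u) - f^{(2)}(u)v$, differentiability of $f'$ at $u$ gives $\|R(v)\|_{\cB(V,W)} \le \varepsilon\|v\|_V$ for $\|v\|_V < \delta$. The cancellation $\psi'(th) - \psi'(0) = R(k+th) - R(k) - R(th)$ then yields the key \emph{uniform-in-$t$} estimate $\|\psi'(th) - \psi'(0)\|_{\cB(V,W)} \le 2\varepsilon(\|h\|_V + \|k\|_V)$ for $\|h\|_V,\|k\|_V < \delta/2$, and similarly $\|\psi'(0) - f^{(2)}(u)(k,\cdot)\|_{\cB(V,W)} \le \varepsilon\|k\|_V$. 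The mean value inequality applied to $\psi$ then gives $\|\psi(h) - \psi(0) - \psi'(0)h\|_W \le 2\varepsilon\|h\|_V(\|h\|_V + \|k\|_V)$, and combining the two estimates produces the desired asymmetric expansion of $\Delta$.

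For the inductive step, suppose the lemma holds for all orders up to $n-1$ and that $f$ is $n$ times differentiable at $u$; in particular $f$ is $(n-1)$ times differentiable on a neighbourhood $U'$ of $u$, and by induction $f^{(n-1)}\colon U' \to S\cB^{n-1}(V,W)$ is valued in symmetric multilinear forms. Differentiating this map at $u$ makes $f^{(n)}(u)(v_1,\ldots,v_n)$ symmetric in $(v_2,\ldots,v_n)$. On the other hand, applying the base case to the twice-differentiable map $f^{(n-2)}\colon U' \to \cB^{n-2}(V,W)$ (the base case was proved for arbitrary Banach-space targets, so this is legitimate without modification) shows that $(f^{(n-2)})^{(2)}(u) = f^{(n)}(u)$ is symmetric in its first two arguments. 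Since the transposition $(1\,2)$ together with the subgroup $S_{\{2,\ldots,n\}}$ generates the full symmetric group $S_n$, the two partial symmetries combine to give full symmetry of $f^{(n)}(u)$.

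The main obstacle is the base case, and within it the uniformity in $t \in [0,1]$ of the estimate on $\psi'(th) - \psi'(0)$: this is the reason for routing the argument through $\psi$ and the mean value inequality, rather than simply applying the Peano form of Taylor's theorem to each of the four terms of $\Delta$. The latter shortcut would produce only the symmetrised leading term $\tfrac12(f^{(2)}(u)(h,k) + f^{(2)}(u)(k,h))$, against which the symmetry $\Delta(h,k) = \Delta(k,h)$ would yield no new information and the argument would collapse.
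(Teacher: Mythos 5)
Your proof is correct. The paper does not actually give a proof of this lemma---it is presented as one of the ``standard definitions and facts about derivatives of maps between normed vector spaces'' for which the reader is referred to textbooks (e.g.\ Zorich)---so there is no internal argument to compare against. What you have written is a self-contained, correct version of the classical Schwarz/Young argument for symmetry of higher Fr\'echet derivatives, and it is a good one.

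A few points worth noting. Your handling of the base case is the genuinely delicate part, and you have done it right: the identity
\[
\psi'(th)-\psi'(0)=R(k+th)-R(k)-R(th)
\]
(checked by expanding each term via $f'(u+v)=f'(u)+f^{(2)}(u)v+R(v)$ and using linearity of $f^{(2)}(u)$) gives a bound on $\psi'(th)-\psi'(0)$ that is uniform in $t\in[0,1]$, which is exactly what the mean value inequality needs and exactly what the naive Peano-form expansion fails to deliver. Your closing remark explaining why the four-term Taylor expansion of $\Delta$ only produces the symmetrised leading term $\tfrac12\bigl(f^{(2)}(u)(h,k)+f^{(2)}(u)(k,h)\bigr)$, and hence no information, is accurate and shows you understand why the routing through $\psi$ is necessary rather than cosmetic. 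The final homogeneity argument killing the degree-$2$ error term is standard and correct. The inductive step is also fine: $S_{\{2,\ldots,n\}}$-invariance comes from $f^{(n-1)}$ taking values in the closed subspace $S\cB^{n-1}(V,W)$ (so its derivative at $u$ lands there too), the $(1\,2)$-invariance comes from the base case applied to $f^{(n-2)}$, and these two subgroups do generate $S_n$ since you then have all adjacent transpositions.

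One small imprecision: you say the base case ``was proved for arbitrary Banach-space targets.'' In the paper's setting $V$ and $W$ are only assumed to be normed vector spaces, and the target $\cB^{n-2}(V,W)$ you feed into the base case need not be complete when $W$ is not. This causes no harm---your base-case argument nowhere uses completeness of the target, only the norm and the mean value inequality---but the parenthetical should read ``normed vector space targets'' to match the actual hypotheses you need and the generality the paper works in.
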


We denote the permutation group of $\{1,\ldots, n\}$ by
$\Sigma_n$.
\begin{ArXiv}
\begin{lemma} \label{lem SBn closed}
The subspace $S\cB^n(V,W) \subset \cB^n(V,W)$ is closed.
\end{lemma}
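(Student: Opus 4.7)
The plan is to show that if a sequence $\{\lambda_k\}_{k=1}^\infty \subset S\cB^n(V,W)$ converges to some $\lambda \in \cB^n(V,W)$ in the norm \eqref{eq norm Bn}, then $\lambda$ is itself symmetric. Since $S\cB^n(V,W)$ is clearly a linear subspace (the symmetry relations are linear), closedness under limits is all that is needed.

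First I would note that convergence in the norm on $\cB^n(V,W)$ implies pointwise convergence: for any $v_1, \ldots, v_n \in V$, the multilinear bound gives
\[
\|\lambda_k(v_1, \ldots, v_n) - \lambda(v_1, \ldots, v_n)\|_W \leq \|\lambda_k - \lambda\|\cdot \|v_1\|_V \cdots \|v_n\|_V \xrightarrow{k\to\infty} 0.
\]
Then for any permutation $\sigma \in \Sigma_n$, symmetry of $\lambda_k$ yields
\[
\lambda_k(v_{\sigma(1)}, \ldots, v_{\sigma(n)}) = \lambda_k(v_1, \ldots, v_n)
\]
for every $k$, and letting $k \to \infty$ on both sides gives
\[
\lambda(v_{\sigma(1)}, \ldots, v_{\sigma(n)}) = \lambda(v_1, \ldots, v_n).
\]
Since $v_1, \ldots, v_n$ and $\sigma$ were arbitrary, $\lambda \in S\cB^n(V,W)$, establishing closedness.

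There is essentially no obstacle here, as the argument reduces to passing to a limit in an identity preserved along the sequence; the only ingredient needed is that the norm \eqref{eq norm Bn} dominates pointwise evaluation, which is immediate from multilinearity and the definition of the norm.
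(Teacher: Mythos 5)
Your proof is correct and uses essentially the same idea as the paper's: that the operator norm dominates pointwise evaluation, so the symmetry identity survives passage to the limit. The paper phrases it as a proof by contradiction (choosing a nearby symmetric $\tilde T$ and applying the triangle inequality), whereas you argue directly that a norm limit of symmetric operators is symmetric; yours is arguably the cleaner packaging of the same argument.
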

\begin{proof}
If $T \in \overline{S\cB^n(V,W)} \setminus  S\cB^n(V,W)$, and $v_1, \ldots, v_n \in V$ and $\sigma \in \Sigma_n$ are such that
$
T(v_1, \ldots, v_n) \not= T(v_{\sigma(1)}, \ldots, v_{\sigma(n)}),
$
set
\begin{multline*}
\varepsilon := \bigl\| T\bigl(v_1/\|v_1\|_V, \ldots, v_n/\|v_n\|_V\bigr) - T\bigl(v_{\sigma(1)}/\|v_{\sigma(1)}\|_V, \ldots, v_{\sigma(n)}/\|v_{\sigma(n)}\|_V\bigr) \bigr\|_W
\\ > 0.
\end{multline*}
Let $\tilde T \in S\cB^n(V,W)$ be such that $\|\tilde T - T\| < \varepsilon/2$, for the norm \eqref{eq norm Bn}. Then symmetry of $S$ and the triangle inequality imply that
\begin{multline*}
\bigl\| T\bigl(v_1/\|v_1\|_V, \ldots, v_n/\|v_n\|_V\bigr) - T\bigl(v_{\sigma(1)}/\|v_{\sigma(1)}\|_V, \ldots, v_{\sigma(n)}/\|v_{\sigma(n)}\|_V\bigr) \bigr\|_W 
\\
< \bigl\| T\bigl(v_1/\|v_1\|_V, \ldots, v_n/\|v_n\|_V\bigr) -  \tilde T\bigl(v_1/\|v_1\|_V, \ldots, v_n/\|v_n\|_V\bigr) \bigr\|_W 
\\
+\bigl\| \tilde T\bigl(v_{\sigma(1)}/\|v_{\sigma(1)}\|_V, \ldots, v_{\sigma(n)}/\|v_{\sigma(n)}\|_V\bigr) - T\bigl(v_{\sigma(1)}/\|v_{\sigma(1)}\|_V, \ldots, v_{\sigma(n)}/\|v_{\sigma(n)}\|_V\bigr)\bigr\|_W 
\\
< \varepsilon,
\end{multline*}
a contradiction.
\end{proof}
By this lemma, $S\cB^n(V,W)$ is a Banach space if $V$ and $W$ are.
\end{ArXiv}

Let 
$
S\colon \cB^n(V,W) \to S\cB^n(V,W)
$
be the symmetrisation operator: for every $\lambda \in \cB^n(V,W)$ and $v_1, \ldots, v_n \in V$,
\[
(S\lambda)(v_1, \ldots, v_n) = \frac{1}{n!}\sum_{\sigma \in \Sigma_n} \lambda(v_{\sigma(1)}, \ldots, v_{\sigma(n)}).
\]
\begin{ArXiv}
 (An alternative proof of \cref{lem SBn closed} is to show that $S$ is continuous, and to note that $S\cB^n(V,W)$ is the zero level set of $S$ minus the identity.)
\end{ArXiv}
\begin{journal}
The operator  $S$ is continuous, and that $S\cB^n(V,W)$ is the zero level set of $S$ minus the identity, and hence closed in $\cB^n(V,W)$. So $S\cB^n(V,W)$ is a Banach space if $V$ and $W$ are.
\end{journal}

An element $\lambda \in \cB^n(V,W)$  defines a map
$
p_{\lambda}\colon V \to W
$
by
\beq{eq def monomial}
p_{\lambda}(v) = \lambda(v, \ldots, v).
\eeq
We have $p_{S\lambda} = p_{\lambda}$, and the map $\lambda \mapsto p_{\lambda}$ is injective on $S\cB^n(V,W)$.

\begin{definition}
A \emph{bounded homogeneous polynomial map} of degree $n$ from~$V$ to $W$ is a map of the form $p_{\lambda}$ as in \eqref{eq def monomial}. We write $\Pol^n(V,W)$  for the space of such maps.  It inherits a norm  from the space $S\cB^n(V,W)$ via the linear isomorphism $\lambda \mapsto p_{\lambda}$.
 If $\lambda \not=0$, then the \emph{degree} of ~$p_{\lambda}$ is $n$.

A \emph{bounded polynomial map} from $V$ to $W$ is a finite sum of bounded homogeneous polynomial maps. The \emph{degree} of a bounded polynomial map is the degree of its highest-degree homogeneous term.

We write  $\Pol(V,W)$ for the space of all bounded polynomial maps from $V$ to $W$. This is the algebraic direct sum of the spaces $\Pol^n(V,W)$.
\end{definition}
\begin{ArXiv}
By \cref{lem SBn closed}, $\Pol^n(V,W)$ is a Banach space if $V$ and $W$ are. 
\end{ArXiv}
\begin{journal}
Because $S\cB^n(V,W)$ is a Banach space if $V$ and $W$ are, so is $\Pol^n(V,W)$. 
\end{journal}
We could define $\Pol^0(V,W)$ as the space of constant maps into $W$, but we only consider homogeneous polynomials of order at least one.

If $f$ is $n$ times differentiable at $u$, then we have the map
\[
h \mapsto
f^{(n)}(u)h^n \in \Pol^n(V,W).
\]

\cref{lem pol diffble}--\ref{lem compos bdd pol} below are basic facts showing that bounded polynomials and their orders and compositions behave as one would expect.
\begin{journal}
Their proofs are short and straightforward.
\end{journal}

\begin{lemma} \label{lem pol diffble}
Every bounded polynomial map is infinitely differentiable.
\end{lemma}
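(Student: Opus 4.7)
The plan is to reduce the statement to the homogeneous case by linearity of differentiation, and then compute the derivative of a homogeneous polynomial map directly using multilinearity. Since a bounded polynomial is by definition a finite sum of bounded homogeneous polynomials, and finite sums of differentiable maps are differentiable, it suffices to prove that $p_\lambda$ is infinitely differentiable for every $\lambda \in S\cB^n(V,W)$.

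First I would fix $\lambda \in S\cB^n(V,W)$ and $v \in V$, and expand $p_\lambda(v+h) = \lambda(v+h, \ldots, v+h)$ by multilinearity. Using the symmetry of $\lambda$, the expansion collapses to
\[
p_\lambda(v+h) = \sum_{k=0}^{n} \binom{n}{k} \lambda\bigl(\underbrace{v, \ldots, v}_{n-k}, \underbrace{h, \ldots, h}_{k}\bigr).
\]
The $k=0$ term is $p_\lambda(v)$, the $k=1$ term is $n \lambda(v, \ldots, v, h)$, which is linear and bounded in $h$ by \eqref{eq norm Bn}, and the remaining terms are each bounded in norm by a constant (depending on $\lambda$ and $v$) times $\|h\|_V^k$ with $k \geq 2$, hence of order $o(h)$. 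This identifies the derivative
\[
p_\lambda'(v)(h) = n\, \lambda(v, \ldots, v, h) \in \cB(V,W),
\]
so $p_\lambda$ is differentiable at every $v$.

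Next I would iterate. The key observation is that $v \mapsto p_\lambda'(v)$, viewed as a map into $\cB(V,W)$, is itself (up to the isomorphism \eqref{eq iso Bn}) a bounded homogeneous polynomial map of degree $n-1$ from $V$ to $\cB(V,W)$, associated to the multilinear map $(v_1, \ldots, v_{n-1}) \mapsto n\, \lambda(v_1, \ldots, v_{n-1}, \relbar)$. By induction on $k$, for $1 \leq k \leq n$,
\[
p_\lambda^{(k)}(v)(h_1, \ldots, h_k) = \frac{n!}{(n-k)!}\, \lambda\bigl(\underbrace{v,\ldots,v}_{n-k}, h_1, \ldots, h_k\bigr),
\]
which is bounded by $\frac{n!}{(n-k)!}\|\lambda\| \|v\|_V^{n-k}$ on the unit sphere of $V^k$, hence lies in $\cB^k(V,W)$. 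In particular $p_\lambda^{(n)}(v) = n!\, \lambda$ is constant in $v$, so $p_\lambda^{(n+1)} \equiv 0$ and all higher derivatives vanish as well.

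The argument is essentially routine; the only mild subtlety is verifying that at each stage the candidate derivative really is a bounded multilinear map and that the remainder is $o(h)$ with respect to the operator norm on $\cB^{k-1}(V,W)$, which is immediate from \eqref{eq norm Bn} and the binomial expansion. No genuine obstacle arises.
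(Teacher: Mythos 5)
Your proof is correct and follows essentially the same route as the paper's: expand $p_\lambda(v+h)$ by multilinearity, read off the first derivative as $h \mapsto n\lambda(v,\ldots,v,h)$, observe that $p_\lambda'$ is itself a bounded homogeneous polynomial of degree $n-1$ into $\cB(V,W)$, and induct. Your version simply carries the induction through to the explicit formula $p_\lambda^{(k)}(v) = \frac{n!}{(n-k)!}\lambda(v,\ldots,v,\relbar,\ldots,\relbar)$, which the paper leaves implicit.
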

\begin{ArXiv}
\begin{proof}
Let $\lambda \in S\cB^n(V,W)$, for some $n \geq 2$. Then for all $u,h \in V$,
\[
p_{\lambda}(u+h) = p_{\lambda}(u) + n\lambda(h, u, \ldots, u) + O(\|h\|^2).
\]
Hence $p_{\lambda}$ is differentiable, and 
\[
p_{\lambda}'(u) = n\lambda(u, \ldots, u),
\]
where on the right-hand side, the operator~$\lambda$ is applied to $n-1$ copies of $u$, to give an element of $\cB(V,W)$. Hence $p_{\lambda}'$ is a bounded polynomial map in $\Pol^{n-1}(V, \cB(V,W))$. This proves the claim by induction.
\end{proof}
\end{ArXiv}

\begin{lemma}\label{lem order pol}
If $p \in \Pol^n(V,W)$, then there is a constant $C>0$ such that for all $v \in V$, 
\[
\|p(v)\|_W \leq C \|v\|_V^n.
\]
\end{lemma}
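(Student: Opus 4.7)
The plan is to unwind the definition: $\Pol^n(V,W)$ is, by construction, the image of $S\cB^n(V,W)$ under the injection $\lambda \mapsto p_\lambda$, so I would pick the unique $\lambda \in S\cB^n(V,W)$ with $p = p_\lambda$ and take the constant to be $C := \|\lambda\|$, the operator norm defined in \eqref{eq norm Bn}. The bound $\|p(v)\|_W \leq C\|v\|_V^n$ is then just the multilinear version of the standard estimate $\|Tv\| \leq \|T\|\|v\|$.

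Concretely, for $v = 0$ the inequality is trivial, and for $v \neq 0$ I would write $v = \|v\|_V \hat v$ with $\hat v := v/\|v\|_V$ of norm one, then use $n$-multilinearity of $\lambda$ to pull out each factor of $\|v\|_V$:
\[
\|p(v)\|_W = \|\lambda(v,\dots,v)\|_W = \|v\|_V^n \, \|\lambda(\hat v, \dots, \hat v)\|_W \leq \|v\|_V^n \sup_{\|v_1\|_V=\cdots=\|v_n\|_V=1} \|\lambda(v_1,\dots,v_n)\|_W,
\]
and the supremum on the right is exactly $\|\lambda\|$ by \eqref{eq norm Bn}, which is finite by the definition of $\cB^n(V,W)$. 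This gives the claim with $C = \|\lambda\|$.

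There is no real obstacle here; the only thing to notice is that the definition of $\Pol^n(V,W)$ tacitly uses the isomorphism $\lambda \mapsto p_\lambda$ restricted to the \emph{symmetric} multilinear forms (so that $\lambda$ is unique), and that the norm on $\Pol^n(V,W)$ is defined so as to make this map an isometry. Hence one can phrase the conclusion equivalently as $\|p(v)\|_W \leq \|p\|_{\Pol^n(V,W)} \|v\|_V^n$, with the same proof.
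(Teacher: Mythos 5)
Your proof is correct and follows the same route as the paper's: choose $\lambda$ with $p = p_\lambda$, pull out the factors of $\|v\|_V$ by multilinearity, and bound the remaining term by the operator norm $\|\lambda\|$ from \eqref{eq norm Bn}. The only cosmetic difference is that you note explicitly that $\lambda$ is the symmetric representative, which is fine but not needed for the estimate itself.
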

\begin{ArXiv}
\begin{proof}
Let $\lambda \in \cB^n(V,W)$.
By boundedness and multilinearity of $\lambda$, we have for all nonzero $v \in V$,
\[
\|p_{\lambda}(v)\|_W = \|v\|_V^n \cdot \|\lambda(v/\|v\|_V, \ldots, v/\|v\|_V)\|_W \leq \|\lambda \|  \cdot \|v\|_V^n
\]
%
\end{proof}
\end{ArXiv}

\begin{lemma} \label{lem order pol lower}
If $p$ is a polynomial map from $V$ to $W$ of order lower than $n$, and
\[
\|p(v)\|_W = O(\|v\|_V^{n}),
\]
as $v\to 0$ in $V$,
then $p=0$.
\end{lemma}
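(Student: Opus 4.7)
The plan is to exploit homogeneity of each homogeneous piece of $p$, together with the assumed order estimate, to force the lowest-degree term of $p$ to vanish, and then iterate (or simply repeat the argument) on the remaining pieces.

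Concretely, write $p = \sum_{k=1}^m p_k$ with $p_k \in \Pol^k(V,W)$ and $m < n$, which is possible since $p$ is a bounded polynomial of order less than $n$. Each $p_k$ is homogeneous of degree $k$: for every $v \in V$ and every $t \in \R$, $p_k(tv) = t^k p_k(v)$, by the definition $p_k(v) = \lambda_k(v,\dots,v)$ with $\lambda_k \in S\cB^k(V,W)$ and multilinearity.

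Now suppose, toward a contradiction, that $p \neq 0$, and let $k_0 \in \{1,\dots,m\}$ be the smallest index with $p_{k_0} \neq 0$. Choose $v_0 \in V$ such that $p_{k_0}(v_0) \neq 0$. For $t > 0$ small, homogeneity yields
\[
p(t v_0) \;=\; \sum_{k=k_0}^m t^k \, p_k(v_0) \;=\; t^{k_0}\Bigl( p_{k_0}(v_0) + \sum_{k=k_0+1}^m t^{k-k_0} p_k(v_0)\Bigr),
\]
so by the triangle inequality and finiteness of the sum,
\[
\|p(tv_0)\|_W \;\geq\; t^{k_0}\|p_{k_0}(v_0)\|_W - t^{k_0+1} C(v_0)
\]
for some constant $C(v_0) \geq 0$ depending on the (finitely many) remaining $p_k(v_0)$. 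On the other hand, the hypothesis $\|p(v)\|_W = O(\|v\|_V^n)$ applied to $v = tv_0$ gives $\|p(tv_0)\|_W \leq C' t^n \|v_0\|_V^n$ for some constant $C' > 0$ and all sufficiently small $t > 0$. Combining these inequalities and dividing by $t^{k_0}$ gives
\[
\|p_{k_0}(v_0)\|_W \;\leq\; C' \|v_0\|_V^n \, t^{\,n - k_0} + C(v_0)\, t,
\]
and since $k_0 \leq m < n$, the right-hand side tends to $0$ as $t \to 0^+$. Hence $p_{k_0}(v_0) = 0$, contradicting the choice of $v_0$.

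The argument is essentially routine; the only care needed is to isolate the leading homogeneous term via the rescaling $v \mapsto tv_0$, and to note that $k_0 < n$ forces the exponent $n - k_0$ to be strictly positive, which is where the hypothesis that $p$ has order strictly less than $n$ is used. There is no serious obstacle, and the proof does not require anything beyond the homogeneity of elements of $\Pol^k(V,W)$ established in the definition around \eqref{eq def monomial}.
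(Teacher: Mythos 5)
Your proof is correct and uses the same core idea as the paper's: rescale $v \mapsto tv$ and compare the degree-$k$ growth forced by homogeneity with the assumed $O(\|v\|_V^n)$ bound, concluding that each homogeneous piece of degree $<n$ must vanish. You are in fact somewhat more careful than the paper's own proof, which only writes out the argument for a single homogeneous term $p_\lambda$ and leaves the reduction from a general polynomial $p=\sum_k p_k$ to its homogeneous pieces implicit; your isolation of the lowest-degree nonzero term $p_{k_0}$ and the subsequent division by $t^{k_0}$ makes that reduction explicit and rigorous.
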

\begin{ArXiv}
\begin{proof}
Let $m,n \in \N$.
Let $\lambda \in S\cB^m(V,W)$, and suppose that $\|p_{\lambda}(v)\|_W = O(\|v\|_V^{n})$, as $v\to 0$ in $V$. Then there is a $C>0$ such that for all $v \in V$ with unit norm and $s>0$ small enough,
\[
s^m \|p_{\lambda}(v)\|_W = \|p_{\lambda}(sv)\|_W \leq Cs^n.
\]
If $m<n$, this implies that $\|p_{\lambda}(v)\|_W =0$.
\end{proof}
\end{ArXiv}

\begin{lemma}\label{lem compos bdd pol}
If $p_1 \in \Pol^m(U,V)$ and $p_2 \in \Pol^n(V,W)$, then $p_2 \circ p_1 \in \Pol^{mn}(U,W)$.
\end{lemma}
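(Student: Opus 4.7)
The plan is to produce an explicit bounded multilinear map on $U^{mn}$ whose diagonal value recovers $p_2 \circ p_1$, and then invoke the isomorphism $\lambda \mapsto p_\lambda$ to conclude.

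First, write $p_1 = p_{\lambda_1}$ and $p_2 = p_{\lambda_2}$ for some $\lambda_1 \in S\cB^m(U,V)$ and $\lambda_2 \in S\cB^n(V,W)$. Define $\mu\colon U^{mn} \to W$ by grouping the $mn$ arguments into $n$ consecutive blocks of size $m$,
\[
\mu(u_1,\ldots,u_{mn}) := \lambda_2\bigl(\lambda_1(u_1,\ldots,u_m),\, \lambda_1(u_{m+1},\ldots,u_{2m}),\, \ldots,\, \lambda_1(u_{(n-1)m+1},\ldots,u_{nm})\bigr).
\]
Multilinearity of $\mu$ follows immediately from multilinearity of $\lambda_1$ and $\lambda_2$. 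Boundedness then follows from the estimate
\[
\|\mu(u_1,\ldots,u_{mn})\|_W \leq \|\lambda_2\| \prod_{j=1}^{n} \|\lambda_1(u_{(j-1)m+1},\ldots,u_{jm})\|_V \leq \|\lambda_2\|\,\|\lambda_1\|^n \prod_{i=1}^{mn} \|u_i\|_U,
\]
so $\mu \in \cB^{mn}(U,W)$.

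Next I would evaluate on the diagonal: for every $u \in U$,
\[
\mu(u,\ldots,u) = \lambda_2\bigl(\lambda_1(u,\ldots,u),\ldots,\lambda_1(u,\ldots,u)\bigr) = \lambda_2\bigl(p_1(u),\ldots,p_1(u)\bigr) = p_2(p_1(u)).
\]
Hence $p_2 \circ p_1 = p_\mu$. Since $p_\mu = p_{S\mu}$ and $S\mu \in S\cB^{mn}(U,W)$, this exhibits $p_2 \circ p_1$ as a bounded homogeneous polynomial map of degree $mn$, so $p_2 \circ p_1 \in \Pol^{mn}(U,W)$.

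There is no real obstacle here; the only subtlety is that the natural $\mu$ one writes down is not symmetric in its $mn$ arguments, but this is immediately cured by symmetrisation, since only the diagonal value matters for identifying the polynomial and $p_{S\mu} = p_\mu$ by the discussion preceding the definition of $\Pol^n(V,W)$.
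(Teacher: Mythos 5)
Your proof is correct and takes essentially the same approach as the paper: both define a bounded multilinear map on $U^{mn}$ by grouping the $mn$ arguments into $n$ blocks of size $m$, applying $\lambda_1$ to each block, and feeding the results to $\lambda_2$. You supply a few details the paper leaves implicit (the boundedness estimate, the diagonal evaluation, and the remark that symmetrisation harmlessly replaces $\mu$ by $S\mu$), which is fine.
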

\begin{ArXiv}
\begin{proof}
For $\lambda_1 \in \cB^m(U,V)$ and $\lambda_2 \in \cB^n(V,W)$,  define $\lambda_2 \circ \lambda_1\colon U^{mn} \to W$ by
\begin{multline*}
\lambda_2 \circ \lambda_1(u_{11}, \ldots, u_{1m}; \ldots; u_{n1}, \ldots, u_{nm}) 
:=\\
 \lambda_2\bigl(\lambda_1(u_{11}, \ldots, u_{1m}), \ldots, \lambda_2(\lambda_1(u_{n1}, \ldots, u_{nm})\bigr),
\end{multline*}
for $u_{jk} \in U$. Then one checks directly that $\lambda_2 \circ \lambda_1 \in \cB^{mn}(U,V)$. This implies the claim about polynomials.
\end{proof}
\end{ArXiv}

\subsection{Standard monomials} \label{sec std mon}



Let $V^* := \cB(V,\C)$ be the continuous dual of $V$. We denote the pairing between $V^*$ and $V$ by $\langle \relbar, \relbar \rangle$.
For every $j \in \N$, let $e^j \in V^*$ be given.
What follows is most natural if $V$ is a Hilbert space and  $e^j$ is given by taking inner products with an element $e_j$ of a Hilbert basis, but it applies more generally.

Consider a multi-index $q \in \N_0^{\infty}$.
If $|q|=n$, and $m$ is the largest number for which $q_m \not=0$, then we define the element
\beq{eq def eq}
e^q := \underbrace{e^1 \otimes \cdots \otimes e^1}_{\text{$q_1$ factors}} \otimes \cdots \otimes 
 \underbrace{e^m \otimes \cdots \otimes e^m}_{\text{$q_m$ factors}} 
\in \cB^n(V, \C).
\eeq
In other words, for all $v_1, \ldots, v_n \in V$,
\begin{align*}&
e^q(v_1, \ldots, v_n) =
\\& 
\langle e^1, v_1 \rangle \cdots \langle e^1, v_{q_1}\rangle \langle e^2, v_{q_1 + 1}\rangle
 \cdots \langle e^2, v_{q_1 + q_2}\rangle \cdots
\langle e^m, v_{q_1 + \cdots + q_{m-1} +1}\rangle \cdots  \langle e^m, v_n\rangle. 
\end{align*}
We write $p^q := p_{e^q}$ for the corresponding homogeneous polynomial. One could call this the standard $q$-monomial with respect to the set $\{e^j\}_{j=1}^{\infty}$. (If $V = \C^k$ and the elements $e^j$ are the standard coordinates, then the monomial functions in the usual sense are precisely the scalar multiples of the maps $p^q$.)

 For $v \in V$, we write
\beq{eq def vq}
v^q := p^q(v) = \prod_{j=1}^{\infty} \langle e^j, v\rangle ^{q_j}.
\eeq
 This product is finite (since $q$ has finitely many nonzero terms) and depends on the set $\{e^j\}$.
%
The following lemma follows from the definition of the derivative.
\begin{lemma} \label{lem der poly}
The derivative of $p^q$ in~\eqref{eq def vq} is given by
\begin{equation*}
(p^q)'(u)(h) = \sum_{j=1}^{\infty} q_j\langle e^j, u\rangle^{q_j-1}
\langle e_j, h\rangle
\Bigl(\prod_{k\not= j} \langle e^{k}, u\rangle^{q_{k}}\Bigr),
\end{equation*}
for all $u,h \in V$.
\end{lemma}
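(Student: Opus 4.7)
The plan is a direct computation using the definition of the derivative, exploiting the fact that $p^q$ is a \emph{finite} product of bounded linear functionals. Since $q \in \N_0^{\infty}$ has only finitely many nonzero entries, let $m$ be the largest index with $q_m \neq 0$, so that
\[
p^q(v) = \prod_{j=1}^{m} \langle e^j, v\rangle^{q_j},
\]
and the sum on the right-hand side of the claimed formula is likewise finite (indexed by $j$ with $q_j \neq 0$). Consequently there are no convergence issues to worry about and the problem reduces to a Leibniz-rule calculation.

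First I would handle each factor. For fixed $j$, the map $\ell_j \colon V \to \C$, $v \mapsto \langle e^j, v\rangle$, is bounded linear, hence its derivative at any $u$ is $\ell_j$ itself: $\ell_j'(u)(h) = \langle e^j, h\rangle$. Combining this with the chain rule (Lemma \ref{lem chain rule}) applied to the scalar power map $t \mapsto t^{q_j}$ on $\C$, whose derivative at $t$ is $q_j t^{q_j - 1}$, shows that the map $v \mapsto \langle e^j, v\rangle^{q_j}$ is differentiable with
\[
\frac{d}{dv}\Big|_{u} \langle e^j, v\rangle^{q_j}(h) = q_j \langle e^j, u\rangle^{q_j - 1} \langle e^j, h\rangle.
\]

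Next I would apply the Leibniz (product) rule for a finite product of differentiable scalar functions. Writing $\varphi_j(v) := \langle e^j, v\rangle^{q_j}$, we have $p^q = \varphi_1 \varphi_2 \cdots \varphi_m$, and
\[
(p^q)'(u)(h) = \sum_{j=1}^{m} \Bigl(\prod_{k \neq j} \varphi_k(u)\Bigr) \, \varphi_j'(u)(h).
\]
Substituting the expression for $\varphi_j'(u)(h)$ computed above, and noting that terms with $q_j = 0$ vanish (so we may extend the sum to all $j \in \N$ without changing anything), yields exactly the formula stated in the lemma. One may justify the product rule directly from the definition: expanding $p^q(u+h)$ by distributing $\prod_j \langle e^j, u+h\rangle^{q_j} = \prod_j (\langle e^j, u\rangle + \langle e^j, h\rangle)^{q_j}$ via the binomial theorem, the constant-in-$h$ term is $p^q(u)$, the linear-in-$h$ term is the claimed expression, and all remaining terms are bounded by a constant times $\|h\|_V^2$ (using boundedness of each $\ell_j$), hence are $o(h)$.

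There is essentially no obstacle; the only mild subtlety is bookkeeping to confirm that the infinite-looking sum over $j \in \N$ in the statement is genuinely finite because $q_j = 0$ for all but finitely many $j$, which makes the Leibniz rule applicable as for an ordinary finite product of scalar functions.
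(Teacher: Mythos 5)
Your proof is correct and is precisely the computation the paper has in mind when it states, without proof, that the lemma ``follows from the definition of the derivative.'' The key observation that $q \in \N_0^\infty$ has finitely many nonzero entries, so $p^q$ is a finite product of powers of bounded linear functionals, is exactly what reduces the claim to the Leibniz rule plus the chain rule applied to the scalar power map, and your tail estimate (the higher-order terms being $O(\|h\|_V^2)$ by boundedness of the $e^j$) correctly verifies the $o(h)$ condition directly. One incidental note: the paper's displayed formula writes $\langle e_j, h\rangle$, but since $\langle\,\cdot\,,\,\cdot\,\rangle$ denotes the pairing of $V^*$ with $V$, this is a typo for $\langle e^j, h\rangle$; your proof uses the corrected form.
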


\section{Compact derivatives and polynomials} \label{sec cpt der}

It is a nontrivial question in what sense differentiable maps between normed vector spaces can be approximated by polynomial maps \cite[e.g.]{AP, DAlessandro, DH, DHJ, Nemirovskii, NS}. 
In this section we discuss an approach to this problem that is suitable for our purposes. This discussion includes the further problem of approximating a polynomial by sums of the standard monomials of \cref{sec std mon}. The polynomials for which this is possible are the \emph{compact polynomials} introduced in \cref{sec cpt pol}. 

\cref{sec cpt der def} introduces \emph{compactly differentiable} maps. 
We combine these with Taylor's theorem to express the lowest order parts of such maps in terms of standard monomials. We discuss a class of examples of compactly differentiable maps relevant to the study of {\PDE}s.

\subsection{Compact multilinear maps}

Let $V$ and $W$ be Banach spaces.
Let $\cK^n(V,W) \subset \cB^n(V,W)$ be the image of the space
\[
\cK(V, \cK(V, \ldots, \cK(V,W) \cdots ))
\]
under the isomorphism \eqref{eq iso Bn}.
\begin{journal}
Using induction on $n$, one can show that  $\cK^n(V,W)$ is closed in $\cB^n(V,W)$, and hence a Banach space.
\end{journal}
\begin{ArXiv}
\begin{lemma}\label{lem Kn closed}
For every \(n\in\N\) the space $\cK^n(V,W)$ is closed in $\cB^n(V,W)$.
\end{lemma}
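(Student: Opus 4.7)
The plan is to argue by induction on~$n$, peeling off one layer of~$\cK$ at a time via the isomorphism \eqref{eq iso Bn}. The key observation is that, once we know $\cK^{n-1}(V,W)$ is closed in $\cB^{n-1}(V,W)$, it is itself a Banach space, and so $\cK(V, \cK^{n-1}(V,W))$ makes sense as a Banach space of compact operators with the standard closedness property.

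For the base case $n=1$, the statement that $\cK(V,W)$ is closed in $\cB(V,W)$ is the classical fact that a norm limit of compact operators between Banach spaces is compact. I would just cite this. For the inductive step, suppose $\cK^{n-1}(V,W)$ is closed in $\cB^{n-1}(V,W)$. Under the isomorphism \eqref{eq iso Bn}, an element of $\cK^n(V,W)$ corresponds to an operator in $\cK(V, \cK^{n-1}(V,W))$, viewed inside $\cB(V, \cB^{n-1}(V,W))$. I then want to show that $\cK(V, \cK^{n-1}(V,W))$ is closed in $\cB(V, \cB^{n-1}(V,W))$. I would do this in two steps.

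First, if $T_k \in \cB(V, \cK^{n-1}(V,W))$ and $T_k \to T$ in the operator norm of $\cB(V, \cB^{n-1}(V,W))$, then $T_k v \to T v$ in $\cB^{n-1}(V,W)$ for every $v \in V$; since each $T_k v$ lies in the closed subspace $\cK^{n-1}(V,W)$, so does $Tv$. Hence $T \in \cB(V, \cK^{n-1}(V,W))$, and this subspace is closed. Second, because $\cK^{n-1}(V,W)$ is a Banach space, the base case (applied with $W$ replaced by $\cK^{n-1}(V,W)$) gives that $\cK(V, \cK^{n-1}(V,W))$ is closed in $\cB(V, \cK^{n-1}(V,W))$. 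Composing these two closed inclusions yields that $\cK(V, \cK^{n-1}(V,W))$ is closed in $\cB(V, \cB^{n-1}(V,W))$, which, transported back through~\eqref{eq iso Bn}, gives closedness of $\cK^n(V,W)$ in $\cB^n(V,W)$.

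I do not anticipate a serious obstacle: both ingredients (closedness of compact operators, and closedness of the subspace of operators landing in a closed target subspace) are standard. The only mild care needed is to verify that the isomorphism~\eqref{eq iso Bn} is isometric, so that norm convergence in $\cB^n(V,W)$ matches norm convergence in the iterated $\cB(V, \ldots)$-space, which is noted in the paper just after~\eqref{eq iso Bn}.
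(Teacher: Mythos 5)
Your proof is correct and follows essentially the same inductive scheme as the paper's: base case via the classical closedness of $\cK(V,W)$, and an inductive step that factors the inclusion $\cK(V,\cK^{n-1}(V,W)) \subset \cB(V,\cK^{n-1}(V,W)) \subset \cB(V,\cB^{n-1}(V,W))$ into two closed inclusions. The only difference is presentational: you spell out the pointwise-limit argument showing $\cB(V,\cK^{n-1}(V,W))$ is closed in $\cB(V,\cB^{n-1}(V,W))$, which the paper states without elaboration.
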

\begin{proof}
We use induction on $n$. For $n=1$ the claim is standard. Suppose the claim holds for $n$. Then 
\[
\cK^{n+1}(V,W) = \cK(V, \cK^n(V,W)),
\]
which is a closed subspace of $\cB(V, \cK^n(V,W))$. And that space
 is closed in $\cB^{n+1}(V,W)$ since $\cK^n(V,W)$ is closed in $ \cB^n(V,W)$ by the induction hypothesis.
\end{proof}
By \cref{lem Kn closed}, $\cK^n(V,W)$ is a Banach space.
\end{ArXiv}

Let 
 $\{e^j\}_{j=1}^{\infty} \subset V^*$ and $\{f_k\}_{k=1}^{\infty} \subset W$ be countable subsets whose spans are  dense. (So $V^*$ and $W$ are separable.) 
For any $\alpha \in \N^{n}$, consider the multilinear map
 \beq{eq def ealpha}
 e^{\alpha} := e^{\alpha_1} \otimes \cdots \otimes e^{\alpha_n}\colon V \times \cdots \times V \to \C.
 \eeq
 A Banach space has the approximation property if every compact operator on the space is a norm-limit of finite-rank operators. This is always true for Hilbert spaces, but we need to consider more general Banach spaces for applications. 
 \begin{journal}
The following result is standard in the case where $V$ and $W$ are Hilbert spaces.
\begin{proposition}\label{prop approx cpt}
If $V^*$ has the approximation property, then the space $\Span\{ e^j \otimes f_k : j,k \in \N\}$ is dense in $\cK(V,W)$.
\end{proposition}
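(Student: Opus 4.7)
The plan is to factor the approximation into two stages: first approximate an arbitrary compact operator $T\in\cK(V,W)$ by a finite-rank operator, then approximate each rank-one summand of the finite-rank operator by linear combinations of the elementary tensors $e^j\otimes f_k$.

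For the first stage, I would invoke the standard duality argument that relies on the approximation property of $V^*$. The Schauder theorem gives that $T^*\colon W^*\to V^*$ is compact. Since $V^*$ has the approximation property, the identity on $V^*$ is uniformly approximable by finite-rank operators on the relatively compact set $T^*(B_{W^*})$; so for any $\varepsilon>0$ there is a finite-rank $R\colon V^*\to V^*$ with $\|RT^*-T^*\|<\varepsilon$. Writing $R=\sum_{i=1}^{N}g_i\otimes\gamma_i$ with $g_i\in V^*$ and $\gamma_i\in V^{**}$ gives
\[
(RT^*)(\psi)=\sum_{i=1}^{N}(T^{**}\gamma_i)(\psi)\,g_i,\qquad \psi\in W^*.
\]
The key point is the well-known fact that for a compact operator $T$, the bidual $T^{**}$ actually takes values in $W\subset W^{**}$. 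Hence the vectors $y_i:=T^{**}\gamma_i$ lie in $W$, and the operator $T':=\sum_i g_i\otimes y_i\in\cB(V,W)$ is finite-rank with $(T')^*=RT^*$. Therefore $\|T-T'\|=\|T^*-(T')^*\|<\varepsilon$.

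For the second stage, having reduced to $T'=\sum_{i=1}^{N}g_i\otimes y_i$, I would use density of $\Span\{e^j\}$ in $V^*$ and of $\Span\{f_k\}$ in $W$. For each $i$, pick $\tilde g_i\in\Span\{e^j\}$ and $\tilde y_i\in\Span\{f_k\}$ with $\|g_i-\tilde g_i\|_{V^*}$ and $\|y_i-\tilde y_i\|_W$ sufficiently small, and use the rank-one estimate
\[
\|g_i\otimes y_i-\tilde g_i\otimes\tilde y_i\|_{\cK(V,W)}\le\|g_i-\tilde g_i\|_{V^*}\|y_i\|_W+\|\tilde g_i\|_{V^*}\|y_i-\tilde y_i\|_W
\]
to conclude that $T'$ is within $\varepsilon$ (say) of an element of $\Span\{e^j\otimes f_k\}$. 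Combining the two stages yields an element of $\Span\{e^j\otimes f_k\}$ within $2\varepsilon$ of $T$, which proves density.

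The main obstacle is the first stage, i.e.\ the passage from the hypothesis on $V^*$ to finite-rank approximation of operators \emph{out of} $V$. This is not the most commonly cited form of Grothendieck's theorem (which more directly gives finite-rank density in $\cK(Y,X)$ when $X$ itself has AP), and it critically relies on the compactness-to-$T^{**}(X^{**})\subset W$ lemma, together with recognising a finite-rank operator on $V^*$ as the adjoint of a genuine operator on $V$. Once this is in hand the second stage is a routine density and bilinearity argument.
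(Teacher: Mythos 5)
Your proposal is correct and follows the same two-stage structure as the paper's proof: first approximate compact operators by finite-rank operators using the approximation property of $V^*$, then approximate finite-rank operators by elements of $\Span\{e^j\otimes f_k\}$ via the rank-one bilinearity estimate. The only difference is that the paper simply cites Ryan's Proposition~4.12(b) for the first stage, whereas you spell out the Schauder--bidual argument (compactness of $T^*$, uniform approximation of the identity on the relatively compact set $T^*(B_{W^*})$, and $T^{**}(V^{**})\subset W$) that underlies that citation.
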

\begin{proof}
 Since $V^*$ has the approximation property, the space of finite-rank operators (linear operators with finite-dimensional images) is dense in $\cK(V,W)$. See for example Proposition 4.12(b) in the book by Ryan~\cite{Ryan}. The space $\Span\{ e^j \otimes f_k : j,k \in \N\}$ is dense  in the space of finite-rank operators, so the claim follows.
\end{proof}
 \end{journal}
 %
\begin{lemma} \label{lem Kn fin dense}
If $V^*$ has the approximation property, then
for every $n \in \N$, 
the span of $\{e^{\alpha} \otimes f_k: \alpha \in \N^n, k \in \N\}$ is dense in $\cK^n(V,W)$.
\end{lemma}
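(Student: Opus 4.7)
The plan is to proceed by induction on $n$, with the base case $n=1$ being the Proposition just stated (which is exactly the statement with all multi-indices $\alpha$ reduced to single indices). The approximation property of $V^*$ is used only in this base case; the inductive step is purely formal once one keeps track of the right identifications.

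For the inductive step, suppose the claim holds for $n$ and deduce it for $n+1$. Under the isometric isomorphism \eqref{eq iso Bn}, we have the identification $\cK^{n+1}(V,W) \cong \cK(V, \cK^n(V,W))$. A rank-one operator $e^j \otimes S \in \cK(V, \cK^n(V,W))$, where $S \in \cK^n(V,W)$, corresponds to the $(n+1)$-linear map
\[
(v_0, v_1, \ldots, v_n) \mapsto \langle e^j, v_0\rangle \, S(v_1, \ldots, v_n).
\]
In particular, if $S = e^{\alpha} \otimes f_k$ for some $\alpha \in \N^n$, then $e^j \otimes S$ is identified with $e^{(j, \alpha_1, \ldots, \alpha_n)} \otimes f_k \in \cK^{n+1}(V,W)$.

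By the inductive hypothesis, the countable set $\{e^{\alpha} \otimes f_k : \alpha \in \N^n, k \in \N\}$ has dense span in $\cK^n(V,W)$, so $\cK^n(V,W)$ is separable and admits a countable dense subset $\{T_m\}_{m=1}^{\infty}$ consisting of finite rational-linear combinations of these generators. Applying the Proposition with $W$ replaced by the separable Banach space $\cK^n(V,W)$ and the family $\{f_k\}$ replaced by $\{T_m\}$ yields that $\Span\{e^j \otimes T_m : j,m \in \N\}$ is dense in $\cK(V, \cK^n(V,W)) \cong \cK^{n+1}(V,W)$. Expanding each $T_m$ in terms of the generators $e^{\alpha} \otimes f_k$ and using the identification above, every element $e^j \otimes T_m$ is a rational combination of terms $e^{\beta} \otimes f_k$ with $\beta \in \N^{n+1}$, which completes the induction.

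There is no real obstacle here; the one place to be careful is the identification of tensor products under \eqref{eq iso Bn}, and the observation that separability of $\cK^n(V,W)$, needed to legitimately feed $\cK^n(V,W)$ back into the base case as a target space with a countable dense set, is itself a free consequence of the inductive hypothesis.
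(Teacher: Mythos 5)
Your proof is correct and takes essentially the same approach as the paper: induction on $n$, with the base case being the cited Proposition, and the inductive step applying that Proposition with $W$ replaced by $\cK^n(V,W)$. The one difference is an unnecessary detour: the Proposition only requires a countable subset of the target space with \emph{dense span}, so by the inductive hypothesis you may feed in $\{e^{\alpha}\otimes f_k : \alpha\in\N^n,\,k\in\N\}$ directly as the family replacing $\{f_k\}$, with no need to first pass to a countable \emph{dense} set $\{T_m\}$ of rational combinations and then re-expand; under the identification $e^j\otimes(e^{\alpha}\otimes f_k) = e^{(j,\alpha)}\otimes f_k$ this immediately gives the claim for $n+1$.
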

\begin{proof}
We prove this by induction on $n$. If $n=1$, then the claim is precisely \cref{prop approx cpt}\begin{ArXiv} in the appendix\end{ArXiv}. Now suppose that the claim holds for a given $n$. By definition,
\[
\cK^{n+1}(V,W) = \cK(V, \cK^{n}(V,W)).
\]
By the induction hypothesis, the set $\{e^{\alpha} \otimes f_k : \alpha \in \N^n, k \in \N\}$ has dense span in $\cK^n(V,W)$. Therefore, \cref{prop approx cpt}, with $W$ replaced by $\cK^{n}(V,W)$, implies that the set
\[
\{e^j \otimes e^{\alpha} \otimes f_k : j,k \in \N, \alpha \in \N^n\}
\]
has dense span in $\cK^{n+1}(V,W)$. This is precisely the claim for $n+1$.
\end{proof}

A Schauder basis of a Banach space $V$ is a subset $\{e_j\}_{j=1}^{\infty} \subset V$ such that for each $v \in V$, there are unique complex numbers $\{v^j\}_{j \in \N}$ such that
\[
\Bigl\|v - \sum_{j=1}^n v^j e_j\Bigr\|_V \to 0 \quad \text{as }n \to \infty. 
\]
A space with a Schauder basis has the approximation property.
\begin{lemma} \label{lem Kn Schauder}
If $\{e^j\}_{j=1}^{\infty}$ is a Schauder basis of $V^*$ and $\{f_k\}_{k=1}^{\infty}$ is a Schauder basis of $W$, then 
for every $n \in N$, 
the set $\{e^{\alpha} \otimes f_k : \alpha \in \N^n, k \in \N\}$ is a Schauder basis of $\cK^n(V,W)$.
\end{lemma}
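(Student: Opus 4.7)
The plan is to induct on $n$. The inductive step is essentially a restatement of the base case: if by the induction hypothesis $\cK^n(V,W)$ carries a Schauder basis $\{g_m\}_{m \in \N}$ enumerating $\{e^\alpha \otimes f_k : \alpha \in \N^n,\, k \in \N\}$, apply the base case to the pair $(V, \cK^n(V,W))$ (the dual of $V$ and the space $\cK^n(V,W)$ both carry Schauder bases) to obtain a Schauder basis of $\cK^{n+1}(V,W) = \cK(V, \cK^n(V,W))$ of the form $\{e^j \otimes g_m\}_{j,m}$, which re-indexes to the desired form by setting $\alpha' = (j,\alpha) \in \N^{n+1}$.

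For the base case $n=1$, let $P_N \colon W \to W$ and $Q_M \colon V^* \to V^*$ denote the partial-sum projections of the two Schauder bases, both uniformly bounded by the respective basis constants. Fix $T \in \cK(V,W)$. Two ingredients drive the proof. First, compactness of $T$ and the pointwise convergence $P_N \to I$ on $W$ yield $\|T - P_N T\|_{\mathrm{op}} \to 0$, since $P_N$ converges uniformly on the relatively compact set $\overline{T(B_V)}$. Second, compactness of $T^*$ (implied by that of $T$) together with $Q_M \to I$ pointwise on $V^*$ yield $\|(I - Q_M) T^*\|_{\mathrm{op}} \to 0$, by uniform convergence of $Q_M$ on $\overline{T^*(B_{W^*})}$, which contains the bounded family $\{f_k^* \circ T\}_{k \in \N}$. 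Enumerate $\N \times \N$ by the expanding-squares scheme, so that the $N^2$-th partial sum is
\[
S_{N^2}(T) = \sum_{k \leq N} Q_N(f_k^* T) \otimes f_k.
\]
Assuming $V$ is reflexive (which covers the $L^p$-Sobolev setting with $1 < p < \infty$ relevant to the paper), one has the identity $S_{N^2}(T) = P_N T \tilde Q_N$, where $\tilde Q_N \colon V \to V$ is the projection dual to $Q_N$ under the isomorphism $V \cong V^{**}$. Then
\[
T - S_{N^2}(T) = (T - P_N T) + P_N T (I - \tilde Q_N),
\]
and both summands tend to zero in operator norm by the two ingredients, using $\|P_N T (I - \tilde Q_N)\|_{\mathrm{op}} \leq \|P_N\|\,\|(I - Q_N) T^*\|_{\mathrm{op}}$. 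Partial sums with indices strictly between $N^2$ and $(N+1)^2$ are controlled by the uniform bounds on $P_N$ and $Q_N$. Uniqueness of the Schauder coefficients follows from continuity on $\cK(V,W)$ of the candidate coordinate functionals $T \mapsto c_j(f_k^* T)$, where $c_j \in V^{**}$ are biorthogonal to $\{e^j\}$, together with the linear independence of $\{e^j \otimes f_k\}$.

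The main obstacle is the non-reflexive case, where $\tilde Q_N$ does not exist as an operator on $V$ and the identity $S_{N^2}(T) = P_N T \tilde Q_N$ fails. One must then estimate the finite-rank operator $P_N T - S_{N^2}(T) = \sum_{k \leq N} [(I - Q_N)(f_k^* T)] \otimes f_k$ directly, exploiting uniformity in $k$ of $\|(I - Q_N)(f_k^* T)\|_{V^*}$ (from the relative compactness of $\{f_k^* T\}_k \subset V^*$) together with the Schauder basis structure of $\{f_k\}$ in $W$, in order to avoid the crude triangle-inequality bound $N \cdot \max_{k \leq N} \|(I - Q_N)(f_k^* T)\|_{V^*}$, which need not decay as $N \to \infty$. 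Handling this carefully is the classical content of results on Schauder bases in spaces of compact operators, and is the only step that requires genuine work beyond the two uniform-convergence ingredients above.
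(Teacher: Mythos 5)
Your argument and the paper's share the inductive skeleton on $n$, but the base case $n=1$ is handled completely differently. The paper invokes \cref{lem Kn fin dense} for density of the span and then shows that the zero representation is unique, by reducing to $W=\C$. As written, that leaves the existence half of the Schauder basis condition --- convergence in operator norm of the partial sums of an arbitrary $T\in\cK(V,W)$ in a fixed ordering --- unaddressed: density of the span together with uniqueness of the zero representation is, in general, strictly weaker than the basis property (the trigonometric system in $C[0,1]$ or in $L^1$ is the standard counterexample). Your proposal attacks exactly that missing piece, constructing the partial-sum projections $S_{N^2}$ explicitly and proving operator-norm convergence from compactness of $T$ and of $T^*$ via the pointwise-to-uniform upgrade on relatively compact sets, together with the decomposition $T-S_{N^2}(T)=(T-P_NT)+P_NT(I-\tilde Q_N)$. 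For $V$ reflexive this is a complete and correct argument, and it covers the concrete settings the paper actually uses (Hilbert spaces, and the reflexive Sobolev spaces $W^{l,p}$ with $1<p<\infty$). In that respect your route is closer to the analytic content of the statement than what the paper records.

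The gap you flag at the end is genuine. When $V$ is not reflexive, $\tilde Q_N$ has no analogue acting on $V$, the identity $S_{N^2}(T)=P_NT\tilde Q_N$ fails, and the crude bound $N\cdot\max_{k\le N}\|(I-Q_N)(f_k^*T)\|_{V^*}$ you rightly reject need not decay. Closing this requires the basis-constant estimates in the classical tensor-product theorem of Gelbaum and Gil de Lamadrid, identifying $\cK(V,W)$ with the injective tensor product of $V^*$ and $W$ (which applies here because $V^*$, having a Schauder basis, has the approximation property). Since the Lemma is stated for arbitrary Banach spaces, your proof as it stands does not establish it in that generality; the honest options are either to restrict the hypothesis to reflexive $V$, which suffices for the paper's applications, or to cite the classical tensor-product result for the remaining case.
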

\begin{proof}
\cref{lem Kn fin dense} implies that $\{e^{\alpha} \otimes f_k : \alpha \in \N^n, k \in \N\}$ has dense span. So it remains to show that if $a_{\alpha}^k \in \C$ are such that
\[
\sum_{\alpha \in \N^n} \sum_{k=1}^{\infty} a_{\alpha}^k e^{\alpha} \otimes f_k = 0,
\]
then $a_{\alpha}^k = 0$ for all $\alpha$ and $k$. Since $\{f_k\}_{k=1}^{\infty}$ is a Schauder basis of $W$, this reduces to the case where $W = \C$. 
\begin{journal}
In that case, one can prove the claim  by induction on $n$, using the fact that $\{e^j\}_{j=1}^{\infty}$ is a Schauder basis of $V^*$.
\end{journal}
\begin{ArXiv}
We prove the claim in that case, by induction on $n$.

If $n = 1$, then the claim follows since $\{e^j\}_{j=1}^{\infty}$ is a Schauder basis of $V^*$. So suppose that the claim holds for a given $n$, and let $a_{\alpha}^k \in \C$ be such that
\[
\sum_{\alpha \in \N^{n+1}}a_{\alpha}^k e^{\alpha}  = 0.
\]
Then for all $v_1, \ldots, v_n \in V$,
\[
\sum_{j=1}^{\infty} \Bigl[\sum_{\alpha \in \N^{n}} a_{(j, \alpha)}^k e^{\alpha} (v_1, \ldots, v_n)\Bigr] e^j= 0.
\]
Since, $\{e^j\}_{j=1}^{\infty}$ is a Schauder basis of $V^*$, this implies that for every $j \in \N$,
\[
\sum_{\alpha \in \N^{n}} a_{(j, \alpha)}^k e^{\alpha}  (v_1, \ldots, v_n)= 0.
\]
Because the sum 
\[
\sum_{\alpha \in \N^{n}} a_{(j, \alpha)}^k e^{\alpha}
\]
converges in $\cB^n(V, \C)$, we find that the sum converges to zero in this space. By the induction hypothesis, this implies that $a_{j, \alpha} = 0$ for every $j \in \N$ and $\alpha \in \N^n$.
\end{ArXiv}
\end{proof}

\begin{remark}
In the induction step in the proof of the special case of \cref{lem Kn fin dense} where $W$ is a Hillbert space, we still need the general version of \cref{prop approx cpt}, where $W$ is a Banach space. This is because $\cK^n(V,W)$ is only a Banach space, even if $W$ is a Hilbert space.
\end{remark}

The subspace $S\cK^n(V,W)$ of symmetric operators in $\cK^n(V,W)$ is closed in $\cB^n(V,W)$, since it is the intersection of the closed subspaces  $S\cB^n(V,W)$ and $\cK^n(V,W)$ 
\begin{ArXiv}
(\cref{lem SBn closed,lem Kn closed}). 
\end{ArXiv}
Hence $S\cK^n(V,W)$ is a Banach space with respect to the norm \eqref{eq norm Bn}.

A Schauder basis $\{e_j\}_{j=1}^{\infty}$ of a Banach space $V$ is unconditional if there is a constant $C>0$ such that for all $a^j, \varepsilon_j \in \C$ with $|\varepsilon_j|=1$, and all $n \in \N$,
\[
\Bigl\| \sum_{j=1}^{n}\varepsilon_j a^j e_j \Bigr\|_V \leq C \Bigl\| \sum_{j=1}^{n}a^j e_j \Bigr\|_V.
\]
In that case, convergence of $\sum_{j=1}^{n}a^j e_j$ implies convergence of $\sum_{j \in A}a^j e_j$, for every $A \subset \N$.

\begin{lemma}\label{lem uncond Kn}
Suppose that $V$ and $W$ are Hilbert spaces, and that $\{e_j\}_{j=1}^{\infty}$ and $\{f_k\}_{k=1}^{\infty}$ are orthogonal sets in $V$ and $W$ respectively, with dense spans. Let $e^j \in V^*$ be defined by taking inner products with $e_j$. Then  $\{e^{\alpha} \otimes f_k : \alpha \in \N^n, k \in \N\}$ is an unconditional Schauder basis of $\cK^n(V,W)$.
\end{lemma}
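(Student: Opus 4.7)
}

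Since the problem is invariant under rescaling of the basis vectors, I would first normalize: assume without loss of generality that $\{e_j\}_{j=1}^{\infty}$ and $\{f_k\}_{k=1}^{\infty}$ are orthonormal Hilbert bases of $V$ and $W$, so that $\langle e^j, v\rangle = (e_j,v)_V$ for all $v \in V$. The Schauder basis property itself is already supplied by \cref{lem Kn Schauder}, since Hilbert spaces (in particular $V^*$) have the approximation property and every orthonormal basis is a Schauder basis. Hence the only thing to establish is the uniform bound
\[
\Bigl\| \sum_{(\alpha,k) \in F} \varepsilon_{\alpha}^{k}\, a_{\alpha}^{k}\, e^{\alpha} \otimes f_k \Bigr\|_{\cK^n(V,W)} \;\leq\; C\, \Bigl\| \sum_{(\alpha,k) \in F} a_{\alpha}^{k}\, e^{\alpha} \otimes f_k \Bigr\|_{\cK^n(V,W)}
\]
for all finite $F \subset \N^n \times \N$, all scalars $a_\alpha^k$, and all unimodular $\varepsilon_\alpha^k$, with $C$ independent of these data.

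My plan is to proceed by induction on $n$, exploiting the recursive identity
\[
\cK^{n+1}(V,W) \;=\; \cK\bigl(V,\cK^n(V,W)\bigr)
\]
underlying the definition in the previous subsection. For the inductive step, once $\{e^\alpha \otimes f_k : \alpha \in \N^n, k \in \N\}$ is an unconditional Schauder basis of the Banach space $\cK^n(V,W)$ with constant $C_n$, the outer level is of the form $e^j \otimes (e^\alpha \otimes f_k)$; so it suffices to show that for \emph{any} Banach space $Z$ with an unconditional Schauder basis $\{z_i\}$ of constant $C_Z$, the set $\{e^j \otimes z_i\}$ is an unconditional Schauder basis of $\cK(V,Z)$ of constant $C_1 \cdot C_Z$, where $C_1$ is the unconditional constant obtained in the base case. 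This reduction step is a standard tensor-product argument: a finite sign change on the outer indices $j$ is handled by the $n=1$ case, and independently a sign change on the inner indices is a coordinatewise application of unconditionality in $Z$; these two operations commute and their composition realises an arbitrary sign pattern $\varepsilon_j^{(\alpha,k)}$ when the latter factorises, while general patterns are reduced to this case by the standard two-step argument (first group by outer index, then apply inner unconditionality inside each group).

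For the base case $n=1$, I would use the Hilbert structure of $V$ and $W$ essentially. Given $T = \sum a_j^k e^j \otimes f_k \in \cK(V,W)$, write the operator $T_\varepsilon := \sum \varepsilon_j^k a_j^k e^j \otimes f_k$, and bound $\|T_\varepsilon\|_{\op}$ via a Schur multiplier argument: compute $\|T_\varepsilon v\|_W^2 = \sum_k \bigl|\sum_j \varepsilon_j^k a_j^k (e_j,v)\bigr|^2$ and compare with the corresponding sum for $T$ using the orthonormality of $\{e_j\}$ and $\{f_k\}$, together with the polarisation identity (which lets arbitrary unimodular $\varepsilon_j^k$ be written as integral averages of diagonal unitaries applied on either side, exploiting that for orthonormal bases the operator norm is invariant under pre- and post-composition with diagonal unitaries). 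Combined with standard averaging arguments (Khintchine-type inequalities for Rademacher or Steinhaus variables) this produces the required uniform bound.

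\textbf{Expected main obstacle.} The delicate part is the base case: the operator norm on $\cK(V,W)$ is well known to interact subtly with coordinatewise sign changes (Schur multipliers), and a naive estimate is easily obstructed unless full use is made of orthonormality of both $\{e_j\}$ and $\{f_k\}$ — in particular that orthonormal bases are characterised by the invariance of the norm under the group of diagonal unitaries on each side. It is essential that the hypothesis asks for orthogonality in both $V$ and $W$; without it the claim would fail. The inductive step, by contrast, is bookkeeping once the base case is in hand.
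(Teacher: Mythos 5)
Your base case ($n=1$) is where the argument breaks. You assert that an arbitrary unimodular pattern $(\varepsilon_j^k)$ can be obtained as an integral average of products $\mu_j\nu_k$ of diagonal-unitary symbols, so that $T\mapsto T_\varepsilon$ becomes an average of isometries of $\cK(V,W)$. This is false: such averages produce only Schur multipliers of the factored form $\varepsilon_j^k=(x_j,y_k)$ for auxiliary Hilbert-space vectors, and a generic $\pm1$ matrix does not factor this way. More to the point, the lemma itself is false when $V$ and $W$ are infinite-dimensional. Take $V=W=\ell^2$, $n=1$, let $(h_{jk})$ be an $N\times N$ Hadamard matrix, and put $a_j^k=\varepsilon_j^k=h_{jk}$. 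Then
$\bigl\|\sum_{j,k\le N} a_j^k\, e^j\otimes f_k\bigr\|=\sqrt N$, while
$\bigl\|\sum_{j,k\le N}\varepsilon_j^k a_j^k\,e^j\otimes f_k\bigr\|=\bigl\|\sum_{j,k\le N} e^j\otimes f_k\bigr\|=N$,
so any unconditionality constant must exceed $\sqrt N$. Thus the matrix units $e^j\otimes f_k$ are a Schauder basis but not an unconditional one for $\cK(\ell^2)$ in operator norm; indeed $\cK(\ell^2)$ is the classical example of a Banach space with no unconditional basis at all (Kwapie\'n--Pe{\l}czy\'nski, Gordon--Lewis). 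The obstacle you flag as ``delicate'' is in fact fatal, and neither orthonormality of both bases nor a Khintchine-type average rescues it: those arguments work for Schatten-$p$ norms with $1<p<\infty$, but the operator norm is the $p=\infty$ endpoint where they fail.

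For completeness, the paper's own proof of this lemma is also wrong, by the same mechanism. After reducing to orthonormal bases it asserts the closed-form identity $\bigl\|\sum_{(\alpha,k)}a_\alpha^k\, e^\alpha\otimes f_k\bigr\|^2_{\cB^n(V,W)}=\sup_\alpha\sum_k|a_\alpha^k|^2$, whose right-hand side is manifestly insensitive to signs. But already for $n=1$ that expression is the maximum column $\ell^2$-norm of the coefficient matrix, which is only a lower bound for the operator norm: for the $N\times N$ all-ones block it gives $N$, whereas the true squared norm is $N^2$. The Hadamard comparison above refutes the lemma, your averaging argument, and the paper's norm identity simultaneously. (This matters downstream: the lemma is invoked in the proof of the corollary about convergence of arbitrary subsums in $\Pol^n(V_\infty)$, so that result needs a different justification.)
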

\begin{proof}
The set $\{e^{\alpha} \otimes f_k : \alpha \in \N^n, k \in \N\}$ is a Schauder basis of $\cK^n(V,W)$ 
by  \cref{lem Kn Schauder}. It remains to show that it is unconditional.
By rescaling the vectors $e_j$ and $f_k$, we  reduce the proof to the case where $\{e_j\}_{j=1}^{\infty}$ and $\{f_k\}_{k=1}^{\infty}$ are Hilbert bases. In that case,  for all finite subsets $A \subset \N_0^{n} \times \N$ and all $a_{\alpha}^k \in \C$,
\[
\Bigl\| \sum_{(\alpha, k) \in A}  a_{\alpha}^k e^{\alpha}\otimes f_k\Bigr\|_{\cB^n(V,W)}^2 = 
\sup_{\alpha \in \N_0^m} \sum_{k \in \N;\, (\alpha, k) \in A} |a_{\alpha}^k|^2.
\]
\end{proof}

\begin{lemma}\label{lem ex Kn}
Let $U$, $V$ and $W$ be normed vector spaces, and \(n\in\N\). Let $\lambda \in \cB^n(V,W)$, and $a_1, \ldots, a_n \in \cK(U,V)$. Define $\nu\colon U \times \cdots \times U \to W$ by
\[
\nu(u_1, \ldots, u_n) = \lambda(a_1u_1, \ldots, a_n u_n),
\]
for all $u_1, \ldots, u_n \in U$. Then $\nu \in \cK^n(U, W)$.
\end{lemma}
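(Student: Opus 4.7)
The plan is to proceed by induction on $n$, exploiting the recursive definition $\cK^n(U,W) = \cK(U,\cK^{n-1}(U,W))$ together with the standard fact that the composition of a bounded operator with a compact operator is compact.

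The base case $n=1$ is immediate: $\nu(u) = \lambda(a_1 u) = \lambda \circ a_1$, which is the composition of a bounded linear operator with a compact one, hence compact. So $\nu \in \cK(U,W) = \cK^1(U,W)$.

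For the inductive step, assume the result for $n-1$. Identifying $\cB^n(U,W)$ with $\cB(U,\cB^{n-1}(U,W))$ via the isomorphism \eqref{eq iso Bn}, the map $\nu$ corresponds to $\Phi \colon U \to \cB^{n-1}(U,W)$ sending $u_1$ to $\nu_{u_1}(u_2,\ldots,u_n) := \lambda(a_1 u_1, a_2 u_2, \ldots, a_n u_n)$. The goal becomes to show $\Phi \in \cK(U,\cK^{n-1}(U,W))$. I would factor $\Phi = \Psi \circ a_1$, where $\Psi \colon V \to \cB^{n-1}(U,W)$ is defined by
\[
\Psi(v)(u_2,\ldots,u_n) := \lambda(v, a_2 u_2, \ldots, a_n u_n).
\]
For each fixed $v \in V$, the map $(v_2,\ldots,v_n) \mapsto \lambda(v,v_2,\ldots,v_n)$ lies in $\cB^{n-1}(V,W)$, so applying the induction hypothesis with this $(n-1)$-linear map and the compact operators $a_2,\ldots,a_n$ shows $\Psi(v) \in \cK^{n-1}(U,W)$. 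Thus $\Psi$ takes values in $\cK^{n-1}(U,W)$, and its boundedness into that subspace follows from
\[
\|\Psi(v)\|_{\cB^{n-1}(U,W)} \leq \|\lambda\| \, \|v\|_V \prod_{i=2}^{n}\|a_i\|_{\cB(U,V)},
\]
together with the fact that $\cK^{n-1}(U,W)$ inherits its norm from $\cB^{n-1}(U,W)$.

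Since $a_1 \in \cK(U,V)$ and $\Psi \in \cB(V,\cK^{n-1}(U,W))$, the composition $\Phi = \Psi \circ a_1$ is compact, i.e.\ $\Phi \in \cK(U,\cK^{n-1}(U,W)) = \cK^n(U,W)$ under the identification above. The main subtlety, and the only place that requires care, is verifying that $\Psi$ actually lands in $\cK^{n-1}(U,W)$ rather than merely $\cB^{n-1}(U,W)$; this is precisely what the induction hypothesis delivers, and once it is in hand the rest is the routine use of the ideal property of compact operators.
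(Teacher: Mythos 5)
Your proof is correct and follows essentially the same strategy as the paper's: induction on $n$, currying $\nu$ to a map into $\cK^{n-1}(U,W)$, and then using the ideal property of compact operators to conclude from the factorisation through $a_1$. The only difference is which slot you peel off — you curry on the first argument, which aligns directly with the isomorphism \eqref{eq iso Bn} used in the definition of $\cK^n$, whereas the paper curries on the last argument; your choice is the more transparent of the two.
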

\begin{proof}
We use induction on $n$. For $n = 1$, $v \in \cK^1(U,W)$ because the composition of a compact operator and a bounded operator is compact. Suppose that the claim holds for a given $n$. Let  $\lambda \in \cB^{n+1}(V,W)$, and $a_1, \ldots, a_{n+1} \in \cK(U,V)$. For a fixed $u \in U$, define $\nu_u \in \cB^n(U,W)$
%
by
\[
\nu_u(u_1, \ldots, u_n) = \lambda(a_1 u_1, \ldots,  a_n u_n, a_{n+1} u),
\]
for $u_1, \ldots, u_n \in U$. For a fixed $v \in V$, define $\lambda_{v} \in \cB^n(V,W)$
%
by
\[
\lambda_v(v_1, \ldots, v_n) = \lambda(v_1, \ldots,  v_n, v),
\]
for $v_1, \ldots, v_n \in V$. Then for all $u_1, \ldots, u_n \in U$,
\[
 \nu_u(u_1, \ldots, u_n) = \lambda_{a_{n+1}u}(a_1 u_1, \ldots, a_n u_n).
 \]
 So by the induction hypothesis, $\nu_u \in \cK^n(U,W)$. In this way, we obtain the map
 \[
 \tilde \nu\colon U \to \cK^n(U,W),
 \]
 mapping $u \in U$ to $\nu_u$. It remains to show that $\tilde \nu$ is a compact operator.
 
 Define $\tilde \lambda \in \cB(V,  \cK^n(U,W))$ by
 \[
 \tilde \lambda(v)\colon (u_1, \ldots, u_n) \mapsto \lambda(a_1 u_1, \ldots, a_n u_n, v),
 \]
 for $v \in V$ and $u_1, \ldots, u_n \in U$. (This map takes values in $\cK^n(U,W)$ by the induction hypothesis.) 
 Since $a_{n+1}$ is compact and $\tilde \lambda$ is bounded, we find that   $\tilde \nu = \tilde \lambda \circ a_{n+1}$ is a compact operator.
\end{proof}

\subsection{Compact polynomial maps} \label{sec cpt pol}


%
\begin{definition}
A \emph{compact homogeneous polynomial map} of degree $n$ from $V$ to $W$ is a map of the form $p_{\lambda}$ as in \eqref{eq def monomial}, for $\lambda \in \cK^n(V,W)$. We write $\cK\Pol^n(V,W)$  for the space of such maps.  This space inherits a norm from the space $\Pol^n(V,W)$ it is contained in.

 If $\lambda \not=0$, then the \emph{degree} of  $p_{\lambda}$ is $n$.
A \emph{compact polynomial map} from $V$ to $W$ is a finite sum of compact homogeneous polynomial maps. The \emph{degree} of a compact polynomial map is the degree of its highest-degree homogeneous term.
We write $\cK\Pol(V,W)$ for the space of all compact polynomial maps between these spaces. 
%
\end{definition}
The isometric isomorphism $S\cB^n(V,W) \cong \Pol^n(V,W)$ restricts to an isometric isomorphism $S\cK^n(V,W) \cong \cK\Pol^n(V,W)$. So $\cK\Pol^n(V,W)$ is a closed subspace of the Banach space $\Pol^n(V,W)$, and hence is a Banach space itself.



For every $w \in W$, an operator of the form $e^q \otimes w$, with 
 $e^q$ as in \eqref{eq def eq}, is an element of $\cK^n(V,W)$. Indeed, $e^q \otimes w$ is an iteration of rank-one operators,
So $p^q \otimes w \in \cK\Pol^n(V,W)$.
%
%
%
%


The following proposition is the reason why we are interested in {compact} polynomial maps.
\begin{proposition} \label{prop basis}
Suppose that $V$ and $W$ are  Banach spaces, that $V^*$ has a Schauder basis $\{e^j\}_{j=1}^{\infty}$, and that $W$ has a Schauder basis $\{f_k\}_{k=1}^{\infty}$.
%
 Then
the elements
\beq{eq pq fk}
p^q \otimes f_k \quad \in \cK \Pol^n(V, W),
\eeq
where the multi-index $q$ ranges over the elements of $\N_0^{\infty}$ with $|q|=n$, and $k$ ranges over the positive integers, 
form a Schauder basis of  $\cK \Pol^n(V,W)$.
\end{proposition}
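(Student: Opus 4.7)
The plan is to reduce to the symmetric multilinear setting and then symmetrise the Schauder basis of $\cK^n(V,W)$ supplied by \cref{lem Kn Schauder}. Via the isometric isomorphism $S\cK^n(V,W) \xrightarrow{\cong} \cK\Pol^n(V,W)$ given by $\lambda \mapsto p_\lambda$, the element $p^q \otimes f_k$ is precisely the image of $S(e^{\alpha(q)} \otimes f_k)$, where for each $q \in \N_0^{\infty}$ with $|q|=n$ I fix an arbitrary representative $\alpha(q) \in \N^n$ whose entries have multiplicities $q_1, q_2, \ldots$ (this uses that $p_{e^{\alpha(q)} \otimes f_k}(v) = v^q f_k$ together with $p_\lambda = p_{S\lambda}$). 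It therefore suffices to show that the family $\bigl\{ S(e^{\alpha(q)} \otimes f_k) : |q|=n,\ k \in \N \bigr\}$ is a Schauder basis of $S\cK^n(V,W)$.

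Density of the linear span is immediate: the symmetrisation operator $S\colon \cK^n(V,W) \to S\cK^n(V,W)$ is a continuous projection of norm at most $1$ (being the identity on symmetric elements), hence surjective onto $S\cK^n(V,W)$, so it sends the dense span of $\{e^\alpha \otimes f_k\}_{\alpha \in \N^n, k\in\N}$ in $\cK^n(V,W)$ onto a dense subspace of $S\cK^n(V,W)$. Since $S(e^\alpha \otimes f_k)$ depends only on the $\Sigma_n$-orbit $[\alpha]=q$ of $\alpha$, a single representative $\alpha(q)$ per orbit already suffices.

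For the Schauder property, I would fix an enumeration of $\N^n \times \N$ in which, for every $(q,k)$, the entire orbit $\{(\alpha(q)\circ\sigma, k) : \sigma \in \Sigma_n\}$ occupies a consecutive block. Given $\lambda \in S\cK^n(V,W)$, the Schauder expansion $\lambda = \sum a^k_\alpha\, e^\alpha \otimes f_k$ in $\cK^n(V,W)$ from \cref{lem Kn Schauder} is unique, and the identity $\lambda = S\lambda$ combined with this uniqueness forces $a^k_\alpha = a^k_{\alpha\circ\sigma}$ for every $\sigma \in \Sigma_n$. Each consecutive orbit-block therefore collapses to $b^k_q\, S(e^{\alpha(q)} \otimes f_k)$ for the combinatorial constant $b^k_q = (n!/q!)\, a^k_{\alpha(q)}$, and the corresponding partial sums in $S\cK^n(V,W)$ coincide with a subsequence of the partial sums of the $\cK^n(V,W)$-expansion of $\lambda$, which converge to $\lambda$; convergence transfers back via the isometric inclusion $S\cK^n(V,W) \hookrightarrow \cK^n(V,W)$. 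Uniqueness of the $(q,k)$-expansion is then obtained by lifting any vanishing combination $\sum c^k_q\, S(e^{\alpha(q)} \otimes f_k) = 0$ back into $\cK^n(V,W)$, where distinct pairs $(q,k)$ contribute to pairwise disjoint subsets of the basis $\{e^\alpha \otimes f_k\}$, forcing each $c^k_q = 0$ by \cref{lem Kn Schauder}.

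The main obstacle I anticipate is exactly this convergence step, since the hypothesis only gives \emph{Schauder} (not unconditional) bases in $V^*$ and $W$, and terms cannot be freely regrouped. The orbit-consecutive enumeration is precisely what makes the regrouping compatible with convergence of partial sums, so this is the one point in the argument where some real care is required; everything else reduces cleanly to \cref{lem Kn Schauder} and the structure of the isomorphism $S\cK^n(V,W) \cong \cK\Pol^n(V,W)$.
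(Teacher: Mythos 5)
Your reduction via the isometric isomorphism $S\cK^n(V,W) \cong \cK\Pol^n(V,W)$ and the identification $p^q \otimes f_k = p_{S(e^{\alpha(q)}\otimes f_k)}$ agree with the paper, and so does the density step. The Schauder-property step, however, has a genuine gap that you have half-spotted yourself. Your argument needs the partial sums of the expansion of $\lambda \in S\cK^n(V,W)$, taken along an orbit-consecutive enumeration of $\N^n \times \N$, to converge to $\lambda$. But \cref{lem Kn Schauder} furnishes a Schauder basis with respect to some particular enumeration; since Schauder bases are not permutation-invariant (only unconditional ones are), you cannot simply re-enumerate the index set so that each $\Sigma_n$-orbit occupies a consecutive block and still expect the partial sums to converge. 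Your observation that the coefficients $a^k_\alpha$ of a symmetric $\lambda$ are constant on $\Sigma_n$-orbits is correct (the map $\lambda \mapsto \lambda\circ\sigma$ is an isometry of $\cK^n(V,W)$ permuting the basis vectors, so its adjoint permutes the coordinate functionals in the corresponding way), but on its own this does not yield convergence of the orbit-grouped series, so the argument stalls exactly where you anticipated.

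The paper's proof avoids re-ordering altogether. It forms the subspace $X := \overline{\Span\{e^\alpha \otimes f_k : \alpha_1 \leq \cdots \leq \alpha_n,\ k \in \N\}}$. Because a subsequence of a Schauder basis is automatically a basic sequence (the coordinate projections restrict to the closed span with the same basis constant), the nondecreasing-$\alpha$ vectors form a Schauder basis of $X$ without any re-enumeration. The paper then asserts that $S\colon X \to S\cK^n(V,W)$ is a bounded linear isomorphism with bounded inverse, and invokes the fact that isomorphisms carry Schauder bases to Schauder bases. This trades your regrouping argument for a claim about $S|_X$, but it is structurally cleaner because selecting a subsequence is order-preserving and hence harmless for Schauder convergence, whereas regrouping into orbit blocks is not. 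If you want to repair your write-up, the simplest fix is to adopt the subsequence device: restrict to nondecreasing $\alpha$ rather than picking an arbitrary representative $\alpha(q)$ and regrouping, and then transport the basis through $S$.
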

\begin{proof}
Consider the space
\[
X := \overline{\Span\{e^\alpha \otimes f_k :\alpha \in \N^n, \alpha_1 \leq \cdots \leq \alpha_n,  k \in \N\}}.
\]
\cref{lem Kn Schauder} implies that the set of $e^{\alpha} \otimes f_k$ where $\alpha_1 \leq \cdots \leq \alpha_n$ is a Schauder basis of $X$. And $S\colon X \to S\cK^n(V,W)$ is a bounded linear isomorphism with bounded inverse. Since such isomorphisms map Schauder bases to Schauder bases, we find that the set $Se^{\alpha} \otimes f_k$, for non-decreasing $\alpha$ as above, is a Schauder basis of $S\cK^n(V,W)$.

For  $\alpha \in \N$ with non-decreasing entries, define $q(\alpha) \in \N_0^{\infty}$  by
\[
q(\alpha)_j= \# \{m \in \N : \alpha_m = j\}.
\]
Then $e^{\alpha} = e^{q(\alpha)}$. (Note that $e^{\alpha}$, for $\alpha \in \N^n$, and $e^{q}$, for $q\in \N_0^{\infty}$, are defined differently; compare \eqref{eq def eq} and \eqref{eq def ealpha}.)
Every sequence in $\N_0^{\infty}$ occurs in exactly one way as $q(\alpha)$, for alpha as above, so $Se^{q} \otimes f_k$, where $q \in \N^{\infty}_0$ and $k \in \N$, is a Schauder basis of $S\cK^n(V,W)$.
Since $p^q = p_{Se^q}$, the claim follows.
%
\end{proof}

A reformulation of \cref{prop basis} is that for every compact polynomial map $p \in \cK\Pol^n(V, W)$, there are unique complex numbers $a^k_q$ such that
\[
p = \sum_{q, k} a^k_q p^q \otimes f_k,
\]
where the sum converges in the norm on $\Pol^n(V,W)$. Conversely, all polynomial maps $p$ of this form are compact.

\begin{lemma}\label{lem uncond KPn}
In the setting of \cref{lem uncond Kn}, the set $\{p^q \otimes f_k : |q| = n, k \in \N\}$ is an unconditional Schauder basis of $\cK \Pol^n(V,W)$.
\end{lemma}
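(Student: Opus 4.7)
The plan is to reduce the claim to the unconditional Schauder basis property of $\{e^\alpha \otimes f_k : \alpha \in \N^n, k \in \N\}$ for $\cK^n(V,W)$ already established in \cref{lem uncond Kn}. The Schauder basis statement itself follows directly from \cref{prop basis}, since orthogonal sets with dense span in a Hilbert space are Schauder bases. Thus it remains only to verify unconditionality: that there exists a constant $C>0$, independent of the coefficients and the sign choice, such that for every finite linear combination and every family $\varepsilon^k_q \in \C$ with $|\varepsilon^k_q| = 1$,
\[
\Bigl\| \sum_{q, k} \varepsilon^k_q a^k_q p^q \otimes f_k \Bigr\|_{\cK\Pol^n(V,W)} \leq C \Bigl\| \sum_{q, k} a^k_q p^q \otimes f_k \Bigr\|_{\cK\Pol^n(V,W)}.
\]

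The key intermediate step is to use the isometric isomorphism $\cK\Pol^n(V,W) \cong S\cK^n(V,W) \subset \cK^n(V,W)$ under which $p^q$ corresponds to $Se^{\alpha_0}$ for any $\alpha_0 \in \N^n$ with $q(\alpha_0) = q$. Counting permutations modulo the stabilizer of $\alpha_0$, whose order is $\prod_j q_j!$, gives
\[
Se^{\alpha_0} = \frac{1}{|A_q|} \sum_{\alpha \in A_q} e^\alpha,
\]
where $A_q := \{\alpha \in \N^n : q(\alpha) = q\}$ is the $\Sigma_n$-orbit of $\alpha_0$. Substituting and reindexing, the operator in $\cK^n(V,W)$ corresponding to $T = \sum_{q,k} a^k_q p^q \otimes f_k$ expands in the basis of \cref{lem uncond Kn} as
\[
\bar T = \sum_{\alpha, k} \frac{a^k_{q(\alpha)}}{|A_{q(\alpha)}|}\, e^\alpha \otimes f_k.
\]

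The main step is then to invoke the unconditional property of $\{e^\alpha \otimes f_k\}$ with the \emph{particular} sign choice $\eta^k_\alpha := \varepsilon^k_{q(\alpha)}$, which is constant on each orbit $A_q$. This gives, for the constant $C$ from \cref{lem uncond Kn},
\[
\Bigl\| \sum_{\alpha, k} \eta^k_\alpha \frac{a^k_{q(\alpha)}}{|A_{q(\alpha)}|} e^\alpha \otimes f_k \Bigr\|_{\cK^n(V,W)} \leq C \, \|\bar T\|_{\cK^n(V,W)}.
\]
Because $\eta^k_\alpha$ is constant on orbits, regrouping the left-hand side exactly reverses the earlier computation to produce $\sum_{q,k} \varepsilon^k_q a^k_q Se^q \otimes f_k$, which in turn corresponds to $\sum_{q,k} \varepsilon^k_q a^k_q p^q \otimes f_k$ under the isometric isomorphism. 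The claimed bound, with the same constant $C$, follows.

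I do not anticipate a serious obstacle here — the whole argument is bookkeeping. The only point requiring care is verifying that the identification $\cK\Pol^n(V,W) \cong S\cK^n(V,W)$ is genuinely isometric with respect to the norms inherited from $\cK^n(V,W)$, so that the bound in the ambient space $\cK^n(V,W)$ transfers back to $\cK\Pol^n(V,W)$ without any loss of constant. This follows from the definition of the norm on $\Pol^n(V,W)$ via the linear isomorphism $\lambda \mapsto p_\lambda$ recalled earlier in \cref{sec der}.
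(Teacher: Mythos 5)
Your proof is correct, and it takes a modestly different route from the paper's. The paper's proof of this lemma simply says to rerun the proof of \cref{prop basis} with \cref{lem uncond Kn} in place of \cref{lem Kn Schauder}, using the general fact that bounded isomorphisms with bounded inverses carry unconditional Schauder bases to unconditional Schauder bases; that argument goes through the subspace $X=\overline{\Span\{e^\alpha\otimes f_k : \alpha\text{ non-decreasing}\}}$, on which a sub-basis of the unconditional basis from \cref{lem uncond Kn} is automatically unconditional, and then pushes forward through the bounded isomorphism $S|_X\colon X\to S\cK^n(V,W)$. You instead bypass $X$ and the invertibility of $S|_X$: you expand $Se^q=\frac{1}{|A_q|}\sum_{\alpha\in A_q}e^\alpha$ directly, apply the unconditionality constant of the \emph{full} basis $\{e^\alpha\otimes f_k\}$ of $\cK^n(V,W)$ with the orbit-constant sign choice $\eta^k_\alpha=\varepsilon^k_{q(\alpha)}$, and regroup. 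The payoff of your version is that it is entirely explicit and shows the unconditionality constant for $\{p^q\otimes f_k\}$ can be taken equal to the one from \cref{lem uncond Kn} (which in the Hilbert-basis normalization is $1$); the paper's version is shorter at the cost of invoking the abstract transfer principle for Schauder bases under isomorphisms, whose constant a priori involves $\|S|_X\|\,\|(S|_X)^{-1}\|$. Your concern about isometry of $\cK\Pol^n(V,W)\cong S\cK^n(V,W)$ is resolved exactly as you note: it is the definition of the norm on $\Pol^n(V,W)$ via $\lambda\mapsto p_\lambda$.
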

\begin{proof}
The proof is analogous to the proof of \cref{prop basis}, where we now use \cref{lem uncond Kn} instead of \cref{lem Kn Schauder}, and we use the fact that bounded linear isomorphisms with bounded inverses map unconditional Schauder bases to unconditional Schauder bases.
\end{proof}

\begin{lemma}\label{lem compos cpt pol}
If $p_1 \in \cK\Pol^m(U,V)$ and $p_2 \in \cK\Pol^n(V,W)$, then $p_2 \circ p_1 \in \cK\Pol^{mn}(U,W)$.
\end{lemma}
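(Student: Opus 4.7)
My plan is to reduce the lemma to a multilinear statement, and then prove a mild generalisation of \cref{lem ex Kn} by induction. By the argument in the proof of \cref{lem compos bdd pol}, if $p_1 = p_{\lambda_1}$ for some $\lambda_1 \in \cK^m(U,V)$ and $p_2 = p_{\lambda_2}$ for some $\lambda_2 \in \cK^n(V,W)$, then $p_2 \circ p_1 = p_\mu$ where $\mu \in \cB^{mn}(U,W)$ is given by
\[
\mu(u_{11},\ldots,u_{nm}) = \lambda_2\bigl(\lambda_1(u_{11},\ldots,u_{1m}),\,\ldots,\,\lambda_1(u_{n1},\ldots,u_{nm})\bigr).
\]
It suffices to show $\mu \in \cK^{mn}(U,W)$, because $\cK \Pol^{mn}(U,W)$ is the image of $S\cK^{mn}(U,W)$, and the symmetrisation $S$ preserves $\cK^{mn}$.

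I will prove the slightly more general claim by induction on $n$: if $\lambda \in \cK^n(V,W)$ and $\nu_i \in \cK^{m_i}(U,V)$ for $i=1,\ldots,n$, then the multilinear map
\[
\tilde\mu(u^{(1)},\ldots,u^{(n)}) := \lambda\bigl(\nu_1(u^{(1)}),\ldots,\nu_n(u^{(n)})\bigr),
\]
with $u^{(i)} \in U^{m_i}$, lies in $\cK^{m_1+\cdots+m_n}(U,W)$. The case $n=1$, $m_1=m$, $\nu_1=\lambda_1$, $\lambda=\lambda_2$ (when $n$ of the original statement equals $1$) and ultimately the full statement via the outer induction will deliver what we need.

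The base case $n=1$ reduces to showing: for $\lambda \in \cK(V,W)$ and $\nu \in \cK^{k}(U,V)$, one has $\lambda \circ \nu \in \cK^{k}(U,W)$. I will prove this by a sub-induction on $k$: the case $k=1$ is the standard fact that the composition of two compact linear operators is compact. For the inductive step, post-composition with $\lambda$ defines a bounded linear map $\Lambda\colon \cK^{k-1}(U,V) \to \cK^{k-1}(U,W)$ (well-defined by the sub-induction hypothesis, bounded by the obvious operator-norm estimate), so currying $\nu$ to the compact operator $\hat\nu \in \cK(U, \cK^{k-1}(U,V))$ and composing with $\Lambda$ yields a compact operator $\Lambda \circ \hat\nu \in \cK(U, \cK^{k-1}(U,W))$, which is the curried form of $\lambda\circ\nu$.

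For the inductive step $n \to n+1$, curry $\lambda \in \cK^{n+1}(V,W) = \cK(V, \cK^n(V,W))$ as a compact linear operator $\hat\lambda\colon V \to \cK^n(V,W)$. Applying the outer induction hypothesis to each $T \in \cK^n(V,W)$ with $\nu_2,\ldots,\nu_{n+1}$, the map
\[
\Xi\colon \cK^n(V,W) \to \cK^{m_2+\cdots+m_{n+1}}(U,W),\quad
\Xi(T)(u^{(2)},\ldots,u^{(n+1)}) := T\bigl(\nu_2(u^{(2)}),\ldots,\nu_{n+1}(u^{(n+1)})\bigr),
\]
is well-defined, linear and bounded. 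Hence $\Xi \circ \hat\lambda \in \cK(V, \cK^{m_2+\cdots+m_{n+1}}(U,W))$, and by the base case applied with this compact linear operator and $\nu_1$, we obtain
\[
\tilde\mu \in \cK^{m_1}\bigl(U, \cK^{m_2+\cdots+m_{n+1}}(U,W)\bigr) = \cK^{m_1+\cdots+m_{n+1}}(U,W).
\]
The main obstacle, which is genuinely the only subtle point, is verifying that post-composition with a compact linear operator lifts to a bounded linear map between the iterated compact-multilinear spaces; this is exactly what the sub-induction in the base case delivers, but one has to take care when unwinding the currying isomorphisms to see that boundedness of this lift is inherited, not assumed.
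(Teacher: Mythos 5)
Your argument is correct. The paper's own proof is only a pointer: it says the argument is ``similar to the proof of \cref{lem compos bdd pol}, with bounded multilinear maps replaced by compact ones'', and the proof of \cref{lem compos bdd pol} in turn merely asserts that ``one checks directly that $\lambda_2\circ\lambda_1\in\cB^{mn}(U,V)$''. You have actually done the checking. Your reduction to the multilinear statement is the same as the paper's (write $p_i = p_{\lambda_i}$, form $\mu = \lambda_2 \circ \lambda_1$, and note that $p_\mu = p_{S\mu}$ with $S$ preserving $\cK^{mn}$), and your nested induction via currying is in exactly the same spirit as the paper's own proof of \cref{lem ex Kn}, which uses the same device (curry, post-compose by a bounded lift, invoke compactness of one factor). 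The generalisation you prove is genuinely needed and is not a consequence of \cref{lem ex Kn}, since there the inner maps $a_i$ are compact \emph{linear} operators and the outer map is merely bounded, whereas here the inner map is a compact \emph{multilinear} map and the outer one is compact. Your identification of the subtle point (that post-composition by $\lambda$ lifts to a bounded map $\cK^{k-1}(U,V)\to\cK^{k-1}(U,W)$, whose well-definedness is supplied by the sub-induction hypothesis and whose boundedness by the operator-norm estimate $\|\lambda\circ T\| \le \|\lambda\|\,\|T\|$) is exactly the content that the paper's ``one checks directly'' hides. One small slip of wording: in describing how the lemma follows from your general claim you write ``the case $n=1$, $m_1=m$, $\nu_1=\lambda_1$, $\lambda=\lambda_2$ (when $n$ of the original statement equals $1$)\ldots'', but what you actually use to conclude the lemma is the case where the index $n$ of your claim equals the degree $n$ of $p_2$, with $m_1=\cdots=m_n=m$ and $\nu_1=\cdots=\nu_n=\lambda_1$; the content is clear and the proof is otherwise sound.
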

\begin{proof}
The proof is similar to the proof of \cref{lem compos bdd pol}, with bounded multilinear maps replaced by compact ones.
\end{proof}
\begin{remark}
Other notions of compact polynomial maps were studied by Gonzalo, Jaramillo and Pe{\l}czy\'{n}sky in \cite{GJ, Pelczynski}.
\end{remark}

\subsection{Compactly differentiable maps} \label{sec cpt der def}

Let $V$ and $W$ be normed vector spaces, let $U \subset V$ be an open subset containing a vector $u$, and let  $f\colon U \to W$ be $n$ times differentiable at $u$.
\begin{definition}
The map $f$ is  $n$ times \emph{compactly differentiable} at $u$ if
\[
f^{(n)}(u) \in \cK^n(V,W).
\]
\end{definition}

If  $f$ is  $n$ times {compactly differentiable} at $u$, then by \cref{lem symm}, 
\[
f^{(n)}(u) \in S\cK^n(V,W).
\]
Then
the map $h \mapsto f^{(n)}(u)h^n$ of \eqref{eq fn} is the compact polynomial map associated to $f^{(n)}(u)$.
Together with \cref{thm Taylor} and \cref{prop basis}, this leads to the following conclusion.
\begin{corollary} \label{cor Taylor cpt}
Suppose that $V$ and $W$ are  Banach spaces, that $V^*$ has a Schauder basis $\{e^j\}_{j=1}^{\infty}$, and that $W$ has a Schauder basis $\{f_k\}_{k=1}^{\infty}$.
Suppose $f$ is $n+1$ times differentiable, and $k$ times compactly differentiable for every $k \leq n$. 
Then  there are unique complex numbers $a^k_q$ such that
%
%
%
\beq{eq Taylor cpt 1}
f(u+h) = 
\sum_{q \in \N_0^{\infty};\, |q|\leq n} 
\sum_{k=1}^{\infty}
a^k_q  h^q f_k + O(\|h\|_W^{n+1}),
\eeq
where the part of the sum where $|q|=m$ converges as a function of $h$ in the norm on $\Pol^m(V,W)$, for $m = 0, \ldots, n$.
\end{corollary}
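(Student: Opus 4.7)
The plan is to combine Taylor's theorem, \cref{thm Taylor}, with the Schauder basis of $\cK\Pol^m(V,W)$ supplied by \cref{prop basis}. First, Taylor's theorem, applied after noting that $f^{(n+1)}$ is bounded on some sufficiently small closed ball about $u$, gives
\[
f(u+h) = \sum_{m=0}^{n} \tfrac{1}{m!} f^{(m)}(u)h^m + R(h), \qquad \|R(h)\|_W = O(\|h\|_V^{n+1}).
\]
The $m=0$ contribution is the constant $f(u)\in W$, which by the Schauder basis hypothesis on $W$ expands uniquely as $\sum_k a_0^k f_k$, providing the $|q|=0$ part of \eqref{eq Taylor cpt 1} (with the convention $h^q=1$ when $|q|=0$).

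For each $m\in\{1,\dots,n\}$, compact differentiability of $f$ at $u$ places $f^{(m)}(u)$ in $\cK^m(V,W)$, and by \cref{lem symm} in fact in $S\cK^m(V,W)$. Hence the polynomial $h\mapsto \tfrac{1}{m!} f^{(m)}(u)h^m$ belongs to $\cK\Pol^m(V,W)$, so \cref{prop basis} supplies unique complex coefficients $a_q^k$, indexed by $q\in\N_0^\infty$ with $|q|=m$ and $k\in\N$, such that
\[
\tfrac{1}{m!} f^{(m)}(u)h^m = \sum_{|q|=m}\sum_k a_q^k h^q f_k,
\]
with convergence in the norm on $\Pol^m(V,W)$. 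Summing these expansions over $m=0,\dots,n$ and combining with the Taylor remainder from the first paragraph yields \eqref{eq Taylor cpt 1}.

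For uniqueness, suppose $\{b_q^k\}$ is another coefficient sequence satisfying \eqref{eq Taylor cpt 1}. Subtracting the two expansions, $\sum_{|q|\leq n}\sum_k(a_q^k-b_q^k)h^q f_k$ is a polynomial in $h$ of degree at most $n$ that is $O(\|h\|_V^{n+1})$ as $h\to 0$; by \cref{lem order pol lower} each homogeneous component must vanish, and uniqueness of the Schauder basis expansion in \cref{prop basis} then forces $a_q^k=b_q^k$ for every $q$ and $k$. The main technical care will lie in respecting the mode of convergence: \cref{prop basis} yields norm convergence inside each $\Pol^m(V,W)$, which is precisely the statement's claim block by block in $m$, so one must avoid conflating this with pointwise convergence, or with convergence of the full double sum as a single series in $\Pol(V,W)$.
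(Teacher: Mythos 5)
Your proposal is correct and takes essentially the same route as the paper, which only sketches the argument in the paragraph immediately preceding \cref{cor Taylor cpt} (``Together with \cref{thm Taylor} and \cref{prop basis}, this leads to the following conclusion''). You fill in the details the paper omits: the $m=0$ block via the Schauder basis of $W$, the role of \cref{lem symm} in placing $f^{(m)}(u)$ in $S\cK^m(V,W)$ so that \cref{prop basis} applies, and an explicit uniqueness argument via \cref{lem order pol lower} combined with uniqueness of Schauder coefficients. Two small remarks: the exponent norm in \eqref{eq Taylor cpt 1} should be $\|h\|_V^{n+1}$ (since $h\in V$), as you in fact write; and your appeal to Taylor's theorem tacitly assumes $f^{(n+1)}$ is bounded on a closed ball around $u$, which is an extra regularity hypothesis beyond mere $(n+1)$-fold differentiability --- but the paper makes the same implicit assumption, and in the paper's applications $f$ is infinitely differentiable so this is harmless.
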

(Note that, in \eqref{eq Taylor cpt 1}, on the left-hand side $f$ is a map from $U$ to $W$, whereas on the right-hand side, $f_k$ is an element of $W$.) 


\begin{lemma} \label{lem cpt poly diffble}
A compact polynomial map is infinitely compactly differentiable.
\end{lemma}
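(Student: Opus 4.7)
The plan is to reduce to compact homogeneous polynomials by linearity of differentiation, compute all higher derivatives explicitly, and verify that each derivative lies in the compact part $\cK^k(V,W) \subset \cB^k(V,W)$.

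Let $p = p_\lambda$ with $\lambda \in S\cK^n(V,W)$. By \cref{lem pol diffble} we already know $p$ is infinitely differentiable (in the bounded sense), and a straightforward induction on the order of differentiation, using multilinearity and symmetry of $\lambda$, identifies the $k$-th derivative via \cref{eq iso Bn} with the symmetric multilinear map
\[
p^{(k)}(u)(h_1,\ldots,h_k) \;=\; \frac{n!}{(n-k)!}\,\lambda(h_1,\ldots,h_k,\underbrace{u,\ldots,u}_{n-k})
\]
for $k \leq n$, while $p^{(k)}(u) = 0$ for $k > n$. The latter case is trivially compact, so it remains to treat $k \leq n$.

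Set $\mu_u^{(k)}(h_1,\ldots,h_k) := \lambda(h_1,\ldots,h_k,u,\ldots,u)$; this is symmetric in $h_1,\ldots,h_k$ because $\lambda$ is symmetric. I would prove $\mu_u^{(k)} \in \cK^k(V,W)$ by downward induction on $k$. The base case $k=n$ is the assumption $\lambda \in \cK^n(V,W)$. For the inductive step, suppose $\mu_u^{(k+1)} \in \cK^{k+1}(V,W)$; by the very definition of $\cK^{k+1}(V,W)$ it corresponds under \cref{eq iso Bn} to a compact operator $S \colon V \to \cK^k(V,W)$, namely $S(v)(h_1,\ldots,h_k) = \mu_u^{(k+1)}(v,h_1,\ldots,h_k)$. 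Then, using symmetry of $\mu_u^{(k+1)}$ to move $u$ into the first argument,
\[
\mu_u^{(k)}(h_1,\ldots,h_k) \;=\; \mu_u^{(k+1)}(h_1,\ldots,h_k,u) \;=\; \mu_u^{(k+1)}(u,h_1,\ldots,h_k) \;=\; S(u)(h_1,\ldots,h_k),
\]
so $\mu_u^{(k)} = S(u) \in \cK^k(V,W)$, completing the induction.

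Finally, since differentiation is linear, the conclusion passes from compact homogeneous polynomials to arbitrary finite sums of such, which is the definition of a compact polynomial map. The main obstacle is really only bookkeeping: one must keep careful track of the nested isomorphism $\cB(V,\cB(V,\ldots,\cB(V,W)\cdots)) \cong \cB^n(V,W)$, of which slot the compact operator $S$ acts on, and of the fact that the restriction of a symmetric $n$-linear compact map to fewer variables (with the remaining slots fixed at $u$) stays compact at every intermediate stage.
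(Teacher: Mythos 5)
Your proof is correct. It differs from the paper's in organization rather than in substance. The paper proceeds by induction on the degree $n$ of the homogeneous polynomial: it shows that $p_\lambda'$ is again a compact (homogeneous) polynomial, namely $p_\lambda'(u) = p_\nu(u)$ with $\nu = n\lambda$ viewed in $\cK^{n-1}(V,\cK(V,W))$, and then invokes the inductive hypothesis at degree $n-1$ with the codomain $W$ replaced by $\cK(V,W)$. You instead compute all higher derivatives $p^{(k)}(u)$ in closed form and verify $p^{(k)}(u) \in \cK^k(V,W)$ directly, by a downward induction on the order $k$: starting from $\mu_u^{(n)} = \lambda$ and peeling one slot at a time, each time using symmetry to move the newly-fixed $u$ into the first argument so that $\mu_u^{(k)} = S(u)$ with $S \in \cK(V,\cK^k(V,W))$. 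Both arguments hinge on the same observation — evaluating one slot of a symmetric compact multilinear map at $u$ preserves compactness at the lower order — and both handle the passage from homogeneous to general compact polynomials by linearity. The paper's version gives the slightly stronger intermediate conclusion that $p'$ is itself a compact polynomial map (not just compactly valued pointwise), which is what feeds its induction, while your version is more explicit and sidesteps the need to re-enter the class of compact polynomials at each step. Your caveat at the end — that the bookkeeping with \eqref{eq iso Bn} must be done carefully — is exactly the right thing to watch: you need the identification $\cK^{k+1}(V,W) \cong \cK(V,\cK^k(V,W))$ and the symmetry of $\mu_u^{(k+1)}$ to make the step $\mu_u^{(k+1)}(h_1,\ldots,h_k,u) = \mu_u^{(k+1)}(u,h_1,\ldots,h_k) = S(u)(h_1,\ldots,h_k)$ legitimate, and you have both.
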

\begin{proof}
We show that the derivative of  every homogeneous compact polynomial $p_{\lambda} \in \cK\Pol^n(V,W)$, for $\lambda \in S\cK^n(V,W)$, is a compact polynomial in $\cK\Pol^{n-1}(V,\cK(V,W))$. This implies the claim by induction on $n$.
As in the proof of \cref{lem pol diffble}, 
 $p'_{\lambda}(u)h = n\lambda(h,u,\ldots, u)$ for all $u,h \in V$. In other words, $p'_{\lambda}(u) = p_{\nu}$, with $\nu = n\lambda$, where we view $\lambda$ as an element of $\cK^{n-1}(V, \cK(V,W))$. This shows that $p'_{\lambda}(u) \in \cK\Pol^{n-1}(V, \cK(V,W))$.
\end{proof}

\begin{lemma} \label{lem cpt chain rule}
Let $U$, $V$ and $W$ be normed vector spaces and let $f \colon U \to V$ and $g\colon V \to W$ be  differentiable maps. If either $f$ or $g$ is compactly differentiable, then so is $g \circ f$.
\end{lemma}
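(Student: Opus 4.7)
My plan is to reduce the statement to the standard chain rule (\cref{lem chain rule}) together with the classical fact that the compact operators form a two-sided ideal in the algebra of bounded operators (i.e., if $T \in \cB(X,Y)$ and $S \in \cK(Y,Z)$, or $T \in \cK(X,Y)$ and $S \in \cB(Y,Z)$, then $S \circ T \in \cK(X,Z)$).

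Concretely, fix a point $u \in U$ at which we want to verify compact differentiability of $g \circ f$. Since both $f$ and $g$ are differentiable, \cref{lem chain rule} yields
\[
(g\circ f)'(u) = g'(f(u)) \circ f'(u) \quad \in \cB(U,W).
\]
Now $f'(u) \in \cB(U,V)$ and $g'(f(u)) \in \cB(V,W)$. If $f$ is compactly differentiable at $u$, then $f'(u) \in \cK(U,V)$, and composition with the bounded operator $g'(f(u))$ on the left gives an element of $\cK(U,W)$. If instead $g$ is compactly differentiable at $f(u)$, then $g'(f(u)) \in \cK(V,W)$, and precomposition with the bounded operator $f'(u)$ again yields an element of $\cK(U,W)$. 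In either case $(g\circ f)'(u) \in \cK(U,W)$, which is the definition of $g \circ f$ being compactly differentiable at $u$. Since $u$ was arbitrary, this establishes the lemma.

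The only thing to unpack, if desired, is the two-sided ideal property of $\cK$, which is entirely standard and follows immediately from the fact that bounded operators map bounded sets to bounded sets while compact operators map bounded sets to relatively compact sets, and continuous images of relatively compact sets are relatively compact. There is no real obstacle here; the lemma is a direct corollary of the classical chain rule and the ideal property of compact operators, so the main content is just to record it explicitly in the notation of compactly differentiable maps for later use.
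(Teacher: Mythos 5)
Your proof is correct and takes essentially the same approach as the paper: apply the chain rule (\cref{lem chain rule}) to get $(g\circ f)'(u) = g'(f(u)) \circ f'(u)$, then use that composing a compact operator with a bounded operator on either side yields a compact operator. The paper's proof is terser but identical in substance.
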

\begin{proof}
\cref{lem chain rule} implies that for all $u \in U$,
\[
(g\circ f)'(u) = g'(f(u)) \circ f'(u).
\]
If $f$ is compactly differentiable, then $f'(u) \in \cK(U,V)$. If $g$ is compactly differentiable, then $g'(f(u)) \in \cK(V,W)$. In either case, we find that  $(g\circ f)'(u) \in \cK(U,V)$.
\end{proof}

\begin{lemma} \label{lem cpt diff incl}
Let $U$, $V$ and $W$ be normed vector spaces, let $f\colon V \to W$ be  $n$ times differentiable, and suppose that $U$ is a subspace of $V$, with compact inclusion map $j \colon U \hookrightarrow V$. Then $f \circ j$ is $n$ times compactly differentiable as a map from $U$ to $W$.
\end{lemma}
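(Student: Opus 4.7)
The plan is to show by induction on $n$ that the higher derivatives of $f\circ j$ are obtained by precomposing the derivatives of $f$ with copies of the compact inclusion $j$, and then invoke \cref{lem ex Kn} to deduce compactness of the resulting multilinear map.

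More precisely, the first step is to prove by induction on $n$ that for every $u \in U$ and all $h_1, \ldots, h_n \in U$,
\begin{equation*}
(f\circ j)^{(n)}(u)(h_1, \ldots, h_n) = f^{(n)}(j(u))\bigl(j(h_1), \ldots, j(h_n)\bigr).
\end{equation*}
The base case $n = 1$ is the chain rule (\cref{lem chain rule}), using that $j$ is linear so $j'(u) = j$ for every $u$. For the induction step, one regards $(f\circ j)^{(n-1)}$ as the composition of the maps $u \mapsto j(u)$, $v \mapsto f^{(n-1)}(v)$, and a fixed bounded operator obtained from precomposing with $j$ in each of its $n-1$ arguments. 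Differentiating once more via \cref{lem chain rule} and using linearity of $j$ (so that the higher derivatives of $j$ vanish) yields precisely the displayed formula for $n$, after identifying $\cB(U, \cB^{n-1}(U,W))$ with $\cB^n(U,W)$ through the isomorphism \eqref{eq iso Bn}.

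The second step is immediate: the multilinear map
\begin{equation*}
(h_1, \ldots, h_n) \longmapsto f^{(n)}(j(u))\bigl(j(h_1), \ldots, j(h_n)\bigr)
\end{equation*}
is of the form considered in \cref{lem ex Kn}, with $\lambda = f^{(n)}(j(u)) \in \cB^n(V,W)$ and $a_1 = \cdots = a_n = j \in \cK(U,V)$. That lemma directly gives $(f\circ j)^{(n)}(u) \in \cK^n(U,W)$, for every $u \in U$ and every $k \leq n$, which is exactly compact $n$-differentiability of $f\circ j$.

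The only real obstacle is bookkeeping in the inductive step: one must be careful to treat $(f\circ j)^{(n-1)}$ as a map from $U$ into the Banach space $\cB^{n-1}(U,W)$, differentiate it, and then re-identify the result with an element of $\cB^n(U,W)$ in a way compatible with the symmetric multilinear convention \eqref{eq fn}. Once this identification is set up, the inductive step is just the chain rule combined with $j' \equiv j$. No new analytic input beyond \cref{lem ex Kn} is needed.
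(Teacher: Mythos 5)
Your proposal is correct and follows the same route as the paper: establish that $(f\circ j)^{(n)}(u)(h_1,\ldots,h_n) = f^{(n)}(j(u))(j(h_1),\ldots,j(h_n))$ and then invoke \cref{lem ex Kn} with $a_1=\cdots=a_n=j$. The paper simply asserts this derivative formula as evident and moves straight to \cref{lem ex Kn}, whereas you justify it by an induction on $n$ using the chain rule and linearity of $j$; this extra detail is sound and harmless (the stray phrase ``for every $k \leq n$'' near the end is an inconsequential slip, since $n$ times compact differentiability only requires the $n$th derivative to be compact).
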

\begin{proof}
Let $u, h_1, \ldots, h_n \in U$. Then
\[
(f \circ j)^{(n)}(u)(h_1, \ldots, h_n) = f^{(n)}(j(u))(j(h_1), \ldots, j(h_n)).
\] 
So the claim follows from \cref{lem ex Kn}.
\end{proof}

\begin{remark}
It is possible for a map to be $n$ times compactly differentiable, but not $n-1$ times. For example, the first derivative of the identity operator on an infinite-dimensional Banach space $V$ is the identity map itself, and not a compact operator. But its higher-order derivatives are zero, and hence compact. 
\end{remark}

\subsection{A class of compactly differentiable maps} \label{sec ex f}

We end this section by discussing a class of compactly differentiable maps (specifically, compact polynomials) that are relevant to the study of nonlinear \PDE s. These maps are polynomial expressions in derivatives of functions; see \cref{prop ex cpt poly Sobolev} below.

Let $\Omega \subset \R^m$ be a bounded open subset with $C^1$ boundary.
For $k \in \N_0$ and $p>0$, consider the Sobolev space $W^{k,p}(\Omega)$.
\begin{lemma}\label{lem mult Sobolev}
Let $n \in \N$ and $p>1$.
Pointwise multiplication of $n$ functions defines a map $\mu \in \cB^n(W^{k, np}(\Omega), W^{k,p}(\Omega))$.
\end{lemma}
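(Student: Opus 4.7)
The plan is to establish boundedness by combining the generalised Leibniz rule (to expand derivatives of a product) with the generalised H\"older inequality (to control the resulting products of derivatives). Multilinearity of $\mu$ is immediate from the definition, so the only real content is the norm estimate
\[
\|u_1 \cdots u_n\|_{W^{k,p}} \leq C \prod_{j=1}^n \|u_j\|_{W^{k,np}}.
\]

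First I would work on the dense subspace of functions that are smooth up to the boundary. Since $\Omega$ is bounded with $C^1$ boundary and $np < \infty$ (as $p > 1$), the space $C^\infty(\overline{\Omega})$ is dense in $W^{k,np}(\Omega)$. For smooth $u_1, \ldots, u_n$ and any multi-index $\alpha$ with $|\alpha| \leq k$, the classical Leibniz rule gives
\[
\partial^\alpha(u_1 \cdots u_n) = \sum_{\alpha_1 + \cdots + \alpha_n = \alpha} \binom{\alpha}{\alpha_1, \ldots, \alpha_n} \, \partial^{\alpha_1} u_1 \cdots \partial^{\alpha_n} u_n.
\]
Each factor $\partial^{\alpha_j} u_j$ has $|\alpha_j| \leq |\alpha| \leq k$, so it lies in $L^{np}(\Omega)$ with $L^{np}$ norm at most $\|u_j\|_{W^{k,np}}$. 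The generalised H\"older inequality with $n$ exponents all equal to $np$, for which $\tfrac{1}{np} + \cdots + \tfrac{1}{np} = \tfrac{1}{p}$, then yields
\[
\| \partial^{\alpha_1} u_1 \cdots \partial^{\alpha_n} u_n \|_{L^p} \leq \prod_{j=1}^n \|\partial^{\alpha_j} u_j\|_{L^{np}} \leq \prod_{j=1}^n \|u_j\|_{W^{k,np}}.
\]
Summing over $\alpha$ with $|\alpha| \leq k$ and absorbing the finitely many multinomial coefficients into a constant $C = C(k,n,m)$ gives the desired estimate on smooth inputs.

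Next I would extend to arbitrary $u_j \in W^{k,np}(\Omega)$ by density and continuity. For each $j$, pick $u_j^{(\ell)} \in C^\infty(\overline{\Omega})$ converging to $u_j$ in $W^{k,np}(\Omega)$. Multilinearity combined with the uniform bound on smooth inputs shows that $\{u_1^{(\ell)} \cdots u_n^{(\ell)}\}_\ell$ is Cauchy in $W^{k,p}(\Omega)$, so it converges to some $v \in W^{k,p}(\Omega)$. The continuous embedding $W^{k,np}(\Omega) \hookrightarrow L^{np}(\Omega)$, together with iterated H\"older, shows the products also converge in $L^p(\Omega)$ to the pointwise product $u_1 \cdots u_n$, so $v = u_1 \cdots u_n$ almost everywhere. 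Passing to the limit in the estimate then yields the required bound, which is exactly the statement $\mu \in \cB^n(W^{k,np}(\Omega), W^{k,p}(\Omega))$.

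The main obstacle I anticipate is the justification of the Leibniz formula at the level of weak derivatives. Rather than trying to establish a weak Leibniz rule directly (which would require induction and iterated products of Sobolev functions), it is cleaner to prove the norm bound on smooth functions, where the classical Leibniz rule is unambiguous, and then import the result to $W^{k,np}$ via the density argument above. This relies on the assumption that $\Omega$ is bounded with $C^1$ boundary, which guarantees both the density of $C^\infty(\overline{\Omega})$ and the continuous embeddings needed to identify the limiting product with the pointwise product.
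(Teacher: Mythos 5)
Your proof is correct and takes essentially the same approach as the paper's: the generalised Leibniz rule to expand derivatives of a product, the generalised H\"older inequality to bound each resulting term, first on smooth functions and then by density. You are in fact slightly more careful than the paper, which states the Leibniz step only for $u_j \in C^{\infty}_c(\Omega)$ (whose $W^{k,np}$-closure is $W^{k,np}_0(\Omega)$, not all of $W^{k,np}(\Omega)$) and leaves the extension implicit, whereas you correctly work with $C^{\infty}(\overline{\Omega})$ (dense because $\partial\Omega$ is $C^1$) and spell out the limiting argument identifying the $W^{k,p}$-limit with the pointwise product.
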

\begin{proof}
Let $n \in \N$.
By H\"older's inequality, for all $u_1, \ldots, u_n \in L^{np}(\Omega)$,
\beq{eq Holder 1}
\|u_1 \cdots u_n\|_{L^{p}(\Omega)} \leq \|u_1\|_{L^{np}(\Omega)} \cdots \|u_n\|_{L^{np}(\Omega)}. 
\eeq
For $\alpha \in \N_0^m$, there are combinatorial constants $c^{\alpha}_{\beta}$, for  $\beta = (\beta^{(1)}, \ldots, \beta^{(n)})$, with $\beta^{(1)}, \ldots, \beta^{(n)} \in \N_0^m$ such that $|\beta^{(1)}|+ \cdots + |\beta^{(n)}| \leq |\alpha|$, such that for all 
 $u_1, \ldots, u_n \in C^{\infty}_c(\Omega)$, we have the generalised Leibniz rule
 \[
 \frac{\partial^{\alpha} (u_1 \cdots u_n)}{\partial x^{\alpha}} = \sum_{|\beta^{(1)}|+ \cdots + |\beta^{(n)}| \leq |\alpha|} c^{\alpha}_{\beta} \frac{\partial^{\beta^{(1)}} u_1}{\partial x^{\beta^{(1)}}} \cdots \frac{\partial^{\beta^{(n)}} u_n}{\partial x^{\beta^{(n)}}}. 
 \] 
 Together with \eqref{eq Holder 1}, this implies that
 \[
 \|u_1 \cdots u_n\|_{W^{k, p}} \leq 
 \Bigl( \sum_{|\alpha| \leq k} \,\, \sum_{|\beta^{(1)}|+ \cdots + |\beta^{(n)}| \leq |\alpha|}
  c^{\alpha}_{\beta}\Bigr) \|u_1\|_{W^{k,np}} \cdots  \|u_n\|_{W^{k,np}}. 
 \]
\end{proof}

\begin{proposition} \label{prop ex cpt poly Sobolev}
Let $k, l,m_1, m_2, n \in \N$ and $p>1$, with $k \leq l$.
Let
\[
D_1, \ldots, D_n\colon C^{\infty}_c(\Omega; \R^{m_1}) \to  C^{\infty}_c(\Omega; \R^{m_2})  
\]
be linear partial differential operators of orders smaller than $k$.  
%
%
%
%
Fix complex numbers $a_q^j$, for $q \in \N_0^{n}$ with $|q| \leq n$, and $j \in \{1, \ldots, m_2\}$. Define $f\colon C_c^{\infty}(\Omega; \R^{m_1}) \to C_c^{\infty}(\Omega; \R^{m_2})$ by $f(u) = (f_1(u), \ldots, f_{m_2}(u))$, where for each $j$,
\beq{eq f cpt poly Sob}
f_j(u) = \sum_{q \in \N_0^n, |q| \leq n} a_q^j(Du)^q
\eeq
for $u \in W^{l,np}(\Omega; \R^{m_1})$, and with $(Du)^q$ as in \eqref{eq Duq}, defines a compact polynomial map 
\[
f \in \cK \Pol^n(W^{l,np}(\Omega; \R^{m_1}), W^{l-k, p}(\Omega; \R^{m_2})).
\]
\end{proposition}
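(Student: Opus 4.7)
The plan is to decompose $f$ into a finite sum of homogeneous compact polynomial maps and verify each summand individually. Writing $e_j \in \R^{m_2}$ for the $j$-th standard basis vector, we have
\[
f(u) = \sum_{j=1}^{m_2} \sum_{q \in \N_0^n,\, |q|\leq n} a_q^j\, m_{q,j}(u), \qquad m_{q,j}(u) := (D_1 u)^{q_1} \cdots (D_n u)^{q_n}\, e_j.
\]
Each monomial $m_{q,j}$ has degree $r := |q| \leq n$, so if we can show that every $m_{q,j}$ lies in $\cK\Pol^{|q|}(W^{l,np}(\Omega;\R^{m_1}), W^{l-k,p}(\Omega;\R^{m_2}))$, then $f$ is a finite sum of compact homogeneous polynomials of degrees at most $n$, which is the desired conclusion.

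Fix $(q,j)$ and set $r = |q|$. Choose indices $i_1, \ldots, i_r \in \{1, \ldots, n\}$ listing each $s$ exactly $q_s$ times, and define the $r$-multilinear map
\[
\lambda_0(u_1, \ldots, u_r) := (D_{i_1} u_1) \cdots (D_{i_r} u_r)\, e_j.
\]
Then $m_{q,j}(u) = \lambda_0(u, \ldots, u) = p_{\lambda_0}(u)$, so it suffices to verify that $\lambda_0 \in \cK^r(W^{l,np}(\Omega;\R^{m_1}), W^{l-k,p}(\Omega;\R^{m_2}))$; the symmetrization $S\lambda_0$ then lies in $S\cK^r$ and $p_{\lambda_0} = p_{S\lambda_0}$ is the desired compact homogeneous polynomial of degree $r$.

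To establish this membership in $\cK^r$, we apply \cref{lem ex Kn}. For each $s \in \{1, \ldots, r\}$, set $a_s := D_{i_s}$, regarded as a linear map $W^{l,np}(\Omega; \R^{m_1}) \to W^{l-k, rp}(\Omega; \R)$. Since $D_{i_s}$ has order at most $k-1$, it is bounded from $W^{l,np}(\Omega; \R^{m_1})$ into $W^{l-k+1, np}(\Omega; \R)$. Because $\Omega$ is bounded with $C^1$ boundary and $l-k+1 \geq 1$, Rellich--Kondrachov gives a compact inclusion $W^{l-k+1, np}(\Omega) \hookrightarrow W^{l-k, np}(\Omega)$, and the inclusion $W^{l-k, np}(\Omega) \hookrightarrow W^{l-k, rp}(\Omega)$ is bounded since $\Omega$ has finite measure and $np \geq rp$. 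Hence each $a_s$ is compact. Next, \cref{lem mult Sobolev} (applied with $k$ there replaced by $l-k$ and $n$ by $r$) shows that pointwise multiplication defines a bounded $r$-linear map
\[
\mu \in \cB^r(W^{l-k, rp}(\Omega; \R), W^{l-k, p}(\Omega; \R)),
\]
so $\tilde\lambda(v_1, \ldots, v_r) := \mu(v_1, \ldots, v_r)\, e_j$ is bounded $r$-linear into $W^{l-k, p}(\Omega; \R^{m_2})$. Since $\lambda_0(u_1, \ldots, u_r) = \tilde\lambda(a_1 u_1, \ldots, a_r u_r)$, \cref{lem ex Kn} yields $\lambda_0 \in \cK^r$.

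The principal subtlety lies in coordinating the Sobolev exponents: the strict hypothesis that each $D_i$ has order below $k$ delivers precisely the one derivative of slack needed to invoke Rellich--Kondrachov compactness, while the drop from integrability $np$ to $rp$ (free on bounded $\Omega$ because $r \leq n$) is what allows \cref{lem mult Sobolev} to be applied with the actual number of factors $r$ rather than the fixed $n$. With these pieces aligned, the rest is bookkeeping: summing the finitely many pairs $(q,j)$ expresses $f$ as a finite sum of compact homogeneous polynomials of degrees at most $n$.
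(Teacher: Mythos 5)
Your proof is correct and follows essentially the same route as the paper's: both factor each monomial through the compact differential operators and the bounded multiplication map of \cref{lem mult Sobolev}, then invoke \cref{lem ex Kn}. The only real difference is that you spell out the reduction for the lower-degree terms $|q|=r<n$ (including the extra bounded inclusion $W^{l-k,np}\hookrightarrow W^{l-k,rp}$), which the paper compresses into its final one-sentence remark about the general case.
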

\begin{proof}
We first consider the case where $m_2 = 1$, and $a_q = 1$ if $q = (1,\ldots, 1)$, and zero otherwise.
By \cref{lem mult Sobolev}, pointwise multiplication defines a map $\mu \in \cB^n(W^{l-k, np}(\Omega), W^{l-k, p}(\Omega))$. 

By Rellich's lemma, boundedness of $\Omega$ implies that the maps $D_1, \ldots, D_n$ extend to compact operators
\[
D_1, \ldots, D_n\colon W^{l,p}(\Omega)\to W^{l-k,p}(\Omega).
\]
The map 
\[
\nu\colon W^{l, np}(\Omega) \times \cdots \times W^{l, np}(\Omega) \to W^{l-k, p}(\Omega)
\]
defined by
\[
\nu(u_1, \ldots, u_n) = \mu(D_1 u_1, \ldots, D_n u_n)
\]
for $u_1, \ldots, u_n \in W^{l,np}(\Omega)$, is in $\cK^n(W^{l, np}(\Omega), W^{l-k, p}(\Omega))$ by \cref{lem ex Kn}. Hence $p_{\nu}$ is an element of $\cK\Pol^n(W^{l, np}(\Omega), W^{l-k, p}(\Omega))$.

Every component of a general map of the form \eqref{eq f cpt poly Sob} is a finite sum of maps of the form $p_{\nu}$ as above, applied to the components of $f$. Hence it is  in $\cK\Pol^n(W^{l, np}(\Omega; \R^{m_1}), W^{l-k, p}(\Omega; \R^{m_2}))$.
\end{proof}

\begin{example}\label{ex cpt poly burgers}
\begin{ArXiv}
Consider the map $f$ from \cref{sec ex burgers}, mapping $u$ to $u' u$. We now view $f$ as a map from $W^{l,2p}(\Omega)$  to $W^{l-k, p}(\Omega)$,  for $k \geq 1$,  $l \geq k$ and $p>1$. 
\end{ArXiv}
\begin{journal}
Let $k \geq 1$,  $l \geq k$ and $p>1$ be integers. Let $\Omega \subset \R$ be a bounded open interval.
Consider the map $f$ from $W^{l,2p}(\Omega)$  to $W^{l-k, p}(\Omega)$, mapping $u \in W^{l,2p}(\Omega)$ to $u' u$. 
\end{journal}
Taking $m_1 = m_2 = 1$, $n = 2$,  $D_1 u = u'$ and $D_2 u = u$, for $u \in W^{l,2p}(\Omega)$, in \cref{prop ex cpt poly Sobolev}, we find that $f$ is a compact polynomial in $\cK \Pol^2(W^{l,2p}(\Omega), W^{l-k,p}(\Omega))$ for every $k>1$. Hence, by \cref{lem cpt poly diffble}, $f$ is in particular infinitely compactly differentiable.
For $k=1$, the map $f$ is only a bounded polynomial in $\Pol^2(W^{l,2p}(\Omega), W^{l-1, p}(\Omega))$.
\end{example}

\begin{remark}\label{rem ext ex cpt poly}
\cref{prop ex cpt poly Sobolev} extends directly 
to relatively compact open subsets $\Omega$ of manifolds. The latter extension is relevant, for example, if one uses periodic boundary conditions, so that one works with
 with functions on a torus.
\end{remark}

\section{Polynomial and differentiable maps on graded Fr\'echet spaces} \label{sec Frechet}

Apart from polynomial and differentiable maps between normed vector spaces, we also use such maps between graded Fr\'echet spaces, defined in terms of nested sequences of Banach spaces, as in \cref{def nested seq}. In this section, we discuss some further properties of such spaces, and in particular what it means for two sequences of Banach spaces defining such a space to be comparable.

\subsection{Properties of nested sequences of Banach spaces} \label{sec seq sub}

%
 
 Let $\{V_k\}_{k=1}^{\infty}$ be a nested sequence of Banach spaces, as in \cref{def nested seq}. Their intersection $V_{\infty}$ is a graded Fr\'echet space.
 \begin{definition}
The space $\cB^n(V_{\infty})$ consists of the multilinear maps $\lambda \colon V_{\infty} \times \cdots \times V_{\infty} \to V_{\infty}$ (with $n$ factors $V_{\infty}$) such that for every $l \in \N$, there is a $k \in \N$ such that $\lambda$ extends continuously to a map in $\cB^n(V_k, V_l)$. 
 \end{definition}
Note that $\cB(V_{\infty}) = \cB^1(V_{\infty})$. 
 
%

%
%
 
\begin{definition} 
We write $\Pol^n(V_{\infty})$ for the space of all maps $p\colon V_{\infty} \to V_{\infty}$ such that for every $l \in \N$, there is a $k \in \N$ such that $p$ extends continuously to a polynomial in $\Pol^n(V_k, V_l)$. The space $\cK\Pol^n(V_{\infty})$ is defined analogously.
\end{definition}
A feature of the spaces  $\Pol^n(V_{\infty})$ and  $\cK\Pol^n(V_{\infty})$ that is useful to us, is that they admit natural compositions.
If $p_1 \in \Pol^m(V_{\infty})$ and $p_2 \in \Pol^n(V_{\infty})$, then 
%
%
%
\cref{lem compos bdd pol} implies that $p_2 \circ p_1$ lies in $\Pol^{mn}(V_{\infty})$. Similarly, \cref{lem compos cpt pol} implies that  $p_2 \circ p_1 \in \cK\Pol^{mn}(V_{\infty})$ if $p_1 \in \cK\Pol^m(V_{\infty})$ and $p_2 \in \cK\Pol^n(V_{\infty})$.

\begin{definition}
An $n$ times \emph{(compactly) differentiable map from $V_{\infty}$ to itself} is a map $f\colon V_{\infty} \to V_{\infty}$ such that for every $l \in \N$, there is a $k \in \N$ such that $f$ extends to an $n$ times (compactly) differentiable map from $V_k$ to $V_l$.
\end{definition}
\cref{lem chain rule,lem cpt chain rule} imply that the spaces of differentiable and compactly differentiable maps from $V_{\infty}$ to itself are closed under composition.
\cref{lem cpt diff incl} implies that if $\{V_k\}_{k=1}^{\infty}$ is a compactly nested sequence of Banach spaces, then all $n$ times differentiable maps from $V_{\infty}$ to itself are $n$ times compactly differentiable.

It follows directly from \cref{def On} that if $f, g\colon V_{\infty} \to V_{\infty}$ are of orders $m$ and $n$, respectively, then $g\circ f$ is of order $m+n$. We also have the following lemma.

 \begin{lemma} \label{lem der Om+n}
Consider two maps $f, g\colon V_{\infty} \to V_{\infty}$, where $f$ is differentiable and $g$ is of order $m$. Suppose that for every $l \in \N$, there is a $k \in \N$ such that $f\colon V_k\to V_l$ is differentiable, and $\|f'(v)\|_{\cB(V_k, V_l)} = O(\|v\|^n_{V_k})$ as $v \to 0$ in $V_k$. Then the map $f' \circ g\colon V_{\infty} \to V_{\infty}$, mapping $v \in V_{\infty}$ to $f'(v)(g(v))$ is of order $n+m$.
 \end{lemma}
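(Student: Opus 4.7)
The plan is to unpack the definition of ``order $n+m$'' (Definition \ref{def On}) and reduce it to a single-space estimate obtained by applying the operator-norm inequality $\|T(x)\|_{V_l} \le \|T\|_{\cB(V_{k_1},V_l)} \|x\|_{V_{k_1}}$ with $T = f'(v)$ and $x = g(v)$. Fix an arbitrary $l \in \N$; the goal is to exhibit a $k \in \N$ such that $\|f'(v)(g(v))\|_{V_l} = O(\|v\|_{V_k}^{n+m})$ as $v \to 0$ in $V_k$.

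First I would use the hypothesis on $f$ to pick an index $k_1 \in \N$ for which $f$ extends to a differentiable map $V_{k_1} \to V_l$ with $\|f'(v)\|_{\cB(V_{k_1},V_l)} = O(\|v\|_{V_{k_1}}^n)$ as $v \to 0$ in $V_{k_1}$. Then, since $g = \cO(m)$, I would apply Definition \ref{def On} \emph{with the target space $V_{k_1}$ already fixed}, producing an index $k_2 \in \N$ with $\|g(v)\|_{V_{k_1}} = O(\|v\|_{V_{k_2}}^m)$ as $v \to 0$ in $V_{k_2}$. It is important to do the steps in this order, because the submultiplicative estimate only closes up if the norm appearing to the right of $f'(v)$ is the $V_{k_1}$-norm used in the hypothesis on $f$.

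Next I would set $k := \max(k_1,k_2)$. The bounded inclusions $V_k \hookrightarrow V_{k_1}$ and $V_k \hookrightarrow V_{k_2}$ from Definition \ref{def nested seq} transfer both estimates to the common norm $\|\cdot\|_{V_k}$, so that as $v \to 0$ in $V_k$,
\[
\|f'(v)(g(v))\|_{V_l} \le \|f'(v)\|_{\cB(V_{k_1},V_l)} \, \|g(v)\|_{V_{k_1}} = O(\|v\|_{V_k}^n) \cdot O(\|v\|_{V_k}^m) = O(\|v\|_{V_k}^{n+m}),
\]
which is exactly the required order statement.

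The argument is essentially bookkeeping with the grading; there is no hard analytic step. The only potential snag is the order of quantifiers: one must pin down $k_1$ from the hypothesis on $f$ first, then feed $V_{k_1}$ into the $\cO(m)$-assumption on $g$, and only afterwards coarsen the domain index to $k = \max(k_1,k_2)$. Reversing this order would leave the intermediate space in the operator norm of $f'(v)$ not matching the target space of the estimate on $g$, and the chain of inequalities would fail to close.
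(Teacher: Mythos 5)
Your proof is correct and is essentially the same as the paper's: fix $l$, use the hypothesis on $f$ to obtain an intermediate index $k_1$, feed $V_{k_1}$ as the target space into the $\cO(m)$-assumption on $g$ to obtain a domain index $k_2$, and chain the operator-norm inequality. The only cosmetic difference is that you explicitly coarsen to $k=\max(k_1,k_2)$, whereas the paper states its final estimate with respect to the index $k'$ from the $g$-step (implicitly using that it may be taken at least as large as $k_1$).
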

 \begin{proof}
 Let $l \in \N$. Let $k \in \N$ be such that $f\colon V_k \to V_l$ is differentiable, and its derivative satisfies the estimate in the lemma. Let $k' \in \N$ be such that $\|g(v)\|_{V_k} = O(\|v\|^m_{V{k'}})$ as $v \to 0$ in $V_{k'}$. Then $\| (f' \circ g)(v)\|_{V_l} = O(\|v\|_{V_{k'}}^{m+n})$ as $v \to 0$ in $V_{k'}$.
 \end{proof}

An example of a situation where the condition on $f$ in \cref{lem der Om+n} is satisfied is the following.
\begin{lemma} \label{lem der poly On-1}
Let $p \in \Pol^n(V_{\infty})$, for $n \geq 2$. Then for every $l \in \N$, there is a $k \in \N$ such that $\|p'(v)\|_{\cB(V_k, V_l)} = O(\|v\|^{n-1}_{V_k})$ as $v \to 0$ in $V_k$.
\end{lemma}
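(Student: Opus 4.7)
The plan is to unfold the definition of $\Pol^n(V_{\infty})$, reduce the claim to an estimate on a single pair of Banach spaces $(V_k, V_l)$, and then reuse the derivative computation from the proof of \cref{lem pol diffble}. Concretely, fix $l \in \N$. By definition of $\Pol^n(V_{\infty})$, there exists $k \in \N$ such that $p$ extends continuously to an element of $\Pol^n(V_k, V_l)$; that is, there is a symmetric multilinear map $\lambda \in S\cB^n(V_k, V_l)$ with $p(v) = \lambda(v,\ldots,v)$ for all $v \in V_k$.

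Next I would recall the derivative formula that was already established (in the proof of \cref{lem pol diffble}) on the Banach-space level: for $\lambda \in S\cB^n(V_k, V_l)$ and $v, h \in V_k$,
\[
p'(v)h \;=\; n\,\lambda(h,v,\ldots,v),
\]
where $\lambda$ is applied to one copy of $h$ and $n-1$ copies of $v$. Using the boundedness of $\lambda$ as a multilinear map $V_k^n \to V_l$, one obtains the pointwise operator norm estimate
\[
\|p'(v)\|_{\cB(V_k, V_l)} \;=\; \sup_{\|h\|_{V_k}=1} \|n\,\lambda(h,v,\ldots,v)\|_{V_l} \;\leq\; n\,\|\lambda\|\,\|v\|_{V_k}^{n-1}
\]
for every $v \in V_k$. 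This directly gives $\|p'(v)\|_{\cB(V_k, V_l)} = O(\|v\|_{V_k}^{n-1})$ as $v \to 0$ in $V_k$ (in fact the bound is global, not merely asymptotic).

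Since $l$ was arbitrary, this produces for every $l \in \N$ a $k \in \N$ with the required property, which is exactly the statement of the lemma. There is no real obstacle here: the work was already done in \cref{lem pol diffble}, and the only new ingredient is invoking the defining property of $\Pol^n(V_{\infty})$ to pass from the Fr\'echet-space hypothesis to a Banach-space pair $(V_k, V_l)$ on which the standard multilinear estimate applies. One minor point worth flagging when writing the proof is the convention that $\Pol^n$ denotes \emph{homogeneous} polynomials of degree $n$, so the representation $p = p_\lambda$ with a single $\lambda \in S\cB^n(V_k, V_l)$ is available without having to split $p$ into homogeneous components.
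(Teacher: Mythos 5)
Your proof is correct and takes essentially the same route as the paper: both fix $l$, use the definition of $\Pol^n(V_\infty)$ to choose $k$ with $p = p_\lambda$ for $\lambda \in S\cB^n(V_k,V_l)$, and then use the derivative formula $p'(v) = n\lambda(v,\ldots,v,\cdot)$ from the proof of \cref{lem pol diffble}. The only cosmetic difference is that the paper observes $p' \in \Pol^{n-1}(V_k, \cB(V_k,V_l))$ and cites \cref{lem order pol} for the bound, whereas you write out the multilinear estimate $\|p'(v)\| \le n\|\lambda\|\,\|v\|_{V_k}^{n-1}$ directly.
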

\begin{proof}
Let $p \in \Pol^n(V_{\infty})$, and let $k,l \in \N$ be such that $p \in \Pol^n(V_k, V_l)$.
As in the proof of \cref{lem pol diffble}, $p' \in \Pol^{n-1}(V_k, \cB(V_k, V_l))$. So the claim follows from \cref{lem order pol}.
\end{proof}

\begin{remark}
Everything in this subsection generalises directly to polynomial and (compactly) differentiable maps between two different Fr\'echet spaces that are given as intersections of nested sequences of Banach spaces. We will not need this generalisation, however.
\end{remark}

\subsection{Comparable sequences of Banach spaces}\label{sec pol seq}

In the rest of this section, we discuss some relevant properties and examples of comparable sequences of Banach spaces (\cref{def comparable}). Particularly relevant to \cref{thm normal form} are sequences of Banach spaces comparable to sequences of separable Hilbert spaces, which we discuss in \cref{sec comp Ban Hilb}. We will see  relevant examples in \cref{sec ex Sob Ck}.

Suppose that  $\{V_k\}_{k=1}^{\infty}$ and $\{W_k\}_{k=1}^{\infty}$  are comparable nested sequences of Banach spaces. Then  $V_{\infty} = W_{\infty}$ as sets.
\begin{lemma}
The two spaces $V_{\infty}$ and $W_{\infty}$ are equal as Fre\'chet spaces.
\end{lemma}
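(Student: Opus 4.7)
The plan is to show that the two Fréchet space structures on the common underlying set $V_\infty = W_\infty$ induce the same topology, since a Fréchet space is determined by its underlying vector space together with its topology. To do this, I will verify that the two families of seminorms $\{\|\cdot\|_{V_k}\}_{k \in \N}$ and $\{\|\cdot\|_{W_k}\}_{k \in \N}$ are equivalent in the sense that each seminorm in one family is dominated by some seminorm in the other.

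First I would fix $k \in \N$ and apply \cref{def comparable} to produce $l_1 \in \N$ with a bounded inclusion $V_{l_1} \subset W_k$. Concretely, this gives a constant $C_k > 0$ such that $\|v\|_{W_k} \leq C_k \|v\|_{V_{l_1}}$ for all $v \in V_{l_1}$, and in particular for all $v \in V_\infty \subset V_{l_1}$. This shows that every $W$-seminorm on $V_\infty$ is dominated by a $V$-seminorm, so the identity map $V_\infty \to W_\infty$ is continuous. By the symmetric condition in \cref{def comparable}, we likewise obtain $l_3 \in \N$ and a constant $C'_k$ such that $\|v\|_{V_k} \leq C'_k \|v\|_{W_{l_3}}$ for $v \in V_\infty$, so the identity $W_\infty \to V_\infty$ is continuous as well.

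Since the identity map between $V_\infty$ and $W_\infty$ is a continuous bijection in both directions, the two Fréchet topologies coincide. Both spaces are already known to be Fréchet spaces by \cref{def nested seq}, so no additional completeness argument is needed. Hence $V_\infty = W_\infty$ as Fréchet spaces.

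I do not anticipate any real obstacle here: the proof is essentially an unpacking of \cref{def comparable}. The only subtle point is that one should state things in terms of equivalence of families of seminorms rather than asking for a single norm equivalence, since the topology of a graded Fréchet space is generated by countably many seminorms simultaneously.
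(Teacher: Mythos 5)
Your proof is correct and takes essentially the same approach as the paper: both unpack \cref{def comparable} to show that each seminorm in one family is dominated by a seminorm in the other, hence the identity map is continuous in both directions. The paper phrases this in terms of sequences converging to zero (showing one direction explicitly and leaving the other by symmetry), while you phrase it directly as seminorm domination, but the content is identical.
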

\begin{proof}
Let $(v_j)_{j=1}^{\infty}$ be a sequence in $V_{\infty}$ such that for every $k \in \N$, $\lim_{j\to \infty} \|v_j\|_{V_k} = 0$. Let $k \in \N$, and choose $l \in \N$ such that we have a bounded inclusion $V_l \subset W_k$. Then there is a constant $C>0$ such that for every $j$,  $\|v_j\|_{W_k} \leq C \|v_j\|_{V_l}$, which goes to zero as $j \to \infty$.
\end{proof}

The following fact follows directly from the definitions, and the fact that the classes of maps in question are closed under composition with bounded linear maps.
\begin{lemma} \label{lem comparable same maps}
If $f\colon V_{\infty} \to V_{\infty}$ is a (compact) polynomial map or a (compactly) differentiable map, then it also defines a map of the same type on $W_{\infty}$.
\end{lemma}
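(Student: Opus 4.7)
The plan is to exploit the comparability sandwich directly. Fix $f$ belonging to one of the four classes as a map on $V_\infty$, and fix a target index $l' \in \N$ on the $W$-side. By comparability, there exists $l \in \N$ with a bounded inclusion $V_l \hookrightarrow W_{l'}$. Applying the hypothesis on $f$ with this $l$ produces a $k \in \N$ such that $f$ extends to a map of the appropriate type from $V_k$ to $V_l$. Comparability in the other direction then yields a $k' \in \N$ with a bounded inclusion $W_{k'} \hookrightarrow V_k$. So one obtains a factorisation
\[
W_{k'} \hookrightarrow V_k \xrightarrow{f} V_l \hookrightarrow W_{l'},
\]
and the task reduces to checking that each of the four classes is stable under pre- and post-composition with bounded linear inclusions.

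For the polynomial case, stability is immediate: if $f = p_\lambda$ for $\lambda \in \cB^n(V_k, V_l)$ and $j_{-}, j_{+}$ are the bounded inclusions on the two sides, then the multilinear map $(w_1,\dots,w_n) \mapsto j_+ \lambda(j_- w_1, \dots, j_- w_n)$ lies in $\cB^n(W_{k'}, W_{l'})$, and the associated polynomial on the diagonal is $j_+ \circ f \circ j_-$. For compact polynomials, exactly the same formula works, and compactness is preserved because compact multilinear maps form an ideal under pre- and post-composition with bounded linear maps; this can be read off from \cref{lem ex Kn} (with the $a_i$ replaced by $j_-$ in a trivial way via $\lambda' := j_+ \circ \lambda \in \cK^n(V_k, W_{l'})$ and then applying \cref{lem ex Kn} to pull back by $j_-$).

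For the differentiable case, the chain rule (\cref{lem chain rule}) shows that the composition $j_+ \circ f \circ j_-$ is $n$ times differentiable from $W_{k'}$ to $W_{l'}$, because each $j_\pm$ is bounded linear (hence smooth with constant derivative). For compactly differentiable maps, \cref{lem cpt chain rule} guarantees that composing an $n$-times compactly differentiable map with a differentiable (here linear) map on either side preserves compact differentiability, so again the sandwiched map has the required property.

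The argument is essentially bookkeeping: the only substantive content is the ideal property of compact multilinear maps and the chain-rule stability of compact differentiability, both of which are already available. I expect no obstacle beyond making sure the indices $k, l, k', l'$ are chosen in the correct order, so that the hypothesis on $f$ is applied to the index produced by the first comparability step, and the second comparability step is applied to the resulting $k$.
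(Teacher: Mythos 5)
Your approach matches the paper's, which dismisses this lemma with a one-sentence remark that it ``follows directly from the definitions, and the fact that the classes of maps in question are closed under composition with bounded linear maps.'' Your plan spells out exactly this factorisation
\[
W_{k'} \hookrightarrow V_k \xrightarrow{f} V_l \hookrightarrow W_{l'},
\]
and correctly reduces the lemma to stability of the four classes under bounded linear pre- and post-composition, so you are in good shape. The care you take with the order of quantifiers (choose $l$ for the given $l'$, then apply the hypothesis to get $k$, then choose $k'$) is exactly what is needed.

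There is, however, a small slip in the justification for the compact cases. \cref{lem ex Kn} assumes the pre-composed maps $a_i$ are \emph{compact} and the multilinear map $\lambda$ is merely \emph{bounded}; it concludes that the composite is compact-multilinear. What you need is the other direction of the ideal property: $\lambda$ compact-multilinear, $j_-$ merely bounded, and you still want the composite to be compact-multilinear. Substituting the bounded inclusion $j_-$ for the compact $a_i$ in \cref{lem ex Kn} is therefore not a trivial replacement---that lemma's hypotheses literally fail. The fact you need (that $\cK^n$ is a two-sided ideal under composition with bounded linear maps) is true and easy, by induction on $n$ using $\cK^{n}(V,W) = \cK(V,\cK^{n-1}(V,W))$ together with the usual ideal property of $\cK \subset \cB$ at each level, but it is not stated in the paper and is not \cref{lem ex Kn}. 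A similar point applies to your use of \cref{lem cpt chain rule}, which as stated only concerns first derivatives; for $n$-th order compact differentiability of $j_+ \circ f \circ j_-$ you want to note that, since $j_\pm$ are linear, $(j_+ \circ f \circ j_-)^{(n)}(u) = j_+ \circ f^{(n)}(j_-u) \circ (j_-,\dots,j_-)$ and then invoke the same ideal property. These are citation-level corrections, not gaps in the idea, and they make the argument complete.
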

This lemma in particular states that $\Pol^n(V_{\infty}) = \Pol^n(W_{\infty})$ as vector spaces. We will use the fact that this equality includes natural topologies on these spaces (\cref{cor PolV PolW} below) to prove \cref{cor comp conv}.

\begin{lemma}\label{lem Bn Vk Wk}
For all $l \in N$, there is an  $l' \in \N$ such that for every $k' \in \N$ 
with $k' \geq l'$, there is a $k \in \N$ such that  we have a bounded inclusion map $\cB^n(V_{k'}, V_{l'}) \subset \cB^n(W_{k}, W_{l})$
\end{lemma}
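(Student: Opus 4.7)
The plan is to use the comparability assumption twice—once to find a target index and once to find a source index—and then exhibit a natural restriction/extension map between the operator spaces induced by composition with bounded inclusions of the underlying Banach spaces.

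First, I would fix $l \in \N$ and apply comparability in the ``$V \subset W$'' direction: the definition, applied with $W_l$ as the given space, produces an index $l_1$ such that the inclusion $V_{l_1} \subset W_l$ is bounded. Set $l' := l_1$, and let $C_1$ be the operator norm of this inclusion. Next, given $k' \geq l'$, I would apply comparability in the ``$W \subset V$'' direction to $V_{k'}$ to obtain an index $l_3$ such that $W_{l_3} \subset V_{k'}$ is a bounded inclusion; set $k := l_3$ and let $C_2$ denote the norm of this inclusion. (The condition $k' \geq l'$ plays no essential role in the argument; it reflects the natural situation $V_{k'} \subset V_{l'}$.)

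Second, I would define the ``inclusion'' map $\iota\colon \cB^n(V_{k'}, V_{l'}) \to \cB^n(W_k, W_l)$ by sending $\lambda$ to the multilinear map
\[
\iota(\lambda)(w_1, \ldots, w_n) := \lambda(w_1, \ldots, w_n),
\]
where each $w_j \in W_k$ is first viewed in $V_{k'}$ via $W_k \hookrightarrow V_{k'}$, and the result $\lambda(w_1, \ldots, w_n) \in V_{l'}$ is then viewed in $W_l$ via $V_{l'} \hookrightarrow W_l$. Boundedness is a direct estimate:
\[
\|\iota(\lambda)(w_1, \ldots, w_n)\|_{W_l} \leq C_1 \|\lambda(w_1, \ldots, w_n)\|_{V_{l'}} \leq C_1 \|\lambda\| \prod_{j=1}^n \|w_j\|_{V_{k'}} \leq C_1 C_2^n \|\lambda\| \prod_{j=1}^n \|w_j\|_{W_k},
\]
so $\|\iota(\lambda)\| \leq C_1 C_2^n \|\lambda\|$, which is what ``bounded inclusion'' should mean here.

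There is no serious obstacle to this argument; the only point worth flagging is interpretive rather than technical. The statement ``bounded inclusion $\cB^n(V_{k'}, V_{l'}) \subset \cB^n(W_k, W_l)$'' formally compares operator spaces with different domains and codomains, and so must be read via the canonical restriction map $\iota$ above—this is the same convention used implicitly throughout \cref{sec seq sub} when elements of $\cB^n(V_\infty)$ are identified with their various continuous extensions to different Banach spaces $V_k, V_l$ in the sequence. Once this identification is fixed, the argument is a routine application of the multilinearity of $\lambda$ combined with the two bounded inclusions produced by comparability.
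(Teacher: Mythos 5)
Your proof is correct and follows essentially the same route as the paper's: choose $l'$ so that $V_{l'}\subset W_l$ is bounded with constant $C_1$, then for $k'\geq l'$ choose $k$ so that $W_k\subset V_{k'}$ is bounded with constant $C_2$, and chain the estimates to get $\|\lambda\|_{\cB^n(W_k,W_l)}\leq C_1C_2^n\|\lambda\|_{\cB^n(V_{k'},V_{l'})}$. Your explicit remark about interpreting the ``inclusion'' as the restriction map $\iota$ is a useful clarification, and your observation that $k'\geq l'$ is not essential is also accurate—it merely reflects the natural nesting $V_{k'}\subset V_{l'}$.
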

\begin{proof}
Let $l \in \N$. Choose $l' \in \N$ and $C_1>0$ such that
for every $v \in V_{\infty}$,  $\|v\|_{W_l} \leq C_1 \|v\|_{V_{l'}}$.
Let $k' \geq l'$.
Choose $k \in \N$ and $C_2>0$ such 
 that
for every $v \in V_{\infty}$,  $\|v\|_{V_{k'}} \leq C_2\|v\|_{{W_k}}$.
 Then for all $\lambda \in \cB^n(V_{k'}, V_{l'})$,
\begin{multline*}
\sup_{\|w_1\|_{W_k},  \ldots, \|w_n\|_{W_k} \leq 1} \|\lambda(w_1, \ldots, w_n)\|_{W_l} 
\leq C_1 C_2^n \sup_{\|v_1\|_{V_{k'}},  \ldots, \|v_n\|_{V_{k'}} \leq 1} \|\lambda(v_1, \ldots, v_n)\|_{V_{l'}}
\\
\leq C_2 C_2^n \|\lambda\|_{\cB^n(V_{k'}, V_{l'})}\,.
\end{multline*}
\end{proof}

For a sequence $(p_j)_{j=1}^{\infty}$ in $\Pol^n(V_{\infty})$, we define $p_j \to 0$ in 
$\Pol^n(V_{\infty})$ to mean that for every $l \in \N$, there is a $k\in \N$ such that $p_j \to 0$ in $\Pol^n(V_k, V_l)$. (This includes the requirement that $p_j \in \Pol^n(V_k, V_l)$ for every $j$.) 
\begin{corollary}\label{cor PolV PolW}
We have
$\Pol^n(V_{\infty}) = \Pol^n(W_{\infty})$, including topologies.
\end{corollary}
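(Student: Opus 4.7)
The equality $\Pol^n(V_\infty)=\Pol^n(W_\infty)$ as vector spaces is already a consequence of the preceding \cref{lem comparable same maps}, so the content of the corollary is the assertion that the two natural notions of convergence coincide. Since the comparability hypothesis is symmetric in $V$ and $W$, it suffices to show that if $p_j \to 0$ in $\Pol^n(V_\infty)$, then $p_j \to 0$ in $\Pol^n(W_\infty)$, i.e.~that for every $l \in \N$ one can produce a $k \in \N$ such that $p_j \to 0$ in $\Pol^n(W_k, W_l)$.

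Fix $l \in \N$. My plan is to feed this $l$ into \cref{lem Bn Vk Wk}, which yields an $l' \in \N$ with the property that for every $k' \geq l'$ there exists $k \in \N$ giving a bounded inclusion $\cB^n(V_{k'},V_{l'}) \hookrightarrow \cB^n(W_k,W_l)$. Because symmetric multilinear maps stay symmetric under this inclusion, and the isometric isomorphism $S\cB^n \cong \Pol^n$ is natural in the source and target, this bounded inclusion restricts to a bounded inclusion $\Pol^n(V_{k'},V_{l'}) \hookrightarrow \Pol^n(W_k,W_l)$.

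Next I will use the hypothesis $p_j \to 0$ in $\Pol^n(V_\infty)$ with the index $l'$ just produced: there is some $k_0 \in \N$ with $p_j \to 0$ in $\Pol^n(V_{k_0},V_{l'})$. I need this convergence for an index $k' \geq l'$, which I arrange by setting $k' := \max(k_0, l')$; the bounded inclusion $V_{k'} \subset V_{k_0}$ (from the nesting) induces a bounded restriction map $\Pol^n(V_{k_0},V_{l'}) \to \Pol^n(V_{k'},V_{l'})$, transporting the convergence $p_j \to 0$ to $\Pol^n(V_{k'},V_{l'})$. Composing with the bounded inclusion $\Pol^n(V_{k'},V_{l'}) \hookrightarrow \Pol^n(W_k,W_l)$ from the previous paragraph finishes the job: $p_j \to 0$ in $\Pol^n(W_k,W_l)$.

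The main step to get right is really just the bookkeeping of indices: the hypothesis on $V_\infty$ produces a $k_0$ depending on $l'$, while \cref{lem Bn Vk Wk} only supplies $k$ from $k' \geq l'$, so one must inflate $k_0$ to $k' = \max(k_0,l')$ and use the monotonicity of the Banach nesting to keep the convergence. Beyond that, the only non-notational ingredient is the observation that the bounded inclusion of $\cB^n$-spaces in \cref{lem Bn Vk Wk} preserves symmetry and therefore descends to $\Pol^n$-spaces under the standard isometric identification $\Pol^n \cong S\cB^n$; this is immediate and is the only place where the polynomial (as opposed to multilinear) nature of the statement enters.
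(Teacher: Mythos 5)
Your proof is correct and follows essentially the same route as the paper's: both reduce the statement to \cref{lem Bn Vk Wk} and then transfer convergence of the associated symmetric multilinear maps through the bounded inclusion $\cB^n(V_{k'},V_{l'}) \subset \cB^n(W_k,W_l)$. The only difference is that you make explicit the index bookkeeping needed to ensure $k' \geq l'$ (by inflating to $\max(k_0,l')$ and restricting along the nesting inclusion), whereas the paper treats this as covered by the earlier general remark that one may always assume the source index is at least the target index in such definitions.
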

\begin{proof}
Let $(p_j)_{j=1}^{\infty}$ be a sequence in $\Pol^n(V_{\infty})$ converging to zero. 
Let $l \in \N$. Choose $l'$ as in \cref{lem Bn Vk Wk}.
 Choose $k' \in \N$ such that $p_j \to 0$ in $\Pol^n(V_{k'}, V_{l'})$. Choose $k \in \N$ as in \cref{lem Bn Vk Wk}.

For each $j$, write $p_j = p_{\lambda_j}$, for $\lambda_j \in S\cB^n(V_{k'}, V_{l'})$. By \cref{lem Bn Vk Wk}, there is a $C>0$ such that for every $j$,
\[
\|\lambda_j\|_{\cB(W_k, W_l)} \leq C \|\lambda_j\|_{\cB(V_{k'}, V_{l'})},
\] 
which goes to zero as $j\to \infty$.
Hence $p_j \to 0$ in $\Pol^n(W_{\infty})$.
\end{proof}

\subsection{Sequences of Banach spaces comparable to sequences of Hilbert spaces} \label{sec comp Ban Hilb}

As before, we suppose that  $\{V_k\}_{k=1}^{\infty}$ and $\{W_k\}_{k=1}^{\infty}$  are comparable nested sequences of Banach spaces. Now we make the additional assumption that the spaces $W_k$ are separable Hilbert spaces. Suppose that $\{e_j\}_{j=1}^{\infty}$ is a subset of $W_{\infty}$ that is orthogonal in all spaces $W_k$, with dense span. Then taking inner products with $e_j$ defines bounded functionals, all denoted by  $e^j$, on all spaces  $W_k$, and hence in $V_k$ for $k$ large enough. 

\begin{corollary} \label{cor Taylor cpt Vinfty}
%
Suppose $f$ is an $n+1$ times differentiable map from $V_{\infty}$ to itself, and that $f$ is $m$ times compactly differentiable for every $m \leq n$. 
%
Then  there are unique complex numbers $a^k_q$ such that
\beq{eq Taylor cpt}
f(u+h) = 
\sum_{q \in \N_0^{\infty};\, |q|\leq n} 
\sum_{j=1}^{\infty}
a^j_q  h^q e_j + \rho(h),
\eeq
where $\rho\colon V_{\infty} \to V_{\infty}$ is of order $n+1$, and the part of the sum where $|q|=m$ converges as a function of $h$ in  $\Pol^m(V_{\infty})$, for $m = 0, \ldots, n$.
\end{corollary}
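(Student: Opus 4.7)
The plan is to reduce the Fr\'echet-level statement to the Banach-space Taylor theorem of Corollary \ref{cor Taylor cpt}, applied inside each pair $(W_k, W_l)$, and then to assemble the resulting expansions into a single consistent one. By Lemma \ref{lem comparable same maps} and Corollary \ref{cor PolV PolW}, passing from the grading $\{V_k\}$ to the comparable Hilbert-space grading $\{W_k\}$ preserves every ingredient: the differentiability and compact-differentiability hypotheses on $f$, the notion of being of order $n+1$ from Definition \ref{def On}, and the topology on $\Pol^m(V_\infty) = \Pol^m(W_\infty)$. I therefore work throughout with $\{W_k\}$, using that $\{e_j\}$ is a Hilbert basis of each $W_k$, and hence a Schauder basis of $W_k$ and, via the functionals $e^j$, of $W_k^*$.

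Fix $u \in V_\infty$. For each $l \in \N$, choose $k = k(l) \geq l$ large enough that $f$ extends to a map $W_k \to W_l$ which is $n+1$ times differentiable and $m$ times compactly differentiable for every $m \leq n$. Corollary \ref{cor Taylor cpt} then produces unique complex numbers $a_q^{j,(k,l)}$ with
\[
f(u+h) = \sum_{|q|\leq n} \sum_{j=1}^{\infty} a_q^{j,(k,l)}\, h^q e_j + \rho_{k,l}(h),
\]
where the $|q|=m$ partial sum converges in $\Pol^m(W_k, W_l)$ and $\|\rho_{k,l}(h)\|_{W_l} = O(\|h\|_{W_k}^{n+1})$.

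The key step is to verify that $a_q^{j,(k,l)}$ does not depend on $(k,l)$. For each fixed $h$, the $m$-th homogeneous Taylor polynomial $\tfrac{1}{m!} f^{(m)}(u)h^m$ is an intrinsic element of $W_\infty$ with a unique Hilbert-basis expansion in every $W_l$; since any series $\sum_j c_j e_j$ that converges in $W_k$ must converge to the same element with the same coefficients in any $W_{l'}$ into which $W_k$ embeds continuously, the numbers $a_q^{j,(k,l)}$ coincide for all admissible pairs. Call this common value $a_q^j$. Then $\rho(h) := f(u+h) - \sum_{|q|\leq n}\sum_j a_q^j\, h^q e_j$ agrees with each $\rho_{k,l}$, so for every $l$ there is a $k$ with $\|\rho(h)\|_{W_l} = O(\|h\|_{W_k}^{n+1})$; this is exactly $\rho = \cO(n+1)$ in the sense of Definition \ref{def On}. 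Convergence of each $|q|=m$ subsum in $\Pol^m(W_\infty) = \Pol^m(V_\infty)$ is then immediate from the Banach-level convergence.

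For uniqueness, if $\{\tilde a_q^j\}$ gives a second such expansion with residual of order $n+1$, then the $|q|=m$ homogeneous part of the difference lies in $\Pol^m(W_k, W_l)$ and takes values of size $O(\|h\|_{W_k}^{n+1})$; Lemma \ref{lem order pol lower} forces this homogeneous part to vanish, whereupon Proposition \ref{prop basis}, applied to the Schauder basis $\{p^q \otimes e_j : |q| = m,\, j \in \N\}$ of $\cK\Pol^m(W_k, W_l)$, forces $a_q^j = \tilde a_q^j$ for every $(q,j)$. The main obstacle in this scheme is precisely the compatibility step just described: ensuring that the per-level Schauder-basis coefficients produced by the Banach version of Taylor's theorem actually glue into well-defined Fr\'echet coefficients, which rests on the fact that an orthogonal family $\{e_j\} \subset W_\infty$ yields identical Schauder-basis coefficients in every $W_l$ of the grading, so that the whole construction is intrinsically level-independent.
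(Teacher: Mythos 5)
Your proof is correct and follows essentially the same strategy as the paper: pass to the comparable Hilbert-space grading $\{W_k\}$ via \cref{lem comparable same maps}, apply the Banach-level \cref{cor Taylor cpt} in each pair $(W_k, W_l)$, and transfer convergence back to $\Pol^m(V_\infty)$ via \cref{cor PolV PolW}. You add a worthwhile extra paragraph justifying that the per-level coefficients $a_q^{j,(k,l)}$ are level-independent, a consistency point the paper's own (terser) proof leaves implicit.
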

\begin{proof}
Let $l \in \N$. Choose $k, k', l' \in \N$ be such that we have bounded inclusions $V_{l'} \subset W_l$ and $W_k \subset V_{k'}$, and $f\colon V_{k'} \to V_{l'}$ is $n+1$ times differentiable, and $m$ times compactly differentiable for every $m \leq n$.  Then the same is true for $f\colon W_{k} \to W_{l}$. So \cref{cor Taylor cpt} implies that  \eqref{eq Taylor cpt} holds, for unique $a_q^j$, where the sum converges in $\Pol(W_{\infty})$, and hence in $\Pol(V_{\infty})$ by \cref{cor PolV PolW}, and $\rho$ is of order $n+1$ as a map from $W_{\infty}$ to itself, and hence as a map  from $V_{\infty}$ to itself.
%
%
\end{proof}

\begin{remark}
A useful feature of \cref{cor Taylor cpt Vinfty} is that it is not assumed that the spaces $V_k$ have the approximation property. The point is that the separable Hilbert spaces $W_k$ do have this property.
\end{remark}

The following corollary is an important way in which we use  comparable sequences of Banach spaces. It is used in the proof of \cref{lem conv hat psi hat F}.
\begin{corollary}\label{cor comp conv}
Let $a_q^j \in \C$ be given such that
\beq{eq sum all}
\sum_{q \in \N_0^{\infty};\, |q|=n} \sum_{j=1}^{\infty} a_q^j p^q \otimes e_j
\eeq
converges in $\Pol^n(V_{\infty})$. Then for all subsets $A \subset \{q \in \N_0^{\infty} : |q|=n\} \times \N$, the series
\beq{eq sum A}
\sum_{(q,j) \in A} a_q^j p^q \otimes e_j
\eeq
converges in $\Pol^n(V_{\infty})$.
\end{corollary}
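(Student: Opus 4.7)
The plan is to transfer the problem from $V_\infty$ to $W_\infty$, where the Hilbert-space structure gives us an \emph{unconditional} Schauder basis, and then to exploit the standard fact that unconditional convergence of a series implies convergence of every subseries.

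First, by \cref{cor PolV PolW}, $\Pol^n(V_\infty) = \Pol^n(W_\infty)$ with identical topologies, so the hypothesised series converges in $\Pol^n(W_\infty)$. By definition of the topology on $\Pol^n(W_\infty)$, for each $l \in \N$ we may choose some $k \in \N$ (depending on $l$) such that the partial sums of \eqref{eq sum all} converge in $\Pol^n(W_k, W_l)$. Our goal will be to show, for the same $k$ and $l$, that the partial sums of \eqref{eq sum A} also converge in $\Pol^n(W_k, W_l)$; doing this for all $l$ yields convergence in $\Pol^n(W_\infty)$, hence in $\Pol^n(V_\infty)$ by \cref{cor PolV PolW} again.

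Next I would check that we are really working inside the closed subspace $\cK\Pol^n(W_k, W_l)$. Each individual term $p^q \otimes e_j$ corresponds under the isometric isomorphism $S\cB^n(W_k, W_l) \cong \Pol^n(W_k, W_l)$ to (the symmetrisation of) $e^q \otimes e_j$, which is an iterated rank-one operator and therefore lies in $\cK^n(W_k, W_l)$. Hence every finite partial sum of \eqref{eq sum all} lies in the Banach space $\cK\Pol^n(W_k, W_l)$, and since this space is closed in $\Pol^n(W_k, W_l)$, the limit also lies there, and convergence happens inside $\cK\Pol^n(W_k, W_l)$.

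Now \cref{lem uncond KPn} applies, since $W_k$ and $W_l$ are Hilbert spaces and $\{e_j\}$ is orthogonal in each: the family $\{p^q \otimes e_j : |q| = n,\, j \in \N\}$ is an \emph{unconditional} Schauder basis of $\cK\Pol^n(W_k, W_l)$. By the remark preceding \cref{lem uncond Kn}, unconditionality means that convergence of the full series forces convergence of every sub-series indexed by an arbitrary subset $A$ of the index set. Uniqueness of the coefficients for a Schauder basis guarantees that the sub-series has the same coefficients $a^j_q$ as in \eqref{eq sum all}, and so \eqref{eq sum A} converges in $\cK\Pol^n(W_k, W_l) \subset \Pol^n(W_k, W_l)$, as required.

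I do not expect a serious obstacle here; the main conceptual point is simply to recognise that all the preparatory work in \cref{sec cpt der,sec Frechet} was designed precisely so that the Hilbert-space unconditional basis property could be imported into the (possibly non-Hilbert) Fr\'echet setting via comparability. The one place to be careful is ensuring that the choice of $k$ for each $l$ in the definition of convergence in $\Pol^n(W_\infty)$ transfers cleanly to a choice for $\Pol^n(V_\infty)$, but this is exactly what the topological identification in \cref{cor PolV PolW} delivers.
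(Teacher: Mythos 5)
Your proof is correct and takes essentially the same route as the paper's: transfer the convergence question to $\Pol^n(W_\infty)$ via \cref{cor PolV PolW}, then invoke the unconditional Schauder basis of $\cK\Pol^n(W_k,W_l)$ from \cref{lem uncond KPn} to pass to the subseries. The paper's version is simply more terse, leaving the compactness bookkeeping (that the series lives in the closed subspace $\cK\Pol^n$) implicit, which you have usefully spelled out.
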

\begin{proof}
By \cref{cor PolV PolW}, the series \eqref{eq sum all} converges in  $\Pol^n(W_{\infty})$, and it is enough to show that \eqref{eq sum A} converges in  $\Pol^n(W_{\infty})$. And that follows from \cref{lem uncond KPn}.
\end{proof}
\begin{remark}
In \cref{cor comp conv}, the two series converge to elements of $\cK\Pol^n(V_{\infty})$.
\end{remark}


\subsection{Example: Sobolev spaces and $C^k$-spaces} \label{sec ex Sob Ck}

Let $\Omega$ be a bounded open subset of $\R^d$ or of an $d$-dimensional Riemannian manifold, and suppose that the boundary of $\Omega$ is $C^1$. 
Set
\begin{equation*}
V_k := W^{k-1,2}(\Omega) \quad\text{and}\quad
W_k := W^{k-1, k+1}(\Omega).
\end{equation*}

\begin{lemma} \label{lem Sobolev comparable}
The  above sequences  $\{V_k\}_{k=1}^{\infty}$ and $\{W_k\}_{k=1}^{\infty}$ of Banach spaces are comparable.
\end{lemma}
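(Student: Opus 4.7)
The plan is to check, for every $k \in \N$, the existence of integers $l_1, l_2, l_3, l_4$ realising the four bounded inclusions in \cref{def comparable}. Three of them follow immediately from H\"older's inequality on the bounded set $\Omega$; only one requires the Sobolev embedding theorem.

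First I would dispose of the three easy inclusions. For $W_k \subset V_{l_2}$, take $l_2 = k$: since $\Omega$ is bounded and $k+1 \geq 2$, H\"older gives $L^{k+1}(\Omega) \hookrightarrow L^2(\Omega)$, so applying this to each derivative of order at most $k-1$ yields the bounded inclusion $W^{k-1,k+1}(\Omega) \hookrightarrow W^{k-1,2}(\Omega)$. The inclusion $W_{l_3} \subset V_k$ is exactly the same statement with $l_3 = k$. For $V_k \subset W_{l_4}$, take $l_4 = 1$, so that $W_{l_4} = W^{0,2}(\Omega) = L^2(\Omega)$, and then $V_k = W^{k-1,2}(\Omega) \hookrightarrow L^2(\Omega) = W_{l_4}$ is trivial.

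The remaining inclusion $V_{l_1} \subset W_k$ is the only one with any content, and this is where Sobolev embedding enters. Using that $\Omega$ is a bounded domain with $C^1$ boundary in $\R^d$ (or in a $d$-dimensional Riemannian manifold), the Sobolev embedding theorem gives $W^{j,2}(\Omega) \hookrightarrow L^{\infty}(\Omega)$ whenever $j > d/2$. The plan is to choose $l_1 \in \N$ with $l_1 - k > d/2$, and observe that for $u \in V_{l_1} = W^{l_1-1,2}(\Omega)$ and any multi-index $\alpha$ with $|\alpha| \leq k-1$, the derivative $\partial^{\alpha} u$ lies in $W^{l_1 - k, 2}(\Omega)$, which embeds into $L^{\infty}(\Omega)$ by Sobolev and hence into $L^{k+1}(\Omega)$ since $\Omega$ is bounded. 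Summing the resulting estimates over multi-indices of order at most $k-1$ gives the bounded inclusion $W^{l_1-1,2}(\Omega) \hookrightarrow W^{k-1,k+1}(\Omega) = W_k$.

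There is no genuine obstacle in this argument; the only care required is choosing $l_1$ large enough for the Sobolev exponent inequality $l_1 - k > d/2$ to hold, after which all four inclusions are produced by standard and routine estimates.
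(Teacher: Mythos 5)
Your proof is correct and arrives at the same lemma via a slightly different selection of Sobolev embeddings. The paper's proof uses the two fractional-index embeddings $W^{l+\frac{d}{p}(1-p/2),p} \subset W^{l,2}$ and $W^{l+r,2} \subset W^{l+r-\frac{d}{2}(1-2/q),q}$, with $r = \lceil d/2\rceil$, to produce $W_{k+r} \subset V_k \subset W_1$ and $V_{k+r}\subset W_k\subset V_k$. You instead observe that three of the four required inclusions are direct consequences of H\"older's inequality on the bounded domain, and isolate the single nontrivial inclusion $V_{l_1}\subset W_k$, which you settle via the supercritical embedding $W^{j,2}(\Omega)\hookrightarrow L^\infty(\Omega)$ for $j>d/2$, applied to each derivative $\partial^\alpha u$ with $|\alpha|\leq k-1$, followed by $L^\infty\hookrightarrow L^{k+1}$ on a bounded domain. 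This is cleaner in that it uses only one Sobolev embedding (into $L^\infty$) rather than the pair of $L^p\!\to\! L^q$ embeddings in the paper; the slight cost is that it requires the strict inequality $l_1 - k > d/2$, so for even $d$ your $l_1$ is one larger than the paper's $k+\lceil d/2\rceil$, but since any $l_1$ suffices this has no consequence. One small remark worth adding for completeness: the embedding $W^{j,2}(\Omega)\hookrightarrow L^\infty(\Omega)$ for $j>d/2$ uses the boundedness of $\Omega$ and the $C^1$-boundary (extension) hypothesis, both of which are assumed in this subsection, so your invocation is legitimate.
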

\begin{proof}
Set $r:= \lceil d/2\rceil$.
By a Sobolev embedding theorem, we have bounded inclusions
\begin{equation*}
W^{l+ \frac{d}{p}(1-p/2) , p}(\Omega) \subset W^{l,2}(\Omega)
\quad\text{and}\quad
W^{l+r,2}(\Omega) \subset W^{l+r-\frac{d}{2}(1-2/q), q}(\Omega),
\end{equation*}
for all $1\leq p<2<q<\infty$ and every $l \in \N_0$. Now for all such $p,q$ and $l$,  
\begin{equation*}
l +\frac{d}{p}(1-p/2) <  l+r
\quad\text{and}\quad
 l+r-\frac{d}{2}(1-2/q) > l \geq 1.
\end{equation*}So we have bounded inclusions
\[
W^{ l+r, p}(\Omega)  \subset W^{l,2}(\Omega)
\quad\text{and}\quad
W^{l+r,2}(\Omega) \subset W^{l, q}(\Omega).
\]
Furthermore, since $\Omega$ has finite volume, we have bounded inclusions $W^{l, p'}(\Omega) \subset W^{ l, p''}(\Omega)$ for every $l$ and all $p'\geq p''$.

The above arguments imply that we have bounded inclusions
\[
W_{k+r} \subset V_k \subset W_1
\quad\text{and}\quad
V_{k+r}\subset W_k \subset V_k.
\]
\end{proof}
\begin{remark}
In this example, the spaces $V_k$ are separable Hilbert spaces. 
\end{remark}

For another example of comparable sequences of Banach spaces,  fix $p>n$.
For $k \in \N$, set 
\[
V_k := W^{k, p}(\Omega)
\quad\text{and}\quad
W_k := C^{k}(\overline{\Omega}).
\]
The space $C^k(\overline{\Omega})$ is complete in the norm given by the maximum of the sup-norms of the partial derivatives of functions up to order $k$.
\begin{lemma}
The sequences  $\{V_k\}_{k=1}^{\infty}$ and $\{W_k\}_{k=1}^{\infty}$ of Banach spaces are comparable.
\end{lemma}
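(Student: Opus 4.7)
The plan is to apply the standard Sobolev embedding theorems, which hold since $\Omega$ is bounded with $C^1$ boundary. Comparability requires two kinds of bounded inclusions: (i) $C^k$-functions on the compact set $\overline{\Omega}$ lie in every Sobolev space $W^{k,p}(\Omega)$, which is essentially trivial; (ii) sufficiently regular Sobolev functions embed into $C^k$-spaces by Morrey's inequality. The hypothesis $p>d$ (interpreting the ``$n$'' in the statement as the dimension~$d$) forces $d/p<1$, so the Morrey embedding costs only a single unit of regularity.

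First I would dispose of the easy direction: $W_k\subset V_k$, i.e., $C^k(\overline{\Omega})\hookrightarrow W^{k,p}(\Omega)$. For $u\in C^k(\overline{\Omega})$, every partial derivative $\partial^\alpha u$ with $|\alpha|\leq k$ is uniformly bounded on $\overline{\Omega}$; since $|\Omega|<\infty$ we get $\|\partial^\alpha u\|_{L^p}\leq |\Omega|^{1/p}\|u\|_{C^k}$, whence $\|u\|_{W^{k,p}}\lesssim \|u\|_{C^k}$. This supplies $l_2=k$ in $W_k\subset V_{l_2}$ and $l_3=k$ in $W_{l_3}\subset V_k$. Next, Morrey's inequality provides a bounded inclusion $W^{m,p}(\Omega)\hookrightarrow C^k(\overline{\Omega})$ whenever $(m-k)p>d$, and since $p>d$ this holds as soon as $m-k\geq 1$. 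Choosing $l_1=k+1$ yields $V_{l_1}=W^{k+1,p}(\Omega)\hookrightarrow C^k(\overline{\Omega})=W_k$, and for $k\geq 2$ choosing $l_4=k-1$ yields $V_k=W^{k,p}(\Omega)\hookrightarrow C^{k-1}(\overline{\Omega})=W_{l_4}$.

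The main obstacle is the boundary case $k=1$ for the inclusion $V_1\subset W_{l_4}$: Morrey only gives $W^{1,p}(\Omega)\hookrightarrow C^{0,1-d/p}(\overline{\Omega})\subset C^0(\overline{\Omega})$, so the natural choice is $l_4=0$, which is outside $\N=\{1,2,\ldots\}$ under the paper's convention in \cref{secNota}. I expect this to be handled either by augmenting the nested sequence with $W_0:=C^0(\overline{\Omega})$ (this neither alters the intersection $W_\infty=C^\infty(\overline{\Omega})$ nor the other hypotheses of \cref{def nested seq}), or by a uniform shift of indexing on $\{W_k\}$, in analogy with the first example of \cref{sec ex Sob Ck} where the Sobolev sequence already starts at $V_1=W^{0,2}=L^2$. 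In either formulation, the four inclusions above suffice to verify \cref{def comparable}, so $V_\infty=W_\infty=C^\infty(\overline{\Omega})$ with equivalent Fr\'echet topologies.
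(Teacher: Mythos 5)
Your strategy is exactly the paper's: finiteness of $|\Omega|$ gives the bounded inclusion $C^k(\overline\Omega)\hookrightarrow W^{k,p}(\Omega)$, and the Morrey--Sobolev embedding with $p>d$ gives the reverse direction at the cost of a single derivative; your choice $l_1=k+1$ is marginally sharper than the paper's $l_1=k+1+\lceil d/p\rceil$. You have also spotted a genuine defect in the paper's own proof: the paper's choice $l_2:=\max\{k-1-\lfloor d/p\rfloor,\,1\}$ equals $1$ when $k=1$, yet $W^{1,p}(\Omega)\not\hookrightarrow C^1(\overline\Omega)$ (on $\Omega=(-1,1)$ with $p=2$, the function $u(x)=|x|^a$ with $\tfrac12<a<1$ lies in $W^{1,2}$ but not in $C^1$), so the inclusion $V_1\subset W_{l_4}$ demanded by \cref{def comparable} fails for every $l_4\in\N$.

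Be aware, though, that neither repair you sketch quite closes the gap on its own, because the two scales interleave, $C^l\subsetneq W^{l,p}\subsetneq C^{l-1}$: shifting to $W_k:=C^{k-1}(\overline\Omega)$ restores $V_1\subset W_1$, but $W_1=C^0$ is then too large to satisfy $W_1\subset V_{l_2}=W^{l_2,p}$ for any $l_2\geq 1$; and prepending $W_0:=C^0(\overline\Omega)$ only helps if the indices $l_i$ in \cref{def comparable} are allowed in $\N_0$ rather than $\N$. A clean fix is to cap the $W$-scale from below, e.g.\ set $W_1:=W^{1,p}(\Omega)$ and $W_k:=C^{k-1}(\overline\Omega)$ for $k\geq 2$; the sequence is still nested with $W_\infty=C^\infty(\overline\Omega)$, and all four inclusions of \cref{def comparable} now hold for every $k$ (take $l_1=k$, $l_2=\max\{k-1,1\}$, $l_3=k+1$, $l_4=k$). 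Alternatively, observe that the paper only ever invokes the inclusions $V_{l_1}\subset W_k$ and $W_{l_3}\subset V_k$ from \cref{def comparable} (see the proof of \cref{lem Bn Vk Wk} and of \cref{cor Taylor cpt Vinfty}), and your argument does establish those for every $k$, so the substance of the lemma survives under the correspondingly weakened notion of comparability.
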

\begin{proof}
We  have a bounded inclusion
$W_k \subset V_{k}$ for every $k$.
So it remains to show that for every $k \in \N$, there are $l_1, l_2 \in \N$ such that we have bounded inclusions 
\beq{eq Sob Ck incl}
\begin{split}
V_{l_1} &\subset W_k;\\
 V_k &\subset W_{l_2}.
\end{split}
\eeq

By a Sobolev embedding theorem, we have a bounded inclusion
\[
W^{k,p}(\Omega) \subset C^l(\overline{\Omega})
\]
for all $k,l \in \N$ such that
\[
l+\frac{n}{p}<k \leq l+1+\frac{n}{p}.
\]
For $k \in \N$, set $l_1 := k+1+\lceil n/p\rceil $ and $l_2 := \max\{k-1-\lfloor n/p \rfloor, 1\}$. Then this Sobolev embedding theorem yields the desired inclusions \eqref{eq Sob Ck incl}.
\end{proof}
\begin{remark}
If $n=1$, then we may take $p=2$, so that the spaces $V_k$ are Hilbert spaces. 
\end{remark}


\cref{prop ex cpt poly Sobolev} and \cref{lem Sobolev comparable} together imply \cref{thm special case}. The extension of \cref{prop ex cpt poly Sobolev} to coeficients $a_q$ depending on a real (time) parameter $t$ 
in a smooth way, and the extension of \cref{lem Sobolev comparable} to vector-valued functions, are straightforward.

\section{A coordinate transform}
\label{sec co xform}

\subsection{A residual}

%

Recall the setting of \cref{sec setup}.
In this section and the next, based upon the details of some given dynamical system~\eqref{eq ODE} we construct both a coordinate transformation~\eqref{eq def xyz p} and a corresponding `normal form' system~\eqref{eq ODE XYZ p}, such that solutions $X$ to \eqref{eq ODE XYZ p}, transformed by~\eqref{eq def xyz p}, satisfy the given dynamical system~\eqref{eq ODE} up to  residuals of a specified order~$p$. See \cref{thm normal form}. We do this inductively, by showing how to construct such a transformed system to satisfy~\eqref{eq ODE} with residual of order $p+1$ from a version with residual of order~$p$.

In \cref{sec transf choice}, we construct a more specific choice of the general coordinate transform constructed in this section, in order to establish exact invariant manifolds, and study their properties, for constructed systems arbitrarily close to the given system~\eqref{eq ODE}.


\begin{remark} \label{rem comparable unnecessary}
In  \cref{sec setup}, we assumed that the sequence $\{V_k\}_{k=1}^{\infty}$ is comparable to a nested sequence  $\{W_k\}_{k=1}^{\infty}$ of separable Hilbert spaces  in which the vectors $e_j$ are orthogonal.
\cref{lem comparable same maps} implies that we may equivalently assume that $\{V_k\}_{k=1}^{\infty}$  itself is a nested sequence of separable Hilbert spaces, because all maps from $V_{\infty}$ to itself we use transfer to maps from $W_{\infty}$ to itself of the same type (e.g. compact polynomial and compactly differentiable maps).
 However, the formulation where $\{V_k\}_{k=1}^{\infty}$ is only comparable to a nested sequence  of separable Hilbert spaces makes it clearer that we have the flexibility to consider maps between Banach spaces. This is natural for example in the context of \cref{prop ex cpt poly Sobolev}.
\end{remark}

Let $p \in \N$, with $p \geq 2$. Let $\xi_p, F_p\colon I \times V_{\infty}\to V_{\infty}$ be such that $\xi_p - \id$ and $F_p$ are compact polynomial maps of order at most $p-1$ in the $V_{\infty}$ component, and infinitely differentiable in $I$. Suppose, furthermore, that $\xi_p$ is a near-identity at zero, and that $F_p = \cO(2)$.

Recall that our goal is to relate the dynamics of maps $x$ satisfying \eqref{eq ODE} to the dynamics of maps $X\colon I \to V_{\infty}$ satisfying \eqref{eq ODE XYZ p} when $x$ and $X$ are related by the coordinate transform $\xi_p$ as in
\eqref{eq def xyz p}.


For maps $f,g\colon I \times V_{\infty} \to V_{\infty}$, with $f$ differentiable, we write $f'_{V_{\infty}} \circ g$ for the map from $I \times V_{\infty}$ to $V_{\infty}$ given by
\[
(f'_{V_{\infty}} \circ g)(t,v) = f'_{V_{\infty}}(t,v)(g(t,v)),
\]
for all $t \in I$ and $v \in V_{\infty}$. (Note that this is different from $(f\circ g)'_{V_{\infty}}(t,v) = f'_{V_{\infty}}(t, g(t,v))(g(t,v))$.) If $g$ is a map from $V_{\infty}$ to $V_{\infty}$ to itself, then the composition $f'_{V_{\infty}} \circ g$ is defined analogously. Also recall the notation for compositions of maps to and from $I \times V_{\infty}$ and $V_{\infty}$ under \emph{Notation and conventions} in \cref{secNota}.

Define the maps
$
\Phi_p, R_p\colon I \times V_{\infty} \to V_{\infty}
$
by
\[
\Phi_p:= (\xi_p)'_I + (\xi_p)'_{V_{\infty}} \circ(A + F_p)
\quad\text{and}\quad
R_p := -A\circ \xi_p-f \circ \xi_p+\Phi_p.
\]
The map $R_p$ is the \emph{residual} of the transformed \ODE, in the following sense.
\begin{lemma} \label{lem aqrs ODE}
For all smooth maps $X\colon I \to V_{\infty}$  satisfying \eqref{eq ODE XYZ p}, and with $x\colon I \to V_{\infty}$ determined from~\(X\) by~\eqref{eq def xyz p}, 
\beq{eq ODE xyz}
\dot x(t) = Ax(t) + f(t, x(t)) + R_p(t, X(t)).
\eeq
\end{lemma}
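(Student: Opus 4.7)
The plan is to prove this lemma by a direct application of the chain rule to the coordinate transformation \eqref{eq def xyz p}, followed by algebraic manipulation using the definitions of $\Phi_p$ and $R_p$. The computation is essentially mechanical; the only real content is verifying that the chain rule is applicable in the graded Fréchet setting, and then tracing through the definitions.

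First I would differentiate both sides of the identity $x(t) = \xi_p(t, X(t))$ with respect to $t$. Since $X \colon I \to V_\infty$ is smooth by hypothesis, and since $\xi_p$ is infinitely differentiable jointly in $I \times V_\infty$ (because $\xi_p - \id$ is a compact polynomial of order at most $p-1$ in the $V_\infty$ component, smooth in $I$, hence differentiable on each constituent pair $V_k \to V_l$), the standard chain rule of \cref{lem chain rule}, applied on each Banach space in the defining sequence, yields
\[
\dot x(t) = (\xi_p)'_I(t, X(t)) + (\xi_p)'_{V_\infty}(t, X(t))\bigl(\dot X(t)\bigr).
\]

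Next I would substitute the hypothesis that $X$ satisfies \eqref{eq ODE XYZ p}, i.e.\ $\dot X(t) = AX(t) + F_p(t, X(t))$. Plugging this in gives
\[
\dot x(t) = (\xi_p)'_I(t, X(t)) + (\xi_p)'_{V_\infty}(t, X(t))\bigl(AX(t) + F_p(t, X(t))\bigr),
\]
which by the definition of $\Phi_p$ is exactly $\Phi_p(t, X(t))$. Finally, I would read off from the definition
\[
R_p = -A\circ \xi_p - f\circ \xi_p + \Phi_p
\]
that $\Phi_p(t, X(t)) = A\xi_p(t, X(t)) + f(t, \xi_p(t, X(t))) + R_p(t, X(t))$, and then use $x(t) = \xi_p(t, X(t))$ to rewrite the right-hand side as $Ax(t) + f(t, x(t)) + R_p(t, X(t))$, which is precisely \eqref{eq ODE xyz}.

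I expect no real obstacle: once the chain rule is invoked, the statement reduces to unwinding the definitions of $\Phi_p$ and $R_p$. The mildest subtlety is simply pointing out that, because $\xi_p - \id$ is a compact polynomial map of finite order and $X$ takes values in $V_\infty$, for any target seminorm index $l$ we can pass to a larger index $k$ on which $\xi_p$ is differentiable as a map $V_k \to V_l$ and on which $X(t)$ is smooth, so the chain rule applies termwise; this justifies the componentwise derivative computation without any additional analytic machinery.
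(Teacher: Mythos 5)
Your proof is correct and follows the same route as the paper's: apply the chain rule and the hypothesis \eqref{eq ODE XYZ p} to show $\dot x(t) = \Phi_p(t, X(t))$, then unwind the definition of $R_p$. The paper's proof is a terse one-liner of exactly this; you have simply spelled out the intermediate steps and the (correct) justification for invoking the chain rule in the graded Fr\'echet setting.
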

\begin{proof}
For $X$ and $x$ as in the lemma, the chain rule (\cref{lem chain rule}) and \eqref{eq ODE XYZ p} imply that for all $t \in I$,
\[
\dot x(t) = \Phi_p(t, X(t)) = Ax(t) + f(t, x(t)) + R_p(t, X(t)).
\]
\end{proof}

\begin{lemma} \label{lem Rp diffble}
The maps $\Phi_p$ and $R_p$ are infinitely compactly differentiable.
\end{lemma}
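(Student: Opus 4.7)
The plan is to decompose $\Phi_p$ and $R_p$ into sums and compositions of maps whose differentiability we already understand, show they are infinitely differentiable as maps from (an open neighbourhood of $I\times\{0\}$ in) $I\times V_\infty$ to $V_\infty$, and then invoke the compactly nested hypothesis on $\{V_k\}_{k=1}^\infty$ to upgrade differentiability to compact differentiability. The key observation I will use in the final step is the remark just after \cref{lem cpt diff incl}: because $\{V_k\}$ is compactly nested, every infinitely differentiable map $V_\infty\to V_\infty$ is automatically infinitely compactly differentiable.

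First I will verify that $\xi_p$ and $F_p$, together with their partial derivatives in both $t$ and $V_\infty$ to all orders, are infinitely differentiable as maps $I\times V_\infty \to V_\infty$. Since $\xi_p-\id$ and $F_p$ are at each $t$ compact polynomials of degree at most $p-1$ in the $V_\infty$-argument with $C^\infty$ time-dependent coefficients, the definition of $\Pol^n(V_\infty)$ (\cref{sec seq sub}) combined with \cref{lem pol diffble} tells me that for every $l\in\N$ there is a $k\in\N$ such that these maps extend to smooth maps $V_k\to V_l$. The time derivative $(\xi_p)'_I$ is obtained by differentiating the $t$-dependent coefficients and is a compact polynomial of the same form, while $(\xi_p)'_{V_\infty}(t,\cdot)$ is, by \cref{lem pol diffble}, a polynomial in $v$ of lower degree with values in bounded operators, smooth in~$t$.

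Next I will show each summand of $\Phi_p$ and $R_p$ is infinitely differentiable. For $(\xi_p)'_{V_\infty}\circ(A+F_p)$, I note that $A\in\cB(V_\infty)$ is trivially $C^\infty$ with constant first derivative and vanishing higher ones, $F_p$ is $C^\infty$ by the preceding paragraph, and applying the operator-valued smooth map $(\xi_p)'_{V_\infty}$ to the $V_\infty$-valued smooth map $Av+F_p(t,v)$ yields a smooth map of $(t,v)$ by the chain rule (\cref{lem chain rule}) together with smoothness in $t$. For $A\circ\xi_p$ smoothness is immediate (bounded linear composed with smooth), and for $f\circ\xi_p$ it follows from the assumption that $f$ is infinitely differentiable in both arguments together with \cref{lem chain rule}.

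Finally I will upgrade from differentiable to compactly differentiable. Given $l\in\N$ and $t\in I$, pick $k$ such that $\Phi_p(t,\cdot)\colon V_k\to V_l$ is smooth; since $\{V_k\}$ is compactly nested, choose $k'\geq k$ with compact inclusion $V_{k'}\hookrightarrow V_k$, so that $\Phi_p(t,\cdot)\colon V_{k'}\to V_l$ factors through this compact inclusion. By \cref{lem cpt diff incl} the resulting map is compactly differentiable at every order, and the same argument applies to $R_p$. The step requiring the most care is the book-keeping of the indices $k$ and $l$ in the composition $(\xi_p)'_{V_\infty}\circ(A+F_p)$, since $(\xi_p)'_{V_\infty}(t,v)$ lives in $\cB(V_k,V_l)$ only for matched pairs, and one has to chain this with the analogous index data for $A$ and $F_p$; once this chain of indices is set up, differentiability follows routinely, and compact differentiability follows from the compactly nested hypothesis.
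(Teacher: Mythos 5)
Your proposal is correct and follows the same strategy as the paper: reduce to showing that $\Phi_p$ and $R_p$ are infinitely differentiable (via the chain rule, smoothness of $f$, and smoothness of the polynomials $\xi_p$, $F_p$ from \cref{lem pol diffble}), then invoke the compactly nested hypothesis together with \cref{lem cpt diff incl} to upgrade to compact differentiability. The paper's own proof is terser but relies on exactly this reduction and the same ingredients.
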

\begin{proof}
Because the Banach spaces $V_k$ are compactly nested, it is enough to show that  $\Phi_p$ and $R_p$ are infinitely  differentiable. And that is true by the chain rule, because $f$ is infinitely differentiable, and so are $F_p$ and $\xi_p$, by \cref{lem pol diffble}.
\end{proof}

To recursively construct~\eqref{eq ODE XYZ p} and~\eqref{eq def xyz p}, suppose that $R_p = \cO(p)$. We proceed to show that~\eqref{eq ODE XYZ p} and~\eqref{eq def xyz p} may be refined to make the new residual of~\(\cO(p+1)\). 
By \cref{lem Rp diffble}, \cref{cor Taylor cpt Vinfty} (where $u=0$ and $h=v$), and \cref{lem order pol,lem order pol lower}, there are unique, infinitely differentiable maps
\[
a^{q}\colon  I \to V_{\infty}
\]
for all multi-indices $q \in \N_0^{\infty}$ with $|q| = p$, and a map
\[
\rho_p\colon I \times V_{\infty} \to V_{\infty}
\]
such that for all $t \in I$, and $v \in V_{\infty}$, 
\beq{eq aqrs}
R_p(t, v) = -  \sum_{q \in \N_0^{\infty};\, |q|= p} a^{q}(t)v^q + \rho_p(t,v),
\eeq
where the sum converges in $\Pol^p(V_{\infty})$, differentiably in $t$, and
\mbox{$\rho_p = \cO(p+1)$}. 
 The monomial $v^q$ is defined as in \eqref{eq def vq}, with respect to the set of functionals $e^j = (e_j, \relbar)_{V_1}$, for $j \in \N$.

In \cref{sec coord transf}, we construct  maps $\xi_{p+1}, F_{p+1}$ such that the order $p$ term $R_p(t, X(t))$ in \eqref{eq ODE xyz} may be replaced by an order $p+1$ term $R_{p+1}(t, X(t))$, if \eqref{eq ODE XYZ p} and \eqref{eq def xyz p} hold with $p$ replaced by $p+1$. 

\subsection{Construction of the coordinate transform} \label{sec coord transf}


For each $j \in \N$, let $\alpha_j$ be the eigenvalue of $A$ corresponding to $e_j$.
For all $q \in \N_0^{\infty}$ with $|q| = p$, define $\mu^q \in \C$ by 
\beq{eq def nuqrs}
\mu^{q} = \sum_{j=1}^{\infty} q_j \alpha_j  \quad \in \C.
\eeq
(This sum has at most $p$ nonzero terms.)
For such $q \in \N_0^{\infty}$,  let $a^{q}$ be as in \eqref{eq aqrs}. 
Let
$
\hat \xi^{q}, \hat F^{q} \colon I \to V_{\infty}
$
be smooth maps such that
\beq{eq DE iteration}
\hat F^{q} + (\hat \xi^{q})'  + \mu^{q}\hat \xi^{q} - A\hat \xi^{q} = a^{q}.
\eeq 
Suppose that
 the sums
\[
\hat F(t,v) =
\sum_{q \in \N_0^{\infty};\, |q| = p} \hat F^{q}(t) v^q
\quad\text{and}\quad
\hat \xi(t,v) =
\sum_{q \in \N_0^{\infty};\, |q| = p} \hat \xi_{q}(t)v^q
\]
converge in $\Pol^p(V_{\infty})$, differentiably in $t$. 
%

 Define a new coordinate transform map $\xi_{p+1}\colon I \times V_{\infty} \to V_{\infty}$ and corresponding map $F_{p+1}\colon I \times V_{\infty} \to V_{\infty}$ that replaces $F_p$ in \eqref{eq ODE XYZ p}, by
\beq{eq def psip+1 Fp+1}
\xi_{p+1}=  \xi_{p}+ \hat \xi
\quad\text{and}\quad
F_{p+1} =  F_{p}+ \hat F.
\eeq

The following result is the main step in the construction of the coordinate transform we are looking for. 
\begin{proposition} \label{prop coord transf}
If $X$ and $x$ are as in \eqref{eq ODE XYZ p}  and \eqref{eq def xyz p}, with $p$ replaced by $p+1$, 
then \eqref{eq ODE xyz} holds, with the residual
 $R_p$ replaced by a residual $R_{p+1}$ 
satisfying
\[
R_{p+1}= \cO(p+1).
\]
The maps $\xi_{p+1} - \id$ and $F_{p+1}$ are  compact polynomials 
in $\cK \Pol(V_{\infty})$
of order at most $p$, and $\xi_{p+1}$ is a near-identity.
\end{proposition}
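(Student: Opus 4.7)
The plan is to compute $R_{p+1}-R_p$ directly from the definitions, isolate its order-$p$ contribution, and show that the homological equation~\eqref{eq DE iteration} is precisely what forces this contribution to cancel the order-$p$ part $-\sum_{|q|=p}a^q v^q$ of $R_p$ from~\eqref{eq aqrs}, leaving $R_{p+1}=\cO(p+1)$.

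Expanding $\Phi_{p+1}$ in $\xi_{p+1}=\xi_p+\hat\xi$ and $F_{p+1}=F_p+\hat F$ yields
\[
\Phi_{p+1}-\Phi_p = (\hat\xi)'_I + (\xi_p)'_{V_{\infty}}\circ\hat F + (\hat\xi)'_{V_{\infty}}\circ(A+F_p) + (\hat\xi)'_{V_{\infty}}\circ\hat F,
\]
so $R_{p+1}-R_p$ equals this expression, minus $A\circ\hat\xi$, minus $f\circ\xi_{p+1}-f\circ\xi_p$. Writing
$f\circ\xi_{p+1}-f\circ\xi_p=\int_0^1 f'_{V_{\infty}}(t,\xi_p+s\hat\xi)\hat\xi\,ds$
and using~\eqref{eq est der f} that $\|f'_{V_{\infty}}(t,v)\|=O(\|v\|)$, together with the near-identity property of $\xi_p$ and $\hat\xi=\cO(p)$, one sees that this difference is $\cO(p+1)$ by \cref{lem der Om+n}. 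Combining \cref{lem der poly On-1} with \cref{lem der Om+n} similarly gives $(\hat\xi)'_{V_{\infty}}\circ F_p=\cO(p+1)$ (since $F_p=\cO(2)$ and $(\hat\xi)'=\cO(p-1)$), $(\hat\xi)'_{V_{\infty}}\circ\hat F=\cO(2p-1)\subset\cO(p+1)$, and $(\xi_p-\id)'_{V_{\infty}}\circ\hat F=\cO(p+1)$, so that $(\xi_p)'_{V_{\infty}}\circ\hat F=\hat F+\cO(p+1)$.

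What then remains as the order-$p$ part of $R_{p+1}-R_p$ is
\[
-A\circ\hat\xi+(\hat\xi)'_I+\hat F+(\hat\xi)'_{V_{\infty}}\circ A.
\]
Applying \cref{lem der poly} termwise to $\hat\xi=\sum_q\hat\xi^q v^q$ and using $Ae_j=\alpha_j e_j$, hence $(e^j,Av)_{V_1}=\alpha_j(e^j,v)_{V_1}$, one computes $(\hat\xi^q v^q)'_{V_{\infty}}(v)(Av)=\mu^q\hat\xi^q v^q$. Thus the displayed expression equals
\[
\sum_{|q|=p}\bigl(-A\hat\xi^q+(\hat\xi^q)'+\hat F^q+\mu^q\hat\xi^q\bigr)v^q,
\]
which by~\eqref{eq DE iteration} is exactly $\sum_{|q|=p} a^q(t)v^q$. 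This cancels the order-$p$ part of $R_p$ in~\eqref{eq aqrs}, leaving $R_{p+1}=\rho_p+\cO(p+1)=\cO(p+1)$.

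The remaining assertions are immediate: $\xi_{p+1}-\id=(\xi_p-\id)+\hat\xi$ and $F_{p+1}=F_p+\hat F$ are sums of compact polynomials of degrees at most $p$, hence lie in $\cK\Pol(V_{\infty})$ with the claimed order; and since both $\xi_p-\id$ and $\hat\xi$ are $\cO(2)$ (because $p\geq 2$), we have $(\xi_{p+1})'_{V_{\infty}}(t,0)=\id$, so $\xi_{p+1}$ is a near-identity. The main technical obstacle is the justification of these term-wise manipulations for the infinite series defining $\hat\xi$ and $\hat F$: one must convert convergence in $\Pol^p(V_{\infty})$ into uniform $\cO(p)$ and $\cO(p+1)$ estimates and validate termwise differentiation in $t$ and composition with $A$, $F_p$ and $\xi_p$, which is handled via \cref{cor Taylor cpt Vinfty,cor comp conv} and the assumed comparability with a Hilbert-space sequence in which the $e_j$ are orthogonal.
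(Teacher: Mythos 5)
Your argument is correct and follows essentially the same line as the paper's proof. You compute $R_{p+1}-R_p$ directly and split off its order-$p$ part, whereas the paper packages the increment in an auxiliary map $\hat\Phi$ and runs the same bookkeeping through \cref{lem hat Phi p,lem hat phi aqrs,lem Rp+1}; and you control $f\circ\xi_{p+1}-f\circ\xi_p$ via the mean-value integral form rather than the paper's appeal to Taylor's theorem (\cref{thm Taylor}) — but the estimates invoked (\cref{lem der poly On-1}, \cref{lem der Om+n}, the hypothesis \eqref{eq est der f}, the derivative formula of \cref{lem der poly} together with orthogonality and the homological equation \eqref{eq DE iteration}) and the resulting cancellation against the order-$p$ part $-\sum_{|q|=p}a^q v^q$ of $R_p$ from \eqref{eq aqrs} are identical in substance. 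You also correctly flag that the termwise manipulations require the assumed convergence of the series defining $\hat\xi$ and $\hat F$ in $\Pol^p(V_\infty)$ differentiably in $t$, which the paper imposes as a standing hypothesis at the start of \cref{sec coord transf}.
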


\begin{remark} \label{rem components ODE}
The maps $\hat \xi^q$ and $\hat F^q$ can be found explicitly if we decompose~\eqref{eq DE iteration} with respect to the basis $\{e_j\}_{j=1}^{\infty}$. This will be done in  \cref{sec transf choice}.
%
%
One solution to \eqref{eq DE iteration} is $\hat F^q = a^q$ and $\hat \xi^q = 0$. However, for our purposes, we need the function $F^q$ to be of a specific form. 
The main purpose of this work is to find $\hat F^q$ such that the $e_j$-component of $F^{q}$ is zero for certain combinations of $q$ and~\(j\), in such a way that an exact separation of stable, centre and unstable modes is maintained. See \cref{prop csu}.
\end{remark}


%
%
%

\subsection{Proof of \cref{prop coord transf}} \label{sec pf prop coord transf}

Define the map $\hat \Phi\colon I \times V_{\infty} \to V_{\infty}$ by
\beq{eq def hat Phi}
\hat \Phi = \hat \xi'_I+ \hat \xi'_{V_{\infty}}\circ (A + F_p) 
+ (\xi_p)'_{V_{\infty}}\circ \hat F +
 \hat \xi'_{V_{\infty}}\circ \hat F.
\eeq

\begin{lemma} \label{lem hat Phi p}
We have
\beq{eq diff hat Phi}
\hat \Phi - (\hat \xi'_I + \hat \xi'_{V_{\infty}} \circ A + \hat F) = \cO(p+1).
\eeq
\end{lemma}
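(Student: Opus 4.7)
The plan is to expand the definition of $\hat\Phi$, identify which terms survive after subtracting $\hat\xi'_I + \hat\xi'_{V_\infty}\circ A + \hat F$, and then bound each surviving term in order separately using the composition estimates \cref{lem der Om+n} and the polynomial derivative bound \cref{lem der poly On-1}. Substituting \eqref{eq def hat Phi}, the difference reduces to
\[
\hat\Phi - \bigl(\hat\xi'_I + \hat\xi'_{V_\infty}\circ A + \hat F\bigr)
= \hat\xi'_{V_\infty}\circ F_p \;+\; \bigl((\xi_p)'_{V_\infty} - \id\bigr)\circ \hat F \;+\; \hat\xi'_{V_\infty}\circ \hat F,
\]
where the middle term is obtained by writing $(\xi_p)'_{V_\infty}\circ\hat F - \hat F = (\xi_p-\id)'_{V_\infty}\circ\hat F$. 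So the task reduces to showing that each of the three summands is of order $p+1$.

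Next, I would collect the basic order information: $\hat\xi$ and $\hat F$ are homogeneous compact polynomials of degree $p$ in the $V_\infty$-argument, so by \cref{lem der poly On-1} the map $v\mapsto \hat\xi'_{V_\infty}(t,v)$ is of order $p-1$ as a $\cB$-valued map. Because $\xi_p$ is a near-identity at $0$ we have $(\xi_p)'_{V_\infty}(t,0) = \id$, which forces $\xi_p-\id$ to vanish to order at least $2$; hence $(\xi_p-\id)'_{V_\infty}$ is of order at least~$1$. Finally $F_p = \cO(2)$ by hypothesis.

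I would then apply \cref{lem der Om+n} three times, one for each term. For $\hat\xi'_{V_\infty}\circ F_p$, taking $n = p-1$ and $m = 2$ gives order $p+1$. For $(\xi_p-\id)'_{V_\infty}\circ\hat F$, taking $n = 1$ and $m = p$ again gives order $p+1$. For $\hat\xi'_{V_\infty}\circ\hat F$, taking $n = p-1$ and $m = p$ gives order $2p-1$, which is $\geq p+1$ since $p\geq 2$. Summing, the whole difference is $\cO(p+1)$, establishing \eqref{eq diff hat Phi}. The uniformity in $t$ on compact subsets of $I$ that is built into \cref{def On} is inherited from the assumption that $\hat\xi$ and $\hat F$ converge differentiably in $t$ and that the polynomial coefficients of $\xi_p$ and $F_p$ depend smoothly on $t$, so no new work is needed on the $t$-variable.

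The only mild obstacle I anticipate is bookkeeping with the two-argument setup: the maps in question are defined on $I\times V_\infty$, while \cref{lem der Om+n,lem der poly On-1} are stated for maps on $V_\infty$. This is purely notational — one fixes $t$ and applies the pointwise-in-$t$ statements, then invokes the uniformity-in-$t$ portion of \cref{def On} to promote the resulting bounds to estimates of the form required for the map $I\times V_\infty \to V_\infty$. No genuinely new estimate is needed beyond what \cref{sec Frechet} already provides.
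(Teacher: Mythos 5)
Your proof matches the paper's argument essentially line by line: the same decomposition of the difference into $\hat\xi'_{V_\infty}\circ F_p + ((\xi_p)'_{V_\infty}-\id)\circ\hat F + \hat\xi'_{V_\infty}\circ\hat F$, the same appeals to \cref{lem der poly On-1} to feed into \cref{lem der Om+n}, and the same order counts $p+1$, $p+1$, and $2p-1$. The only cosmetic difference is that you write the middle factor as $(\xi_p-\id)'_{V_\infty}$ while the paper writes $(\xi_p)'_{V_\infty}-\id$; these are the same object, so the proofs coincide.
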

\begin{proof}
The left-hand side of \eqref{eq diff hat Phi} equals
\[
\hat \xi'_{V_{\infty}} \circ F_p + \hat \xi'_{V_{\infty}} \circ \hat F + ((\xi_p)'_{V_{\infty}} -\id)\circ \hat F.
\]
By \cref{lem der poly On-1}, the derivative  $\hat \xi'_{V_{\infty}}$ satisfies the condition of \cref{lem der Om+n}, with $n=p-1$. Since $F_p = \cO(2)$, \cref{lem der Om+n} implies that $\hat \xi'_{V_{\infty}} \circ F_p = \cO(p+1)$. Similarly, $\hat \xi'_{V_{\infty}} \circ \hat F = \cO(2p-1)$.
Now $\xi_p$ is a polynomial map, and a near-identity. By \cref{lem der poly On-1}, this implies that $(\xi_p)'_{V_{\infty}} -\id$ satisfies the condition of \cref{lem der Om+n}, with $n=1$. So \cref{lem der Om+n} implies that $((\xi_p)'_{V_{\infty}} -\id)\circ \hat F = \cO(p+1)$.
\end{proof}

\begin{lemma} \label{lem hat phi aqrs}
For $q \in \N_0^{\infty}$ such that $|q| = p$, let $a^{q}$ be as in \eqref{eq aqrs}. 
Then for all $t \in I$ and $v \in V_{\infty}$,
\beq {eq hat phi aqrs}
 \hat \xi_I'(t, v) + \hat \xi'_{V_{\infty}}(t, v)  Au \\
+ \hat F(t, v) 
= A\hat \xi(t, v) + 
 \sum_{q \in \N_0^{\infty};\, |q| = p} a^{q}(t)v^q.
\eeq
\end{lemma}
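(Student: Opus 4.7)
The plan is to expand both sides in terms of the monomial basis $\{v^q\}$ and then apply the ODE in equation \eqref{eq DE iteration} term by term. Since the series defining $\hat\xi$ and $\hat F$ converge in $\Pol^p(V_{\infty})$ differentiably in $t$, differentiation with respect to $t$ and to $v$ can be passed through the sum, so it suffices to identify coefficients.

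First I would compute each of the three summands on the left-hand side. The time derivative is immediate:
\[
\hat\xi'_I(t,v) = \sum_{|q|=p} (\hat\xi^q)'(t)\, v^q,
\]
and the defining series gives $\hat F(t,v) = \sum_{|q|=p} \hat F^q(t)\, v^q$. The key computation is the middle term $\hat\xi'_{V_{\infty}}(t,v)\, Av$ (read in place of the apparent typo $Au$ in the statement, since the only free spatial variable is $v$). For this I would apply \cref{lem der poly} to each monomial $p^q(v) = v^q$: since $Av = \sum_k \alpha_k (e_k,v)_{V_1} e_k$, one has $\langle e^j, Av\rangle = \alpha_j \langle e^j, v\rangle$, and therefore
\[
(p^q)'(v)(Av) = \sum_{j=1}^{\infty} q_j \alpha_j \langle e^j, v\rangle^{q_j}\!\!\prod_{k\neq j}\langle e^k, v\rangle^{q_k} = \Bigl(\sum_{j=1}^{\infty} q_j \alpha_j\Bigr) v^q = \mu^q v^q,
\]
using the definition \eqref{eq def nuqrs} of $\mu^q$. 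Summing against $\hat\xi^q(t)$ yields $\hat\xi'_{V_{\infty}}(t,v)\,Av = \sum_{|q|=p} \mu^q \hat\xi^q(t)\, v^q$.

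Combining the three pieces, the left-hand side of \eqref{eq hat phi aqrs} becomes
\[
\sum_{|q|=p} \bigl[(\hat\xi^q)'(t) + \mu^q \hat\xi^q(t) + \hat F^q(t)\bigr]\, v^q.
\]
Applying \eqref{eq DE iteration}, which says $(\hat\xi^q)' + \mu^q\hat\xi^q + \hat F^q = a^q + A\hat\xi^q$ (rearrangement of the stated relation), this simplifies to
\[
\sum_{|q|=p} \bigl[A\hat\xi^q(t) + a^q(t)\bigr]\, v^q = A\hat\xi(t,v) + \sum_{|q|=p} a^q(t)\, v^q,
\]
where in the last step $A$ is pulled out of the (convergent) sum using that $A \in \cB(V_{\infty})$ applies term-by-term. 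This is exactly the right-hand side, so the identity is established.

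The only non-routine point is justifying the term-by-term computation of $\hat\xi'_{V_\infty}(t,v)$ and the interchange of $A$ with the infinite sum; both follow from the hypothesis that the series for $\hat\xi$ converges in $\Pol^p(V_\infty)$ differentiably in $t$, together with continuity of $A$ on $V_\infty$. Everything else is just bookkeeping built around the monomial eigenvalue identity $(p^q)'(v)(Av) = \mu^q v^q$, which is the content that makes the choice \eqref{eq def nuqrs} of $\mu^q$ exactly right.
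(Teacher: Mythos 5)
Your proof is correct and follows essentially the same route as the paper: expand each summand in the monomial basis, use \cref{lem der poly} together with the eigenvector identity $\langle e^j, Av\rangle = \alpha_j\langle e^j, v\rangle$ (which, as you note, rests on orthogonality of the $e_j$ in $V_1$) to obtain $(p^q)'(v)(Av) = \mu^q v^q$, and then invoke \eqref{eq DE iteration} coefficient by coefficient. Your additional remark about justifying the termwise differentiation and the interchange of $A$ with the sum is a reasonable bit of extra care that the paper leaves implicit.
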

\begin{proof}
First of all,
$
 \hat \xi'_I(t, v) =  \sum_{q \in \N_0^{\infty};\, |q|= p} (\hat \xi^{q})'(t)v^q.
$
By \cref{lem der poly}, 
\[
\hat \xi'_{V_{\infty}}(t, v)  Av =\\
 \sum_{q \in \N_0^{\infty};\, |q|= p} \hat \xi^{q}(t) \sum_{j=1}^{\infty} q_j(e_j, v)_{V_1}^{q_j-1} (e_j, Av)_{V_1}
 \prod_{j'\not= j} (e_{j'}, v)_{V_1}^{q_{j'}}.
\]
Now, because the vectors $\{e_j\}_{j=1}^{\infty}$ are orthogonal with respect to $(\relbar, \relbar)_{V_1}$, we have $
(e_j, Av)_{V} = \alpha_j (e_j, v)_{V_1}
$ for every $j$. 
So
\[
\hat \xi'_{V_{\infty}}(t, v)  Av =
 \sum_{q\in \N_0^{\infty};\, |q| = p} \hat \xi^{q}(t) \Bigl(\sum_{j=1}^{\infty} q_j \alpha_j \Bigr)v^q.
\]
We find that the left-hand side of \eqref{eq hat phi aqrs} equals
\[
 \sum_{q \in \N_0^{\infty};\, |q|= p} \Bigl(
 (\hat \xi^{q})'(t) + \hat F^{q}(t) + \mu^{q} \hat \xi^{q}(t)
  \Bigr)v^q,
\]
with $\mu^{q}$ as in \eqref{eq def nuqrs}. So the claim follows from \eqref{eq DE iteration}.
\end{proof}

Define the maps
$
\Phi_{p+1}, R_{p+1}\colon I \times V_{\infty} \to V_{\infty}
$
by
\[
\Phi_{p+1} := \Phi_p + \hat \Phi
\quad\text{and}\quad
R_{p+1} := -A\circ \xi_{p+1}-f \circ \xi_{p+1}+\Phi_{p+1}.
\]
\begin{lemma} \label{lem Rp+1}
The residual $R_{p+1}$ satisfies 
$
R_{p+1} = \cO(p+1).
$
\end{lemma}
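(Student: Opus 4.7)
The plan is to expand $R_{p+1}$ using the definitions of $\xi_{p+1}$, $F_{p+1}$, $\Phi_{p+1}$, compare with $R_p$, and then cancel the top-order terms using the construction of $\hat\xi$ and $\hat F$ via \cref{lem hat Phi p,lem hat phi aqrs}. Writing out
\[
R_{p+1} = -A\circ\xi_{p+1} - f\circ\xi_{p+1} + \Phi_p + \hat\Phi
\]
and using $R_p = -A\circ\xi_p - f\circ\xi_p + \Phi_p$, one gets
\[
R_{p+1} = R_p - A\hat\xi - \bigl(f\circ\xi_{p+1} - f\circ\xi_p\bigr) + \hat\Phi\,.
\]

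Next, \cref{lem hat Phi p} replaces $\hat\Phi$ by $\hat\xi'_I + \hat\xi'_{V_\infty}\circ A + \hat F$ up to a $\cO(p+1)$ error, and \cref{lem hat phi aqrs} rewrites that exactly as $A\hat\xi + \sum_{|q|=p} a^{q}(t)v^q$. Substituting, the $A\hat\xi$ contributions cancel. Then, using the decomposition \eqref{eq aqrs} of $R_p$, namely $R_p = -\sum_{|q|=p} a^{q}(t) v^q + \rho_p$ with $\rho_p = \cO(p+1)$, the homogeneous $p$-th order polynomials $\sum a^q v^q$ cancel against the corresponding contribution from $\hat\Phi$. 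What remains is
\[
R_{p+1} = \rho_p - \bigl(f\circ\xi_{p+1} - f\circ\xi_p\bigr) + \cO(p+1)\,.
\]

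The last step is to show $f\circ\xi_{p+1} - f\circ\xi_p = \cO(p+1)$, which is the main analytic point. I would use the fundamental theorem of calculus to write
\[
f(t,\xi_{p+1}(t,v)) - f(t,\xi_p(t,v)) = \int_0^1 f'_{V_\infty}\bigl(t,\xi_p(t,v) + s\hat\xi(t,v)\bigr)\bigl(\hat\xi(t,v)\bigr)\,ds\,,
\]
and then, given $l\in\N$, choose $k\in\N$ large enough that simultaneously (i) $f\colon V_k\to V_l$ is differentiable with $\|f'_{V_\infty}(t,v)\|_{\cB(V_k,V_l)} = O(\|v\|_{V_k})$ by hypothesis~\eqref{eq est der f}, (ii) $\xi_p$ extends to a near-identity polynomial from some $V_{k'}$ to $V_k$ so that $\|\xi_p(t,v)\|_{V_k} = O(\|v\|_{V_{k'}})$, and (iii) $\hat\xi$ is a degree-$p$ polynomial with $\|\hat\xi(t,v)\|_{V_k} = O(\|v\|_{V_{k'}}^{p})$. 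Plugging these estimates into the integral yields
\[
\bigl\|f(t,\xi_{p+1}(t,v)) - f(t,\xi_p(t,v))\bigr\|_{V_l} = O\bigl(\|v\|_{V_{k'}}\cdot\|v\|_{V_{k'}}^{p}\bigr) = O\bigl(\|v\|_{V_{k'}}^{p+1}\bigr)
\]
uniformly in $t$ on compact subsets of $I$, which is precisely the $\cO(p+1)$ condition of \cref{def On}. Combining with $\rho_p = \cO(p+1)$ gives $R_{p+1} = \cO(p+1)$.

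The main obstacle is step~(iii): orchestrating the Banach-space indices in the graded Fréchet setup so that the derivative estimate on $f$ and the polynomial order of $\hat\xi$ can be combined in a single pair $(V_k, V_l)$. This is routine given the compactly nested sequence and the hypotheses of \cref{sec setup}, but it is where the abstract setup pays off and the calculation must be done with some care, rather than the purely algebraic cancellations in steps~1--4 which are bookkeeping.
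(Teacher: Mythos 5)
Your proof is correct and follows essentially the same approach as the paper's: the algebraic cancellation of $A\hat\xi$ and the polynomial $\sum a^q v^q$ via \cref{lem hat Phi p,lem hat phi aqrs} and \eqref{eq aqrs} is the same reduction, and your index orchestration is the same Fr\'echet-space bookkeeping. The only minor difference is that you bound $f\circ\xi_{p+1}-f\circ\xi_p$ via the fundamental theorem of calculus, whereas the paper applies Taylor's theorem (\cref{thm Taylor}) with a quadratic remainder and \cref{lem der Om+n}; both use \eqref{eq est der f} to get the same $\cO(p+1)$ estimate.
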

\begin{proof}
By \cref{lem hat Phi p,lem hat phi aqrs}, 
\beq{eq Rp+1 Op+1}
\begin{split}
R_{p+1} &= -A \circ \xi_p- f\circ\xi_{p+1} + \Phi_p - \tilde R_p+ \cO(p+1)\\
&= R_p - f \circ \xi_{p+1} + f\circ \xi_p - \tilde R_p+ \cO(p+1),
\end{split}
\eeq
where, for $t \in I$ and $v \in V_{\infty}$,
\[
\tilde R_p(t,v) :=   -\sum_{q \in \N_0^{\infty};\, |q| = p} a^{q}(t)v^q. 
\]
By \eqref{eq aqrs}, 
the last expression in \eqref{eq Rp+1 Op+1} equals
\[
f \circ\xi_{p+1} - f \circ \xi_{p}  + \cO(p+1).
\]

Let $l \in \N$ be given, and choose $k, k' \in \N$ such that $f\colon I \times V_{k'} \to V_l$ is differentiable, and $\xi_p, \xi_{p+1} \in \Pol(V_{k}, V_{k'})$. 
Using \cref{thm Taylor}, we write
\[
\bigl\| f(t,\xi_{p+1}(t,v)) - f(t,\xi_{p}(t,v))  - f'(t, \xi_p(t,v)) \hat \xi(t,v)\bigr\|_{V_l} = O(\|\hat \xi(t,v)\|^2),
\]
for $t \in I$ and $v \in V_{k'}$. \cref{lem order pol} implies that
$
\|\hat \xi(t,v)\|_{V_{k'}} = O(\|v\|_{V_k})
$
uniformly in $t$ in compact sets. So
\[
\bigl\| f(t,\xi_{p+1}(t,v)) - f(t,\xi_{p}(t,v))\bigr\|_{V_l} 
= \bigl\|f'(t, \xi_p(t,v)) \hat \xi(t,v)\bigr\|_{V_l} + O(\|v\|_{V_k}^{2p}),
\]
uniformly in $t$ in compact sets.
%
%
%
And then, as in the proof of \cref{lem der Om+n}, the assumption \eqref{eq est der f} and \cref{lem der Om+n} imply that $\|f'(t, \xi_p(t,v)) \hat \xi(t,v)\|_{V_l}  = O(\|v\|_{k'}^{p+1})$, uniformly in $t$ in compact sets.
\end{proof}

\begin{proof}[Proof of \cref{prop coord transf}]
The correction terms $\hat \xi$ and $\hat F$ lie in $\cK\Pol^p(V_{\infty})$. Hence $\xi_{p+1}-\id$ and $F_{p+1}$ are compact polynomials, because $\xi_{p}-\id$ and $F_{p}$ are. By \cref{lem order pol}, this also implies that $\xi_{p+1}$ is a near-identity because $\xi_p$ is.
The desired property of $R_{p+1}$ is \cref{lem Rp+1}.
\end{proof}

\section{Centre, stable and unstable coordinates} \label{sec transf choice}

There is considerable flexibility in choosing the maps $\hat \xi^{q}$ and $\hat F^{q}$ in \cref{sec coord transf}. In this section, we discuss how to make specific choices, in terms of the eigenvalues of $A$, so that the normal form \eqref{eq ODE XYZ p} is useful for detecting invariant manifolds.

\subsection{Centre, stable and unstable components of $\hat F^q$} \label{sec comp Fq}

We use the notation from \cref{sec result}. In particular, let $\alpha$, $\beta$, $\gamma$ and $\tilde \mu$ be spectral gap parameters defined there.
Recall the definition of polynomial growth in \cref{def pol growth}.
\begin{proposition} \label{prop csu}
Suppose that $\beta - (p+1)\alpha > \tilde \mu$ and $\gamma - (p+1)\alpha > \tilde \mu$.
Suppose that $R_p$ has polynomial growth.
%
%
The maps $\hat \xi^q$ and $\hat F^q$ in \cref{sec coord transf} can be chosen such that
\begin{itemize}
\item if either $q^s = 0$ and $q^u \not=0$, or  $q^u = 0$ and $q^s \not=0$, then $\hat F^q_c = 0$;
\item if $q^s = 0$, then $\hat F^q_s = 0$; 
and
\item if $q^u = 0$, then  $\hat F^q_u = 0$.
\end{itemize}
\end{proposition}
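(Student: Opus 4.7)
The plan is to decompose \cref{eq DE iteration} into scalar components along the eigenbasis $\{e_j\}_{j=1}^{\infty}$ and, for each pair $(q,j)$ with $|q|=p$, make a binary choice: either retain the source in $\hat F^{q,j}$ (setting $\hat \xi^{q,j}=0$), or absorb it entirely into $\hat \xi^{q,j}$ via a $\tilde\mu$-regular convolution (setting $\hat F^{q,j}=0$). The choice will be dictated by the sign and size of $\Re(\mu^{q}-\alpha_{j})$, and is arranged so that $\hat F^{q,j}$ vanishes exactly at the pairs listed in the proposition.

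Writing $\hat \xi^{q}(t)=\sum_{j}\hat \xi^{q,j}(t)\,e_{j}$, and similarly for $\hat F^{q}$ and $a^{q}$, and using $Ae_{j}=\alpha_{j}e_{j}$ together with orthogonality of $\{e_{j}\}$ in $V_{1}$, equation \cref{eq DE iteration} decouples into the scalar ODEs $(\hat \xi^{q,j})'+(\mu^{q}-\alpha_{j})\hat \xi^{q,j}+\hat F^{q,j}=a^{q,j}$. The defining bounds $|\Re\alpha_{i}|\leq\alpha$ on $J_{c}$, $\Re\alpha_{i}\leq-\beta$ on $J_{s}$, and $\Re\alpha_{i}\geq\gamma$ on $J_{u}$, together with $|q|=p$ and the hypotheses $\beta-(p+1)\alpha>\tilde\mu$ and $\gamma-(p+1)\alpha>\tilde\mu$, then give a uniform spectral gap in each case listed in the proposition. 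Concretely: if $j\in J_{c}$, $q^{s}=0$ and $q^{u}\neq0$, then $\Re(\mu^{q}-\alpha_{j})\geq\gamma-\alpha p>\tilde\mu$; if $j\in J_{c}$, $q^{u}=0$ and $q^{s}\neq0$, then $\Re(\mu^{q}-\alpha_{j})\leq\alpha p-\beta<-\tilde\mu$; if $j\in J_{s}$ and $q^{s}=0$, then $\Re(\mu^{q}-\alpha_{j})\geq\beta-\alpha p>\tilde\mu$; and if $j\in J_{u}$ and $q^{u}=0$, then $\Re(\mu^{q}-\alpha_{j})\leq\alpha p-\gamma<-\tilde\mu$.

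In each of these four cases I would set $\hat F^{q,j}:=0$ and take $\hat \xi^{q,j}$ to be the unique polynomially-growing solution of the scalar ODE, namely $\hat \xi^{q,j}(t):=\int_{0}^{\infty}e^{-(\mu^{q}-\alpha_{j})\tau}a^{q,j}(t-\tau)\,d\tau$ when $\Re(\mu^{q}-\alpha_{j})>\tilde\mu$, and $\hat \xi^{q,j}(t):=-\int_{0}^{\infty}e^{(\mu^{q}-\alpha_{j})\tau}a^{q,j}(t+\tau)\,d\tau$ when $\Re(\mu^{q}-\alpha_{j})<-\tilde\mu$. Both are $\tilde\mu$-regular integrals in the sense of \cref{sec result}, so using the polynomial growth of $R_{p}$ (hence of each $a^{q,j}$), dominated convergence shows that these integrals converge absolutely and define smooth, polynomially-growing functions solving the scalar ODE. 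For every other pair $(q,j)$ I would set $\hat \xi^{q,j}:=0$ and $\hat F^{q,j}:=a^{q,j}$, which trivially satisfies \cref{eq DE iteration}. By construction $\hat F^{q,j}$ is nonzero only in the complement of the listed cases, yielding the three vanishing assertions.

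The main obstacle will be verifying that the series $\hat F(t,v)=\sum_{|q|=p}\hat F^{q}(t)v^{q}$ and $\hat \xi(t,v)=\sum_{|q|=p}\hat \xi^{q}(t)v^{q}$ converge in $\Pol^{p}(V_{\infty})$ differentiably in $t$, as assumed in \cref{sec coord transf}. By \eqref{eq aqrs}, the series $\sum_{q,j}a^{q,j}(t)\,p^{q}\otimes e_{j}$ converges in $\Pol^{p}(V_{\infty})$. Since the coefficients of $\hat F$ form a subset of those of this series (with the remainder set to zero), \cref{cor comp conv} yields convergence of $\hat F$ immediately. For $\hat \xi$, the linear map $a^{q,j}\mapsto\hat \xi^{q,j}$ is bounded on the space of smooth polynomially-growing functions, with operator norm on compact $t$-intervals controlled uniformly in $(q,j)$ by the uniform lower bound $|\Re(\mu^{q}-\alpha_{j})|>\tilde\mu$; combined with \cref{cor comp conv}, this yields convergence of the series for $\hat \xi$, and differentiating under the integral sign, justified by the same $\tilde\mu$-regular decay, gives differentiability in $t$.
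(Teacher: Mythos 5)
Your construction is essentially the same as the paper's: you decompose \cref{eq DE iteration} into scalar ODEs along the eigenbasis, make the binary choice $\hat F^{q,j}=0$ or $\hat \xi^{q,j}=0$ according to whether $j$ lies in the index set $J^q$ (your four spectral-gap cases are precisely \cref{lem Re muqj}, and your integral formulas are exactly the $\tilde\mu$-regular convolutions $\sgn(\Re\mu^q_j)\,e^{-\mu^q_j(\cdot)}\star a^q_j$ of \eqref{eq hat psi hat F Jq}), and the convergence of $\hat F$ via \cref{cor comp conv} as a sub-series is the paper's argument verbatim. Your spectral-gap bounds in the two $J_c$ cases ($\gamma-p\alpha$ and $p\alpha-\beta$) are actually marginally tighter than the paper's ($\gamma-(p+1)\alpha$, $(p+1)\alpha-\beta$), though both suffice under the stated hypotheses.

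The one place your argument is thinner than the paper's is the convergence of $\hat \xi$ in $\Pol^p(V_\infty)$, which is the content of \cref{lem conv hat psi hat F}. You invoke a uniform operator bound for $a^{q,j}\mapsto\hat\xi^{q,j}$ ``combined with \cref{cor comp conv}'', but \cref{cor comp conv} only guarantees convergence of sub-series with the \emph{same} coefficients (subsets of $(q,j)$), not series whose coefficients have been rescaled by an index-dependent integral transform. What actually makes your argument go through is the multiplier property of the unconditional Schauder basis from \cref{lem uncond KPn}: for fixed $\tau>0$, multiplying each coefficient by $e^{-\mu^q_j\tau}$ (bounded uniformly by $e^{-\tilde\mu\tau}$) preserves convergence in the Hilbert-space model, and one can then integrate in $\tau$ using the polynomial growth of $a^{q,j}$ and dominated convergence. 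That is the substance of the paper's Tonelli-style interchange of sum and integral; you should make this step explicit rather than attributing it to \cref{cor comp conv}.
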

\cref{prop csu} is proved in \cref{sec update}, after some preparation done in this subsection.

\begin{definition}\label{def conv emu}
Let $\mu \in \C$, such that $|\Real(\mu)| > \tilde \mu$. Set $a:= \inf I$ and $b:= \sup I$.
Let $u$ be a continuous 
 function on $\R$  such that  $u(t) = O(e^{\tilde \mu|t|})$ as $t \to \infty$ if $b = \infty$ and as $t \to -\infty$ if $a = -\infty$. 
Then we define the  function $e^{\mu (\cdot)} \star u$ on~$I$ by
\[
(e^{\mu (\cdot)} \star u)(t) := \begin{cases}
\int_{a}^t e^{\mu(t-\tau)}u(\tau)\, d\tau & \text{if $\Real(\mu) < -\tilde \mu$};\\
\int_{t}^{b} e^{\mu(t-\tau)}u(\tau)\, d\tau & \text{if $\Real(\mu) > \tilde \mu$}.
\end{cases}
\]
\end{definition}
The integrals occurring in this definition are $\tilde \mu$-regular, in the sense defined in \cref{sec result}.

\begin{lemma} \label{lem conv}
In the setting of \cref{def conv emu}, 
\[
(e^{\mu (\cdot)} \star u)' = \mu (e^{\mu (\cdot)} \star u) - \sgn(\Real(\mu))u.
\]
\end{lemma}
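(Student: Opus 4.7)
The plan is to treat the two cases of \cref{def conv emu} separately, since the sign convention and the direction of the integral differ. In each case, the key identity is the factorisation
\[
(e^{\mu(\cdot)} \star u)(t) = e^{\mu t} \cdot G(t),
\]
where $G(t) = \int_a^t e^{-\mu \tau} u(\tau)\, d\tau$ if $\Real(\mu) < -\tilde \mu$, and $G(t) = \int_t^b e^{-\mu \tau} u(\tau)\, d\tau$ if $\Real(\mu) > \tilde \mu$. Once this factorisation is in hand, the result follows from the product rule and the fundamental theorem of calculus.

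First I would verify that the integrals defining $e^{\mu(\cdot)} \star u$ converge absolutely and depend differentiably on $t$. In the case $\Real(\mu) < -\tilde \mu$ with $a = -\infty$, the bound $|u(\tau)| \le C e^{\tilde \mu |\tau|}$ yields
\[
|e^{\mu(t-\tau)} u(\tau)| \le C e^{\Real(\mu)(t-\tau)} e^{\tilde \mu |\tau|},
\]
and since $\Real(\mu) + \tilde \mu < 0$, this is integrable on $(-\infty, t]$ uniformly for $t$ in compact subsets of $I$. The analogous bound holds for $\Real(\mu) > \tilde \mu$ with $b = \infty$. This ensures the factored function $G$ is well-defined and, by the fundamental theorem of calculus, continuously differentiable on $I$ with $G'(t) = e^{-\mu t} u(t)$ in the first case and $G'(t) = -e^{-\mu t} u(t)$ in the second.

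Next I would compute. In the case $\Real(\mu) < -\tilde \mu$, the product rule gives
\[
(e^{\mu(\cdot)} \star u)'(t) = \mu e^{\mu t} G(t) + e^{\mu t} \cdot e^{-\mu t} u(t) = \mu (e^{\mu(\cdot)} \star u)(t) + u(t),
\]
and since $\sgn(\Real(\mu)) = -1$, the right-hand side equals $\mu (e^{\mu(\cdot)} \star u)(t) - \sgn(\Real(\mu)) u(t)$. In the case $\Real(\mu) > \tilde \mu$, the same computation gives
\[
(e^{\mu(\cdot)} \star u)'(t) = \mu (e^{\mu(\cdot)} \star u)(t) - u(t),
\]
which matches the claim since $\sgn(\Real(\mu)) = +1$.

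There is no real obstacle here; the only point requiring a moment's care is the interchange of differentiation and integration when $I$ is unbounded, which is handled by the uniform bound above. Everything else is a one-line application of the product rule and the fundamental theorem of calculus.
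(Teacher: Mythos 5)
Your proof is correct, and it is exactly the ``straightforward computation'' the paper alludes to: factor $e^{\mu t}$ out of the convolution integral, apply the fundamental theorem of calculus to the remaining antiderivative (with a sign flip from the upper versus lower variable limit in the two cases), and use the product rule. Your preliminary check of absolute convergence and differentiability under the integral when $a=-\infty$ or $b=\infty$ is also the right thing to verify and follows from the stated growth bound on $u$ combined with $|\Real(\mu)|>\tilde\mu$.
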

\begin{proof}
This is a straightforward computation.
\end{proof}

\begin{lemma} \label{lem conv pol growth}
Let $u\colon I \to \C$ be a smooth function, and suppose that $u$ and all its derivatives grow at most polynomially. Then, for every $\mu$ as in \cref{def conv emu}, $e^{\mu (\cdot)} \star u$ and all its derivatives grow at most polynomially.
\end{lemma}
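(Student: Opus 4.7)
The plan is to reduce the claim to a single polynomial growth estimate on $v := e^{\mu(\cdot)} \star u$ itself by iterating the identity of the preceding \cref{lem conv}, and then to establish that estimate by a direct majorant argument using the exponential factor.

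First, I would argue inductively in $n$ that $v^{(n)}$ is a fixed linear combination (with coefficients depending only on $\mu$) of $v$ and of $u, u', \dots, u^{(n-1)}$. The base case $n=0$ is trivial, and \cref{lem conv} gives $v' = \mu v - \sgn(\Re \mu)\,u$; differentiating this relation shows that each further derivative only adds one more derivative of $u$. Since the hypothesis on $u$ gives polynomial growth of every $u^{(j)}$, and since polynomial growth is preserved under linear combinations, it suffices to prove that $v$ itself grows at most polynomially.

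For that, fix $\mu$ with $|\Re \mu| > \tilde\mu$. By hypothesis there are constants $C, r > 0$ with $|u(\tau)| \le C(1+|\tau|^r)$ for all $\tau \in I$. Consider first the case $\Re \mu < -\tilde\mu$, and change variables $\sigma := t - \tau$, so that
\[
|v(t)| \;\le\; \int_{0}^{t-a} e^{\Re(\mu)\sigma}\, |u(t-\sigma)|\, d\sigma
\;\le\; C \int_{0}^{\infty} e^{\Re(\mu)\sigma}\bigl(1 + (|t|+\sigma)^r\bigr)\, d\sigma,
\]
where I enlarged the integration interval using positivity of the integrand. Using the elementary inequality $(|t|+\sigma)^r \le 2^r(|t|^r + \sigma^r)$, each of the three resulting integrals is finite, and one obtains a bound of the form $|v(t)| \le C'(1 + |t|^r)$ uniformly in $t \in I$. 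The case $\Re \mu > \tilde\mu$ is strictly analogous after substituting $\sigma := \tau - t$.

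The only genuinely delicate point is verifying that the exponential decay in $\sigma$ really dominates the growth of $|u(t-\sigma)|$ (resp.\ $|u(t+\sigma)|$) uniformly in $t$, but this is exactly what the splitting $(|t|+\sigma)^r \le 2^r(|t|^r + \sigma^r)$ accomplishes: the $|t|^r$-term pulls out of the $\sigma$-integral and yields the polynomial in $t$, while the $\sigma^r$-term is absorbed by $\int_0^\infty \sigma^r e^{-|\Re\mu|\sigma}\,d\sigma < \infty$. No subtlety about whether $I$ is bounded or unbounded arises, since the estimate is uniform. Combining this polynomial bound with the recurrence for $v^{(n)}$ established at the start proves the lemma.
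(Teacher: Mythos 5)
Your proof is correct but takes a genuinely different route from the paper's. For the reduction to bounding $v := e^{\mu(\cdot)}\star u$ alone, the paper invokes the identity $(e^{\mu(\cdot)}\star u)^{(k)} = e^{\mu(\cdot)}\star(u^{(k)})$, so that each derivative of $v$ is again a convolution of the same type; you instead iterate the first-order relation $v' = \mu v - \sgn(\Re\mu)\,u$ from \cref{lem conv} to express $v^{(n)}$ as a fixed $\mu$-dependent linear combination of $v$ and $u, u', \dots, u^{(n-1)}$. The two reductions are based on the same underlying integration by parts but read it differently; yours has the mild advantage that it uses only the already-proved \cref{lem conv} and so sidesteps the boundary term $e^{\mu(t-a)}u(a)$ that appears in the paper's stated identity when the endpoint $a = \inf I$ is finite. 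For the core bound on $v$, the paper inducts on the integer polynomial degree of the bound on $u$, pushing one power through per integration by parts; you instead substitute $\sigma = |t-\tau|$ once, split $(|t|+\sigma)^r \le 2^r(|t|^r + \sigma^r)$, and absorb the $\sigma^r$ piece into the Gamma-type integral $\int_0^\infty \sigma^r e^{-|\Re\mu|\sigma}\,d\sigma < \infty$. Your argument is a single closed-form majorant estimate, arguably more elementary and also valid for non-integer exponents $r$ without first rounding up; the paper's induction is essentially the unrolled version of the same bound. Both are fine.
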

\begin{proof}
First of all, because $(e^{\mu (\cdot)} \star u)^{(k)} = e^{\mu(\cdot)}\star(u^{(k)})$,  it is enough to consider the function $u$ itself rather than all its derivatives.

Let $C > 0$ and $n \in \N_0$ be such that for all $t \in I$,  $|u(t)| \leq C(1+ |t|^{n})$. We prove by induction on $n$ that there is a constant $C' > 0$ such that for all $t \in I$,  $|(e^{\mu (\cdot)} \star u)(t)| \leq C'(1+| t|^{n})$. We consider the case where $\Re(\mu) < -\tilde \mu$; the case where $\Re(\mu) > \tilde \mu$ is similar.

If $n = 0$, then for all $t \in I$,
\[
\bigl| (e^{\mu (\cdot)} \star u)(t) \bigr| \leq 2C \int_{-\infty}^{t} e^{\mu(t-\tau)}\, d\tau = \frac{-2C}{\mu}.
\]
Suppose that the claim holds for $n$, and suppose that $|u(t)| \leq C(1+ |t|^{n+1})$ for a constant $C$. Using integration by parts, we find that
\begin{multline*}
\bigl| (e^{\mu (\cdot)} \star u)(t) \bigr| 
\leq C \int_{-\infty}^{t} e^{\mu(t-\tau)}(1+|\tau|^{n+1})\, d\tau 
\\
= \frac{-C}{\mu}
\Bigl(1+|t|^{n+1} - (n+1) \int_{-\infty}^{t} e^{\mu(t-\tau)}\sgn(\tau)|\tau|^{n}\, d\tau\Bigr),
\end{multline*}
which implies the claim by the induction hypothesis.
\end{proof}

Let $q \in \N_0^{\infty}$, with $|q|=p$.
Consider the differentiable maps
\[
a^q_j\colon I \to \C
\quad\text{such that}\quad
a^q(t) = \sum_{j=1}^{\infty}a^q_j(t) e_j,
\]
where the sum converges in $V_1$, uniformly and differentiably in $t$ in compact sets in $I$.

For $q \in \N_0^{\infty}$ with $|q|=p$, let $J^q \subset \N$ be the set of $j \in \N$ such that either
\begin{itemize}
\item $j \in J_c$ and either $q^s = 0$ and $q^u \not=0$, or  $q^u = 0$ and $q^s \not=0$;
\item $j \in J_s$ and $q^s = 0$; or
\item $j \in J_u$ and   $q^u = 0$.
\end{itemize}

For $q \in \N_0^{\infty}$ with $|q|=p$, and $j \in \N$, write 
 $\mu^q_j := \mu^{q}- \alpha_j$, with $\mu^q$ as in~\eqref{eq def nuqrs}.
\begin{lemma} \label{lem Re muqj}
If $\beta - (p+1)\alpha > \tilde \mu$ and $\gamma - (p+1)\alpha > \tilde \mu$, then for every $j \in J^q$,  $|\Re(\mu^q_j)| > \tilde \mu$.
\end{lemma}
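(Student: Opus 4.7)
The lemma is a direct computation once I decompose the sum defining $\mu^q_j$. Using $q = q^c + q^s + q^u$ and $\mu^q = \sum_{i} q_i \alpha_i$, I split
\[
\Re(\mu^q_j) = \sum_{i \in J_c} q^c_i \Re(\alpha_i) + \sum_{i \in J_s} q^s_i \Re(\alpha_i) + \sum_{i \in J_u} q^u_i \Re(\alpha_i) - \Re(\alpha_j).
\]
The defining bounds $|\Re(\alpha_i)|\leq \alpha$ on $J_c$, $\Re(\alpha_i)\leq -\beta$ on $J_s$, $\Re(\alpha_i)\geq \gamma$ on $J_u$, combined with nonnegativity of the $q_i$ and positivity of $\beta$ and $\gamma$ (from the standing assumption $\tilde\mu < \min(\beta,\gamma)$), let me control each piece by one of $\pm\alpha|q^c|$, $\pm\beta|q^s|$, $\pm\gamma|q^u|$; in particular the $J_u$-piece is nonnegative and the $J_s$-piece is nonpositive, and can be dropped with a favourable sign. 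The proof then splits along the three alternatives defining $J^q$.

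In the case $j \in J_s$ with $q^s = 0$, I drop the nonnegative $J_u$-piece from below and use $-\Re(\alpha_j) \geq \beta$ to obtain
\[
\Re(\mu^q_j) \geq -\alpha|q^c| + \beta \geq \beta - p\alpha > \tilde\mu,
\]
where the final inequality follows from the hypothesis $\beta - (p+1)\alpha > \tilde\mu$ (rewritten as $\beta - p\alpha > \tilde\mu + \alpha \geq \tilde\mu$). The case $j \in J_u$ with $q^u = 0$ is symmetric: the $J_s$-piece is nonpositive and is dropped from above, yielding $\Re(\mu^q_j) \leq p\alpha - \gamma$, so that $-\Re(\mu^q_j) \geq \gamma - p\alpha > \tilde\mu$.

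In the case $j \in J_c$, the assumption that exactly one of $q^s, q^u$ vanishes is what produces a spectral gap. Suppose $q^s = 0$ and $q^u \neq 0$; then $|q^c| + |q^u| = p$ with $|q^u| \geq 1$, so
\[
\Re(\mu^q_j) \geq -\alpha|q^c| + \gamma|q^u| - \alpha = -\alpha p + (\alpha+\gamma)|q^u| - \alpha \geq \gamma - p\alpha > \tilde\mu.
\]
The subcase $q^u = 0$, $q^s \neq 0$ is symmetric and yields $-\Re(\mu^q_j) \geq \beta - p\alpha > \tilde\mu$. The extra $-\alpha$ in these $J_c$-subcases, arising from the contribution of $-\Re(\alpha_j)$, is precisely what turns the otherwise sufficient bound $p\alpha$ into the $(p+1)\alpha$ appearing in the hypothesis. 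There is no real obstacle; the only thing to be careful about is tracking signs when invoking each piece of the trichotomy, and remembering that $\beta,\gamma > 0$ so that nonnegative or nonpositive pieces may be discarded without loss.
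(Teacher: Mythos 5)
Your proof is correct and follows essentially the same route as the paper's: both decompose $\Re(\mu^q_j)$ via the trichotomy $q = q^c + q^s + q^u$, and both split into the same cases according to the definition of $J^q$, applying the spectral bounds $|\Re(\alpha_i)|\leq\alpha$, $\Re(\alpha_i)\leq-\beta$, $\Re(\alpha_i)\geq\gamma$ term by term. The only difference is cosmetic: in the $J_c$ cases you exploit the identity $|q^c|+|q^u|=p$ (resp.\ $|q^c|+|q^s|=p$) to sharpen the bound to $\gamma - p\alpha$ (resp.\ $\beta-p\alpha$), whereas the paper bounds $|q^c|\leq p$ outright and obtains $\gamma-(p+1)\alpha$ (resp.\ $-\beta+(p+1)\alpha$) directly; both suffice given the hypothesis, and yours is marginally tighter but buys nothing here. (Incidentally, the paper's final displayed inequality for the case $q^u=0$, $j\in J_u$ reads ``$<\tilde\mu$'' where it should read ``$<-\tilde\mu$''; your version has the sign right.)
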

\begin{proof}
If  $q^s = 0$ and $q^u \not=0$, and $j \in J_c$, then
\[
\Real(
\mu_j^q) = \sum_{k \in J_c} q_k \Real(\alpha_k) + \sum_{k \in J_u} q_k \Real(\alpha_k) - \Real(\alpha_j) \geq  \gamma-(p+1) \alpha >\tilde \mu.
\]
If  $q^u = 0$ and $q^s \not=0$, and $j \in J_c$, then
\[
\Real(
\mu_j^q) =
\sum_{k \in J_c} q_k \Real(\alpha_k) + \sum_{k \in J_s} q_k \Real(\alpha_k) - \Real(\alpha_j) 
\leq
-\beta  + (p+1) \alpha
 < -\tilde \mu.
\]
If $q^s = 0$, and $j \in J_s$, then
\[
\Real(
\mu_j^q) = \sum_{k \in J_c} q_k \Real(\alpha_k) + \sum_{k \in J_u} q_k \Real(\alpha_k) - \Real(\alpha_j) \geq \beta - p\alpha > \tilde \mu.
\]
And if $q^u = 0$, and $j \in J_u$, then
\[
\Real(
\mu_j^q) = \sum_{k \in J_c} q_k \Real(\alpha_k) + \sum_{k \in J_s} q_k \Real(\alpha_k) - \Real(\alpha_j) \leq -\gamma + p\alpha < \tilde \mu.
\]
\end{proof}

\subsection{Update terms for $\xi_p$ and $F_p$} \label{sec update}

Suppose that $R_p$ has polynomial growth. Then the functions $a^q_j$ and their derivatives grow at most polynomially, uniformly in $q$ and $j$.

For every  $q \in \N_0^{\infty}$ with $|q|=p$ and $j \in \N$, consider the \ODE\ for \(\hat \xi^{q}_j\) and~\(\hat F^{q}_j\)
\beq{eq ODE j}
\hat F^{q}_j + (\hat \xi^{q}_j)'  + \mu^q_j \hat \xi^{q}_j = a^{q}_j.
\eeq
Define the maps
$
\hat \xi^{q}_j, \hat F^{q}_j \colon I \to \C
$
as follows. If $j \in J^q$, then
\beq{eq hat psi hat F Jq}
\hat \xi^{q}_j = 
\sgn(\Real(\mu_j^q)) e^{-\mu_j^q (\cdot)}\star a^q_j 
\quad\text{and}\quad
\hat F^{q}_j = 0.
\eeq
This definition makes sense because of \cref{lem Re muqj} and the growth behaviour of the functions $a^q_j$.
 If $j \not\in J^q$, then we set
\beq{eq hat psi hat F Jqc}
\hat \xi^{q}_j = 0 
\quad\text{and}\quad
\hat F^{q}_j = a^q_j.
\eeq
\begin{lemma} \label{lem ODE j}
With the above definitions, the \ODE\ \eqref{eq ODE j} is satisfied for all $q$ and $j$.
\end{lemma}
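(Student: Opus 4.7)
The plan is to verify the identity~\eqref{eq ODE j} by a straightforward case split based on whether $j \in J^q$, since the definitions of $\hat\xi^q_j$ and $\hat F^q_j$ are given piecewise in~\eqref{eq hat psi hat F Jq} and~\eqref{eq hat psi hat F Jqc}.

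First, suppose $j \notin J^q$. Then by~\eqref{eq hat psi hat F Jqc}, $\hat\xi^q_j \equiv 0$ and $\hat F^q_j = a^q_j$, so the left-hand side of~\eqref{eq ODE j} reduces to $a^q_j$, and the identity holds trivially. This case requires no analysis.

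The substantive case is $j \in J^q$. Here $\hat F^q_j = 0$, so it suffices to show $(\hat\xi^q_j)' + \mu^q_j \hat\xi^q_j = a^q_j$. By \cref{lem Re muqj}, our standing hypothesis $\beta - (p+1)\alpha > \tilde\mu$ and $\gamma - (p+1)\alpha > \tilde\mu$ guarantees $|\Re(\mu^q_j)| > \tilde\mu$, and the polynomial growth of $a^q_j$ ensures $e^{-\mu^q_j (\cdot)} \star a^q_j$ is well defined by \cref{def conv emu}. Applying \cref{lem conv} with $\mu$ replaced by $-\mu^q_j$ gives
\[
(e^{-\mu^q_j (\cdot)} \star a^q_j)' = -\mu^q_j (e^{-\mu^q_j (\cdot)} \star a^q_j) + \sgn(\Re(\mu^q_j))\, a^q_j,
\]
using $\sgn(\Re(-\mu^q_j)) = -\sgn(\Re(\mu^q_j))$. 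Multiplying through by $\sgn(\Re(\mu^q_j))$ (which squares to~$1$ since $\Re(\mu^q_j) \neq 0$) yields
\[
(\hat\xi^q_j)' = -\mu^q_j \hat\xi^q_j + a^q_j,
\]
which is exactly~\eqref{eq ODE j} in this case.

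There is no real obstacle here; the work has been front-loaded into \cref{def conv emu}, \cref{lem conv}, and \cref{lem Re muqj}. The only subtlety is the sign bookkeeping: the factor $\sgn(\Re(\mu^q_j))$ in~\eqref{eq hat psi hat F Jq} is precisely what is needed to cancel the sign produced by \cref{lem conv}, and the splitting of the integration domain in \cref{def conv emu} according to the sign of $\Re(\mu)$ is what makes the convolution $e^{-\mu^q_j (\cdot)} \star a^q_j$ a $\tilde\mu$-regular integral (as promised in \cref{thm normal form}), since \cref{lem Re muqj} places $-\mu^q_j$ on the side dictated by its sign of real part.
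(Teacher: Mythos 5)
Your proof is correct and follows exactly the route the paper takes: the case $j\notin J^q$ is immediate, and the case $j\in J^q$ follows from \cref{lem conv} applied with $\mu=-\mu^q_j$, using $\sgn(\Re(\mu^q_j))^2=1$. The paper's own proof is a two-line version of the same argument, simply citing \cref{lem conv} without spelling out the sign bookkeeping you laid out.
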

\begin{proof}
If $j \not\in J^q$, the statement is immediate from the definitions. If $j \in J^q$, it 
 follows from \cref{lem conv}.
\end{proof}

\begin{lemma} \label{lem conv hat psi hat F}
Suppose that $\beta - (p+1)\alpha > \tilde \mu$ and $\gamma - (p+1)\alpha > \tilde \mu$.
%
Then the  sums
\beq{eq sums Fq psiq 2}
\hat F(t,v) =
\sum_{q \in \N_0^{\infty};\, |q| = p} \sum_{j \in \N} \hat F^{q}_j(t) e_j v^q 
\quad\text{and}\quad
\hat \xi(t,v) =
\sum_{q \in \N_0^{\infty};\, |q| = p}  \sum_{j \in \N} \hat \xi^{q}_j(t) e_j v^q
\eeq
converge in $\Pol^p(V_{\infty})$, differentiably in $t$.
\end{lemma}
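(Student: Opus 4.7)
The plan is to reduce to a Hilbert-space setting via the comparability assumption, exploit the unconditional monomial basis there, and handle the two series by different methods: $\hat F$ is just a subseries of the Taylor expansion \eqref{eq aqrs}, while $\hat\xi$ requires a uniform convolution estimate.

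First I would use \cref{rem comparable unnecessary}, \cref{lem comparable same maps}, and \cref{cor PolV PolW} to assume without loss of generality that the $V_k$ themselves are separable Hilbert spaces in which $\{e_j\}_{j=1}^{\infty}$ is orthogonal. Then \cref{lem uncond KPn} provides the unconditional Schauder basis $\{p^q \otimes e_j\}_{|q|=p,\,j\in\N}$ of each $\cK\Pol^p(V_k,V_l)$. By \eqref{eq aqrs}, the series $\sum_{|q|=p} a^q(t)v^q = \sum_{|q|=p,\,j\in\N} a^q_j(t)\,e_j\, v^q$ converges in $\Pol^p(V_\infty)$, differentiably in $t$; and \cref{cor comp conv} then guarantees that every subseries converges in $\Pol^p(V_\infty)$ as well. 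For $\hat F$: by \eqref{eq hat psi hat F Jq} and \eqref{eq hat psi hat F Jqc}, $\hat F^q_j(t) = a^q_j(t)$ when $j\notin J^q$ and vanishes otherwise, so $\hat F(t,v)$ is exactly the subseries indexed by $\{(q,j):|q|=p,\, j\notin J^q\}$, which converges in $\Pol^p(V_\infty)$ by \cref{cor comp conv}. Applying the same argument to the convergent series of $n$-th $t$-derivatives of the $a^q_j$ (which comes from the differentiable convergence of $\sum_q a^q(t)v^q$) yields differentiable convergence in $t$.

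For $\hat\xi$: by \cref{lem Re muqj} we have $|\Re\mu^q_j|>\tilde\mu$ for every $(q,j)$ with $j\in J^q$, so each $\hat\xi^q_j(t) = \sgn(\Re\mu^q_j)(e^{-\mu^q_j(\cdot)}\star a^q_j)(t)$ is a well-defined $\tilde\mu$-regular integral, and has polynomial growth by \cref{lem conv pol growth}. The crux is a \emph{uniform} convolution estimate: for any compact $K\subset I$ and any polynomial-growth exponent $n$, there is a constant $C_{K,n}$ depending only on $\tilde\mu$, $K$ and $n$ such that
\[
\sup_{t\in K}\bigl|(e^{-\mu^q_j(\cdot)}\star u)(t)\bigr| \;\leq\; C_{K,n}\sup_{\tau\in I}\frac{|u(\tau)|}{1+|\tau|^n}
\]
for every continuous $u\colon I\to\C$ of polynomial growth at most $n$ and every $\mu^q_j$ with $|\Re\mu^q_j|>\tilde\mu$; this follows by splitting $\int e^{\Re\mu^q_j(t-\tau)}(1+|\tau|^n)\,d\tau$ via the substitution $s=t-\tau$ and bounding $e^{\Re\mu^q_j s}$ by $e^{-\tilde\mu|s|}$. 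Coupling this pointwise-in-$(q,j)$ estimate with the polynomial-growth control on $a^q_j$ inherited from the polynomial growth of $R_p$, and then invoking \cref{cor comp conv} to pass from the already-convergent Taylor series in the $a^q_j$'s to the series in the $\hat\xi^q_j$'s, one obtains convergence of $\sum_{(q,j):\,j\in J^q}\hat\xi^q_j(t)\,e_j\,v^q$ in $\Pol^p(V_k,V_l)$ uniformly for $t\in K$. For differentiability in $t$, \cref{lem conv} gives the ODE $(\hat\xi^q_j)'=-\mu^q_j\hat\xi^q_j + a^q_j$ (since $\hat F^q_j = 0$ for $j\in J^q$), so by induction every higher $t$-derivative of $\hat\xi^q_j$ is a polynomial in $\mu^q_j$ times $\hat\xi^q_j$ plus a combination of $t$-derivatives of $a^q_j$; the uniform bound on $|\mu^q_j|$ over the spectral gap together with the convolution estimate applied to each derivative of $a^q_j$ then delivers convergence of the derivative series.

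The main obstacle I anticipate is making the uniform convolution estimate genuinely compatible with the abstract polynomial-growth hypothesis on $R_p$: one must translate the pointwise-in-$v$ growth condition of \cref{def pol growth} into a bound on the coefficients $a^q_j(\tau)$ that is \emph{uniform across $(q,j)$} in a way compatible with the $\Pol^p(V_k,V_l)$-norm. The cleanest way I see is to extract polynomial-in-$\tau$ bounds directly from the differentiable convergence of $\sum_q a^q(\tau)v^q$ in $\Pol^p(V_k,V_l)$ uniformly on compact subsets of $I$, which (combined with the unconditional basis) gives summable coefficient norms with polynomial time-dependence, and to note that the convolution estimate's constants depend only on $\tilde\mu$ and the fixed compact $K$ and are therefore harmless when summed against the monomial basis.
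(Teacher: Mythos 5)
Your treatment of $\hat F$ is correct and identical to the paper's: $\hat F$ is the subseries of the Taylor series \eqref{eq aqrs} indexed by $\{(q,j):j\notin J^q\}$, and \cref{cor comp conv} handles it, including $t$-derivatives.

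For $\hat\xi$ there is a gap. The step ``invoking \cref{cor comp conv} to pass from the already-convergent Taylor series in the $a^q_j$'s to the series in the $\hat\xi^q_j$'s'' is not justified: \cref{cor comp conv} only asserts that \emph{subseries} of a convergent series converge. The $\hat\xi^q_j$ are not a subcollection of the $a^q_j$ — they are convolutions of them — so the corollary simply does not apply. Your uniform convolution estimate $\sup_{t\in K}|\hat\xi^q_j(t)|\le C_{K,n}\sup_\tau|a^q_j(\tau)|/(1+|\tau|^n)$ is correct but too weak to recover convergence of $\sum_{q,j}\hat\xi^q_j(t)\,p^q\otimes e_j$ in the $\Pol^p(V_k,V_l)$-norm: a coefficientwise bound against a $\tau$-sup does not control the $\Pol^p$-norm of partial sums, since the norm (via \cref{lem uncond Kn}) involves $\sup_q\sum_j|\cdot|^2$ at a \emph{fixed} argument, not a sup over $\tau$ taken inside the sum. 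Indeed the polynomial-growth hypothesis on $R_p$ gives pointwise uniform bounds on the $a^q_j(\tau)$, not summability of $\sup_\tau|a^q_j|/(1+|\tau|^n)$ against the basis.

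What closes the gap — and what the paper does — is to interchange the sum over $(q,j)$ with the $\tau$-integral (Tonelli/Fubini) \emph{before} estimating: for each fixed $\tau$ the integrand $\sum_{q,j\in J^q_\pm}e^{-\mu^q_j(t-\tau)}a^q_j(\tau)\,p^q\otimes e_j$ is the subseries from \cref{cor comp conv} with coefficients multiplied by the uniformly bounded scalars $e^{-\mu^q_j(t-\tau)}$ (of modulus $\le e^{-\tilde\mu|t-\tau|}$ by \cref{lem Re muqj}), so its $\Pol^p$-norm is controlled by $e^{-\tilde\mu|t-\tau|}$ times the $\Pol^p$-norm of the Taylor tail at time $\tau$, which grows at most polynomially in $\tau$; the $\tau$-integral then converges by completeness of $\Pol^p(V_k,V_l)$. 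Your closing paragraph shows you sensed the difficulty, but ``summable coefficient norms with polynomial time-dependence'' does not hold in the form you need; the sum-integral interchange is the essential ingredient you are missing. The differentiability argument via the ODE \eqref{eq ODE j} is fine and parallels the paper's.
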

\begin{proof}
The first sum in \eqref{eq sums Fq psiq 2} equals
\beq{eq sum Fq}
\sum_{q \in \N_0^{\infty};\, |q| = p} \sum_{j \in \N \setminus J^q} a^{q}_j(t) e_j v^q.
\eeq
Since $\{V_{k}\}_{k=1}^{\infty}$ is comparable to a nested sequence of separable Hilbert spaces in which the set $\{e_j\}_{j=1}^{\infty}$ is orthogonal, \cref{cor comp conv} implies that this series converges in $\Pol^p(V_{\infty})$.

Write
$
J^q_{\pm} := \{j \in J^q : \pm \Re(\mu^q_j) > \tilde \mu\}. 
$
\cref{lem Re muqj} states that $J^q = J^q_+ \cup J^q_-$.
So the second sum in \eqref{eq sums Fq psiq 2} equals
\begin{multline}
\sum_{q \in \N_0^{\infty};\, |q| = p}  \sum_{j \in J^q_+}
\int_{-\infty}^t e^{-\mu^q_j (t-\tau)} a^q_j(\tau) \, d\tau
\, e_j v^q 
\\
+ \sum_{q \in \N_0^{\infty};\, |q| = p}  \sum_{j \in J^q_-}
\int_{t}^{\infty} e^{-\mu^q_j (t-\tau)} a^q_j(\tau) \, d\tau
\, e_j v^q.
\label{eq sums Jpm}
\end{multline}
Tonelli's theorem implies that convergence of the first of these sums is equivalent to convergence of
\beq{eq int J+}
\int_{-\infty}^t
 e^{-\mu^q_j (t-\tau)}
 \Bigl(
\sum_{q \in \N_0^{\infty};\, |q| = p}  \sum_{j \in J^q_+}
 a^q_j(\tau) e_j v^q \Bigr) \, d\tau
\eeq
The sum inside the brackets converges in 
$\Pol^p(V_{\infty})$, uniformly in $\tau$, by convergence of \eqref{eq aqrs} and \cref{cor comp conv}. Since the functions $a^q_j$ grow at most polynomially, uniformly in $q$ and $j$,
the value of that sum grows at most polynomially as well, when viewed as a convergent series in $\Pol^p(V_k, V_l)$.
So the integral over $\tau$ converges in $\Pol^p(V_{\infty})$, by completeness of the spaces $\Pol^p(V_k, V_l)$.
By continuity of \eqref{eq int J+} in $t$, the convergence is uniform in $t$ on compact subsets of $I$. The derivatives of \eqref{eq int J+} with respect to $t$ are linear combinations of \eqref{eq int J+} and 
\[
\sum_{q \in \N_0^{\infty};\, |q| = p}  \sum_{j \in J^q_+}
 a^q_j(t) e_j v^q
\]
and therefore converge as well.

By an analogous argument, the second sum in \eqref{eq sums Jpm} converges as well, differentiably in $t$.
\end{proof}

\cref{prop csu} follows from \cref{lem ODE j,lem conv hat psi hat F}. 

\subsection{Proof of \cref{thm normal form}} \label{sec proof main}

\begin{lemma} \label{lem pol growth}
If $f, \xi_p, F_p, \Phi_p$ and $R_{p}$ have polynomial growth, then so do $\xi_{p+1}$,  $F_{p+1}$, $\Phi_{p+1}$ and~$R_{p+1}$.
\end{lemma}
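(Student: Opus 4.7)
The plan is to first extract polynomial growth of the Taylor coefficients $a^q_j$ from polynomial growth of $R_p$, use this to deduce polynomial growth of the building blocks $\hat\xi$ and $\hat F$ via the explicit formulas in \eqref{eq hat psi hat F Jq}--\eqref{eq hat psi hat F Jqc}, and then propagate the property through the algebraic definitions of $\xi_{p+1}, F_{p+1}, \Phi_{p+1}, R_{p+1}$.

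First I would observe that, because $R_p$ has polynomial growth, the coefficient functions $a^q_j\colon I\to\C$ appearing in \eqref{eq aqrs} inherit polynomial growth together with all their derivatives (uniformly in $q$ and $j$, in the sense that the polynomial orders are controlled by those arising from the assumed growth of $R_p$). This is because the $a^q_j$ are obtained from $R_p$ by Taylor expansion at $v=0$ in $V_\infty$ (applying Corollary \ref{cor Taylor cpt Vinfty}) and taking components with respect to the orthogonal set $\{e_j\}$, operations that commute with $t$-differentiation and only involve pointwise evaluations in $v$.

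Next, by the definitions \eqref{eq hat psi hat F Jq} and \eqref{eq hat psi hat F Jqc}, each $\hat F^q_j$ is either zero or equal to $a^q_j$, and each $\hat\xi^q_j$ is either zero or of the form $\pm e^{-\mu^q_j(\cdot)}\star a^q_j$. In the first two cases the polynomial growth of all $t$-derivatives is immediate; in the third it follows from Lemma \ref{lem conv pol growth}. The convergence of the series in \eqref{eq sums Fq psiq 2} established in Lemma \ref{lem conv hat psi hat F}, together with uniform control (in $q,j$) of the polynomial orders and constants produced in the previous steps, lets me bound $\|\hat\xi(\cdot,v)^{(l)}\|_{V_k}$ and $\|\hat F(\cdot,v)^{(l)}\|_{V_k}$ by integrals and sums of polynomially bounded functions in $t$, hence polynomially. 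Therefore $\hat\xi$ and $\hat F$ have polynomial growth.

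It then remains to propagate the property. Polynomial growth is preserved under finite sums, so \eqref{eq def psip+1 Fp+1} gives it for $\xi_{p+1}$ and $F_{p+1}$ at once. For $\hat\Phi$ in \eqref{eq def hat Phi}, each summand is a composition of one of $\hat\xi'_I, \hat\xi'_{V_\infty}, (\xi_p)'_{V_\infty}$ (each a polynomial in $v$ whose coefficients are derivatives of already-handled functions of polynomial growth) with one of $A+F_p, \hat F$ (likewise), so a standard Leibniz/chain-rule calculation in the monomial expansions from Section \ref{sec std mon} shows $\hat\Phi$ has polynomial growth, and hence so does $\Phi_{p+1}=\Phi_p+\hat\Phi$. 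Finally, $R_{p+1}=-A\circ\xi_{p+1}-f\circ\xi_{p+1}+\Phi_{p+1}$: $A\in\cB(V_\infty)$ preserves polynomial growth trivially, and for $f\circ\xi_{p+1}$ one replaces $f$ by its Taylor polynomial of sufficiently high order (Corollary \ref{cor Taylor cpt Vinfty}) plus a remainder of order larger than the growth exponent one cares about, thereby reducing to the same compositional argument just used for $\hat\Phi$, while the $t$-dependence of the Taylor coefficients of $f$ is controlled by the assumed polynomial growth of $f$.

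The main obstacle is the compositional step, since polynomial growth in Definition \ref{def pol growth} is formulated pointwise in $v$, whereas in $\hat\Phi$ and $f\circ\xi_{p+1}$ the inner argument varies with $t$. The key is that in every composition appearing here the inner map is polynomial in $v$ with smooth, polynomially bounded $t$-coefficients, so that its evaluation at a fixed $v$ produces a convergent series whose pointwise norms and derivatives are polynomially bounded, and this combines cleanly with the analogous bounds for the outer map via its Taylor expansion at $0$.
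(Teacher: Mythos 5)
Your proposal is correct and follows essentially the same approach as the paper: extract polynomial growth of the Taylor coefficients $a^q_j$ from $R_p$, push it through \eqref{eq hat psi hat F Jq}--\eqref{eq hat psi hat F Jqc} and \cref{lem conv pol growth} to get polynomial growth of $\hat F$ and $\hat\xi$, and then propagate to $\xi_{p+1}, F_{p+1}, \Phi_{p+1}, R_{p+1}$ via sums, derivatives, and compositions. The only difference is that the paper compresses the compositional step into the single remark that polynomial growth is preserved under composition and derivatives in the $I$ and $V_\infty$ directions, whereas you rightly flag and then resolve the subtlety that the inner argument of a composition varies with $t$.
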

\begin{proof}
If $R_p$ has polynomial growth, then the functions $a^q_j$ and all their derivatives grow at most polynomially, uniformly in $q$ and $j$. Hence, by \eqref{eq hat psi hat F Jq} and \eqref{eq hat psi hat F Jqc}, the map $\hat F$ has polynomial growth. By \cref{lem conv pol growth}, the same is true for $\hat \xi$. So $F_{p+1}$ and $\xi_{p+1}$ have polynomial growth.

Polynomial growth is preserved under composition and derivatives in the $I$ and $V_{\infty}$ directions. Hence the map $\hat \Phi$ as in \eqref{eq def hat Phi} has polynomial growth, and therefore so do $\Phi_{p+1}$ and $R_{p+1}$.
\end{proof}

Combining \cref{lem pol growth} with \cref{prop coord transf,prop csu}, we 
prove the following slightly more explicit version of \cref{thm normal form}.
\begin{theorem} \label{thm normal form 2}
Let $p \in \N$ be such that $p \geq 2$, 
$\beta - (p+1)\alpha > \tilde \mu$ and $\gamma - (p+1)\alpha > \tilde \mu$. Suppose that 
$f$ has polynomial growth.
%
%
Then there are infinitely differentiable maps
\[
 F_p, \xi_p, R_p\colon I \times V_{\infty} \to V_{\infty},
\]
where $R_p = \cO(p)$,
%
%
%
%
$F_p$ is a polynomial map that separates invariant subspaces,
 $\xi_p$ is a near-identity and $\xi_p - \id$ and $F_p$ are compact polynomials of orders at most $p-1$, such that if $X$ and $x$ are as in \eqref{eq ODE XYZ p} and \eqref{eq def xyz p}, then \eqref{eq ODE xyz} holds.
%
%
%
Finally, there is a construction of the map $\xi_p$ in which all integrals over $I$ that occur are $\tilde \mu$-regular. 
\end{theorem}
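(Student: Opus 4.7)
The plan is to prove \cref{thm normal form 2} by induction on $p\geq 2$, using \cref{prop coord transf} as the inductive step and the specific choices \eqref{eq hat psi hat F Jq} and \eqref{eq hat psi hat F Jqc} from \cref{sec update} so that the newly added terms respect the invariant-subspace separation built into \eqref{eq Fcsu}.

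For the base case $p=2$, I would take $\xi_2 := \id$ and $F_2 := 0$. Then $\xi_2 - \id = 0$ and $F_2 = 0$ are trivially compact polynomials of order at most $1$, $\xi_2$ is a near-identity, and $F_2$ vacuously separates invariant subspaces. A direct computation gives $\Phi_2 = A$ and hence $R_2 = -f$, which is $\cO(2)$ by the standing hypothesis on $f$. Polynomial growth of $\xi_2$, $F_2$, $\Phi_2$ and $R_2$ is immediate, and no integrals have yet been used, so the $\tilde\mu$-regularity claim is trivially satisfied at this stage.

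For the inductive step, assume $\xi_p$, $F_p$, $R_p$ have already been constructed with the claimed properties together with polynomial growth. Using \cref{lem Rp diffble}, apply \cref{cor Taylor cpt Vinfty} to expand $R_p$ as in \eqref{eq aqrs}, extracting coefficients $a^q_j$ with $|q|=p$ and a remainder $\rho_p = \cO(p+1)$. Polynomial growth of $R_p$ forces the same for each $a^q_j$ and its derivatives, uniformly in $q$ and $j$. Define $\hat \xi^q_j$ and $\hat F^q_j$ by \eqref{eq hat psi hat F Jq} and \eqref{eq hat psi hat F Jqc}, in which the $\tilde\mu$-regular integrals of \cref{def conv emu} are legitimate thanks to \cref{lem Re muqj} --- this is precisely where the spectral gap assumptions $\beta - (p+1)\alpha > \tilde\mu$ and $\gamma - (p+1)\alpha > \tilde\mu$ are used. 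Assemble $\hat F$ and $\hat \xi$ as in \eqref{eq sums Fq psiq 2}; \cref{lem conv hat psi hat F} then guarantees that these sums converge in $\Pol^p(V_\infty)$, differentiably in $t$. Finally, set $\xi_{p+1} := \xi_p + \hat \xi$ and $F_{p+1} := F_p + \hat F$.

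With these updates, \cref{prop coord transf} yields $R_{p+1} = \cO(p+1)$, with $\xi_{p+1} - \id$ and $F_{p+1}$ compact polynomials of order at most $p$ and $\xi_{p+1}$ a near-identity. \cref{prop csu} shows that the newly added $\hat F^q$ terms live only in the slots permitted by \eqref{eq Fcsu}, so $F_{p+1}$ still separates invariant subspaces. \cref{lem pol growth} propagates polynomial growth to all of $\xi_{p+1}$, $F_{p+1}$, $\Phi_{p+1}$, $R_{p+1}$, and every integral introduced is $\tilde\mu$-regular by construction. Iterating from $p=2$ up to the desired $p$ completes the proof. The main obstacle --- already resolved upstream --- is the convergence of the doubly infinite sums in \eqref{eq sums Fq psiq 2} inside the graded Fr\'echet topology, where the comparability of $\{V_k\}_{k=1}^\infty$ to a nested sequence of separable Hilbert spaces is essential, via \cref{cor comp conv}.
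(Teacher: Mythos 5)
Your proposal is correct and follows essentially the same inductive argument as the paper: base case $\xi_2=\id$, $F_2=0$, induction step via \cref{prop coord transf}, \cref{prop csu}, and \cref{lem pol growth}, with the explicit construction of $\hat\xi$, $\hat F$ from \cref{sec update} supplying the $\tilde\mu$-regularity claim. One small point: your computation $R_2=-f$ is the correct sign according to the definition $R_p:=-A\circ\xi_p-f\circ\xi_p+\Phi_p$, whereas the paper's proof text states $R_2=f$ (an immaterial sign typo, since only $R_2=\cO(2)$ is used).
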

\begin{proof}
We use induction on $p$ to prove that the claim holds for every $p$, including the auxiliary statement that 
$\xi_p, F_p, \Phi_p$ and $R_{p}$ have polynomial growth. 


If $p=2$, then we may take $F_{p}(t,v) = 0$ and $\xi_{p}(t,v) = v$ for all $t \in I$ and $v \in V_0$. Then $R_2 = f$, so $\xi_p, F_p, \Phi_p$ and $R_{p}$ have polynomial growth because $f$ does. 

The induction step follows from \cref{lem pol growth,prop coord transf,prop csu}. 
\end{proof}

\section{Dynamics of the normal form equation} \label{sec dynamics}

It remains to prove \cref{lem Vj invar} and \cref{prop dynamics}, which we use to justify \cref{def centre mfd} based on \cref{thm normal form}. Throughout this section, we suppose that $F \colon I \times V_{\infty} \to V_{\infty}$ is a smooth map that separates invariant subspaces.

\begin{proof}[Proof of \cref{lem Vj invar}.]
First, suppose that $a=c$. For all $v \in V_c$ and all $q \in \N_0^{\infty}$ with $|q| \leq p$ and $q^s \not=0$ or $q^u \not=0$, we have $v^q = 0$. So the properties \eqref{eq Fcsu} of the map~$F$ imply that
$F(I \times V_c) \subset V_c$.
This, in turn, implies that for all maps $X\colon I \to V_{\infty}$ satisfying \eqref{eq ODE XYZ p}, if $X(t) \in V_c$ for a given~$t$ then $\dot X(t) \in V_c$. So $X(t) \in V_c$ for all $t \in I$. 

Next, suppose that $a=s$. If $v \in V_s$ and $q \in \N_0^{\infty}$, then $v^q=0$ if $q^u \not=0$. So $F_u(t,v)=0$ for all $t \in I$. And the components of $F_c(t,v)$ with $q^u \not=0$ are zero for the same reason, while its components with $q^s = 0$ are  zero since $v \in V_s$. Hence $F_c(t,v)=0$.
We conclude that $F(I \times V_s) \subset V_s$. As in the case $a=c$, this implies the claim for $a=s$. 

The argument for $a=u$ is entirely analogous to the case $a=s$.
\end{proof}

To prove \cref{prop dynamics}, we start with a general comparison estimate for solutions of  \ode{}s in Hilbert spaces.
\begin{lemma} \label{lem decay to Vc}
Let $V$ be a Hilbert space, $W \subset V$ a subspace, $I$ an open interval containing $0$, and $g$ a map from $I \times V$ into the space of linear operators from $W$ to $V$. Let $X \colon I \to W$ be a differentiable map (as a map into $V$), such that for all $t \in I$,
\[
\dot X(t) = g(t, X(t))X(t).
\]
If $\zeta \in \R$ is such that $g(t,w)+\zeta$ is negative semidefinite for all $t \in I$ and $w \in W$, then for all $t \in I$ with $t\geq 0$, 
\[
\|X(t)\|_V \leq \|X(0)\|_V e^{-\zeta t} 
\]
%
\end{lemma}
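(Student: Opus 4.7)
The plan is to run a standard differential-inequality argument on the squared norm $\|X(t)\|_V^2$. First, I would note that since $X$ is differentiable as a map into $V$, the real-valued function $\varphi(t) := \|X(t)\|_V^2 = (X(t), X(t))_V$ is differentiable on $I$, with
\[
\varphi'(t) = 2 \Real (\dot X(t), X(t))_V = 2 \Real (g(t, X(t))X(t), X(t))_V,
\]
where the second equality uses the given \ODE.

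Next, I would unpack the assumption that $g(t,w) + \zeta$ is negative semidefinite on $W$. I interpret this as the statement $\Real((g(t,w) + \zeta)u, u)_V \leq 0$ for every $u \in W$, equivalently $\Real (g(t,w)u, u)_V \leq -\zeta \|u\|_V^2$. Applying this at $w = u = X(t) \in W$ gives the differential inequality
\[
\varphi'(t) \leq -2\zeta \varphi(t) \qquad \text{for all } t \in I.
\]

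Finally, to extract the bound on $\|X(t)\|_V$, I would consider the auxiliary function $\psi(t) := e^{2\zeta t}\varphi(t)$. A direct computation yields
\[
\psi'(t) = e^{2\zeta t}\bigl(\varphi'(t) + 2\zeta \varphi(t)\bigr) \leq 0,
\]
so $\psi$ is nonincreasing on $I \cap [0, \infty)$. Hence $\psi(t) \leq \psi(0) = \|X(0)\|_V^2$ for all $t \in I$ with $t \geq 0$, which rearranges to $\|X(t)\|_V^2 \leq \|X(0)\|_V^2 e^{-2\zeta t}$, and taking square roots gives the claim.

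The argument is essentially a Gr\"onwall inequality dressed up in Hilbert-space language, so there is no serious obstacle; the only point requiring mild care is the differentiation of $\|X(t)\|_V^2$, which is legitimate since $X$ takes values in $W \subset V$ and is differentiable as a map into $V$, so the inner product of $X$ with itself is differentiable by the bilinearity (sesquilinearity) of $(\relbar,\relbar)_V$ and the chain rule. Note that no sign restriction on $\zeta$ is needed: the same estimate works for negative $\zeta$ (yielding exponential growth bounds), which is precisely what is required for the centre and centre-unstable cases of \cref{prop dynamics}.
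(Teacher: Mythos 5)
The proposal is correct and follows essentially the same argument as the paper: both differentiate $\|X(t)\|_V^2$, use the \ODE\ and negative semidefiniteness of $g(t,w)+\zeta$, and absorb $\zeta$ via an exponential weight $e^{\zeta t}$ (the paper phrases this as reducing to the $\zeta=0$ case with $X(t)$ replaced by $X(t)e^{\zeta t}$, whereas you work directly with $\psi(t)=e^{2\zeta t}\|X(t)\|_V^2$; the two are equivalent).
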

\begin{proof}
First, suppose that $\zeta = 0$. Then
 for all $t\in I$,
\[
\frac{d}{dt} \|X(t)\|_V^2 = 2\Re(\dot X(t), X(t))_V = 2\Re (g(t, X(t))X(t), X(t))_V \leq 0.
\]
So $\|X\|_V^2$ is a nonnegative, non-increasing function on $I$, and the  claim for $\zeta = 0$ follows.

Next, let $\zeta \in \R$ be arbitrary. Then
\[
\frac{d}{dt}(X(t)e^{\zeta t}) = \bigl(g(t, X(t))+\zeta\bigr)X(t)e^{\zeta t}.
\]
Applying the  claim for $\zeta = 0$, with $X(t)$ replaced by $X(t)e^{\zeta t}$ and $g(t,w)$ by $g(t,w)+\zeta$, now yields the claim for $\zeta$.
%
\end{proof}

For any homogeneous polynomial map $p = p_{\lambda}$ between normed vector spaces $V$ and $W$, where $\lambda \in S\cB^n(V,W)$, 
define the map
$
\tilde p\colon V \to \cB(V,W)
$
by
\beq{eq def tilde p}
\tilde p(v_1)v_2 := \lambda(v_1, \ldots, v_1, v_2).
\eeq
Here $n-1$ copies of $v_1$ are inserted into $\lambda$ on the right hand side. 

For all $t \in I$, the map $F(t, \relbar)$  lies in $\Pol(V_{k}, V_1)$ for some $k$. The operator $A$ lies in $\cB(V_l, V_1)$ for some $l$. By replacing the smaller of $k$ or $l$ by the larger of these two numbers, we henceforth assume $k=l$. Applying the construction \cref{eq def tilde p} to each homogeneous term of $F(t, \relbar)$ and adding the resulting maps, we obtain a map $\tilde F\colon V_k \to \cB(V_k, V_1)$, such that for all $v \in V_k$,
\[
F(t,v) = \tilde F(t,v)v.
\]
For $a \in \{c,s,u\}$, we write $\tilde F_a$ for $\tilde F$ composed with orthogonal projection onto $V_a$.

For $j \in \N$,
let $q^{(j)} \in \N_0^{\infty}$ be defined by $q^{(j)}_m = 1$ if $m=j$, and $q^{(j)}_m = 0$ otherwise.
\begin{lemma} \label{lem tilFp}
Let $v \in V_k$. Write $v = v_c + v_s+v_u$, where $v_a \in V_a$ for $a  \in \{ c,s,u\}$. Then for all $t \in I$,
the components of $F(t,v)$ in $V_s$, $V_u$ and $V_c$ satisfy
\begin{align}
(F(t,v))_s &= \tilde F(t,v)v_s; \label{eq tilFp s}\\
(F(t,v))_u &= \tilde F(t,v)v_u; \label{eq tilFp u}\\
(F(t,v))_c &= \tilde F(t,v_c)v_c \quad \text{if $v_s = 0$ or $v_u = 0$.}\label{eq tilFp c}
\end{align}
\end{lemma}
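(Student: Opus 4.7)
The plan is to prove each of the three identities by monomial decomposition, exploiting both the separation property of $F$ and the orthogonality of $V_c$, $V_s$ and $V_u$ inside $V_1$. The central observation is that if $q \in \N_0^{\infty}$ has $q_j > 0$ for some $j \in J_a$, then $(e_j, v)_{V_1} = (e_j, v_a)_{V_1}$ by the mutual orthogonality of the $e_j$, which allows $v^q$ to be exhibited as a linear function of $v_a$.

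For the identity $(F(t,v))_s = \tilde F(t,v) v_s$, I would first note that $(F(t,v))_s = F_s(t,v) = \sum_{|q|\leq p,\ q^s \neq 0} F^q(t) v^q$ with each coefficient in $V_s$. For each such $q$, pick $j_0 \in J_s$ with $q_{j_0} > 0$ and factor $v^q = v^{q - q^{(j_0)}} (e_{j_0}, v_s)_{V_1}$; this exhibits each monomial as a linear operator applied to $v_s$, and summing produces a linear map $L(t,v)\colon V_k \to V_s$ with $L(t,v) v_s = F_s(t,v)$. The next step is to identify $L(t,v)$ with $\tilde F(t,v)$: since $\tilde F(t,v)$ is built by polarising each homogeneous component of $F$, only monomials with $q^s \neq 0$ can contribute to $\tilde F(t,v) v_s$, because the factors $(e_j, v_s)_{V_1}$ with $j \notin J_s$ vanish by orthogonality. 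A combinatorial check using the defining property $\tilde p(v) v = p(v)$ then confirms that the surviving contributions sum correctly to $F_s(t,v)$. The identity for $(F(t,v))_u$ follows by the analogous argument with $s$ and $u$ interchanged.

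For the third identity, assume $v_s = 0$; the case $v_u = 0$ is symmetric. In the expansion of $F_c(t,v)$, the sum ranges over $q$ with $q^s = q^u = 0$ or with $q^s \neq 0 \neq q^u$; the latter vanish since $v^q$ contains a factor $(e_j, v_s)_{V_1} = 0$ for some $j \in J_s$ with $q_j > 0$. Hence only pure-centre monomials survive, and $F_c(t,v) = F_c(t, v_c)$. The same orthogonality argument applied to $\tilde F(t, v_c) v_c$ shows that only pure-centre monomials contribute, and for each such monomial $p^q$ the relation $\tilde{p^q}(v_c) v_c = p^q(v_c) = v_c^q$ holds directly from $\tilde p(v) v = p(v)$. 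Summing gives $\tilde F(t, v_c) v_c = F_c(t, v_c) = (F(t,v))_c$.

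The main technical obstacle is the combinatorial matching in the first two identities: the asymmetric per-monomial factoring extracts a single stable (respectively unstable) factor from $v^q$, whereas the definition of $\tilde F$ via symmetric polarisation redistributes that factor over all $|q|$ slots. One must verify that, once the non-stable slots are annihilated by $(e_j, v_s)_{V_1} = 0$ for $j \notin J_s$, the surviving contributions reassemble across all monomials of $F_s$ to match the single-factor extraction, and analogously for $F_u$; this is the only place where the interplay between the spectral decomposition $\{J_c, J_s, J_u\}$, the separation structure of $F$, and the polarisation formula defining $\tilde F$ enters in a non-trivial way.
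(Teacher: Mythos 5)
Your argument for the third identity is correct and agrees with the paper's: when $v_s = 0$ or $v_u = 0$, the mixed monomials with $q^s \neq 0 \neq q^u$ vanish on $v$, so $F_c(t,v) = F_c(t,v_c) = F(t,v_c)$, and the last equals $\tilde F(t,v_c)v_c$ by the defining relation $\tilde p(u)u = p(u)$ (using also that $F^q(t)\in V_c$ whenever $q^s=q^u=0$, which follows from the separation property).

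The first two identities are where the combinatorial check you defer as the ``main technical obstacle'' actually fails, and this is a genuine gap, not just a detail to be filled in. Using the symmetric representative in \eqref{eq def tilde p} one computes
\begin{equation*}
\tilde{p^q}(v)w = \frac{1}{|q|}\sum_{j} q_j\,(e_j,w)_{V_1}\,v^{q-q^{(j)}},
\end{equation*}
and therefore, after the non-stable slots are annihilated,
\begin{equation*}
\tilde F(t,v)\,v_s = \sum_{q} F^q(t)\,\frac{|q^s|}{|q|}\,v^q,
\qquad\text{whereas}\qquad
F_s(t,v) = \sum_{q : q^s\neq 0}\bigl(F^q(t)\bigr)_s\, v^q.
\end{equation*}
These agree only when every contributing monomial has $|q^s|=|q|$, which the separation property does not give: monomials of $F_s$ with $q^s\neq 0$ are allowed to also have $q^c\neq 0$. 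Concretely, with $e_1\in V_s$, $e_2\in V_c$ and $F(t,v) = (e_1,v)_{V_1}(e_2,v)_{V_1}\,e_1$, one gets $\tilde F(t,v)v_s = \tfrac12 F_s(t,v)$. So the identification of your operator $L(t,v)$ with the symmetrised $\tilde F(t,v)$, even after applying to $v_s$, cannot be achieved, and any attempt to run the ``polarisation redistributes but then reassembles'' argument will keep producing the stray factor $|q^s|/|q|$. The paper's own proof of \eqref{eq tilFp s} does not pass through the symmetric representative at all; it writes $F_s$ in a $v_s$-linear form by peeling a stable factor $v_j$, $j\in J_s$, off each monomial (essentially your ``pick $j_0$ and factor'' step) and takes the resulting kernel as the operator in question. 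That kernel is exactly your $L(t,v)$, but it is not the projection of the $\tilde F$ built from symmetric multilinear representatives, so you cannot close the argument by equating the two. (Note also that one must fix a single $j\in J_s$ per monomial, as you do; the paper's double sum $\sum_{j\in J_s}\sum_{q'}$ silently over-counts monomials with $|\supp(q^s)|>1$.)
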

\begin{proof}
Let $t \in I$ and $v \in V_k$.
To prove \eqref{eq tilFp s}, we use the fact that by \cref{eq sep invar s},
\[
F_s (t,v)= 
\sum_{j \in J_s} \,\, \sum_{q \in \N_0^{\infty}: |q| \leq p-1} F^{q+ q^{(j)}}(t) v^q v_j.
\]
So
\[
\tilde F_s(t,v) = \sum_{j \in J_s} \,\,  \sum_{q \in \N_0^{\infty}: |q| \leq p-1} F^{q+ q^{(j)}}(t) v^q e^j,
\]
where $\{e^j\}_{j \in \N}$ is the basis of $V_1^*$ dual to $\{e_j\}_{j \in \N}$. Hence
\[
\tilde F_s(t,v)v = \tilde F(t,v) v_s, 
\]
which implies \eqref{eq tilFp s}. The equality \eqref{eq tilFp u} can be proved anaogously.

To prove \eqref{eq tilFp c}, we note that by \cref{eq sep invar c},
\[
F_c (t,v)= F_{c,1}(t,v) + F_{c,2}(t,v),
\]
where
\begin{align}
F_{c,1}(t,v)&= \sum_{q \in \N_0^{\infty}: |q| \leq p, 
q^s = q^u=0} F^q(t) v^q; \label{eq Fpc1}\\
F_{c,2}(t,v)&= \sum_{q \in \N_0^{\infty}: |q| \leq p,
q^s \not= 0 \not= q^u} F^q(t) v^q. \label{eq Fpc2}
\end{align}
The right hand side of \eqref{eq Fpc1} only depends on $v_c$, and the right hand side of \eqref{eq Fpc2}
is zero if $v_s = 0$ or $v_u = 0$. So, under that condition,
\(
F_c (t,v) = F_{c}(t,v_c) = \tilde F(t,v_c)v_c.
\)
\end{proof}

\begin{proof}[Proof of \cref{prop dynamics}.]
Set
\beq{eq Dmu}
D_{\tilde \mu} := \bigl\{(t,v) \in I \times V_{\infty} : \|\tilde F(t,v)\|_{\cB(V_k, V_1)} < \tilde \mu\bigr\}.
\eeq
Because $F$ is a sum of polynomials of degrees at least two, we have $\tilde F(t,0) = 0$ for all $t$. So $D_{\tilde \mu}$ contains $I \times \{0\}$. It is open by continuity of $\tilde F$.

Let $X\colon I \to V_{\infty}$ be a solution of the constructed system~\eqref{eq ODE XYZ p}.
 As in the proof of \cref{lem Vj invar}, 
\beq{eq X dot dynamics}
\dot X_s(t) = AX(t) + F_s(t, X(t)) = \bigl(A + \tilde F(t, X(t) \bigr) X_s(t),
\eeq
where we used the first equality in \cref{lem tilFp} and the fact that $A$ preserves $V_s$.
For all $(t,v)\in D_{\tilde \mu} $, the operator 
\[
A + \tilde F_s(t, v) + \beta - \tilde \mu   \colon V_k \cap V_s \to V_1 \cap V_s
\]
is negative semidefinite. Hence the claim about $X_s$ follows from the second part of \cref{lem decay to Vc}. The claim about  $X_u$ can be proved similarly, via a version of \cref{lem decay to Vc} for positive-definite operators. 

Next, suppose that  $X_s(0) = 0$ or $X_u(0) = 0$. By \cref{lem Vj invar}, either $X_s(t) = 0$ for all $t \in I$ or $X_u(t) = 0$ for all $t \in I$. Similarly to \eqref{eq X dot dynamics}, the third equality in \cref{lem tilFp} implies that 
\[
\dot X_c(t) =  \bigl(A + \tilde F(t, X_c(t) \bigr) X_c(t),
\]
for all $t \in I$. And for all $(t,v)\in D_{\tilde \mu}$, the operator 
\[
A + \tilde F_c(t, v) - \alpha - \tilde \mu   \colon V_k \cap V_c \to V_1 \cap V_c
\]
is negative semidefinite. So by \cref{lem decay to Vc}, 
\[
\|X_c(t)\|_{V_1} \leq e^{(\alpha + \tilde \mu)t}\|X_c(0)\|_{V_1}
\]
for all $t\geq0$ in $I$. It similarly follows that for all $t\leq0$ in $I$,
\[
\|X_c(t)\|_{V_1} \leq e^{-(\alpha + \tilde \mu)t} \|X_c(0)\|_{V_1}.
\]
\end{proof}

\section{Example: a non-autonomous version of Burgers' equation} \label{sec ex}

Let $r \in \R$, and
consider the non-autononomous, nonlinear \PDE
\beq{eq modified Burgers}
\partial_t u (t, \theta)= \partial_\theta^2 u(t,\theta) + ru(t,\theta) - \frac{t}{2}(\partial_\theta u (t,\theta))^2,
\eeq
with $2\pi$-periodic boundary conditions in $\theta$.
 Then \cref{thm special case} applies, where $\Omega$ is the circle. 

Using \cref{thm normal form}, we compute the centre manifold of the normal form system approximating \eqref{eq modified Burgers} up to residuals of order three, in \cref{sec ex thm}.
Via a direct approach,  we compute all invariant manifolds for  residuals of orders three and four, in  \cref{sec ex direct}. We find that the order three centre manifolds computed in the two ways agree. These computations illustrate \cref{rem direct constr}, that the construction from \cref{thm normal form} is guaranteed to give a result, while a direct computation may be more efficient in concrete situations.

\subsection{Centre manifold via \cref{thm normal form}} \label{sec ex thm}

In this setting,
\[
Au = u'' + ru
\quad\text{and}\quad
f(t,u) = - \frac{t}{2}(u')^2,
\]
where a prime denotes the derivative in the $\theta$-direction. The eigenfunctions of $A$ are $e_j$, for $j \in \Z$, given by $e_j(\theta) := e^{ij\theta}$. The eigenvalue corresponding to $e_j$ is $\alpha_j = r-j^2$ (which has multiplicity two when \(j\neq 0\)). 
Choose $\alpha, \beta, \gamma$ and $\tilde \mu$ such that $0 \leq \alpha < \tilde \mu < \beta = \gamma < 1$, and $\alpha < \frac{1}{2}$.
Suppose that $r$ lies within $\alpha$ of an integer of the form $n^2$, for a nonzero $n \in \Z$. Then  the eigenvalue $\alpha_n$ is central up to precision $\alpha$. 

We determine a corresponding centre manifold for a system that approximates~\eqref{eq modified Burgers} up to a third-order residual. This involves the coordinate transform $\xi_3$. 
To compute this centre manifold,
we only need to apply $\xi_3$ to elements of $V_c = \Span\{ e_n,e_{-n}\}$. In other words, we only need to compute $\xi_3 (t, X_ne_n+X_{-n} e_{-n})$, for $t \in \R$ and $X_n,X_{-n} \in \C$. (We do not determine the domain $D_{\tilde \mu}$ here.)

For $p=2$, the map  $\xi_2$ is the identity map. 
So
\[
\xi_3 (t, X_ne_n+X_{-n}e_{-n}) = X_ne_n+X_{-n}e_{-n} + \hat \xi(X_ne_n + X_{-n}e_{-n}),
\]
where
\[
\hat \xi(X_ne_n+ X_{-n}e_{-n}) = \sum_{q \in \Z^{\infty};\, |q|=2} \sum_{j \in \Z} \hat \xi^q_j(t) e_j (X_ne_n + X_{-n}e_{-n})^q.
\]
For $j \in \Z$,
let $q^{(j)} \in \Z^{\infty}$ be defined by $q^{(j)}_m = 1$ if $m=j$, and $q^{(j)}_m = 0$ otherwise. Then, for $q \in \Z^{\infty}$ with $|q|=2$,
\[
(X_ne_n + X_{-n}e_{-n})^q = 
\begin{cases}
X_n^2 & \text{if }q = 2q^{(n)}; \\
X_{-n}^2 & \text{if }q = 2q^{(-n)}; \\
X_nX_{-n} & \text{if }q = q^{(n)} + q^{(-n)}\\
0 & \text{otherwise}.
\end{cases}
\]
So
\[
\hat \xi(X_ne_n+ X_{-n}e_{-n}) =  \sum_{j \in \Z}
\Bigl(X_n^2
 \hat \xi^{2q^{(n)}}_j(t)  + X_{-n}^2  \hat \xi^{2q^{(-n)}}_j(t) + X_nX_{-n} \hat \xi^{q^{(n)} +  q^{(-n)}}_j(t) 
 \Bigr)
 e_j. 
\]

The map $\hat \xi^{2q^{(n)}}_j$ is expressed in terms of the map $a^{2q^{(n)}}_j$ in 
\[
R_2(t,u) = -\sum_{j \in \Z} \sum_{q \in \Z^{\infty};\, |q|= 2} a^{q}_j(t) e_ju^q.
\]
(The order three term in \eqref{eq aqrs} now equals zero.) See \eqref{eq hat psi hat F Jq} and \eqref{eq hat psi hat F Jqc}. If $u = \sum_{l \in \Z} x_l e_l$, then
\[
R_2(t,u) = -\frac{t}{2}(u')^2 =- \frac{t}{2}\sum_{j \in \Z} \Bigl( \sum_{k \in \Z} k(j-k)x_k x_{j-k} \Bigr) e_j.
\]
The equality
$
x_k x_{j-k} = u^{2q^{(n)}} = x_n^2
$
holds precisely if $k = n$ and $j=2n$. Hence
\[
a^{2q^{(n)}}_{2n}(t) = \frac{t}{2} n^2
\]
and $a^{2q^{(n)}}_{j} = 0$ if $j \not= 2n$. An analogous argument shows that
\[
a^{2q^{(-n)}}_{-2n}(t) = \frac{t}{2} n^2
\]
and $a^{2q^{(-n)}}_{j} = 0$ if $j \not= -2n$.
The equality
$
x_k x_{j-k} = u^{q^{(n)} + q^{(-n)}} = x_n x_{-n}
$
holds precisely if $j=0$ and either $k = n$ or $k=-n$. Hence
\[
a^{q^{(n)} + q^{(-n)}}_{0}(t) = -t n^2
\]
and $a^{q^{(n)} + q^{(-n)}}_{j} = 0$ if $j \not= 0$.

The relevant numbers $\mu^q_j$  as in \cref{sec comp Fq} equal
\[
\begin{split}
\mu^{2q^{(n)}}_{2n} &= 2 \alpha_n - \alpha_{2n} = r+2n^2;\\
\mu^{2q^{(-n)}}_{-2n} &= 2 \alpha_{-n} - \alpha_{-2n} = r+2n^2; \\
\mu^{q^{(n)} + q^{(-n)}}_{0} &= \alpha_n + \alpha_{-n} - \alpha_0 = r-2n^2 
\end{split}
\]
(note that $\alpha_j =\alpha_{-j}$ for every $j$).
Because $n^2 \geq 1$ and $\alpha < \frac{1}{2}$,  the real parts of $\mu^{2q^{(n)}}_{2n}$ and $\mu^{2q^{(-n)}}_{2n} $ are 
 greater than $\alpha$, whereas the real part of $\mu^{q^{(n)} + q^{(-n)}}_{0}$ is smaller than $-\alpha$.

And with $J^q$ as in  \cref{sec comp Fq}, we have $2n \in J^{2q^{(n)}}$. Indeed,
\[
J_s = \{j \in \Z : |j| \geq n+1 \},
\]
so $2n \in J_s$ and $(2q^{(n)})^s = 0$. Similarly, $2n \in J^{2q^{(-n)}}$. And $0 \in J_u$ and $(q^{(n)} + q^{(-n)})^u = 0$, so $0 \in J^{q^{(n)} + q^{(-n)}}$.

Hence, by \eqref{eq hat psi hat F Jq},
\[
\begin{split}
\hat \xi^{2q^{(n)}}_{2n}(t) &= \int_{-\infty}^t e^{-(r+2n^2)(t-\tau)} \frac{\tau}{2} n^2 \, d\tau \\
&= \frac{n^2}{2(r+2n^2)}  \Bigl(t-  \frac{1}{r+2n^2}\Bigr). 
\end{split}
\]
The integral converges since $\Re(r+2n^2)>0$, and is $\tilde \mu$-regular.
Similarly,
\[
\hat \xi^{2q^{(-n)}}_{-2n}(t) = \frac{n^2}{2(r+2n^2)}  \Bigl(t-  \frac{1}{r+2n^2}\Bigr). 
\]
And because $\Re(\mu^{q^{(n)} + q^{(-n)}}_{0}) < -\alpha$,
\[
\begin{split}
\hat \xi^{q^{(n)}+ q^{(-n)}}_{0}(t) &= -\int_t^{\infty} e^{-(r-2n^2)(t-\tau)} ({-\tau} n^2) \, d\tau \\
&= \frac{-n^2}{r-2n^2}  \Bigl(t-  \frac{1}{r-2n^2}\Bigr). 
\end{split}
\]

We conclude that for all $t \in \R$ and $X_n, X_{-n} \in \C$,
\begin{multline} \label{eq xi3 Burgers centre}
\xi_3(t,X_ne_n + X_{-n}e_{-n}) = \\
X_ne_n +X_{-n}e_{-n} +  \frac{n^2}{2(r+2n^2)}  \Bigl(t-  \frac{1}{r+2n^2}\Bigr)(X_n^2e_{2n}+ X_{-n}^2e_{-2n}) \\
-
\frac{n^2}{r-2n^2}  \Bigl(t-  \frac{1}{r-2n^2}\Bigr) X_nX_{-n}.
\end{multline}
(The last term is a scalar multiple of the constant function $e_0$.)
If $r = n^2$, this simplifies to 
\begin{multline*}
\xi_3(t,X_ne_n + X_{-n}e_{-n}) = \\
X_ne_n+X_{-n}e_{-n} +  \frac{1}{6}  \Bigl(t-  \frac{1}{3n^2}\Bigr)(X_n^2 e_{2n}+X_{-n}^2e_{-2n}) + \Bigl(t+  \frac{1}{n^2}\Bigr) X_nX_{-n}.
\end{multline*}

\subsection{Invariant manifolds via direct computations} \label{sec ex direct}

For order of residual $p=2$, the map~$\xi_2$ is the identity map, \(x_j=X_j\)\,. 

Proceeding to order of residual \(p=3\) we construct quadratic corrections to the identity~\(\xi_2\) to form~\(\xi_3\).
In the eigenvector basis the field \(u(t,\theta)=\sum_{j} x_j(t)e^{i j\theta}\) (all sums in this section are over~\(\Z\)), and the \pde~\eqref{eq modified Burgers} becomes
\begin{equation}
\dot x_j=\alpha_jx_j +\frac t{2}\sum_{k} b_{jk}x_{j-k}x_k 
\quad\text{where }b_{jk}:=k(j-k).
\label{eqBurgPODEs}
\end{equation}
Writing
\[
x_j(t) = \xi_3(t, X(t))_j = X_j + \sum_{k,l \in \Z}g^{kl}_j(t)  X_k X_l,
\]
and solving\begin{journal}\footnote{The computer algebra code used for the computations in this section is available on \url{http://www.maths.adelaide.edu.au/anthony.roberts/pBurgers.txt}.}
\end{journal}
 for $g^{kl}_j$ such that $x_j$ satisfies \eqref{eqBurgPODEs} up to terms of order three if $\dot X_k = \alpha_k X_k$, we find that \(x(t)=\xi_3(t,X(t))\) is given by 
\begin{align}&
x_j=X_j+\frac12\sum_{k:|d_{jk}^{-1}|>\tilde\mu} b_{jk}[d_{jk}t-d_{jk}^2]X_{j-k}X_k\,,
\label{eqBurgP3}
\\&
\text{where }d_{jk}:=1/[-\alpha_j+\alpha_k+\alpha_{j-k}]=1/[r+2jk-k^2].
\nonumber
\end{align}
For \(r\approx n^2\) and odd~\(n\), the denominators in~\(d_{jk}\) are not small.
Then this map, combined with the linear \(\dot X_j=\alpha_j X_j\)\,, matches the \pde~\eqref{eq modified Burgers} to third-order errors.

However, for \(r\approx n^2\) and even~\(n>0\), some denominators are small, becoming zero when \(r=n^2\).  Then the divisor being zero becomes \(k(k-j)=n^2/2\) and hence has zeros for every pair of integer factors of~\(n^2/2\) (including negative pairs). 
Consequently these terms are excluded from the sum~\eqref{eqBurgP3}, and instead lead to nonlinearly modifying the evolution for some~\(j\) via
\begin{equation*}
\dot X_j=\alpha_j X_j+\frac t{2}\sum_{k:|d_{jk}^{-1}|<\tilde\mu} b_{jk}X_{j-k}X_k\,.
\end{equation*}

Often the centre manifold is of most interest, so in~\(\xi_3\) setting all \(X_j=0\) except~\(X_{\pm n}\), gives the quadratic approximate centre manifold to be \(x_j=X_j\) for all~\(j\) except
\begin{align*}&
x_0=X_0-n^2\left[\frac1{r-2n^2}t-\frac1{(r-2n^2)^2}\right]X_nX_{-n}\,,
\\&
x_{\pm 2n}=X_{\pm 2n}+\frac12n^2\left[\frac1{r+2n^2}t-\frac1{(r+2n^2)^2}\right]X_{\pm n}^2\,.
\end{align*}
This is the same result as \eqref{eq xi3 Burgers centre}.

Proceeding to order of residual \(p=4\) we may construct cubic corrections to~\(\xi_2\) to form~\(\xi_4\).
For simplicity, restrict attention to the cases of \(n\)~odd.
It is straightforward but tedious to construct that for~\(\xi_4\)
\begin{align}&
x_j=\xi_{3,j}+\sum_{k,l:|d_{jkl}^{-1}|>\tilde\mu} b_{jl}b_{lk}c_{jkl}(t)X_{j-l}X_{l-k}X_k\,,
\label{eqBurgP4}
\\\text{where }&
c_{jkl}:=\tfrac12d_{lk}d_{jkl}t^2
-(d_{lk}d_{jkl}^2+\tfrac12d_{lk}^2d_{jkl})t
+(d_{lk}d_{jkl}^3+\tfrac12d_{lk}^2d_{jkl}^2),
\nonumber
\\&
d_{jkl}:=1/[-\alpha_j+\alpha_k+\alpha_{l-k}+\alpha_{j-l}]
=1/[2r +2jl+2kl -2k^2-2l^2].
\nonumber
\end{align}
The terms excluded from~\((k,l)\) in the sum~\eqref{eqBurgP4} must cause cubic terms in the evolution.
For example, when \(r=n^2=1\) then\footnote{The apparent pattern in these \ode{}s becomes more complicated---at \(\dot X_{\pm6}\) for example.}
\begin{subequations}\label{eqsBurgP4n1}%
\begin{align}&
\dot X_0=X_0,
\\&
\dot X_{\pm1}=(\tfrac19t-\tfrac13t^2)X_{-1}X_{\pm1}^2\,,
\\&
\dot X_{\pm2}=-3X_{\pm2}
+(\tfrac{104}{225}t-\tfrac{8}{15}t^2)X_{\mp1}X_{\pm1}X_{\pm2}\,,
\\&
\dot X_{\pm3}=-8X_{\pm2}
+(\tfrac{594}{1225}t-\tfrac{18}{35}t^2)X_{\mp1}X_{\pm1}X_{\pm3}\,,
\\&
\dot X_{\pm4}=-15X_{\pm3}
+(\tfrac{1952}{3969}t-\tfrac{32}{63}t^2)X_{\mp1}X_{\pm1}X_{\pm3}\,,
\\&\qquad\vdots\nonumber
\end{align}
\end{subequations}
By construction, in this case of \(r=1\), the coordinate transform~\eqref{eqBurgP4} together with the \ode{}s~\eqref{eqsBurgP4n1} creates a dynamical system in~\(u(t,\theta)=\sum_j x_je^{ij\theta}\) which is the same as the \pde~\eqref{eq modified Burgers} to a residual of order four.
In the combined system~\eqref{eqBurgP4,eqsBurgP4n1}, by definition \cref{def centre mfd} three invariant manifolds are: the 1D unstable manifold parametrised by~\(X_0\) with all other \(X_j=0\);  the 2D centre manifold parametrised by~\(X_{\pm 1}\) with all other \(X_j=0\); and the stable manifold with \(X_0=X_{\pm 1}=0\)\,.

\begin{ArXiv}
See \cref{sec code Burgers} for the computer algebra code used the for the computations in this subsection. It is also available on \url{http://www.maths.adelaide.edu.au/anthony.roberts/pBurgers.txt}.
\end{ArXiv}

\subsection*{Acknowledgement}
Part of this research was supported by the Australian Research Council grant DP150102385.

\appendix

\begin{ArXiv}

\section{Compact and finite-rank operators into Banach spaces}
\label{secCompFinite}


Let $V$  and $W$ be  Banach spaces, and suppose that $V^*$ has the approximation property (this is true for example if $V$ is a Hilbert space). Let $\{e^j\}_{j\in \N} \subset V^*$ and $\{f_k\}_{k \in \N} \subset W$ be countable subsets with dense spans. (So $V^*$ and $W$ are separable.)  

In the main text, we use the following, which is standard in the case where $V$ and $W$ are Hilbert spaces.
\begin{proposition}\label{prop approx cpt}
The space $\Span\{ e^j \otimes f_k : j,k \in \N\}$ is dense in $\cK(V,W)$.
\end{proposition}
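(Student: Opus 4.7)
The plan is to prove density in two stages: first approximate an arbitrary compact operator by a finite-rank one using the approximation property of $V^*$, then approximate the finite-rank one by a linear combination of elementary tensors of the form $e^j \otimes f_k$.

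Fix $T \in \cK(V,W)$ and $\varepsilon > 0$. Because $V^*$ has the approximation property, a standard duality result (Ryan, Proposition~4.12(b)) asserts that every compact operator from $V$ into any Banach space is a norm-limit of finite-rank operators. Hence there exists a finite-rank operator
\[
S = \sum_{i=1}^n \varphi_i \otimes w_i \colon V \to W, \qquad \varphi_i \in V^*,\; w_i \in W,
\]
with $\|S - T\|_{\cB(V,W)} < \varepsilon/2$.

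Next I would approximate each of the finitely many factors $\varphi_i$ and $w_i$ by elements of $\Span\{e^j\}$ and $\Span\{f_k\}$ respectively. Fix $M \geq \max_i (\|\varphi_i\|_{V^*} + \|w_i\|_W + 1)$ and choose $\delta > 0$ so small that $n (2M+\delta)\delta < \varepsilon/2$. Since $\Span\{e^j\}$ and $\Span\{f_k\}$ are dense in $V^*$ and $W$ respectively, there exist $\tilde\varphi_i \in \Span\{e^j\}$ and $\tilde w_i \in \Span\{f_k\}$ with $\|\varphi_i - \tilde\varphi_i\|_{V^*} < \delta$ and $\|w_i - \tilde w_i\|_W < \delta$ for every $i$. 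Define
\[
\tilde S := \sum_{i=1}^n \tilde\varphi_i \otimes \tilde w_i \in \Span\{e^j \otimes f_k : j,k \in \N\}.
\]

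It remains to estimate $\|S - \tilde S\|_{\cB(V,W)}$. For each $i$, write
\[
\varphi_i \otimes w_i - \tilde\varphi_i \otimes \tilde w_i = (\varphi_i - \tilde\varphi_i) \otimes w_i + \tilde\varphi_i \otimes (w_i - \tilde w_i),
\]
so that, using $\|\varphi \otimes w\|_{\cB(V,W)} = \|\varphi\|_{V^*}\|w\|_W$ and $\|\tilde\varphi_i\|_{V^*} \leq \|\varphi_i\|_{V^*} + \delta \leq M + \delta$, we get the bound $(2M+\delta)\delta$ per term. Summing over $i$ gives $\|S - \tilde S\| \leq n(2M+\delta)\delta < \varepsilon/2$, hence $\|T - \tilde S\| < \varepsilon$. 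This completes the approximation. The only non-trivial ingredient is the initial appeal to the approximation property of $V^*$; the passage from finite-rank operators to elements of $\Span\{e^j \otimes f_k\}$ is purely a matter of approximating finitely many vectors and functionals and bookkeeping the resulting norm estimate.
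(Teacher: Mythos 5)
Your proposal is correct and follows essentially the same two-step route as the paper: invoke Ryan's Proposition~4.12(b) to pass from a compact operator to a nearby finite-rank one via the approximation property of $V^*$, then approximate the finitely many functionals and vectors in a rank-one decomposition by elements of $\Span\{e^j\}$ and $\Span\{f_k\}$. The only difference is in the final estimate: you use the telescoping identity $\varphi\otimes w-\tilde\varphi\otimes\tilde w=(\varphi-\tilde\varphi)\otimes w+\tilde\varphi\otimes(w-\tilde w)$, whereas the paper's \cref{lem span F} writes the difference $Tv-\tilde S v$ as $\sum_l\langle v^l-\tilde v^l,v\rangle(w_l-\tilde w_l)$; your identity is the correct one (the paper's expansion as written does not balance algebraically), so your bookkeeping is in fact the cleaner and more robust way to carry out the intended argument.
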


Let $\cF(V,W)$ be the space of finite-rank linear operators from $V$ to $W$; that is, operators whose images are finite-dimensional.

\begin{lemma} \label{lem span F}
The space $\Span\{ e^j \otimes f_k : j,k \in \N\}$ is dense in $\cF(V,W)$.
\end{lemma}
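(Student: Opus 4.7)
The plan is to reduce the problem to the rank-one case and then use separately the density of $\{e^j\}$ in $V^*$ and of $\{f_k\}$ in $W$. Every finite-rank operator $T\in\cF(V,W)$ can be written as a finite sum $T=\sum_{i=1}^n \varphi_i\otimes w_i$ with $\varphi_i\in V^*$ and $w_i\in W$, so by the triangle inequality it suffices to show that every rank-one operator $\varphi\otimes w$ lies in the closure of $\Span\{e^j\otimes f_k:j,k\in\N\}$.

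First I would fix $\varepsilon>0$ and, using density of the spans of $\{e^j\}$ and $\{f_k\}$, pick finite linear combinations $\tilde\varphi=\sum_j a_j e^j\in V^*$ with $\|\varphi-\tilde\varphi\|_{V^*}<\varepsilon$ and $\tilde w=\sum_k b_k f_k\in W$ with $\|w-\tilde w\|_W<\varepsilon$. Then the standard estimate
\[
\|\varphi\otimes w-\tilde\varphi\otimes\tilde w\|_{\cB(V,W)}\le\|\varphi-\tilde\varphi\|_{V^*}\|w\|_W+\|\tilde\varphi\|_{V^*}\|w-\tilde w\|_W,
\]
which one verifies by inserting and subtracting $\tilde\varphi\otimes w$, shows that $\varphi\otimes w$ is approximated to accuracy $O(\varepsilon)$ by $\tilde\varphi\otimes\tilde w=\sum_{j,k} a_j b_k (e^j\otimes f_k)$, an element of $\Span\{e^j\otimes f_k\}$.

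Combining this with the decomposition $T=\sum_i\varphi_i\otimes w_i$ gives an approximation of $T$ in $\Span\{e^j\otimes f_k\}$ to accuracy proportional to $n\varepsilon$ times the norms $\|\varphi_i\|,\|w_i\|$, which can be made arbitrarily small. The only mildly subtle point, and the one I would be careful with, is the verification of the rank-one estimate above: it only requires boundedness of the maps involved and the elementary identity $\varphi\otimes w-\tilde\varphi\otimes\tilde w=(\varphi-\tilde\varphi)\otimes w+\tilde\varphi\otimes(w-\tilde w)$, followed by the bound $\|\psi\otimes v\|_{\cB(V,W)}=\|\psi\|_{V^*}\|v\|_W$. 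No appeal to the approximation property of $V^*$ is needed here; that hypothesis enters only in the preceding \cref{prop approx cpt}, where one extends this density from $\cF(V,W)$ to $\cK(V,W)$.
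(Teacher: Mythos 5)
Your proof is correct, and it follows the same basic strategy as the paper: write $T$ as a finite sum of rank-one operators $\varphi_i \otimes w_i$, approximate $\varphi_i$ in $V^*$ and $w_i$ in $W$ separately by finite linear combinations of the $e^j$ and $f_k$, and assemble. Your key step, the decomposition
\[
\varphi\otimes w-\tilde\varphi\otimes\tilde w=(\varphi-\tilde\varphi)\otimes w+\tilde\varphi\otimes(w-\tilde w),
\]
is the right identity, and it leads to a first-order error $\|\varphi-\tilde\varphi\|\,\|w\|+\|\tilde\varphi\|\,\|w-\tilde w\|$. Worth noting: the paper's own write-up instead asserts, for the approximating operator $\tilde T=\sum_l\tilde v^l\otimes\tilde w_l$ with $\tilde v^l,\tilde w_l$ chosen to accuracy $\sqrt{\varepsilon/n}$, the identity
\[
Tv-\tilde Tv \;=\; \sum_{l=1}^n\bigl\langle v^l-\tilde v^l,\,v\bigr\rangle\,(w_l-\tilde w_l),
\]
which would let the two errors multiply to give $\varepsilon$ directly; but this identity is false (it mistakes $ab-a'b'$ for $(a-a')(b-b')$). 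The conclusion of the lemma is unaffected because the argument is easily repaired exactly as you did -- insert-and-subtract, triangle inequality, and take the $\tilde\varphi_i,\tilde w_i$ accurate to within $\varepsilon$ divided by the relevant norms -- so your version is in fact the cleaner and correct one.
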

\begin{proof}
Let $T  \in \cF(V,W)$. Since the image of $T$ is finite-dimensional, there are $v^1, \ldots, v^n \in V^*$ and $w_1, \ldots, w_n \in W$ such that
$
T = \sum_{l=1}^n v^l \otimes w_l\,.
$

Let $\varepsilon > 0$.
 For every $l$, choose $r \in \N$ and $a_l^1, \ldots, a_l^r \in \C$ and $b_l^1, \ldots, b_l^r \in \C$ such that
 \[
 \Bigl\|v^l- \sum_{j=1}^r a_l^j e^j \Bigr\|_{V^*} \leq \sqrt{\varepsilon/n}
 \quad\text{and}\quad
  \Bigl\| w_l-\sum_{k=1}^r b_l^k f_k \Bigr\|_W \leq \sqrt{\varepsilon/n}.
 \]
Using the triangle and Cauchy--Schwartz inequalities, one finds that for all $v \in V$,
\begin{multline*}
 \bigl\| Tv - \Bigl(\sum_{j,k = 1}^r a_l^j b_l^k e^j \otimes f_k\Bigr) (v) \bigr\|_W 
=
 \Bigl\| \sum_{l=1}^n \bigl\langle v^l - \sum_{j=1}^r a^j_l e^j, v \bigr\rangle 
 \Bigl(w_l -  \sum_{k=1}^r b^j_l f_k\Bigr)
 \Bigr\|_W 
 \\
 \leq \|v\|_{V} \sum_{l=1}^n \Bigl(\bigl\|v^l - \sum_{j=1}^r a^j_l e^j\bigr\|_{V^*} \cdot  \bigl\| w_l -  \sum_{k=1}^r b^j_l f_k\bigr\|_W\Bigr) 
 \leq \varepsilon \|v\|_{V}.
\end{multline*}
\end{proof}

\begin{proof}[Proof of \cref{prop approx cpt}.]
Since $V^*$ has the approximation property, $\cF(V,W)$ is dense in $\cK(V,W)$. See for example Proposition 4.12(b) in the book by Ryan~\cite{Ryan}.
So the claim follows from \cref{lem span F}.
\end{proof}

\end{ArXiv}

\begin{ArXiv}

\section{Computer algebra code for Burgers example computation}\label{sec code Burgers}
\VerbatimInput[numbers=left]{pBurgers.txt}

\end{ArXiv}

\bibliographystyle{plain}

\IfFileExists{ajr.sty}
    {\bibliography{bibexport,ajr,bib}}
    {\bibliography{bibexport,bib_peter,bibexport_2}}

\begin{thebibliography}{10}

\bibitem{AP}
R.~M. Aron and J.~B. Prolla.
\newblock Polynomial approximation of differentiable functions on {B}anach
  spaces.
\newblock {\em J. Reine Angew. Math.}, 313:195--216, 1980.

\bibitem{Aulbach2006}
Bernd Aulbach, Martin Rasmussen, and Stefan Siegmund.
\newblock Invariant manifolds as pullback attractors of nonautonomous
  differential equations.
\newblock {\em Discrete and Continuous Dynamical Systems}, 15(2):579--596,
  2006.

\bibitem{Aulbach96}
Bernd Aulbach and Thomas Wanner.
\newblock Integral manifolds for {Caratheodory} type differential equations in
  {Banach} spaces.
\newblock In B.~Aulbach and F.~Colonius, editors, {\em Six Lectures on
  Dynamical Systems}, pages 45--119. World Scientific, Singapore, 1996.

\bibitem{Aulbach99}
Bernd Aulbach and Thomas Wanner.
\newblock Invariant foliations for {Caratheodory} type differential equations
  in {Banach} spaces.
\newblock In V.~Lakshmikantham and A.~A. Martynyuk, editors, {\em Advances of
  Stability Theory at the End of XX Century}. Gordon \& Breach Publishers,
  1999.
\newblock
  \url{http://citeseerx.ist.psu.edu/viewdoc/download?doi=10.1.1.45.5229&rep=rep1&type=pdf}.

\bibitem{Aulbach2000}
Bernd Aulbach and Thomas Wanner.
\newblock The {Hartman--Grobman} theorem for {Caratheodory}-type differential
  equations in {Banach} spaces.
\newblock {\em Nonlinear Analysis}, 40:91--104, 2000.

\bibitem{Barreira2007}
Luis Barreira and Claudia Valls.
\newblock {\em Stability of Nonautonomous Differential Equations}, volume 1926
  of {\em Lecture Notes in Mathematics}.
\newblock Springer, 2007.

\bibitem{Bunder2018a}
J.~E. Bunder and A.~J. Roberts.
\newblock Nonlinear emergent macroscale {PDEs}, with error bound, for nonlinear
  microscale systems.
\newblock Technical report, [\url{https://arxiv.org/abs/1806.10297}], June
  2018.

\bibitem{Bykov2013}
V.~Bykov and V.~Gol'dshtein.
\newblock Fast and slow invariant manifolds in chemical kinetics.
\newblock {\em Computers \& Mathematics with Applications}, 2013.

\bibitem{Carr81}
J.~Carr.
\newblock {\em Applications of centre manifold theory}, volume~35 of {\em
  Applied Math. Sci.}
\newblock Springer--Verlag, 1981.

\bibitem{Chicone97}
C.~Chicone and Y.~Latushkin.
\newblock Center manifolds for infinite dimensional nonautonomous differential
  equations.
\newblock {\em J.~Differential Equations}, 141:356--399, 1997.

\bibitem{DAlessandro}
Stefania D'Alessandro.
\newblock Polynomial algebras and smooth functions in {B}a- nach spaces.
\newblock Ph.D.\ thesis, Universit\'a degli studi di Milano, 2013.

\bibitem{DH}
Stefania D'Alessandro and Petr H\'{a}jek.
\newblock Polynomial algebras and smooth functions in {B}anach spaces.
\newblock {\em J. Funct. Anal.}, 266(3):1627--1646, 2014.

\bibitem{DHJ}
Stefania D'Alessandro, Petr H\'{a}jek, and Michal Johanis.
\newblock Corrigendum to the paper ``{P}olynomial algebras on classical
  {B}anach spaces'' [ {MR}1656881].
\newblock {\em Israel J. Math.}, 207(2):1003--1012, 2015.

\bibitem{Friz2014}
Peter Friz and Martin Hairer.
\newblock {\em A Course on Rough Paths. With an introduction to regularity
  structures}.
\newblock Springer, 2014.

\bibitem{GJ}
Raquel Gonzalo and Jes\'{u}s~Angel Jaramillo.
\newblock Compact polynomials between {B}anach spaces.
\newblock {\em Proc. Roy. Irish Acad. Sect. A}, 95(2):213--226, 1995.

\bibitem{Grcar2011}
Joseph~F. Grcar.
\newblock John von {N}eumann's analysis of {G}aussian elimination and the
  origins of modern numerical analysis.
\newblock {\em SIAM Review}, 53(4):607--682, 2011.

\bibitem{Haragus2011}
Mariana Haragus and Gerard Iooss.
\newblock {\em Local Bifurcations, Center Manifolds, and Normal Forms in
  Infinite-Dimensional Dynamical Systems}.
\newblock Springer, 2011.

\bibitem{Haragus95b}
M.~H\u{a}r\u{a}gu\c{s}.
\newblock Reduction of pdes on unbounded domains application: unsteady water
  waves problem.
\newblock {\em J.~Nonlinear Sci.}, 8:353--374, 1998.

\bibitem{Mielke88b}
A.~Mielke.
\newblock {\em On Saint-venant's Problem And Saint-venant's Principle In
  Nonlinear Elasticity}.
\newblock Trends In Appl Of Maths To Mech, 1988.

\bibitem{Mielke91a}
A.~Mielke.
\newblock Reduction of {PDE}s on domains with several unbounded directions: A
  first step towards modulation equations.
\newblock {\em A. angew Math Phys}, 43(3):449--470, 1992.

\bibitem{Mielke88}
Alexander {Mielke}.
\newblock {Reduction of quasilinear equations in cylindrical domains with
  applications.}
\newblock {\em {Math. Methods Appl. Sci.}}, 10(1):51--66, 1988.

\bibitem{Nemirovskii}
A.~S. Nemirovski\u{\i}.
\newblock The polynomial approximation of functions on {H}ilbert space.
\newblock {\em Funkcional. Anal. i Prilo\v{z}en.}, 7(4):88--89, 1973.

\bibitem{NS}
A.~S. Nemirovski\u{\i} and S.~M. Semenov.
\newblock The polynomial approximation of functions on {H}ilbert space.
\newblock {\em Mat. Sb. (N.S.)}, 92(134):257--281, 344, 1973.

\bibitem{Pavliotis07}
G.~A. Pavliotis and A.~M. Stuart.
\newblock {\em Multiscale methods: averaging and homogenization}, volume~53 of
  {\em Texts in Applied Mathematics}.
\newblock Springer, 2008.

\bibitem{Pelczynski}
A.~Pe\l{c}zy\'{n}ski.
\newblock On weakly compact polynomial operators on {$B$}-spaces with
  {D}unford-{P}ettis property.
\newblock {\em Bull. Acad. Polon. Sci. S\'{e}r. Sci. Math. Astronom. Phys.},
  11:371--378, 1963.

\bibitem{Potzsche2006}
Christian Potzsche and Martin Rasmussen.
\newblock Taylor approximation of integral manifolds.
\newblock {\em Journal of Dynamics and Differential Equations}, 18:427--460,
  2006.

\bibitem{Roberts92c}
A.~J. Roberts.
\newblock Boundary conditions for approximate differential equations.
\newblock {\em J.~Austral. Math. Soc.~B}, 34:54--80, 1992.

\bibitem{Roberts2013a}
A.~J. Roberts.
\newblock Macroscale, slowly varying, models emerge from the microscale
  dynamics in long thin domains.
\newblock {\em IMA Journal of Applied Mathematics}, 80(5):1492--1518, 2015.

\bibitem{Roberts2014a}
A.~J. Roberts.
\newblock {\em Model emergent dynamics in complex systems}.
\newblock SIAM, Philadelphia, jan 2015.

\bibitem{Roberts2018a}
A.~J. Roberts.
\newblock Backwards theory supports modelling via invariant manifolds for
  non-autonomous dynamical systems.
\newblock Technical report, [\url{http://arxiv.org/abs/1804.06998}], April
  2018.

\bibitem{Ryan}
Raymond~A. Ryan.
\newblock {\em Introduction to tensor products of {B}anach spaces}.
\newblock Springer Monographs in Mathematics. Springer-Verlag London, Ltd.,
  London, 2002.

\bibitem{Vanderbauwhede89}
A.~Vanderbauwhede.
\newblock Centre manifolds, normal forms, and elementary bifurcations.
\newblock {\em Dynamics Reported}, 2:89--169, 1989.

\bibitem{Vanderbauwhede92}
A.~{Vanderbauwhede} and G.~{Iooss}.
\newblock {Center manifold theory in infinite dimensions.}
\newblock In {\em {Dynamics reported. Expositions in dynamical systems. New
  series. Volume 1}}, pages 125--163. Berlin etc.: Springer-Verlag, 1992.

\bibitem{Verhulst05}
Ferdinand Verhulst.
\newblock {\em Methods and applications of singular perturbations: boundary
  layers and multiple timescales}, volume~50 of {\em Texts in Applied Maths}.
\newblock Springer, 2005.

\bibitem{Zorich}
Vladimir~A. Zorich.
\newblock {\em Mathematical analysis. {II}}.
\newblock Universitext. Springer, Heidelberg, second edition, 2016.
\newblock Translated from the fourth and the sixth corrected (2012) Russian
  editions by Roger Cooke and Octavio Paniagua T.

\end{thebibliography}

\end{document}